\title{Norm computation and analytic continuation of \\ vector valued holomorphic discrete series representations%
\thanks{This work was supported by Grant-in-Aid for JSPS Fellows (25$\cdot$7147).}}
\author{Ryosuke Nakahama\thanks{Email: nakahama@ms.u-tokyo.ac.jp} \\
\textit{Graduate School of Mathematical Sciences, the University of Tokyo,} \\
\textit{3-8-1 Komaba Meguro-ku Tokyo 153-8914, Japan}}
\date{\today}
\newtheorem{theorem}{Theorem}[section]
\newtheorem*{theorem*}{Theorem}
\newtheorem{proposition}[theorem]{Proposition}
\newtheorem{corollary}[theorem]{Corollary}
\newtheorem*{corollary*}{Corollary}
\newtheorem{lemma}[theorem]{Lemma}
\newtheorem{remark}[theorem]{Remark}
\newtheorem{conjecture}[theorem]{Conjecture}
\newcommand{\ds}{\displaystyle}
\newcommand{\bk}{\mathbf{k}}
\newcommand{\bl}{\mathbf{l}}
\newcommand{\bm}{\mathbf{m}}
\newcommand{\bn}{\mathbf{n}}
\newcommand{\bs}{\mathbf{s}}
\newcommand{\bzero}{\mathbf{0}}
\newcommand{\bone}{\mathbf{1}}
\newcommand{\BN}{\mathbb{N}}
\newcommand{\BZ}{\mathbb{Z}}
\newcommand{\BR}{\mathbb{R}}
\newcommand{\BC}{\mathbb{C}}
\newcommand{\BH}{\mathbb{H}}
\newcommand{\BO}{\mathbb{O}}
\newcommand{\fa}{\mathfrak{a}}
\newcommand{\fb}{\mathfrak{b}}
\newcommand{\fg}{\mathfrak{g}}
\newcommand{\fh}{\mathfrak{h}}
\newcommand{\fk}{\mathfrak{k}}
\newcommand{\fl}{\mathfrak{l}}
\newcommand{\fm}{\mathfrak{m}}
\newcommand{\fn}{\mathfrak{n}}
\newcommand{\fp}{\mathfrak{p}}
\newcommand{\cH}{\mathcal{H}}
\newcommand{\cO}{\mathcal{O}}
\newcommand{\cP}{\mathcal{P}}
\newcommand{\cV}{\mathcal{V}}
\newcommand{\rA}{\mathrm{A}}
\newcommand{\rT}{\mathrm{T}}
\newcommand{\rank}{\operatorname{rank}}
\newcommand{\End}{\operatorname{End}}
\newcommand{\Tr}{\operatorname{Tr}}
\newcommand{\tr}{\operatorname{tr}}
\newcommand{\Det}{\operatorname{Det}}
\newcommand{\sgn}{\operatorname{sgn}}
\renewcommand{\Re}{\operatorname{Re}}
\begin{document}
\maketitle

\begin{abstract}
In this paper we compute explicitly the norm of the vector-valued holomorphic discrete series representations, 
when its $K$-type is ``almost multiplicity-free''. 
As an application, we discuss the properties of highest weight modules, such as 
unitarizability, reducibility and composition series. 
\bigskip

\noindent \textbf{Keywords}:
holomorphic discrete series representations; highest weight modules; Jordan triple systems; composition series. 
\\ \textbf{AMS subject classification}:
22E45; 17C30. 
\end{abstract}

\section{Introduction}
The purpose of this paper is to compute explicitly the norm of the vector-valued holomorphic discrete series representations, 
and to study the properties of the highest weight modules, such as unitarizabily, reducibility and composition series. 

Let $G$ be a simple Lie group, such that its maximal compact subgroup $K$ has a non-discrete center. 
Then it is known that there exist a linear subspace $\fp^+\subset\fg^\BC$ and a bounded domain $D\subset \fp^+$ such that 
the symmetric space $G/K$ is diffeomorphic to $D$. Therefore $G/K$ becomes a complex manifold. 
Let $(\tau,V)$ be a finite-dimensional holomorphic representation of $K^\BC$, 
and $\chi^{-\lambda}$ be a suitable character of the universal covering group $\tilde{K}^\BC$. 
Then we can consider the representation of the universal covering group $\tilde{G}$ on the space of holomorphic sections 
of the equivariant vector bundle on $G/K$ with fiber $V\otimes\chi^{-\lambda}$, 
\[ \tilde{G}\curvearrowright \Gamma_\cO(G/K,\tilde{G}\times_{\tilde{K}}(V\otimes\chi^{-\lambda})). \]
Since $D\simeq G/K$ is contractible, this space is isomorphic to the space of $V$-valued holomorphic functions on $D$, 
\[ \Gamma_\cO(G/K,\tilde{G}\times_{\tilde{K}}(V\otimes\chi^{-\lambda}))\simeq \cO(D,V). \]
Then the infinitesimal action of the Lie subalgebra $\fp^+\subset\fg^\BC$ on $\cO(D,V)$ is given by 
1st order differential operators with constant coefficients, and thus it annihilates constant functions in $\cO(D,V)$. 
Such representations are called the highest weight representations. 
Also, if $\lambda\in\BR$ is sufficiently large, then this representation preserves an inner product 
which is given by an explicit integral on $D$. Such representations are called the holomorphic discrete series representations. 

For example, let $G:=Sp(r,\BR)$, realized explicitly as 
\[ Sp(r,\BR)=\left\{g\in GL(2r,\BC):
g\begin{pmatrix}0&I_r\\-I_r&0\end{pmatrix}{}^t\hspace{-1pt}g=\begin{pmatrix}0&I_r\\-I_r&0\end{pmatrix},\;
g\begin{pmatrix}0&I_r\\I_r&0\end{pmatrix}=\begin{pmatrix}0&I_r\\I_r&0\end{pmatrix}\bar{g}\right\}. \]
Then $G/K=Sp(r,\BR)/U(r)$ is diffeomorphic to 
\[ D:=\{ w\in\mathrm{Sym}(r,\BC):I_r-ww^*\text{ is positive definite.}\}. \]
Let $(\tau,V)$ be a representation of $K^\BC=GL(r,\BC)$. Then the universal covering group 
$\tilde{G}=\widetilde{Sp}(r,\BR)$ acts on $\cO(D,V)$ by 
\[ \tau_\lambda\left(\begin{pmatrix}a&b\\c&d\end{pmatrix}^{-1}\right)f(w)
=\det(cw+d)^{-\lambda}\tau\left({}^t\hspace{-1pt}(cw+d)\right)f\left((aw+b)(cw+d)^{-1}\right). \]
We note that $\det(cw+d)^{-\lambda}$ is not well-defined as a function on $G\times D$, but is well-defined as a function on 
the universal covering space $\tilde{G}\times D$. 
If $\Re\lambda$ is sufficiently large, then this preserves the sesquilinear form 
\[ \langle f,h\rangle_{\lambda,\tau}
:=\frac{c_\lambda}{\pi^{r(r+1)/2}}\int_D\left(\tau((I-ww^*)^{-1})f(w),h(w)\right)_\tau \det(I-ww^*)^{\lambda-(r+1)}dw, \]
that is, $\langle \tau_{\lambda}(g)f,\tau_{\bar{\lambda}}(g)h\rangle_{\lambda,\tau}=\langle f,h\rangle_{\lambda,\tau}$ holds 
for any $f,h\in \cO(D,V)$ with finite norms, and for any $g\in\tilde{G}$. 
Therefore $\tau_\lambda$ gives a holomorphic discrete series representation of $\tilde{G}$ if $\lambda\in\BR$ and the above norm converges 
for some nonzero function in $\cO(D,V)$. In this case the corresponding Hilbert space $\cH_\lambda(D,V)\subset \cO(D,V)$ 
has the reproducing kernel 
\[ K_{\lambda,\tau}(z,w):=\det(I_r-zw^*)^{-\lambda}\tau(I_r-zw^*)\in \cO(D\times\overline{D},\mathrm{End}(V)), \]
if we choose the normalizing constant $c_\lambda$ suitably. When $r=1$, then we have $G=SU(1,1)$ and $D=\{w\in\BC:|w|<1\}$, 
and the action $\tau_\lambda$ of $\widetilde{SU}(1,1)$ on $\cO(D)$ reduces to the simplest example 
\[ \tau_\lambda\left(\begin{pmatrix}a&b\\c&d\end{pmatrix}^{-1}\right)f(w)
=(cw+d)^{-\lambda}f\left(\frac{aw+b}{cw+d}\right), \]
with the invariant inner product and the reproducing kernel 
\begin{gather}
\langle f,h\rangle_{\lambda}
=\frac{\lambda-1}{\pi}\int_{|w|<1}f(w)\overline{h(w)}(I-|w|^2)^{\lambda-2}dw, \label{normsu11} \\
K_{\lambda}(z,w)=(1-z\bar{w})^{-\lambda} \in\cO(D\times\overline{D}). \label{kersu11}
\end{gather}

We return to the general case. The question of when the highest weight representations are unitarizable 
is studied by e.g. Berezin \cite{B}, Clerc \cite{C}, Vergne-Rossi \cite{VR}, and Wallach \cite{W}, 
and completely classified by Enright-Howe-Wallach \cite{EHW} and Jakobsen \cite{J} by different methods. 
In \cite{EHW} and \cite{J} they used purely algebraic methods. 

On the other hand, the analytical proof, the proof using explicit norm computation, was only partially successful. 
When the fiber $(\tau,V)$ is trivial, this is studied by e.g. Hua \cite{H}, Upmeier \cite{U}, and \O rsted \cite{O}, 
and completely done by Faraut-Kor\'anyi \cite{FK0}. However, vector-valued cases are not computed yet except for a few cases, 
e.g. the case when $(\tau,V)$ is a defining representation of $K^\BC=GL(s,\BC)$ (\O rsted-Zhang \cite{OZ1}, \cite{OZ2}), 
and the case when $G$ is of real rank 1 (Hwang-Liu-Zhang \cite{HLZ}). 

Now we explain how the explicit norm computation gives informations on unitarizability and reducibility in the simplest example. 
Let $G=SU(1,1)$. Then the $\tilde{G}$-invariant inner product (\ref{normsu11}) converges for any polynomial $f,h\in\cP(\BC)$ if $\Re\lambda>1$, 
but does not converge for any non-zero polynomial $f,h\in\cP(\BC)$ if $\Re\lambda\le 1$. 
Suppose $f,h$ has a Taylor expansion $f(w)=\sum_m a_mw^m$, $h(w)=\sum_mb_mw^m$. 
Then for $\Re\lambda>1$, we can compute $\langle f,h\rangle_\lambda$ explicitly as 
\[ \langle f,h\rangle_\lambda=\sum_{m=0}^\infty\frac{m!}{(\lambda)_m}a_m\overline{b_m}, \]
where $(\lambda)_m:=\lambda(\lambda+1)\cdots(\lambda+m-1)$. This expression is available even if $\Re\lambda\le 1$, and is also 
$(\fg,\tilde{K})$-invariant. As a result, the reproducing kernel $K_\lambda(z,w)$ in (\ref{kersu11}) is expanded as 
\[ K_{\lambda}(z,w)=(1-z\bar{w})^{-\lambda}=\sum_{m=0}^\infty\frac{(\lambda)_m}{m!}z^m\bar{w}^m. \]
This expression is also available when $\Re\lambda\le 1$. This kernel function is positive definite if $\lambda\ge 0$, 
and thus $(\tau_\lambda,\cO(D))$ is unitarizable if $\lambda\ge 0$. Here, when $\lambda=0$, the corresponding Hilbert space 
consists of only 0th order polynomials, and is of 1-dimensional. 
Also, for $\lambda=-l\in \BZ_{\le 0}$, the sesquilinear forms 
\begin{gather}
\langle f,h\rangle_{-l}=\sum_{m=0}^l \frac{m!}{(-l)_m}a_m\overline{b_m},\label{subsu11} \\
\lim_{\lambda\to -l}(\lambda+l)\langle f,h\rangle_\lambda
=\frac{1}{(-l)_l}\sum_{m=l+1}^\infty\frac{m!}{(1)_{m-l-1}}a_m\overline{b_m} \label{quotientsu11}
\end{gather}
are well-defined and $(\fg,K)$-invariant on $\cP_{\le l}(\BC)$, the space of polynomials of order at most $l$, 
and on $\cP(\BC)/\cP_{\le l}(\BC)$ respectively. Moreover (\ref{quotientsu11}) is definite. 
Therefore $\cP_{\le l}(\BC)$ gives a $(\fg,K)$-submodule, and 
$\cP(\BC)/\cP_{\le l}(\BC)$ gives a infinitesimally unitary $(\fg,K)$-module. 

To compute the norm for general $G$, we use the $K$-type decomposition of $\cO(D,V)_K=\cP(\fp^+,V)$ instead of the Taylor expansion, 
fix a $K$-invariant norm $\Vert\cdot\Vert_{F,\tau}$ on $\cP(\fp^+,V)$ independent of $\lambda$ (see (\ref{Fischer})), and compare 
$\Vert\cdot\Vert_{\lambda,\tau}$ and $\Vert\cdot\Vert_{F,\tau}$ on each $K$-type. Let 
\[ \cO(D,V)_K=\cP(\fp^+,V)=\bigoplus_i W_i \]
be a $K$-type decomposition such that each $W_i$ is orthogonal to the others with respect to $\langle\cdot,\cdot\rangle_{F,\tau}$. 
Then since $\Vert\cdot\Vert_{\lambda,\tau}$ and $\Vert\cdot\Vert_{F,\tau}$ are both $K$-invariant, 
the ratio of two norms are constant on $W_i$. We denote this ratio by $R_i(\lambda)$. 
Moreover, if $W_i\perp W_j$ with respect to $\langle\cdot,\cdot\rangle_{F,\tau}$ implies 
$W_i\perp W_j$ with respect to $\langle\cdot,\cdot\rangle_{\lambda,\tau}$ 
(for example, if $\cP(\fp^+,V)$ is $K$-multiplicity free), then we have 
\[ \Vert f\Vert_{\lambda,\tau}^2=\sum_iR_i(\lambda)\Vert f_i\Vert_{F,\tau}^2 \qquad (f\in\cO(\fp^+,V)) \]
where $f_i$ is the orthogonal projection of $f$ onto $W_i$, and the reproducing kernel $K_{\lambda,\tau}(z,w)$ is expanded as 
\[ K_{\lambda,\tau}(z,w)=\sum_iR_i(\lambda)^{-1}K_i(z,w), \]
where $K_i(z,w)$ is the reproducing kernel of $W_i$ with respect to $\langle\cdot,\cdot\rangle_{F,\tau}$. 
Similarly to the $SU(1,1)$ case, if we compute $R_i(\lambda)$ explicitly, 
then we can determine completely when the representation is unitarizable, or reducible, and 
can get some informations on composition series. 

Since the above argument is available only if $W_i\perp W_j$ with respect to $\langle\cdot,\cdot\rangle_{F,\tau}$ implies 
$W_i\perp W_j$ with respect to $\langle\cdot,\cdot\rangle_{\lambda,\tau}$, 
we specialize our interest to $(G,V)$'s in the following table. \\
\hspace{-30pt}
\begin{minipage}{\linewidth}
\vspace{5pt}
\begin{tabular}{|c|c|c|c|}\hline
$G$ & $K$ & $V$ & Where \\ \hline
$Sp(r,\BR)$ & $U(r)$ & $\bigwedge^k(\BC^r)^\vee$ \quad ($0\le k\le r-1$) & Thm \ref{sprr} \\ \hline
$SU(q,s)$ & $S(U(q)\times U(s))$ & $\BC\boxtimes V'$ \quad ($V'$: any irrep of $U(s)$) & 
\shortstack{Thm \ref{suqs} ($q\ge s$)\\ Thm \ref{suqsnontube} ($q<s$)} \\ \hline
$SO^*(2s)$ & $U(s)$ & \shortstack{$S^k(\BC^s)^\vee$\\ $S^k(\BC^s)\otimes\det^{-k/2}$} \quad ($k\in \BZ_{\ge 0}$) &
\shortstack{Thm \ref{sostareven} ($s$ even)\\ Thm \ref{sostarodd1}, \ref{sostarodd2} ($s$ odd)} \\ \hline
$Spin_0(2,n)$ & \shortstack{$(Spin(2)\times$ \\ $Spin(n))/\BZ_2$} 
& \shortstack{$\BC_{-k}\boxtimes V_{(k,\ldots,k,\pm k)}$ \quad ($k\in\frac{1}{2}\BZ_{\ge 0}$,\, $n$ even)
\\$\BC_{-k}\boxtimes V_{(k,\ldots,k)}$ \quad ($k\in\{0,\frac{1}{2}\}$,\, $n$ odd)} & Thm \ref{spin2n} \\ \hline
$E_{6(-14)}$ & $SO(2)\times Spin(10)$ & $\BC_{-k/2}\boxtimes\cH^k(\BR^{10})$ \quad ($k\in \BZ_{\ge 0}$) & 
Prop \ref{e6(-14)}, Conj \ref{e6(-14)conj} \\ \hline
$E_{7(-25)}$ & $SO(2)\times E_6$ & $\BC$ & Already done in \cite{FK} \\ \hline
\end{tabular}
\vspace{5pt}
\end{minipage}\\
In the above cases, except for $G=SU(q,s)$ case, $\cP(\fp^+,V)$ is multiplicity-free under $K$, 
which is proved by direct computation of $K$-type decomposition. 
We can also prove multiplicity-freeness a priori by using \cite[Theorem 2]{K}. 
In $G=SU(q,s)$ case, $\cP(\fp^+,V)$ is not multiplicity-free in general, but each $K$-isotypic component sits in a single polynomial space, 
and thus the arguments explained above is still available. 

When $G$ is of tube type or $G=SU(q,s)$ with $q\ge s$, which we deal with in Section 4, 
we can compute the norm in a uniform way, by generalizing the technique used by Faraut-Kor\'anyi \cite{FK}. 
For these cases, the fibers $V$ in the above table satisfy the condition that 
they remain irreducible even if restricted to some subgroup $K_L$ of $K$, 
and this condition allows us to compute the norm explicitly. 
The same condition also appears in e.g. \cite{C}, \cite{HN}. 
In these papers they got some necessary condition on the unitarizability of highest weight representations, 
by considering when the reproducing kernel on the tube domain becomes a Laplace transform of some measure. 
Under the assumption that $V|_{K_L}$ is irreducible, the necessary and sufficient condition is also computable, 
and therefore this assumption seems to be natural. 

However, when $G$ is of non-tube type, there is no such uniform way to compute the norm at this time, 
and we do this by purely case-by-case analysis. For example, we use an embedding of $G$ into a larger group, 
or use an embedding of some smaller subgroup into $G$. We deal with such cases in Section 5. 

We enumerate the main results of this paper. 
\begin{theorem}[Theorem \ref{sprr}]
When $G=Sp(r,\BR)$, and $(\tau,V)=(\tau_{\varepsilon_1+\cdots+\varepsilon_k}^\vee,V_{\varepsilon_1+\cdots+\varepsilon_k}^\vee)$, 
$\Vert\cdot\Vert_{\lambda,\tau}^2$ converges if $\Re\lambda>r$, 
the $K$-type decomposition of $\cO(D,V)_K$ is given by 
\[ \cP(\fp^+)\otimes V_{\varepsilon_1+\cdots+\varepsilon_k}^\vee
=\bigoplus_{\bm\in\BZ^r_{++}}\bigoplus_{\substack{\bk\in\{0,1\}^r,\,|\bk|=k\\ \bm+\bk\in\BZ^r_+}}V_{2\bm+\bk}^\vee, \]
and for $f\in V_{2\bm+\bk}^\vee$, the ratio of norms is given by 
\begin{align*}
\frac{\Vert f\Vert_{\lambda,\tau_{\varepsilon_1+\cdots+\varepsilon_k}^\vee}^2}
{\Vert f\Vert_{F,\tau_{\varepsilon_1+\cdots+\varepsilon_k}^\vee}^2}
&=\frac{\prod_{j=1}^k\left(\lambda-\frac{1}{2}(j-1)\right)}{\prod_{j=1}^r\left(\lambda-\frac{1}{2}(j-1)\right)_{m_j+k_j}}\\
&=\frac{1}{\prod_{j=1}^k\left(\lambda-\frac{1}{2}(j-1)+1\right)_{m_j+k_j-1}
\prod_{j=k+1}^r\left(\lambda-\frac{1}{2}(j-1)\right)_{m_j+k_j}}. 
\end{align*}
\end{theorem}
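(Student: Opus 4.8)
The plan is to treat the three assertions in turn---the $K$-type decomposition, the convergence range, and the norm ratio---with the last being the substantive point. For the decomposition I would begin from the Hua--Kostant--Schmid expansion $\cP(\fp^+)=\bigoplus_{\bm\in\BZ^r_{++}}V_{2\bm}^\vee$ for $\fp^+=\mathrm{Sym}(r,\BC)$ and tensor with $V_{\varepsilon_1+\cdots+\varepsilon_k}^\vee=\bigwedge^k(\BC^r)^\vee$. The Pieri rule for exterior powers writes $V_{2\bm}^\vee\otimes\bigwedge^k(\BC^r)^\vee$ as the multiplicity-free sum of $V_{2\bm+\bk}^\vee$ over vertical strips $\bk\in\{0,1\}^r$ with $|\bk|=k$ and $2\bm+\bk$ dominant. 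A short check shows that, for $\bm$ dominant, dominance of $2\bm+\bk$ is equivalent to $\bm+\bk\in\BZ^r_+$: the two conditions differ only at indices where $m_j=m_{j+1}$, where both reduce to $k_j\ge k_{j+1}$, while at indices with $m_j>m_{j+1}$ both hold automatically. This is exactly the index set in the statement. Multiplicity-freeness of the whole space is then automatic, since from a dominant $\bn=2\bm+\bk$ the parities $n_j\bmod 2=k_j$ recover $\bk$ and hence $\bm$, so no $K$-type repeats.

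For convergence I would compare directly with the scalar weight. The fibre operator $\tau((I-ww^*)^{-1})$ is built from $k\times k$ minors of $I-ww^*$ together with powers of $\det(I-ww^*)^{-1}$, so its operator norm is bounded by a fixed power of $\det(I-ww^*)^{-1}$ near $\partial D$; the integrability threshold of $(\tau((I-ww^*)^{-1})f,h)_\tau\det(I-ww^*)^{\lambda-(r+1)}$ is therefore the same as for the scalar density $\det(I-ww^*)^{\lambda-(r+1)}$, and the Faraut--Kor\'anyi criterion gives absolute convergence precisely for $\Re\lambda>r$, the genus of $\mathrm{Sym}(r,\BC)$ being $r+1$. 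Finiteness on each individual $K$-type for $\Re\lambda>r$ then also reads off from the poles of the ratio computed below.

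The heart is the ratio $R_{\bm,\bk}(\lambda)$, and here I would generalise the Faraut--Kor\'anyi norm computation of Section 4. Because the decomposition is multiplicity-free, it suffices to evaluate the ratio of the invariant norm to the (explicit) Fischer norm $\Vert\cdot\Vert_{F,\tau}$ on a single highest weight vector of $V_{2\bm+\bk}^\vee$. First I would Cayley-transform $D$ to the tube domain $\mathrm{Sym}(r,\BR)+i\Omega$ over the cone $\Omega$ of positive-definite symmetric matrices, and use the Laplace transform on the tube to convert $\Vert\cdot\Vert_{\lambda,\tau}^2$ into an integral over $\Omega$ whose values on the relevant weight vectors are governed by the matrix-valued analogue of the Gindikin Gamma integral $\int_\Omega\tau(x)\Delta_{\bs}(x)e^{-\tr x}\,d^*x$, with $d^*x$ the $GL(r,\BR)$-invariant measure. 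The covariance of the construction under the subgroup $K_L$ of $K$ on which $V$ stays irreducible forces this matrix integral, restricted to the appropriate extreme weight line, to act by a scalar, and that scalar is a ratio of Gindikin Gamma functions $\Gamma_\Omega$ of the cone $\Omega$. Carrying out the Gamma-function bookkeeping, the scalar part reproduces the shifted Pochhammer $\prod_{j=1}^r(\lambda-\tfrac{1}{2}(j-1))_{m_j+k_j}$ in the denominator, while the twist by $\bigwedge^k$ contributes the $\bm$-independent factor $\prod_{j=1}^k(\lambda-\tfrac{1}{2}(j-1))$ in the numerator; the two displayed forms of the answer are then the same rational function rewritten through $(\alpha)_{m+1}=\alpha(\alpha+1)_m$. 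As a consistency check one can instead expand the reproducing kernel $K_{\lambda,\tau}(z,w)=\det(I-zw^*)^{-\lambda}\tau(I-zw^*)$ into its $K$-isotypic pieces and read off $R_{\bm,\bk}(\lambda)^{-1}$ as the coefficients.

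The hard part is precisely this vector-valued cone integral $\int_\Omega\tau(x)\,\Delta_{\bs}(x)\,e^{-\tr x}\,d^*x$: one must show it localises to a scalar on a single extreme weight vector and then evaluate that scalar. The irreducibility of $V|_{K_L}$ is exactly what permits the localisation---without it the integral would be a genuine endomorphism of $V$ and $R_{\bm,\bk}$ would not be a single rational function---so the crux is to pin down $K_L$ for $\mathrm{Sym}(r,\BC)$ and to track how the $k\times k$ minors defining $\bigwedge^k$ pair with the generalised power functions $\Delta_{\bs}$. I expect the $\bm$-independent numerator $\prod_{j=1}^k(\lambda-\tfrac{1}{2}(j-1))$ to be the most delicate point, since it records the normalisation of the fibre and must be obtained by evaluating the integral on one extreme weight vector rather than by the Gamma-function recursion that delivers the denominator.
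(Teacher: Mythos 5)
Your outline has the same architecture as the paper's proof: the decomposition via Theorem \ref{HKS} plus the Pieri rule (with your parity and dominance checks, both correct), and then a Faraut--Kor\'anyi-style reduction to a cone integral, using irreducibility of $V|_{K_L}$ with $K_L\simeq O(r)$ to force the vector-valued Gamma integral to be scalar --- which is exactly what the paper's Theorem \ref{keythm} and Corollary \ref{tubecor} formalize. But your proposal stops precisely where the real work begins, and the step you defer as ``bookkeeping'' hides the one genuinely missing idea: the exponent $\bn$ that enters the Gamma quotient $\Gamma_\Omega(\lambda+\bk)/\Gamma_\Omega(\lambda+\bn)=(\lambda)_\bk/(\lambda)_\bn$ is \emph{not} read off from the highest weight $2\bm+\bk$ of the $K$-type $W=V_{2\bm+\bk}^\vee$; it is the lowest weight of the $K_L$-spherical constituent of $W\otimes\overline{V}$ (hypothesis (A2') of Corollary \ref{tubecor}). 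The object that actually lives on the cone and transforms by a power function $\Delta_\bn$ is not a highest weight vector of $W$ but (the $K_L$-average of) the reproducing kernel $K_W(\cdot,e)$, viewed inside $W\otimes\overline{V}$, so its equivariance exponent must be located by a spherical analysis. The paper does this concretely: $V_{2\bm+\bk}^\vee\otimes V_{\varepsilon_1+\cdots+\varepsilon_k}^\vee=\bigoplus_{\bk'}V_{2\bm+\bk+\bk'}^\vee$, and by Theorem \ref{sph} a constituent is $O(r)$-spherical iff every entry of $2\bm+\bk+\bk'$ is even, which forces $\bk'=\bk$ and hence $\bn=\bm+\bk$. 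Without this parity argument you have no derivation of the denominator $\prod_{j=1}^r(\lambda-\tfrac{1}{2}(j-1))_{m_j+k_j}$; your sketch presupposes the answer. (By contrast, the numerator you single out as the delicate point is the easy part: it is the same quotient evaluated on the minimal $K$-type $\bm=\bzero$, where $\bn=\bk$, forced by the normalisation of $c_\lambda$.)

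A secondary flaw is the convergence argument: bounding the operator norm of $\tau(B(w)^{-1})$ by a \emph{fixed power} of $\det(I-ww^*)^{-1}$ only yields convergence for $\Re\lambda$ exceeding $r$ plus that power, not for $\Re\lambda>r$, so your inference of ``the same threshold as the scalar density'' is a non sequitur as written. The statement is nevertheless salvageable here because for the dual representation one has $\tau_{\varepsilon_1+\cdots+\varepsilon_k}^\vee(B(w)^{-1})=\bigwedge^k\bigl({}^t(I-ww^*)\bigr)$, whose entries are $k\times k$ minors of $I-ww^*$ and hence bounded by $1$ on $D$; that boundedness, not a power bound, is what makes the comparison with the scalar case $\Re\lambda>p-1=r$ legitimate. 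The paper instead obtains the sufficient condition $\Re\lambda+k_r>p-1$ from the convergence of the Gamma integral (\ref{inttau2}) inside the proof of Theorem \ref{keythm}, so no separate boundary estimate is needed there.
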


\begin{theorem}[Theorem \ref{suqs}, \ref{suqsnontube}]
When $G=SU(q,s)$, and $(\tau,V)=(\bone^{(q)}\boxtimes\tau_\bk^{(s)},\BC\otimes V_\bk^{(s)})$ $(\bk\in\BZ_{++}^s)$, 
$\Vert\cdot\Vert_{\lambda,\tau}^2$ converges if $\Re\lambda+k_s>q+s-1$, 
the $K$-type decomposition of $\cO(D,V)_K$ is given by 
\[ \cP(\fp^+)\otimes \left(\BC\boxtimes V_\bk^{(s)}\right)
=\bigoplus_{\bm\in\BZ^s_{++}}\bigoplus_{\bn\in\bm+\mathrm{wt}(\bk)}c^\bn_{\bk,\bm}V_\bm^{(q)\vee}\boxtimes V_\bn^{(s)}, \]
and for $f\in V_\bm^{(q)\vee}\boxtimes V_\bn^{(s)}$, the ratio of norms is given by 
\[ \frac{\Vert f\Vert_{\lambda,\bone^{(q)}\boxtimes\tau_\bk^{(s)}}^2}{\Vert f\Vert_{F,\bone^{(q)}\boxtimes\tau_\bk^{(s)}}^2}
=\frac{\prod_{j=1}^s(\lambda-(j-1))_{k_j}}{\prod_{j=1}^s(\lambda-(j-1))_{n_j}}
=\frac{1}{\prod_{j=1}^s(\lambda-(j-1)+k_j)_{n_j-k_j}}. \]
\end{theorem}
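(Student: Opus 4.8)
The plan is to realize the ratio $R_{\bm,\bn}(\lambda):=\Vert\cdot\Vert_{\lambda,\tau}^2/\Vert\cdot\Vert_{F,\tau}^2$ on each $K$-type as a quotient of Gindikin $\Gamma$-factors, generalizing the Faraut--Kor\'anyi scalar computation to the present fiber. I would treat the case $q\ge s$ (Theorem \ref{suqs}) in full by the uniform method of Section 4, and reduce the case $q<s$ (Theorem \ref{suqsnontube}) to it by the embedding arguments of Section 5. First I would record the $K$-type decomposition. Here $\fp^+\simeq M_{q,s}(\BC)$, and the $(GL(q,\BC),GL(s,\BC))$-duality (Cauchy identity) gives $\cP(\fp^+)=\bigoplus_\bm V_\bm^{(q)\vee}\boxtimes V_\bm^{(s)}$, the sum over dominant $\bm$ with nonnegative entries, i.e. partitions of length at most $s$. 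Since the fiber $\BC\boxtimes V_\bk^{(s)}$ is trivial on the $U(q)$-factor, tensoring affects only the $U(s)$-side, and decomposing $V_\bm^{(s)}\otimes V_\bk^{(s)}=\bigoplus_\bn c^\bn_{\bk,\bm}V_\bn^{(s)}$ by the Littlewood--Richardson rule yields the stated decomposition, with $\bn$ ranging over $\bm+\mathrm{wt}(\bk)$.

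Both norms are $\tilde{K}$-invariant, so each restricts to a $K$-invariant Hermitian form on a fixed isotypic component $V_\bm^{(q)\vee}\boxtimes V_\bn^{(s)}$. The delicate structural point is that this component may occur with multiplicity $c^\bn_{\bk,\bm}>1$, so a priori the two forms are related only by a positive operator on the $c^\bn_{\bk,\bm}$-dimensional multiplicity space. What saves the argument is that the full $U(q)$-type $V_\bm^{(q)\vee}$ pins down $\bm$, hence every copy of this $K$-type lives in a single homogeneous degree $|\bm|$ and inside the single Howe summand $V_\bm^{(q)\vee}\boxtimes(V_\bm^{(s)}\otimes V_\bk^{(s)})$; establishing that the relating operator is in fact scalar, equivalently that the ratio is one number $R_{\bm,\bn}(\lambda)$ depending only on $\bn$ and not on $\bm$ or the multiplicity label, is part of the content and will drop out of the explicit integral.

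The main computation is then to evaluate $R_{\bm,\bn}(\lambda)$ for $\Re\lambda$ large. I would take a highest weight vector $f$ of $V_\bm^{(q)\vee}\boxtimes V_\bn^{(s)}$, realized as an explicit $V_\bk^{(s)}$-valued polynomial built from principal minors of the matrix variable, and insert it into the defining integral. The essential simplification, which is exactly the hypothesis that $V_\bk^{(s)}$ remains irreducible under the Levi subgroup $K_L$ attached to a Jordan frame, is that along the associated tube subdomain the fiber pairing $(\tau_\bk((I-ww^*)^{-1})f(w),f(w))$ factors as an explicit scalar times a power of $\det(I-ww^*)$. This collapses the vector-valued integral to a scalar Gindikin $\Gamma$-integral of Faraut--Kor\'anyi type, with value a product of shifted $\Gamma$-functions; dividing by the Fischer norm (computed identically with $\bk=0$) produces $\prod_{j=1}^s(\lambda-(j-1))_{k_j}\big/\prod_{j=1}^s(\lambda-(j-1))_{n_j}$. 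The second displayed form follows from the Pochhammer identity $(a)_{n_j}=(a)_{k_j}(a+k_j)_{n_j-k_j}$ with $a=\lambda-(j-1)$, valid since $n_j\ge k_j$ whenever $c^\bn_{\bk,\bm}\ne0$. The same expansion read off the reproducing kernel $\det(I-zw^*)^{-\lambda}\tau_\bk(I-zw^*)$ into Fischer kernels gives an independent check via $K_{\lambda,\tau}=\sum_i R_i(\lambda)^{-1}K_i$.

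Two points finish the proof. The convergence range $\Re\lambda+k_s>q+s-1$ I would read off from the region of convergence of the same Gindikin integral: with genus $p=q+s$ the scalar Wallach threshold is $\Re\lambda>p-1$, and the binding constraint at the deepest boundary stratum is governed by the smallest fiber exponent $k_s$, shifting it to $\Re\lambda>p-1-k_s$. Finally $R_{\bm,\bn}(\lambda)$ is a ratio of finite products of linear factors, hence rational in $\lambda$, which furnishes the analytic continuation past the convergence range. I expect the principal obstacle to be the fiber pairing in paragraph three: showing that $(\tau_\bk((I-ww^*)^{-1})f,f)$ genuinely reduces to a scalar Gindikin integrand that is independent of the multiplicity label and of $\bm$. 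For $q<s$ the analogous difficulty is to transport this formula through an embedding (for instance $SU(q,s)\hookrightarrow SU(s,s)$ or restriction from $SU(s,s)$), verifying that the ratio is preserved and that the altered index range of $\bm$ is correctly accounted for.
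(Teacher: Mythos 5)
For the case $q\ge s$ your plan coincides with the paper's proof of Theorem \ref{suqs}: the same Hua--Kostant--Schmid plus Littlewood--Richardson decomposition, the same observation that every copy of a given $K$-type lies inside the single summand $V_\bm^{(q)\vee}\boxtimes(V_\bm^{(s)}\otimes V_\bk^{(s)})$ (so that the ratio formula, being valid for \emph{every} irreducible copy with the same output, forces the operator relating the two forms on the multiplicity space to be scalar), and the same reduction to a quotient of Gindikin Gamma factors under the hypothesis that $V|_{K_L}$ stays irreducible (assumption (A1') of Corollary \ref{tubecor}). Two details of your sketch need repair, though they stay inside this method. First, the pointwise claim that $(\tau_\bk(I_s-w^*w)f(w),f(w))$ ``factors as a scalar times a power of $\det(I-ww^*)$'' is false for a highest weight vector; what the paper actually does is pass to the trace of the Fischer reproducing kernel of the $K$-type (equivalently, $K$-average your $f\otimes\bar{f}$), restrict to the cone, replace the kernel by a $K_L$-spherical vector, and evaluate by computing $\int_\Omega I(y)e^{-\tr(y)}dy$ in two ways; the integrand involves the generalized power functions $\Delta_\bn$, not a single power of $\det$. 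Second, you never identify the mechanism by which $\bn$ (rather than $\bm$) enters: it is assumption (A2'), verified via Theorem \ref{sph}, that every $K_L$-spherical constituent of $\mathrm{rest}(W)\otimes\overline{V}$ is isomorphic to $V_\bn^{(s)\vee}\boxtimes V_\bn^{(s)}$, whose lowest weight $-(n_1\gamma_1+\cdots+n_s\gamma_s)$ is exactly what lands in $\Gamma_\Omega(\lambda+\bn)$.

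The genuine gap is in your treatment of $q<s$. The paper does \emph{not} reduce this case to $q\ge s$ by an embedding; Theorem \ref{suqsnontube} is proved by redoing the cone integral directly on $\fp^+=M(q,s;\BC)$, splitting the variable into a tube part $x\in\Omega$ and $y\in(\fp^+_\rT)^\bot=M(q,s-q;\BC)$ and working over the full $s\times s$ cone $\tilde{\Omega}$. There are two reasons your plan falls short. (a) ``Restriction from $SU(s,s)$'' is not an embedding of representation spaces at all (holomorphic functions on the small domain do not extend canonically to the big one); the construction that does work is the seesaw embedding recorded in the paper's Remark after Theorem \ref{sostarodd1}, namely $V_\bk^{(p)\vee}\boxtimes\cP(M(q,s;\BC),V_\bk^{(s)})\hookrightarrow\cP(M(p+q,s;\BC))$, realizing the vector-valued module as an isotypic component of a scalar-type module for $U(p+q,s)$. (b) Even with the correct embedding, one only obtains the ratios up to a $\lambda$-dependent constant, fixed afterwards by normalization at the minimal $K$-type; as the paper notes there, the constant $c_\lambda$ cannot be determined this way, and --- decisive for the statement you are proving --- neither can the convergence range $\Re\lambda+k_s>q+s-1$. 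Proportionality of two invariant Hermitian forms is an algebraic fact about the $(\fg,K)$-module and gives no control over whether the integral over the small domain converges at a given $\lambda$ (the scalar integral upstairs converges only for $\Re\lambda>(p+q)+s-1$). This is precisely why the paper establishes convergence inside the direct computation, by showing $\int_{\tilde{\Omega}}\tau_\bk^{(s)}(z)\det(z)^{\lambda-(q+s)}e^{-\tr(z)}dz=\Gamma_{\tilde{\Omega}}(\lambda+\bk-q)\,I_{V_\bk^{(s)}}$ exactly when $\Re\lambda+k_s>q+s-1$. Your $q<s$ argument needs either this direct computation or some other independent convergence proof.
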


\begin{theorem}[Theorem \ref{sostareven}]
When $G=SO^*(4r)$, and $(\tau,V)=(\tau_{(k,0,\ldots,0)}^\vee,V_{(k,0,\ldots,0)}^\vee)$, 
$\Vert\cdot\Vert_{\lambda,\tau}^2$ converges if $\Re\lambda>4r-3$, 
the $K$-type decomposition of $\cO(D,V)_K$ is given by 
\[ \cP(\fp^+)\otimes V_{(k,0,\ldots,0)}^\vee
=\bigoplus_{\bm\in\BZ^r_{++}}\bigoplus_{\substack{\bk\in(\BZ_{\ge 0})^r,\;|\bk|=k\\ 0\le k_j\le m_{j-1}-m_j}} 
V_{(m_1+k_1,m_1,m_2+k_2,m_2,\ldots,m_r+k_r,m_r)}^\vee, \]
and for $f\in V_{(m_1+k_1,m_1,m_2+k_2,m_2,\ldots,m_r+k_r,m_r)}^\vee$, the ratio of norms is given by 
\[ \frac{\Vert f\Vert_{\lambda,\tau_{(k,0,\ldots,0)}^\vee}^2}{\Vert f\Vert_{F,\tau_{(k,0,\ldots,0)}^\vee}^2}
=\frac{(\lambda)_k}{\prod_{j=1}^r(\lambda-2(j-1))_{m_j+k_j}}
=\frac{1}{(\lambda+k)_{m_1+k_1-k}\prod_{j=2}^r(\lambda-2(j-1))_{m_j+k_j}}. \]

When $G=SO^*(4r)$, and $(\tau,V)=(\tau_{(k/2,\ldots,k/2,-k/2)}^\vee,V_{(k/2,\ldots,k/2,-k/2)}^\vee)$, 
$\Vert\cdot\Vert_{\lambda,\tau}^2$ converges if $\Re\lambda>4r-3$, 
the $K$-type decomposition of $\cO(D,V)_K$ is given by 
\[ \cP(\fp^+)\otimes V_{\left(\frac{k}{2},\ldots,\frac{k}{2},-\frac{k}{2}\right)}^\vee
=\bigoplus_{\bm\in\BZ^r_{++}}\bigoplus_{\substack{\bk\in(\BZ_{\ge 0})^r,\;|\bk|=k\\ 0\le k_j\le m_j-m_{j+1}}}
V_{(m_1,m_1-k_1,m_2,m_2-k_2,\ldots,m_r,m_r-k_r)+\left(\frac{k}{2},\ldots,\frac{k}{2}\right)}^\vee, \]
and for $f\in V_{(m_1,m_1-k_1,m_2,m_2-k_2,\ldots,m_r,m_r-k_r)+\left(\frac{k}{2},\ldots,\frac{k}{2}\right)}^\vee$, 
the ratio of norms is given by 
\begin{align*}
\frac{\Vert f\Vert_{\lambda,\tau_{(k/2,\ldots,k/2,-k/2)}^\vee}^2}{\Vert f\Vert_{F,\tau_{(k/2,\ldots,k/2,-k/2)}^\vee}^2}
&=\frac{\prod_{j=1}^{r-1}(\lambda-2(j-1))_k}{\prod_{j=1}^r(\lambda-2(j-1))_{m_j-k_j+k}}\\
&=\frac{1}{\prod_{j=1}^{r-1}(\lambda+k-2(j-1))_{m_j-k_j}(\lambda-2(r-1))_{m_r-k_r+k}}. 
\end{align*}
\end{theorem}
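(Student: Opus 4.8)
The plan is to treat $G=SO^*(4r)$ by the uniform tube-type method sketched in the introduction, which generalizes Faraut--Kor\'anyi \cite{FK}. Here $\fp^+$ is the Jordan triple system $\bigwedge^2\BC^{2r}=\mathrm{Alt}(2r,\BC)$ of complex antisymmetric matrices, on which $K=U(2r)$ acts naturally, with bounded domain $D=\{w\in\mathrm{Alt}(2r,\BC):I-ww^*>0\}$. This is of tube type, of rank $r$, with root multiplicity $d=4$ and genus $p=2+d(r-1)=4r-2$; accordingly the scalar weight is $\det(I-ww^*)^{\lambda-p}$ with $p=4r-2$, whose integral over $D$ converges exactly for $\Re\lambda>p-1=4r-3$. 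Since both fibers are ``small'' (built from $S^k\BC^{2r}$ and a Pfaffian power), the insertion $\tau((I-ww^*)^{-1})$ does not move the abscissa of convergence, which gives the asserted range $\Re\lambda>4r-3$ (a point to be confirmed by a short growth estimate). Following the introduction, I would fix the Fischer norm $\|\cdot\|_{F,\tau}$, decompose $\cP(\fp^+)\otimes V$ into $K$-types, and compute the ratio $R_i(\lambda)=\|\cdot\|_{\lambda,\tau}^2/\|\cdot\|_{F,\tau}^2$ on each.

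First I would establish the $K$-type decomposition. The Cauchy identity for $\bigwedge^2\BC^{2r}$ gives $\cP(\fp^+)=\bigoplus_{\bm\in\BZ^r_{++}}V_{(m_1,m_1,\ldots,m_r,m_r)}^\vee$, i.e. exactly the $U(2r)$-types whose signature has every part repeated twice. For $V=V_{(k,0,\ldots,0)}^\vee=(S^k\BC^{2r})^\vee$, tensoring and applying the Pieri rule adds a horizontal strip of size $k$ to the odd rows, producing $V_{(m_1+k_1,m_1,\ldots,m_r+k_r,m_r)}^\vee$; the interlacing condition $\mu_i\ge\nu_{i+1}\ge\mu_{i+1}$ is precisely $0\le k_j\le m_{j-1}-m_j$. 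For $V=V_{(k/2,\ldots,k/2,-k/2)}^\vee=\det^{-k/2}\otimes S^k\BC^{2r}$, the dual Pieri rule removes a horizontal strip from the even rows, subject to $0\le k_j\le m_j-m_{j+1}$, and the Pfaffian twist $\det^{-k/2}$ contributes the global shift $+(\tfrac{k}{2},\ldots,\tfrac{k}{2})$ in the signature. In either case the even-indexed entries recover $\bm$ and the differences recover $\bk$, so the decomposition is multiplicity-free; $K$-orthogonality for $\|\cdot\|_{F,\tau}$ then forces $K$-orthogonality for $\|\cdot\|_{\lambda,\tau}$, and each $R_i(\lambda)$ is well defined.

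To compute $R_i(\lambda)$ I would expand the reproducing kernel $K_{\lambda,\tau}(z,w)=\det(I-zw^*)^{-\lambda}\tau(I-zw^*)=\sum_iR_i(\lambda)^{-1}K_i(z,w)$ into its $K$-isotypic pieces. The scalar factor expands by Faraut--Kor\'anyi as $\det(I-zw^*)^{-\lambda}=\sum_{\bm}\bigl(\prod_{j=1}^r(\lambda-2(j-1))_{m_j}\bigr)\Phi_{\bm}(z,\bar w)$ in the normalized spherical polynomials $\Phi_\bm$ (here $d/2=2$). Multiplying by the fiber-valued polynomial $\tau(I-zw^*)$ shifts each spherical signature $\bm$ to the $K$-type signature found in Step 2, and I expect the coefficient of each $K$-type to read off as a ratio of generalized Pochhammer symbols, namely $R_i(\lambda)=(\lambda)_k\bigl/\prod_{j=1}^r(\lambda-2(j-1))_{m_j+k_j}$ in the first case and $\prod_{j=1}^{r-1}(\lambda-2(j-1))_k\bigl/\prod_{j=1}^r(\lambda-2(j-1))_{m_j-k_j+k}$ in the second. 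The two displayed forms of each ratio then agree by cancellation: in the first case $(\lambda)_k$ cancels the first $k$ factors of the $j=1$ denominator term $(\lambda)_{m_1+k_1}$, leaving $(\lambda+k)_{m_1+k_1-k}$; in the second case each $(\lambda-2(j-1))_k$ ($1\le j\le r-1$) cancels against $(\lambda-2(j-1))_{m_j-k_j+k}$, leaving $(\lambda+k-2(j-1))_{m_j-k_j}$.

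The main obstacle is the vector part of Step 3: proving that multiplication of $\Phi_\bm$ by the entries of $\tau(I-zw^*)$ is governed by a \emph{single-term} Pieri rule, equivalently evaluating the Gindikin-type integral $\int_\Omega e^{-\tr x}\,\det(x)^{s}\,(\text{fiber weight})\,dx$ over the symmetric cone $\Omega$ and identifying it with the quoted Pochhammer product. This is exactly where the hypothesis that $V$ restricts irreducibly to the frame subgroup $K_L\subset K$ is indispensable, since it collapses an a priori sum over intermediate $K_L$-types to one term. A secondary difficulty is the half-integral weight case: the fiber $V_{(k/2,\ldots,k/2,-k/2)}^\vee$ involves the Pfaffian $\det^{1/2}$, so one must work on the universal cover $\tilde K^\BC$ and track the $\det^{k/2}$ twist carefully, and it is this twist that produces the uniform shift $\lambda\mapsto\lambda+k$ in the second denominator. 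The remaining steps---verifying convergence for $\Re\lambda>4r-3$ and checking the index constraints---are then routine.
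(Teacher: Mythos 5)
Your plan reproduces the paper's strategy in outline: $SO^*(4r)$ is of tube type with $d=4$, $p=4r-2$; the $K$-type decomposition follows from Hua--Kostant--Schmid (Theorem \ref{HKS}) plus the Pieri rule (\cite[$\mathsection$79, Example 4]{Z}), exactly as you describe; and the ratios are ratios of generalized Pochhammer symbols. But the heart of the theorem --- the actual derivation of those ratios --- is missing: in your Step 3 you only \emph{expect} the coefficients, and in your last paragraph you name the evaluation of the cone integral as ``the main obstacle'' without overcoming it. The paper closes this gap with two concrete inputs you would still need. First, the denominator exponents are not guessed but computed: for $W=V^\vee_{(m_1+k_1,m_1,\ldots,m_r+k_r,m_r)}$ one forms $W\otimes\overline{V}$ (using $\overline{V}\simeq V$, which itself requires the conjugation analysis of Section \ref{explicitroots}, since the bar is conjugation with respect to the real form $L\simeq GL(r,\BH)$), decomposes it by the Pieri rule again, and invokes the sphericity criterion (Theorem \ref{sph}): an irreducible $GL(2r,\BC)$-module is $K_L=Sp(r)$-spherical iff its signature has every part repeated twice. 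This forces the unique spherical constituent to be $V^\vee_{(m_1+k_1,m_1+k_1,\ldots,m_r+k_r,m_r+k_r)}$, i.e. $\bn=\bm+\bk$ (and $\bn=\bm-\bk+(k,\ldots,k)$ in the half-integral case), which is precisely hypothesis (A2') of Corollary \ref{tubecor} and is what fixes the denominators $\prod_j(\lambda-2(j-1))_{m_j+k_j}$, resp. $\prod_j(\lambda-2(j-1))_{m_j-k_j+k}$; it is also the step that turns your qualitative remark ``$K_L$-irreducibility collapses the sum to one term'' into a determination of \emph{which} term.

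Second, the identity $R_W(\lambda)=(\lambda)_{\bk}/(\lambda)_{\bn}$ is exactly the content of Theorem \ref{keythm}/Corollary \ref{tubecor}, proved in Section \ref{key} via the functional equation $I(y)=I(e)\Delta_{\bn}(y)\Delta(y)^{\lambda-n_\rT/r}$ for the cone integral, Fubini, and the Gindikin Gamma function; here $\bk$ is the doubled restricted lowest weight of the fiber, namely $(k,0,\ldots,0)$ for $S^k(\BC^{2r})^\vee$ and $(k,\ldots,k,0)$ for $S^k(\BC^{2r})\otimes\det^{-k/2}$. This computation of restricted lowest weights, which produces the numerators $(\lambda)_k$ and $\prod_{j=1}^{r-1}(\lambda-2(j-1))_k$, is absent from your proposal; your attribution of the shift $\lambda\mapsto\lambda+k$ to careful tracking of the Pfaffian twist is a symptom of this --- the shift is pure Pochhammer cancellation once the numerator is known, and the numerator is dictated by the restricted lowest weight. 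Likewise the convergence range $\Re\lambda>4r-3$ needs no separate ``growth estimate'': it is the condition $\Re\lambda+k_r>p-1$ of Corollary \ref{tubecor} with $k_r=0$, where $k_r$ is the last entry of that same restricted lowest weight. So your proposal is a correct skeleton that matches the paper's route, but without (A2') and the Corollary \ref{tubecor} evaluation (or an equivalent) it does not prove the stated formulas.
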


\begin{theorem}[Theorem \ref{sostarodd1}, \ref{sostarodd2}]
When $G=SO^*(4r+2)$ and $(\tau,V)=(\tau_{(k,0,\ldots,0)}^{\vee},V_{(k,0,\ldots,0)}^{\vee})$, 
$\Vert\cdot\Vert_{\lambda,\tau}^2$ converges if $\Re\lambda>4r-1$, 
the $K$-type decomposition of $\cO(D,V)_K$ is given by 
\[ \cP(\fp^+)\otimes V_{(k,0,\ldots,0)}^{\vee}
=\bigoplus_{\bm\in\BZ^r_{++}}\bigoplus_{\substack{\bk\in(\BZ_{\ge 0})^{r+1};|\bk|=k\\ 0\le k_j\le m_{j-1}-m_j}}
V_{(m_1+k_1,m_1,m_2+k_2,m_2,\ldots,m_r+k_r,m_r,k_{r+1})}^{\vee}, \]
and for $f\in V_{(m_1+k_1,m_1,m_2+k_2,m_2,\ldots,m_r+k_r,m_r,k_{r+1})}^{\vee}$, the ratio of norms is given by 
\begin{align*}
\frac{\Vert f\Vert_{\lambda,\tau_{(k,0,\ldots,0)}^{\vee}}^2}{\Vert f\Vert_{F,\tau_{(k,0,\ldots,0)}^{\vee}}^2}
=&\frac{(\lambda)_k}{\prod_{j=1}^r(\lambda-2(j-1))_{m_j+k_j}(\lambda-2r)_{k_{r+1}}}\\
=&\frac{1}{(\lambda+k)_{m_1+k_1-k}\prod_{j=2}^r(\lambda-2(j-1))_{m_j+k_j}(\lambda-2r)_{k_{r+1}}}. 
\end{align*}

When $G=SO^*(4r+2)$ and $(\tau,V)=(\tau_{(k/2,\ldots,k/2,-k/2)}^{\vee},V_{(k/2,\ldots,k/2,-k/2)}^{\vee})$, 
$\Vert\cdot\Vert_{\lambda,\tau}^2$ converges if $\Re\lambda>4r-1$, 
the $K$-type decomposition of $\cO(D,V)_K$ is given by 
\[ \cP(\fp^+)\otimes V_{\left(\frac{k}{2},\ldots,\frac{k}{2},-\frac{k}{2}\right)}^{\vee}
=\!\!\bigoplus_{\bm\in\BZ^r_{++}}\bigoplus_{\substack{\bk\in(\BZ_{\ge 0})^{r+1};|\bk|=k\\ 0\le k_j\le m_j-m_{j+1}\\ 0\le k_r\le m_r}}
\!\!\!\!V_{(m_1,m_1-k_1,m_2,m_2-k_2,\ldots,m_r,m_r-k_r,-k_{r+1})+\left(\frac{k}{2},\ldots,\frac{k}{2}\right)}^{\vee}, \]
and for $f\in V_{(m_1,m_1-k_1,m_2,m_2-k_2,\ldots,m_r,m_r-k_r,-k_{r+1})+\left(\frac{k}{2},\ldots,\frac{k}{2}\right)}^{\vee}$, 
the ratio of norms is given by 
\begin{align*}
\frac{\Vert f\Vert_{\lambda,\tau_{(k/2,\ldots,k/2,-k/2)}^{\vee}}^2}
{\Vert f\Vert_{F,\tau_{(k/2,\ldots,k/2,-k/2)}^{\vee}}^2}
=&\frac{\prod_{j=1}^r\left(\lambda-2(j-1)\right)_k}{\prod_{j=1}^r\left(\lambda-2(j-1)\right)_{m_j-k_j+k}\left(\lambda-2r+1\right)_{k-k_{r+1}}}\\
=&\frac{1}{\prod_{j=1}^r\left(\lambda+k-2(j-1)\right)_{m_j-k_j}\left(\lambda-2r+1\right)_{k-k_{r+1}}}. 
\end{align*}
\end{theorem}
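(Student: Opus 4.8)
The plan is to treat the two statements (Theorems \ref{sostarodd1} and \ref{sostarodd2}) as the non-tube-type counterparts of the tube-type result Theorem \ref{sostareven}, and to reduce each of them to that theorem by embedding $SO^*(4r+2)$ into $SO^*(4r+4)=SO^*(4(r+1))$. Concretely, realize $\fp^+$ for $SO^*(2s)$ as the skew-symmetric matrices $\bigwedge^2\BC^s$ with $K=U(s)$, and embed $\fp^+_{2r+1}=\bigwedge^2\BC^{2r+1}$ into $\fp^+_{2r+2}=\bigwedge^2\BC^{2r+2}$ as the subspace of matrices whose last row and column vanish. This is a $U(2r+1)$-stable subspace, and $\fp^+_{2r+2}=\fp^+_{2r+1}\oplus\BC^{2r+1}$ as a $U(2r+1)$-module, the extra summand being the last column. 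The decisive elementary fact is that for $z,w\in\fp^+_{2r+1}\subset\fp^+_{2r+2}$ one has $\Det(I_{2r+2}-zw^*)=\Det(I_{2r+1}-zw^*)$, so that the scalar part of the reproducing kernel restricts \emph{with the same} $\lambda$; no reparametrization of $\lambda$ is introduced by the embedding.

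First I would establish the $K$-type decompositions. For the scalar polynomial space the classical $U(s)$-decomposition is $\cP(\bigwedge^2\BC^s)=\bigoplus_{\bm}V_{(m_1,m_1,\ldots)}$, the doubled partitions (with a trailing $0$ when $s$ is odd). Tensoring with the fiber and applying the Pieri/Littlewood--Richardson rule produces the index sets stated in the theorems, with the constraints $0\le k_j\le m_{j-1}-m_j$ (and the analogous ones in the second statement). Equivalently, one obtains the same lists by branching the $SO^*(4r+4)$-decomposition of Theorem \ref{sostareven} under $U(2r+2)\downarrow U(2r+1)$ and keeping the $U(2r+1)$-types supported on $\fp^+_{2r+1}$; these are exactly the $SO^*(4r+4)$ $K$-types with vanishing last pair index $m_{r+1}=0$.

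The heart of the argument is the computation of the ratios $R_i(\lambda)$. Because these ratios are intrinsic to the $(\fg,K)$-module structure together with the fixed Fischer norm, it suffices to expand the explicit kernel $\Det(I-zw^*)^{-\lambda}\tau(I-zw^*)$ into Fischer-reproducing kernels of the $K$-types and read off the coefficients; this is a formal identity of power series, valid irrespective of convergence. For the first fiber $(\tau,V)=(\tau_{(k,0,\ldots,0)}^\vee,V_{(k,0,\ldots,0)}^\vee)$, decompose $S^k(\BC^{2r+2})$ under $U(2r+1)$, restrict the matrix argument of $\tau$ to $\fp^+_{2r+1}$ (so that its last diagonal entry becomes $1$), and identify the $U(2r+1)$-subrepresentation carrying the target $K$-type. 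Applying Theorem \ref{sostareven} in rank $r+1$ and then specializing $m_{r+1}=0$ in its ratio
\[ \frac{(\lambda)_k}{\prod_{j=1}^{r+1}(\lambda-2(j-1))_{m_j+k_j}} \]
yields exactly the factor $(\lambda-2r)_{k_{r+1}}$ in the denominator, i.e. the formula of Theorem \ref{sostarodd1}. Since both members are rational in $\lambda$, the identity extends by analytic continuation from the half-plane $\Re\lambda>4r+1$, where the $SO^*(4r+4)$ computation is valid, to all $\lambda$. The convergence statement $\Re\lambda>4r-1$ for $SO^*(4r+2)$ is then proved directly from the explicit integral over $D$, by locating the first pole of the Gindikin Gamma factor attached to the non-tube parameter; this reproduces the scalar threshold with genus $p=4r$, consistently with \cite{FK0}.

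The hard part will be the second statement, Theorem \ref{sostarodd2}, for the half-spin-type fiber $(k/2,\ldots,k/2,-k/2)$. Writing this fiber as $S^k(\BC^{2r+1})\otimes\Det^{-k/2}$ shows it differs from the first by a determinant twist \emph{and} a contragredient, so it is not a mere $\lambda$-shift of the first case, and the naive $m_{r+1}=0$ specialization of the rank-$(r+1)$ formula
\[ \frac{\prod_{j=1}^{r}(\lambda-2(j-1))_k}{\prod_{j=1}^{r+1}(\lambda-2(j-1))_{m_j-k_j+k}} \]
produces the factor $(\lambda-2r)_{k-k_{r+1}}$ rather than the required $(\lambda-2r+1)_{k-k_{r+1}}$. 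The genuine obstacle is to account for this unit shift: it arises because the weight $-k/2$ in the last coordinate of the fiber couples to the degenerate last slot of the embedding, shifting the non-tube correction by one. I would pin this down by carrying out the restriction of $\tau$ one weight-space at a time (equivalently, by tracking the proportionality constant relating the $U(2r+2)$- and $U(2r+1)$-Fischer reproducing kernels of the relevant isotypic pieces), or, failing a clean bookkeeping, by an independent determinant-twist computation on $SO^*(4r+2)$ itself that fixes the single correction factor; the remaining $\prod_{j=1}^r(\lambda-2(j-1))$-factors are then inherited verbatim from Theorem \ref{sostareven} as in the first case.
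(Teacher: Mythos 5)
Your overall idea for Theorem \ref{sostarodd1} (embed $SO^*(4r+2)\subset SO^*(4r+4)$, exploit that the kernel restricts with the same $\lambda$, read off the ratios from the rank-$(r+1)$ result) is indeed the paper's idea, and your derivation of the $K$-type decompositions via Pieri is also the paper's. But the paper runs the embedding in the opposite direction, and that difference hides two real gaps in your version. The paper maps the \emph{small vector-valued} space into the \emph{big scalar} space, $\iota(f)\begin{pmatrix}w&v\\-{}^t\hspace{-1pt}v&0\end{pmatrix}:=f(w,v)$, so that the fiber variable $v$ becomes the extra matrix coordinate. This buys two things simultaneously: (i) the relevant big $K'$-types are doubled partitions $V^{(2r+2)\vee}_{(n_1,n_1,\ldots,n_{r+1},n_{r+1})}$, and the $GL(2r+2)\downarrow GL(2r+1)$ interlacing rule then forces each small $K$-type into \emph{exactly one} big $K'$-type; (ii) $\iota$ is a Fischer isometry, because the Gaussian integral factorizes, so no undetermined constants appear. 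In your direction you compress the rank-$(r+1)$ vector-valued kernel $\Det(I-zw^*)^{-\lambda}\tau'(I-zw^*)$ to the subdomain, and a given $U(2r+1)$-type occurs in \emph{several} $U(2r+2)$-types of $\cP(\fp^{+\prime})\otimes S^k(\BC^{2r+2})^\vee$ (already for $r=1$, $k=1$ the type $V^{(3)\vee}_{(m+1,m,0)}$ occurs in $V^{(4)\vee}_{(m+1,m,0,0)}$, $V^{(4)\vee}_{(m+2,m+1,0,0)}$ and $V^{(4)\vee}_{(m+1,m+1,1,0)}$), so the coefficient you want is a priori a sum of several big ratios; you never argue the extra terms drop out (a homogeneity-degree count plus interlacing does kill them, since only $\bm'=(\bm,0)$, $\bk'=\bk$ has matching degree, but this must be said). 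Even then, compression is not a Fischer isometry, so you obtain $R_j(\lambda)^{-1}=c\,R'_{i(j)}(\lambda)^{-1}$ with a $\lambda$-independent constant $c$ that normalization at the minimal $K$-type does not fix for the other types; you need an extra argument (e.g. the $\lambda\to\infty$ flat limit to the Fischer norm) to get $c=1$.

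For Theorem \ref{sostarodd2} the gap is fatal rather than reparable. You correctly observe that naive specialization produces $(\lambda-2r)_{k-k_{r+1}}$ instead of $(\lambda-2r+1)_{k-k_{r+1}}$, but neither of your proposed repairs can work even in principle. For $SO^*(2s)$ one has $\chi=\det^{1/2}$, so twisting the fiber by $\det^{-c}$ is literally the substitution $\lambda\mapsto\lambda+2c$: it shifts \emph{every} Pochhammer parameter by the same amount and can never shift a single factor by one. The structural obstruction is that $V^{(2r+1)\vee}_{(k/2,\ldots,k/2,-k/2)}$ restricted to $K^\BC_\rT$ splits into the $k+1$ irreducibles $V^{(2r)\vee}_{(k/2,\ldots,k/2,k/2-l)}$ with distinct restricted lowest weights, so no single shifted scalar formula (hence no specialization of Theorem \ref{sostareven}) governs all contributions. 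The paper's proof is of a genuinely different, two-sided nature: Theorem \ref{keythm} applied to this $K^\BC_\rT$-decomposition yields the ratio only up to an unknown monic numerator of degree $k_{r+1}$ (Lemma \ref{sostaroddlemma}); independently, an embedding of the \emph{smaller} group $SU(2r,1)$ into $SO^*(4r+2)$, combined with induction on the number of nonzero $m_j$ and the previously established non-tube $SU(q,s)$ result (Theorem \ref{suqsnontube}), produces a second expression for the same ratio with a different unknown monic numerator; only the comparison of the two expressions forces both unknowns to equal $1$ and produces the factor $(\lambda-2r+1)_{k-k_{r+1}}$. Your proposal contains no counterpart to either half of this squeeze, so the second theorem remains unproved under your plan.
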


\begin{theorem}[Theorem \ref{spin2n}]
When $G=Spin_0(2,n)$ and 
\[ (\tau,V)=\left\{\begin{array}{lll} 
(\chi^{-k}\boxtimes \tau_{(k,\ldots,k,\pm k)},\BC_{-k}\otimes V_{(k,\ldots,k,\pm k)})
&\left(k\in\frac{1}{2}\BZ_{\ge 0}\right)&(n:\text{even}),\\
(\chi^{-k}\boxtimes \tau_{(k,\ldots,k)},\BC_{-k}\otimes V_{(k,\ldots,k)})
&\left(k=0,\frac{1}{2}\right)&(n:\text{odd}), \end{array}\right. \]
$\Vert\cdot\Vert_{\lambda,\tau}^2$ converges if $\Re\lambda>n-1$, 
the $K$-type decomposition of $\cO(D,V)_K$ is given by 
\begin{align*}
\cP(\fp^+)\otimes V=\begin{cases}
\ds \bigoplus_{\bm\in\BZ^2_{++}}\bigoplus_{\substack{-k\le l\le k\\ m_1-m_2+l\ge k}} 
\BC_{-(m_1+m_2+k)}\boxtimes V_{(m_1-m_2+l,k,\ldots,k,\pm l)} &(n:\text{even}), \\ 
\ds \bigoplus_{\bm\in\BZ^2_{++}}\bigoplus_{\substack{-k\le l\le k\\ m_1-m_2+l\ge k}} 
\BC_{-(m_1+m_2+k)}\boxtimes V_{(m_1-m_2+l,k,\ldots,k,|l|)} &(n:\text{odd}), \end{cases}
\end{align*}
and for $f\in \BC_{-(m_1+m_2+k)}\boxtimes V_{(m_1-m_2+l,k,\ldots,k,\pm l)}$ or 
$\BC_{-(m_1+m_2+k)}\boxtimes V_{(m_1-m_2+l,k,\ldots,k,|l|)}$, the ratio of norms is given by 
\[ \frac{\Vert f\Vert_{\lambda,\tau}^2}{\Vert f\Vert_{F,\tau}^2}
=\frac{(\lambda)_{2k}}{(\lambda)_{m_1+k+l}\left(\lambda-\frac{n-2}{2}\right)_{m_2+k-l}}
=\frac{1}{(\lambda+2k)_{m_1-k+l}\left(\lambda-\frac{n-2}{2}\right)_{m_2+k-l}}. \]
\end{theorem}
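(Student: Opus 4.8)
The plan is to follow the general scheme of the introduction: realize $\fp^+$ as a Jordan triple system, decompose $\cP(\fp^+)\otimes V$ into $K$-types, check that the decomposition is multiplicity-free so that the ratio $R_i(\lambda)=\Vert f\Vert_{\lambda,\tau}^2/\Vert f\Vert_{F,\tau}^2$ is a well-defined scalar on each $K$-type, compute $R_i(\lambda)$ explicitly, and finally read off both the stated formula and the convergence range. For $G=Spin_0(2,n)$ the domain $D$ is the type IV (Lie ball) domain, so $\fp^+\cong\BC^n$ is the spin factor, a Jordan triple system of rank $2$ with structure constant $a=n-2$ and genus $p=n$; in particular it is of tube type, so the uniform computation should apply. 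The $(\tau,V)$ in the statement are exactly those for which $V$ stays irreducible when restricted to the subgroup $K_L\subset K$ attached to a Jordan frame (all $k\in\frac12\BZ_{\ge0}$ when $n$ is even, but only $k\in\{0,\frac12\}$ when $n$ is odd), and this irreducibility is the property that makes the explicit computation possible.

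First I would pin down the $K$-type decomposition. By the Hua--Kostant--Schmid formula for the type IV domain, $\cP(\fp^+)=\bigoplus_{m_1\ge m_2\ge 0}\BC_{-(m_1+m_2)}\boxtimes V_{(m_1-m_2,0,\ldots,0)}$ as a module over $Spin(2)\times Spin(n)$, where $V_{(d,0,\ldots,0)}$ is the degree-$d$ harmonic representation; this matches the classical decomposition of the degree-$(m_1+m_2)$ polynomials on $\BC^n$ into harmonic pieces of degree $m_1-m_2$. Tensoring with $V=\BC_{-k}\boxtimes V_{(k,\ldots,k,\pm k)}$ reduces everything to decomposing $V_{(d,0,\ldots,0)}\otimes V_{(k,\ldots,k,\pm k)}$ in $Spin(n)$ with $d=m_1-m_2$. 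This is a Pieri/Newell--Littlewood type computation; tracking which boxes the harmonic factor can add or remove, together with the dominance and parity conditions on $Spin(n)$ highest weights, should yield the multiplicity-free sum over $-k\le l\le k$ with $m_1-m_2+l\ge k$, giving $V_{(m_1-m_2+l,k,\ldots,k,\pm l)}$ (respectively $V_{(m_1-m_2+l,k,\ldots,k,|l|)}$ for $n$ odd). Reattaching the $Spin(2)$-character produces $\BC_{-(m_1+m_2+k)}$, and multiplicity-freeness is visible directly from this index set, which legitimizes the constant-ratio argument.

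The core of the proof, and where I expect the real difficulty, is the explicit evaluation of $R(\lambda)$. Since $D$ is of tube type and $V|_{K_L}$ is irreducible, I would generalize the Faraut--Kor\'anyi computation: either express $\Vert f\Vert_{\lambda,\tau}^2/\Vert f\Vert_{F,\tau}^2$ as a ratio of values of the Gindikin Gamma function of the Lorentz cone twisted by $\tau$, or set up a first-order recursion by letting $\fp^+$ act as multiplication by a linear factor (the vector representation $\BC_{-1}\boxtimes V_{(1,0,\ldots,0)}$) and $\fp^-$ as its $\langle\cdot,\cdot\rangle_{\lambda,\tau}$-adjoint between adjacent $K$-types. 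Normalizing $R=1$ at the lowest $K$-type $\bm=\bzero$, $l=k$ (which recovers $V$ itself, and indeed makes the formula equal $1$), the recursion in the two independent directions should produce the two Pochhammer factors $(\lambda+2k)_{m_1-k+l}$ and $\left(\lambda-\frac{n-2}{2}\right)_{m_2+k-l}$. The subtle point is that the parameter $l$ couples the two rows, so the raising operator mixes the directions and the eigenvalue of $\fp^-\fp^+$ on the $K$-type $(\bm,l)$ is not simply additive; checking that the two sign choices $\pm l$ (and the passage to $|l|$ for $n$ odd) give the same scalar is the delicate bookkeeping. Once $R(\lambda)$ is known, convergence is automatic: already on the lowest $K$-type the defining integral reduces to a beta integral over the Lorentz cone, which converges exactly for $\Re\lambda>p-1=n-1$.
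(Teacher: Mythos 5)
Your framework is the paper's framework: the rank-$2$ tube-type spin factor, the Hua--Kostant--Schmid decomposition (Theorem \ref{HKS}) followed by the $Spin(n)$ tensor decomposition of $V_{(m,0,\ldots,0)}\otimes V_{(k,\ldots,k,\pm k)}$ (the paper's Lemma \ref{tensorso}, via Stembridge's multi-minuscule rule --- your ``Pieri-type'' step), and then the tube-type generalization of Faraut--Kor\'anyi, which is exactly Corollary \ref{tubecor}. The $K$-type decomposition and the normalization $R=1$ at $\bm=\bzero$, $l=k$ are correct. Your convergence remark is stated too loosely (``automatic'' from the lowest $K$-type): in the paper, convergence on \emph{every} $K$-type is deduced from finiteness of the single $\tau$-twisted Gamma integral (\ref{inttau}), using that the reproducing-kernel factor is a polynomial, hence bounded on $\Omega\cap(e-\Omega)$; this is short but it is a needed argument, not a consequence of knowing $R(\lambda)$ as a rational function.

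The genuine gap is in the core computation. Corollary \ref{tubecor} gives the ratio as $(\lambda)_{\bk}/(\lambda)_{\bn}$, so everything hinges on attaching to each $K$-type $W=\BC_{-(m_1+m_2+k)}\boxtimes V_{(m_1-m_2+l,k,\ldots,k,\pm l)}$ the correct parameter $\bn$; this is precisely hypothesis (A2'), and your proposal never explains how $\bn=(m_1+k+l,\,m_2+k-l)$ arises. In the paper this is the actual content of the $Spin_0(2,n)$ section: one proves that the only $K_L$-spherical constituent of $W\otimes\overline{V}$ has lowest weight $-(n_1\gamma_1+n_2\gamma_2)$ with $(n_1,n_2)=(m_1+k+l,m_2+k-l)$, by showing that $\overline{V}$ (whose identification with $V_{(-k;k,\ldots,k,\mp k)}$ or $V_{(-k;k,\ldots,k,\pm k)}$ depends on whether $n\in4\BN$, Section \ref{explicitroots}) has a unique weight of the required shape, because the root vectors $x_{\varepsilon_1-\varepsilon_s}$ and $x_{\varepsilon_1+\varepsilon_s}$ commute; Theorem \ref{sph} then pins down sphericity. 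Your route (a) quotes the twisted-Gamma answer but omits this mechanism entirely, and your route (b), the $\fp^{\pm}$-recursion, is exactly where you flag an unresolved difficulty (the coupling of $l$ with both rows, non-additivity of the $\fp^-\fp^+$ eigenvalue) and you do not resolve it. So neither branch of the proposal derives the shifts $m_1+k+l$ and $m_2+k-l$: the Pochhammer factors in your conclusion are asserted by matching the statement, not obtained from the argument.
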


We also state the conjecture on $E_{6(-14)}$ in Section \ref{secte6}. From these theorems 
we can get informations on unitarizability, reducibility and composition series. 

This paper is organized as follows. In Section 2 we prepare some notations and review some facts on 
Lie algebras of Hermitian type and Jordan triple systems. 
In Section 3 we state and prove the theorems (Theorem \ref{keythm}, Corollary \ref{tubecor}) which plays a key role in this paper. 
In Section 4 and 5 we compute the norm explicitly. 
In Section 4 we deal with the cases that the norm is computable directly from the theorem in Section 3, 
and in Section 5 we deal with the cases that need more techniques. 
In Section 6 we apply the results on norm computation to the problems on unitarizabily, reducibility and composition series.

\section{Preliminaries}
\subsection{Root decomposition}\label{root}
Let $\fg=\fk\oplus\fp$ be a simple Hermitian Lie algebra, that is, 
the maximal compact part $\fk$ has a 1-dimensional center. 
We take an element $z$ from the center of $\fk$ such that the eigenvalues of $\mathrm{ad}(z)$ are 
$+\sqrt{-1}$, $0$, $-\sqrt{-1}$, and let 
\[ \fg^\BC=\fp^+\oplus\fk^\BC\oplus\fp^- \]
be the corresponding eigenspace decomposition. We denote the Cartan involution of $\fg^\BC$ 
(the anti-holomorphic extension of the Cartan involution on $\fg$) by $\vartheta$. 
Then $\fp^+$ has a Hermitian Jordan triple system structure with the product 
\[ (x,y,z)\longmapsto \{x,y,z\}:=-\frac{1}{2}[[x,\vartheta y],z], \qquad x,y,z\in\fp^+. \]
We take a maximal abelian subalgebra $\fh\subset\fk$. 
Then $\fh^\BC$ becomes simaltaneously a Cartan subalgebra of both $\fk^\BC$ and $\fg^\BC$. 
Let $\Delta=\Delta(\fg^\BC,\fh^\BC)$ be the root system. We denote by $\Delta_{\fp^\pm}$, $\Delta_{\fk^\BC}$ 
the all roots $\alpha$ such that the corresponding root space $\fg_\alpha^\BC$ is contained in $\fp^\pm$, $\fk^\BC$ respectively. 
Also, we take a positive root system $\Delta_+=\Delta_+(\fg^\BC,\fh^\BC)$ such that $\Delta_{\fp^+}\subset\Delta_+$, 
and we denote $\Delta_{\fk^\BC,+}:=\Delta_{\fk^\BC}\cap\Delta_+$. We set $n:=\dim\fp^+$, $r:=\rank_\BR\fg$. 

We take the set of strongly orthogonal roots $\{\gamma_1,\ldots,\gamma_r\}\subset\Delta_{\fp^+}$ such that 
\begin{enumerate}
\item $\gamma_1$ is the highest root in $\Delta_{\fp^+}$, 
\item $\gamma_k$ is the root in $\Delta_{\fp^+}$ which is highest among the roots strongly orthogonal to each $\gamma_j$ with $1\le j\le k-1$, 
\end{enumerate}
and for each $j$, we take $e_j\in\fg_{\gamma_j}^\BC$ such that $-[[e_j,\vartheta e_j],e_j]=2e_j$. 
Then $\fa:=\bigoplus_{j=1}^r\BR(e_j-\vartheta e_j)\subset\fp$ is a maximal abelian subalgebra in $\fp$, 
and $\{e_1,\ldots,e_r\}$ is a Jordan frame on $\fp^+$. We set $e:=\sum_{j=1}^re_j\in\fp^+$ (a maximal tripotent), 
and $h:=-[e,\vartheta e]\in\sqrt{-1}\fh$. Then $ad(h)$ has eigenvalues $2,1,0,-1,-2$. We set 
\begin{align*}
\fp^\pm_\rT&:=\{x\in\fp^\pm:[h,x]=\pm 2x\}\subset \fp^\pm,\\
\fk^\BC_\rT&:=[\fp^+_\rT,\fp^-_\rT]\subset \fk^\BC,\\
\fg^\BC_\rT&:=\fp^+_\rT\oplus\fk^\BC_\rT\oplus\fp^-_\rT,\\
\fg_\rT&:=\fg^\BC_\rT\cap\fg. 
\end{align*}
Then, $\fp^+_\rT$ becomes a complex simple Jordan algebra with the product 
\begin{equation}\label{Jordanstr}
x\cdot y:=\{ x,e,y\}=-\frac{1}{2}[[x,\vartheta e],y], 
\end{equation}
and $\fg_\rT$ becomes a Lie algebra of tube type.  

We define the Cayley transform $c:\fg^\BC\to\fg^\BC$ by $c:=Ad(e^{\frac{\pi i}{4}(e-\vartheta e)})$, 
and set ${}^c\fg:=c(\fg)$, ${}^c\fg_\rT:=c(\fg_\rT)$. Then ${}^c\fg_\rT\subset\fg^\BC_\rT$ is fixed by the involution 
$\sigma\vartheta:=Ad(e^{\frac{\pi}{2}(e+\vartheta e)})\circ\vartheta$. By direct computation we have 
\begin{gather*}
\sigma\vartheta|_{\fp^+_\rT}=\frac{1}{2}ad(e)^2\circ \vartheta:\fp^+_\rT\longrightarrow\fp^+_\rT,\\
\sigma\vartheta|_{\fk^\BC_\rT}=(\mathrm{id}_{\fk^\BC}+ad(e)ad(\vartheta e))\circ\vartheta:\fk^\BC_\rT\longrightarrow\fk^\BC_\rT,\\
\sigma\vartheta|_{\fp^-_\rT}=\frac{1}{2}ad(\vartheta e)^2\circ \vartheta:\fp^-_\rT\longrightarrow\fp^-_\rT. 
\end{gather*}
That is, $\sigma\vartheta$ preserves the grading. Therefore we denote 
\[ {}^c\fg_\rT=\fn^+\oplus\fl\oplus\fn^-\subset\fp^+_\rT\oplus\fk^\BC_\rT\oplus\fp^-_\rT=\fg^\BC_\rT. \]
Then the real form $\fn^+$ of $\fp^+_\rT$ becomes a Euclidean simple Jordan algebra. 

We set $\fa_{\fl}:=c(\fa)=\sqrt{-1}\fh\cap\fl=\bigoplus_{j=1}^r\BR h_j$, where $h_j:=-[e_j,\vartheta e_j]$. 
Then the restricted root system $\Sigma=\Sigma({}^c\fg,\fa_\fl)$ is given by 
\[ \Sigma=\begin{cases}\ds \left\{\left.\frac{1}{2}(\gamma_j-\gamma_k)\right|_{\fa_\fl}:
\!\!\begin{array}{c}1\le j, k\le r,\\ j\ne k\end{array}\!\!\right\}
\cup\left\{\left.\pm\frac{1}{2}(\gamma_j+\gamma_k)\right|_{\fa_\fl}:1\le j\le k\le r\right\}&(\fg=\fg_\rT),\\
\ds (\text{as above})\cup\left\{\left.\pm\frac{1}{2}\gamma_j\right|_{\fa_\fl}:1\le j\le r\right\}&(\fg\ne\fg_\rT). 
\end{cases} \]
We define the positive restricted roots $\Sigma_+$ by 
\[ \Sigma_+=\begin{cases}\ds \left\{\left.\frac{1}{2}(\gamma_j-\gamma_k)\right|_{\fa_\fl}:1\le j<k\le r\right\}
\cup\left\{\left.\frac{1}{2}(\gamma_j+\gamma_k)\right|_{\fa_\fl}:1\le j\le k\le r\right\}&(\fg=\fg_\rT),\\
\ds (\text{as above})\cup\left\{\left.\frac{1}{2}\gamma_j\right|_{\fa_\fl}:1\le j\le r\right\}&(\fg\ne\fg_\rT). 
\end{cases} \]
Then $\Sigma_+$ and $\Delta_+$ are compatible, that is, $\alpha\in\Delta_+$ implies 
$\alpha|_{\fa_\fl}\in\Sigma_+\cup\{0\}$. We set 
\begin{align*}
\fl_{jk}&:=\left\{X\in {}^c\fg_\rT:ad(H)X=\frac{1}{2}(\gamma_j-\gamma_k)(H)X \text{ for any }H\in\fa_\fl\right\}
&&(1\le j,k\le r,\ j\ne k),\\
\fm_\fl&:=\left\{X\in {}^c\fg^\vartheta_\rT:ad(H)X=0 \text{ for any }H\in\fa_\fl\right\},\\
\fn^\pm_{jk}&:=\left\{X\in {}^c\fg_\rT:ad(H)X=\pm\frac{1}{2}(\gamma_j+\gamma_k)(H)X \text{ for any }H\in\fa_\fl\right\}
&&(1\le j\le k\le r),\\
\fp^\pm_{jk}&:=(\fn_{jk}^\pm)^\BC &&(1\le j\le k\le r),\\
\fp^\pm_{0j}&:=\left\{X\in\fp^\pm:ad(H)X=\pm\frac{1}{2}\gamma_j(H)X \text{ for any }H\in\fa_\fl\right\}
&&(1\le j\le r),
\end{align*}
and 
\begin{gather*}
\fk_\fl:=\fl^{\vartheta}=\{X\in\fl:\vartheta X=Ad(e^{\frac{\pi}{2}(e+\vartheta e)})X=X\},\\
\fn_\fl^-:=\bigoplus_{1\le k<j\le r}\fl_{jk}.
\end{gather*}
Then we have 
\begin{gather*}
\fl=\fa_\fl\oplus\fm\oplus\bigoplus_{j\ne k}\fl_{jk}=\fk_\fl\oplus\fa_\fl\oplus\fn_\fl^-,\\
\fn^\pm=\bigoplus_{1\le j\le k\le r}\fn^\pm_{jk},\qquad \fp^\pm_\rT=\bigoplus_{1\le j\le k\le r}\fp^\pm_{jk},
\qquad \fp^\pm=\bigoplus_{\substack{0\le j\le k\le r\\ (j,k)\ne (0,0)}}\fp^\pm_{jk}.
\end{gather*}
The decomposition $\fn^+=\bigoplus_{j\le k}\fn^+_{jk}$, or $\fp^+=\bigoplus_{j\le k}\fp^+_{jk}$, 
coincides with the Peirce decomposition of the Jordan algebra $\fn^+$, or the Jordan triple system $\fp^+$, 
with respect to the Jordan frame $\{e_1,\ldots,e_r\}$. 
We set $d:=\dim_\BC\fp^+_{12}$, $b:=\dim_\BC\fp^+_{01}$, and $n_\rT:=\dim_\BC\fp^+_\rT$. 
Then $n=r+\frac{1}{2}r(r-1)d+br$ and $n_\rT=r+\frac{1}{2}r(r-1)d$ holds. 
Also we set $p:=2+(r-1)d+b$. 

Throughout this paper, let $G^\BC$ be a connected complex Lie group with Lie algebra $\fg^\BC$, 
and let $G, {}^cG_\rT, K, K^\BC, K^\BC_\rT$ be the connected Lie subgroups with Lie algebras 
$\fg, {}^c\fg_\rT, \fk, \fk^\BC, \fk^\BC_\rT$ respectively. 
Also we set $L:=K^\BC\cap {}^cG_\rT$, $K_L:=K\cap L$ (possibly non-connected, with Lie algebras $\fl,\fk_\fl$), 
let $A_L, N_L^-$ be the connected Lie subgroups of $L$ with Lie algebras $\fa_\fl, \fn_\fl^-$ respectively, 
and let $M_L$ the centralizer of $\fa_\fl$ in $K_L$. 

We write 
\begin{align*}
\bar{x}&:=\sigma\vartheta x=\frac{1}{2}ad(e)^2(\vartheta x)& &(x\in\fp^+_\rT), \\
l^*&:=-\vartheta l& &(l\in\fk^\BC), \\
{}^t\hspace{-1pt}l&:=-\sigma l=-(\mathrm{id}_{\fk^\BC}+ad(e)ad(\vartheta e))(l) & &(l\in\fk^\BC_\rT), \\
\bar{l}&:=\sigma\vartheta l=(\mathrm{id}_{\fk^\BC}+ad(e)ad(\vartheta e)) & &(l\in\fk^\BC_\rT). 
\end{align*}
Then these are (anti-)involutions on $\fp^+_\rT$, $\fk^\BC$ and $\fk^\BC_\rT$, 
which preserves $\fn^+$, $\fk$, $(\fk_\fl)^\BC$ and $\fl$ respectively. 
Also, we denote by the same symbols ${}^*$, ${}^t$ and $\bar{\phantom{l}}$ the corresponding (anti-)involutions on $K^\BC$ and $K^\BC_\rT$. 
Also, for $x\in\fp^+$ and $l\in K^\BC$ or $\fk^\BC$, we abbreviate $Ad(l)x$ or $ad(l)x$ as $lx$.

\subsection{Some operations and polynomials on Jordan algebras}
As in the previous subsection, $\fp^+$ has a Jordan triple system structure, and $\fp^+_\rT,\fn^+$ has a Jordan algebra structure. 
For $x,y\in\fp^+$, we define $x\Box y$, $B(x,y)\in\End_\BC(\fp^+)$ by, for $z\in\fp^+$, 
\begin{align*}
(x\Box y)z&:=\{x,y,z\}=-\frac{1}{2}ad([x,\vartheta y])z,\\
B(x,y)z&:=x-2\{x,y,z\}+\{x,\{y,z,y\},x\}
=\left(I_{\fp^+}+ad([x,\vartheta y])+\frac{1}{4}ad(x)^2ad(\vartheta y)^2\right)z. 
\end{align*}
These depends holomorphically on $x$, and anti-holomorphically on $y$. 
Also, for $x\in\fp^+_\rT$, we define $L(x),P(x)\in\End_\BC(\fp^+_\rT)$ by, for $y\in\fp^+_\rT$, 
\begin{align*}
L(x)y&:=xy=-\frac{1}{2}ad([x,\vartheta e])y,\\
P(x)y&:=2x(xy)-(x^2)y=\frac{1}{4}ad(x)^2ad(\vartheta e)^2y. 
\end{align*}
Then for $x,y\in\fp^+$ and $l\in K^\BC$, 
\begin{align*}
lx\Box (l^*)^{-1}y&=l(x\Box y)l^{-1},\\
B(lx,(l^*)^{-1}y)&=lB(x,y)l^{-1}
\end{align*}
holds, and for $x\in\fp^+_\rT$, $l\in K^\BC_\rT$, 
\begin{align*}
P(lx)&=lP(x){}^t\hspace{-1pt}l,\\
\left.B(x,\overline{x})\right|_{\fp^+_\rT}&=P(e-x^2) 
\end{align*}
holds. We define an inner product $(\cdot|\cdot)$ on $\fp^+$ by 
\[ (x|y):=\frac{2}{p}\Tr(x\Box y:\fp^+\to\fp^+). \]
Then for $l\in K^\BC$, $(lx|y)=(x|l^*y)$ holds. 
This inner product is proportional to the restriction of the Killing form on $\fg^\BC$ to $\fp^+\times\fp^-$, 
under the identification of $\fp^+$ and $\fp^-$ through $\vartheta$. 
Also, let $\tr(x)$, $\det(x)$ be the trace and determinant polynomials of the Jordan algebra $\fp^+_\rT$, 
and let $h(x,y)$ be the generic norm of the Jordan triple system $\fp^+$. 
Then these polynomials are expressed by 
\begin{align*}
\frac{n_\rT}{r}\tr(x)&=\Tr(L(x):\fp^+_\rT\to\fp^+_\rT),\\
(\det(x))^{2n_\rT/r}&=\Det(P(x):\fp^+_\rT\to\fp^+_\rT),\\
(h(x,y))^{p}&=\Det(B(x,y):\fp^+\to\fp^+). 
\end{align*}
$\tr(x)$ is a linear form satisfying $\tr(x)=(x|e)$, and $\det(x)$, $h(x,y)$ are polynomials of degree $r$ 
with respect to each variable. These polynomials satisfy 
\begin{align*}
\det(lx)&=\det(le)\det(x)&&(l\in K^\BC_\rT,\; x\in\fp^+_\rT),\\
h(lx,(l^*)^{-1}y)&=h(x,y)&&(l\in K^\BC,\; x,y\in\fp^+),\\
h(x,\overline{x})&=\det(e-x^2)&&(x\in \fp^+_\rT). 
\end{align*}
From now we abbreviate $B(x,x)=B(x)$, $h(x,x)=h(x)$, and $(x|x)=|x|^2$ for $x\in\fp^+$. 
Then $B(x)$ is self-adjoint on $\fp^+$, and therefore $h(x)$ is real-valued. 
Also we set 
\begin{gather*}
\Omega:=\{x^2\in\fn^+:x\in\fn^+,\ \det(x)\ne 0\},\\
D:=(\text{connected componet of }\{w\in\fp^+:h(w)>0\}\text{ which contains }0). 
\end{gather*}
Then $L$ acts on $\Omega$ by linear transformation, and $G$ acts on $D\subset\fp^+$ via Borel embedding, 
which we will review later. Moreover we have 
\[ \Omega\simeq L/K_L,\qquad D\simeq G/K. \]
For $x\in\Omega$, $P(x)$ is positive definite on $\fn^+$, and there exists a unique element 
$l\in\exp(\fl^{-\vartheta})\subset L$ such that $P(x)=Ad(l)|_{\fn^+}$. We denote such $l\in L$ by the same $P(x)$. 
Similarly, for $z,w\in D$, $B(z,w)$ is invertible on $\fp^+$, and there exists an element $l\in K^\BC$ 
such that $B(z,w)=Ad(l)|_{\fp^+}$. So we define the holomorphic map $B:D\times\overline{D}\to K^\BC$ 
(with the same symbol $B$) such that $Ad(B(z,w))|_{\fp^+}=B(z,w)$ and $B(0,0)=\mbox{\boldmath{$1$}}$. 
Clearly $P(x)$ and $B(z,w)$ are also well-defined as elements of the universal covering groups 
$\tilde{L}$, $\tilde{K}^\BC$. 

Now we recall the Peirce decomposition 
\[ \fp^+=\bigoplus_{\substack{0\le j\le k\le r\\ (j,k)\ne (0,0)}}\fp^+_{jk}. \]
We set 
\[ \fp^+_{(l)}:=\bigoplus_{1\le j\le k\le l}\fp^+_{jk} \]
for $l=1,2,\ldots,r$. Then each $\fp^+_{(l)}$ is again a unital Jordan algebra. 
For each $l$, let ${\det}_{(l)}$ be the determinant polynomial of $\fp^+_{(l)}$, $P_l:\fp^+\to\fp^+_{(l)}$ be the 
orthogonal projection, and we set 
\[ \Delta_l(x):={\det}_{(l)}(P_l(x)). \]
For $l=r$ we also write 
\[ \Delta(x)=\Delta_r(x)=\det(x). \]
Using these, for $\bs=(s_1,\ldots,s_r)\in\BC^r$, we set 
\[ \Delta_\bs(x):=\Delta_1(x)^{s_1-s_2}\Delta_2(x)^{s_2-s_3}\cdots\Delta_{r-1}(x)^{s_{r-1}-s_r}\Delta_r(x)^{s_r}. \]
If $\bm\in\BZ^r$ and $m_1\ge m_2\ge\cdots\ge m_r\ge 0$, then $\Delta_\bm$ is a polynomial of degree $m_1+\cdots+m_r$. 
We denote this condition by $\BZ^r_{++}$:
\[ \BZ^r_{++}:=\{\bm=(m_1,\ldots,m_r)\in\BZ^r:m_1\ge \cdots\ge m_r\ge 0\}. \]
For later use, we prepare another set $\BZ^r_+$:
\[ \BZ^r_+:=\{\bm=(m_1,\ldots,m_r)\in\BZ^r:m_1\ge \cdots\ge m_r\}. \]

Now for $q\in (M_LA_LN_L^-)^\BC$, since $q$ preserves each $\fp^+_{(l)}$, we have 
\[ \Delta_\bs(qx)=\Delta_\bs(qe)\Delta_\bs(x). \]
That is, for any $\bm$, $\Delta_\bm$ is a lowest weight vector with lowest weight $-m_1\gamma_1-\cdots-m_r\gamma_r$ 
under the representation 
\[ L\longrightarrow\End(\cP(\fp^+)),\qquad l\longmapsto(f(x)\longmapsto f(l^{-1}x)) \]
where $\cP(\fp^+)$ denotes the space of all holomorphic polynials on $\fp^+$. In fact, we have 
\begin{theorem}[Hua-Kostant-Schmid, {\cite[Part III, Theorem V.2.1]{FKKLR}}]\label{HKS}
\[ \cP(\fp^+)=\bigoplus_{\bm\in\BZ^r_{++}}\cP_\bm(\fp^+) \]
where $\cP_\bm(\fp^+)$ is the irreducible representation of $K^\BC$ with lowest weight 
$-m_1\gamma_1-\cdots-m_r\gamma_r$. 
\end{theorem}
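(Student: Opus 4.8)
The statement to prove is the Hua--Kostant--Schmid decomposition: $\cP(\fp^+)=\bigoplus_{\bm\in\BZ^r_{++}}\cP_\bm(\fp^+)$, where $\cP_\bm(\fp^+)$ is the irreducible $K^\BC$-representation with lowest weight $-m_1\gamma_1-\cdots-m_r\gamma_r$. My plan is to produce both the multiplicity-freeness and the complete list of lowest weights by exhibiting, for each $\bm\in\BZ^r_{++}$, an explicit lowest weight vector, and then matching dimensions via a Hilbert-series/character argument. The excerpt has already done the crucial geometric work: it has shown that for $q$ in the complexified parabolic $(M_LA_LN_L^-)^\BC$ we have $\Delta_\bm(qx)=\Delta_\bm(qe)\Delta_\bm(x)$, so that each $\Delta_\bm$ is a lowest weight vector of weight $-m_1\gamma_1-\cdots-m_r\gamma_r$ under the $L$-action, and hence (since the $\gamma_j$ are the long roots and $L\subset K^\BC$ contains the relevant torus) under the $K^\BC$-action as well.

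\textbf{Step 1: each $\cP_\bm$ occurs.}
First I would verify that $-m_1\gamma_1-\cdots-m_r\gamma_r$ is a $\Delta_{\fk^\BC,+}$-lowest weight, i.e.\ that it is anti-dominant for the compact positive system. This follows because the $\gamma_j$ are arranged in decreasing order of height and are strongly orthogonal, so the pairing of $\sum m_j\gamma_j$ with each compact simple coroot has the correct sign exactly when $m_1\ge\cdots\ge m_r\ge 0$; this is precisely the defining condition of $\BZ^r_{++}$. Thus $\Delta_\bm$ generates an irreducible $K^\BC$-submodule isomorphic to $\cP_\bm(\fp^+)$, and every $\bm\in\BZ^r_{++}$ contributes.

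\textbf{Step 2: multiplicity-freeness and exhaustion.}
The core of the argument is to show these are \emph{all} the $K^\BC$-types, each with multiplicity one. The cleanest route is to use the restriction to the maximal torus and reduce to a generating-function identity. Restricting the formal character of $\cP(\fp^+)=\bigoplus_d \cP^d(\fp^+)$ (graded by polynomial degree) to $\fh^\BC$ gives $\prod_{\alpha\in\Delta_{\fp^+}}(1-e^{-\alpha})^{-1}$, the Hilbert series of the polynomial ring on $\fp^-\cong(\fp^+)^*$. I would then match this against $\sum_{\bm\in\BZ^r_{++}}\operatorname{ch}\cP_\bm(\fp^+)$, where each $\cP_\bm$ has known lowest weight; the Weyl character formula for $K^\BC$ combined with the strong orthogonality of the $\gamma_j$ collapses the sum into the same product. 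Equivalently, and more conceptually, one observes that the restriction $\cP(\fp^+)|_L$ has each $\Delta_\bm$ as a \emph{cyclic} $L$-lowest weight vector, and that the span of the $L$-orbit of all $\Delta_\bm$ is all of $\cP(\fp^+)$ by the Poincaré--Birkhoff--Witt filtration on the nilradical; multiplicity-freeness then follows from the fact that the lowest weights $\{-\sum m_j\gamma_j:\bm\in\BZ^r_{++}\}$ are pairwise distinct, since the $\gamma_j$ are linearly independent.

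\textbf{Main obstacle.}
The genuinely nontrivial point is the \emph{exhaustion}: proving that no other $K^\BC$-type appears and that each appears with multiplicity exactly one. The existence of the vectors $\Delta_\bm$ is already handed to us by the displayed identity $\Delta_\bm(qx)=\Delta_\bm(qe)\Delta_\bm(x)$, so the forward inclusion is routine. The hard direction requires either a careful character/dimension count—where the delicate step is verifying that the alternating Weyl-group sum over $\Delta_{\fk^\BC,+}$ reproduces exactly the Jordan-theoretic product $\prod_{\Delta_{\fp^+}}(1-e^{-\alpha})^{-1}$, using that the restricted multiplicities of $\fp^+$ are governed by the single parameter $d=\dim_\BC\fp^+_{12}$—or an a priori multiplicity-freeness input. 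Since the paper has just the tools of Jordan frames and the Peirce decomposition at hand, I expect the author to invoke the branching/multiplicity-freeness of $\cP(\fp^+)$ as a spherical module for $L$ (the $\Delta_\bm$ being the spherical vectors), which sidesteps the combinatorial character identity; the result is classical and the cited reference \cite[Part III]{FKKLR} supplies exactly this, so in practice I would present Steps 1--2 as the structural skeleton and defer the detailed character bookkeeping to that source.
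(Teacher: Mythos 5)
The paper does not actually prove this statement: Theorem \ref{HKS} is imported verbatim from \cite[Part III, Theorem V.2.1]{FKKLR} as a known classical result, and the only original material surrounding it is the observation, made immediately before the statement, that each $\Delta_\bm$ ($\bm\in\BZ^r_{++}$) is a lowest weight vector of weight $-m_1\gamma_1-\cdots-m_r\gamma_r$ for the $L$-action. Your Step 1 reproduces exactly that observation, and your closing move --- deferring exhaustion and multiplicity-one to \cite{FKKLR} --- lands you in the same place as the paper, so as a description of the paper's treatment your proposal is consistent with it.

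As a standalone proof, however, there are two concrete gaps. First, in Step 1, the identity $\Delta_\bm(qx)=\Delta_\bm(qe)\Delta_\bm(x)$ for $q\in(M_LA_LN_L^-)^\BC$ only makes $\Delta_\bm$ a lowest weight vector for $L$, equivalently for $\fk^\BC_\rT$ after complexification. In the tube case this suffices, because $(\fm_\fl\oplus\fa_\fl\oplus\fn_\fl^-)^\BC$ contains a Borel subalgebra of $\fk^\BC$; but in the non-tube case $\fk^\BC_\rT\subsetneq\fk^\BC$, and one must additionally check that $\Delta_\bm$ is annihilated by the root spaces of $\fk^\BC$ lying outside $\fk^\BC_\rT$ --- your appeal to ``the relevant torus'' does not address this. (A smaller inaccuracy: compact anti-dominance of $-\sum_j m_j\gamma_j$ requires only $m_1\ge\cdots\ge m_r$; the condition $m_r\ge 0$ is what makes $\Delta_\bm$ a polynomial rather than a rational function.) Second, and more importantly, the entire content of the theorem is the exhaustion/multiplicity-one assertion of your Step 2. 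Both routes you name --- matching the character of $\cP(\fp^+)$ against $\prod_{\alpha\in\Delta_{\fp^+}}(1-e^{-\alpha})^{-1}$, or sphericality of $\cP(\fp^+)$ as an $L$-module with the $\Delta_\bm$ as spherical/conical vectors --- are genuine theorems requiring substantial work (the latter needs Cartan--Helgason-type results for the symmetric cone $\Omega\simeq L/K_L$), and neither is carried out. So what you have is an accurate roadmap rather than a proof; that is acceptable here only because the paper itself treats the statement purely as a citation.
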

We quote another theorem here. 
\begin{theorem}[{\cite[Theorem XII.2.2]{FK}}]\label{sph}
The irreducible representation $V$ of $L$ has a $K_L$-fixed vector if and only if the lowest weight $-\lambda$ 
is of the form $-\lambda=-m_1\gamma_1-\cdots-m_r\gamma_r$ with $(m_1,\ldots,m_r)\in\BZ_+^r$. 
\end{theorem}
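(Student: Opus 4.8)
The plan is to recognize this as the Cartan--Helgason theorem for the reductive Riemannian symmetric pair $(L,K_L)$, for which $\Omega\simeq L/K_L$, and to prove it by combining the Iwasawa-type decomposition $\fl=\fk_\fl\oplus\fa_\fl\oplus\fn_\fl^-$ with the explicit lowest weight vectors $\Delta_\bm$. The organizing observation is that the restricted roots $\Sigma({}^c\fg,\fa_\fl)$ that actually occur inside $\fl$ are precisely the $\frac12(\gamma_j-\gamma_k)|_{\fa_\fl}$ with $j\ne k$ (the spaces $\fl_{jk}$), forming a system of type $A_{r-1}$, so that all dominance conditions are read off along these roots, while the remaining direction $\gamma_1+\cdots+\gamma_r$ is the flat (dilation) direction of the cone. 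First I would fix a Cartan subalgebra of $\fl$ inside $(\fm_\fl\cap\fh^\BC)\oplus\fa_\fl$ and normalize so that the lowest weight of $V$ is $-\lambda$.

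For the necessity direction, suppose $0\ne v_0\in V^{K_L}$. Since $K_L\supset M_L$ and $M_L$ centralizes $\fa_\fl$, the $M_L$-invariance of $v_0$ together with the $\fa_\fl$-weight decomposition of $V$ forces $\lambda$ to vanish on $\fm_\fl\cap\fh^\BC$; hence $\lambda$ lies in the span of the $\gamma_j|_{\fa_\fl}$ and we may write $\lambda=\sum_j m_j\gamma_j$. Applying the $\mathfrak{sl}_2$-triple attached to each restricted root $\frac12(\gamma_j-\gamma_k)$ with $j<k$ to the $K_L$-fixed vector, and using that the resulting highest-weight multiplicities must be nonnegative integers, yields $m_j-m_k\in\BZ_{\ge 0}$, i.e. $m_1\ge\cdots\ge m_r$ with integral differences. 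That each $m_j$ is itself an integer (not merely the differences) comes from the global structure of $L$ rather than from $\fl$ alone: the center of $L$ acts on $\fp^+_\rT$ by scalars and hence on $\cP(\fp^+)$ by powers of the degree-$r$ polynomial $\det$, so every weight of $L$ must be integral along this direction, forcing $\bm\in\BZ^r_+$.

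For the sufficiency direction, given $\bm\in\BZ^r_+$ I would produce both the module and its fixed vector explicitly. When $\bm\in\BZ^r_{++}$, the polynomial $\Delta_\bm\in\cP(\fp^+)$ is, by the relative invariance $\Delta_\bs(qx)=\Delta_\bs(qe)\Delta_\bs(x)$ for $q\in(M_LA_LN_L^-)^\BC$ recorded just above, a lowest weight vector of weight $-\sum_j m_j\gamma_j$; the irreducible $L$-submodule $V$ it generates therefore has the prescribed lowest weight, and the average $\Phi_\bm(x):=\int_{K_L}\Delta_\bm(k^{-1}x)\,dk$ is a $K_L$-fixed vector in $V$, nonzero because $\Delta_\bm(e)=1$ while $K_L$ fixes $e$, so $\Phi_\bm(e)=1$. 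For general $\bm\in\BZ^r_+$, including negative entries, I would reduce to this case by tensoring with a suitable power of the one-dimensional character $l\mapsto\det(le)$ of $L$: this character is trivial on $K_L$ (as $\det(ke)=\det(e)=1$), shifts all the $m_j$ simultaneously by an integer into $\BZ^r_{++}$, and preserves both the lowest-weight form and the $K_L$-fixed vector.

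The main obstacle is the necessity direction, specifically the two dominance/integrality extractions: establishing $\fm_\fl$-triviality of $\lambda$ when $K_L$ may be disconnected, so that one must genuinely invoke $M_L$ rather than only the identity component; and separating the integrality of each individual $m_j$ from that of the differences $m_j-m_k$, which is not a statement about $\fl$ but requires the global action of $L$ on the Jordan algebra through $\det$. Uniqueness of the $K_L$-fixed line, though not asserted, falls out of the same weight analysis and is worth recording for the norm computations to come.
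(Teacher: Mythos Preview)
The paper does not prove this statement; it is quoted from Faraut--Kor\'anyi as a known result (their Theorem~XII.2.2) and used as a black box, so there is no proof in the paper to compare against.  Your identification of the statement as the Cartan--Helgason theorem for the Riemannian symmetric pair $(L,K_L)$ is correct, and your sufficiency direction---realizing the module with lowest weight $-\sum m_j\gamma_j$ inside $\cP(\fp^+)$ via the explicit lowest weight vector $\Delta_\bm$, producing the $K_L$-fixed vector by averaging, and passing from $\BZ^r_{++}$ to $\BZ^r_+$ by a $\det$-twist---is clean and correct.

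There is, however, a gap in your necessity argument at the integrality step.  The restricted root system of $(\fl,\fa_\fl)$ is of type $A_{r-1}$, so the $\mathfrak{sl}_2$ argument along the roots $\tfrac12(\gamma_j-\gamma_k)$ yields only $m_j-m_k\in\BZ_{\ge 0}$.  Your appeal to the center of $L$ adds at most one more scalar condition---essentially that $m_1+\cdots+m_r\in\BZ$---and these together force only $m_j\in\tfrac{1}{r}\BZ$, not $m_j\in\BZ$.  What actually supplies the missing integrality is the action of $M_L$ as a \emph{group} on the lowest weight line: via Iwasawa one has $\langle v_\lambda,v_0\rangle\neq 0$, hence $M_L$ acts trivially on $v_\lambda$, and it is the (possibly disconnected) group $M_L$---not the connected center of $L$---whose triviality on $v_\lambda$ pins down each $m_j$ individually.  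You invoke $M_L$ to obtain triviality of $\lambda$ on $\fm_\fl\cap\fh^\BC$, but then drop it in favor of the center precisely at the point where its group structure is needed.  Alternatively, if one restricts attention from the outset to holomorphic (equivalently, rational) representations of $K_\rT^\BC$ restricted to $L$, the integrality of each $m_j$ is automatic from the weight lattice of $K_\rT^\BC$; but that hypothesis should be stated explicitly rather than smuggled in through the center argument.
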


For $l=0,1,\ldots,r$ we set 
\begin{equation}\label{orbit}
\cO_l:=Ad(K^\BC)(e_1+\cdots+e_l)\subset \fp^+. 
\end{equation}
Then $K^\BC$ acts on each $\cO_l$ transitively, and we have the orbit decomposition 
\[ \fp^+=\cO_0\cup\cO_1\cup\cdots\cup\cO_r. \]
For each orbit $\cO_l$, its closure $\overline{\cO_l}$ is given by 
\[ \overline{\cO_l}=\cO_0\cup\cO_1\cup\cdots\cup\cO_l. \]
Also, since the polynomial $\Delta_{l+1}(x)$ vanishes on $\overline{\cO_l}$, 
the polynomial space on $\overline{\cO_l}$ decomposes under $K^\BC$ as 
\begin{equation}\label{orbitpoly}
\cP(\overline{\cO_l})=\bigoplus_{\substack{\bm\in\BZ^r_{++}\\ m_{l+1}=m_{l+2}=\cdots=0}}\cP_\bm(\fp^+). 
\end{equation}
Each orbit $\cO_l$ has the dimension 
\begin{equation}\label{orbitdim}
\dim_\BC\cO_l=l+\frac{1}{2}l(2r-l-1)d+lb 
\end{equation}
since the tangent space of $\cO_l$ at $e_1+\cdots+e_l$ is given by 
\[ T_{e_1+\cdots+e_l}\cO_l=\bigoplus_{\substack{0\le j\le k\le r\\ j\le l,\,(j,k)\ne (0,0)}}\fp^+_{jk}. \]

Now we recall the generalized Gamma function, which was introduced by Gindikin \cite{G}. 
For $\bs\in\BC^n$ this is defined as 
\[ \Gamma_\Omega(\bs):=\int_{\Omega}e^{-\tr(x)}\Delta_\bs(x)\Delta(x)^{-\frac{n_\rT}{r}}dx. \]
This integral converges if $\Re s_j>(j-1)\frac{d}{2}$, and we have the following equality 
\[ \Gamma_\Omega(\bs)=(2\pi)^{\frac{n_\rT-r}{2}}\prod_{j=1}^r\Gamma\left(s_j-(j-1)\frac{d}{2}\right) \]
(\cite[Corollary VII.1.3]{FK}), and this is meromorphically extended on $\BC^n$. Also we denote 
\[ (\bs)_{\bm}:=\frac{\Gamma_\Omega(\bs+\bm)}{\Gamma_\Omega(\bs)}
=\prod_{j=1}^r\left(s_j-(j-1)\frac{d}{2}\right)_{m_j}. \]
For $\bs=(\lambda,\ldots,\lambda)$, we abbreviate $(\lambda,\ldots,\lambda)=:\lambda$. For example, we denote 
\[ \Gamma_\Omega((\lambda,\ldots,\lambda))=\Gamma_\Omega(\lambda),\qquad 
((\lambda,\ldots,\lambda))_\bm=\frac{\Gamma_\Omega(\lambda+\bm)}{\Gamma_\Omega(\lambda)}=(\lambda)_\bm. \]

\section{Norm computation: General theory}
\subsection{Holomorphic discrete series representation}\label{HDS}
In this subsection we recall the explicit realization of the holomorphic series representation 
of the universal covering group $\tilde{G}$. First we recall the Borel embedding. 
\[ \xymatrix{ G/K \ar[r] \ar@{-->}[d]^{\mbox{\rotatebox{90}{$\sim$}}} & G^\BC/K^\BC P^- \\
 D \ar@{^{(}->}[r] & \fp^+ \ar[u]_{\exp} } \]
We consider maps $\pi^+:G\times D\to D\subset\fp^+$, $\kappa:G\times D\to K^\BC$, 
$\pi^-:G\times D\to\fp^-$ such that 
\[ g\exp(w)=\exp(\pi^+(g,w))\kappa(g,w)\exp(\pi^-(g,w))\qquad(g\in G,w\in D). \]
Then $\pi^+$ gives the action of $G$ on $D$, so we abbreviate $\pi^+(g,w)=:gw$. 
On $K\subset G$ this coincides with the adjoint action. 
Also, $\kappa$ satisfies the cocycle condition 
\[ \kappa(gh,w)=\kappa(g,hw)\kappa(h,w)\qquad(g,h\in G,\; w\in D), \]
and for $k\in K$, $\kappa(k,w)=k$ holds. 
$Ad(\kappa(g,w))|_{\fp^+}\in\mathrm{End}(\fp^+)$ coincides with the tangent map of $w\mapsto gw=\pi^+(g,w)$ at $w\in\fp^+$. 
We naturally lift $\kappa$ to the universal covering group, 
and we denote this map by the same symbol $\kappa:\tilde{G}\times D\to \tilde{K}^\BC$. 

Let $(\tau,V)$ be a finite dimensional irreducible complex representation of $K^\BC$, 
and we fix a $K$-invariant inner product $(\cdot,\cdot)_\tau$ on $V$. 
Also, let $\chi^{\lambda}$ be the character of $\tilde{K}^\BC$ such that $\chi(k)^\lambda=\Det(Ad(k)|_{\fp^+})^{\lambda/p}$. 
We consider the space of holomorphic sections 
\[ \Gamma_\cO(G/K, \tilde{G}\times_{\tilde{K}}(V\otimes\chi^{-\lambda})). \]
Then since $G/K\simeq D$ is contractible, this is isomorphic to $\cO(D,V)$, 
the space of $V$-valued holomorphic functions. Under this identification, 
the natural action $\tau_\lambda$ of $\tilde{G}$ on $\cO(D,V)$ is written as 
\[ \tau_\lambda(g)f(w)=\chi(\kappa(g^{-1},w))^{\lambda}\tau(\kappa(g^{-1},w))^{-1}f(g^{-1}w)
\qquad(g\in\tilde{G}, w\in D, f\in\cO(D,V)). \]
Its differential representation is given by, for $u+l-\vartheta v\in\fp^+\oplus\fk^\BC\oplus\fp^-=\fg^\BC$, 
\begin{align*}
d\tau_\lambda(u+l-\vartheta v)f(w)=&-\lambda d\chi(l+[w,\vartheta v])f(w)+d\tau(l+[w,\vartheta v])f(w) \\
&+\left.\frac{d}{dt}\right|_{t=0}f\left(w-t\left(u+ad(l)w-\frac{1}{2}ad(w)^2\vartheta v\right)\right). 
\end{align*}
Then since $\kappa(g,w)B(w)\kappa(g,w)^*=B(gw)$ holds for any $g\in\tilde{G}$, $w\in D$ (see \cite[Lemma 2.11]{L}), 
this action preserves the following \textit{weighted Bergman inner product} 
\begin{equation}\label{Bergmaninner}
\langle f,g\rangle_{\lambda,\tau}
:=\frac{c_\lambda}{\pi^n}\int_D\left(\tau(B(w)^{-1})f(w),g(w)\vphantom{w^2}\right)_\tau h(w)^{\lambda-p}dw 
\qquad (f,g\in\cO(D,V)), 
\end{equation}
where $c_\lambda$ is a constant defined such that $\Vert v\Vert_{\lambda,\tau}=|v|_\tau$ holds for any constant functions 
$z\mapsto v\in V$ (i.e. for any element of the minimal $K$-type). 
Let $\cH_\lambda(D,V)\subset\cO(D,V)$ be the unitary subrepresentation of $\tilde{G}$ under $\tau_\lambda$. 
Then $\cH_\lambda(D,V)$ is non-zero if $\lambda\in\BR$ is sufficiently large so that the above inner product converges. 
On the other hand, we cannot know a priori whether $\cH_\lambda(D,V)$ is zero or non-zero if $\lambda$ is small. 
In any case, if $\cH_\lambda(D,V)$ is non-zero, the reproducing kernel is proportional to $K_{\Re \lambda,\tau}(z,w)$, where 
\[ K_{\lambda,\tau}(z,w):=h(z,w)^{-\lambda}\tau(B(z,w))\in\cO(D\times\overline{D},\operatorname{End}(V)). \]
This is because the reproducing kernel $K(z,w)$ is characterized by 
\[ \chi(\kappa(g,z))^{\lambda}\tau(\kappa(g,z))^{-1}K(gz,gw)\tau(\kappa(g,w))^{*-1}\overline{\chi(\kappa(g,w))^{\lambda}}=K(z,w), \]
and such $K(z,w)$ is unique up to constant multiple, 
since $\tilde{G}$ acts transitively on the totally real submanifold $\mathrm{diag}(D)\subset D\times\overline{D}$, 
which allows the value at origin $K(0,0)$ to determine the whole $K(z,w)$, 
and $K(0,0)\in\mathrm{End}(V)$ is proportional to identity since this commutes with $\tilde{K}$-action. 
When $\lambda\in\BR$ is sufficiently large, then the reproducing kernel corresponding to the inner product (\ref{Bergmaninner}) 
is precisely $K_{\lambda,\tau}(z,w)$ by the normalization assumption.

\subsection{Key theorem}\label{key}
The norm $\Vert\cdot\Vert_{\lambda,\tau}$ in the previous subsection is $\tilde{G}$-invariant, 
and therefore $\tilde{K}$-invariant. From now on we observe how the norm varies as the parameter $\lambda$ varies 
on each $K$-type. In order to compare, we consider another $K$-invariant norm which is independent of $\lambda$. 

We recall the \textit{Fischer inner product} $\langle\cdot,\cdot\rangle_{F,\tau}$ on $\cP(\fp^+,V)$, 
the space of $V$-valued holomorphic polynomials on $\fp^+$. 
\begin{equation}\label{Fischer}
\langle f,g\rangle_{F,\tau}:=\frac{1}{\pi^n}\int_{\fp^+}(f(w),g(w))_\tau e^{-|w|^2}dw 
\qquad(f,g\in\cP(\fp^+,V)). 
\end{equation}
This inner product is invariant under the following representation $(\hat{\tau},\cP(\fp^+,V))$: 
\[ \left(\hat{\tau}(k)f\right)(w):=\tau(k)f(k^{-1}w)\qquad(k\in K^\BC,\; f\in\cP(\fp^+,V),\; w\in\fp^+), \]
that is, $\langle \hat{\tau}(k)f,g\rangle_{F,\tau}=\langle f,\hat{\tau}(k^*)g\rangle_{F,\tau}$ holds. 
Let $W\subset\cP(\fp^+,V)=\cO(D,V)_K$ be a $K^\BC$-irreducible subspace. Then since both $\Vert\cdot\Vert_{F,\tau}$ and 
$\Vert\cdot\Vert_{\lambda,\tau}$ are $K$-invariant, the ratio of these two norms are constant on $W$. 
Therefore we aim to compute this ratio of two norms. 

In order to state the key theorem, we prepare some notations. Let 
\[ (\tau,V)|_{K^\BC_\rT}=\bigoplus_i(\tau_i,V_i)\]
be the decomposition of the $K^\BC$-module $(\tau,V)$ 
into $K^\BC_\rT$-irreducible submodules, 
and for each $i$ we denote by $(\bar{\tau}_i,\overline{V_i})$ the complex conjugate representation of $V_i$ 
with respect to the real form $L\subset K^\BC_\rT$, that is, there exists a conjugate linear isomorphism $\bar{\cdot}:V_i\to\overline{V_i}$, 
and $\bar{\tau}_i$ is given by $\bar{\tau}_i(l)\bar{v}=\overline{\tau_i(\bar{l})v}$. Let 
\[ \mathrm{rest}:\cP(\fp^+,V)\to\cP(\fp^+_\rT,V)=\bigoplus_i\cP(\fp^+_\rT,V_i) \]
be the restriction map, and for each $i$ we take $K^\BC_\rT$-submodules $W_{ij}\subset\cP(\fp^+_\rT,V_i)$ such that 
\[ \mathrm{rest}(W)\subset\bigoplus_i\bigoplus_j W_{ij} \]
holds. 
\begin{theorem}\label{keythm}
Let $(\tau,V)|_{K^\BC_\rT}=\bigoplus_i(\tau_i,V_i)$, and suppose each $(\tau_i,V_i)$ has a restricted lowest weight 
$\left.-\left(\frac{k_{i,1}}{2}\gamma_1+\cdots+\frac{k_{i,r}}{2}\gamma_r\right)\right|_{\fa_\fl}$. 
Let $W\subset\cP(\fp^+,V)$ be a $K^\BC$-irreducible subspace, with 
$\mathrm{rest}(W)\subset\bigoplus_i\bigoplus_j W_{ij}\subset\bigoplus_i\cP(\fp^+_\rT,V_i)$ as above. We assume 
\begin{itemize}
\item[(A1)] $(\tau_i,V_i)|_{K_L}$ still remains irreducible for each $i$. 
\item[(A2)] For each $i,j$, all the $K_L$-spherical irreducible subspaces 
in $W_{ij}\otimes\overline{V_i}$ have the same lowest weight $-\left(n_{ij,1}\gamma_1+\cdots+n_{ij,r}\gamma_r\right)$. 
\end{itemize}
Then the integral $\Vert f\Vert_{\lambda,\tau}^2$ converges for any $f\in W$ if $\Re(\lambda)+k_{i,r}>p-1$ for all $i$. 
Moreover, there exist non-negative integers $a_{ij}$ such that, for any $f\in W$, 
\[ \frac{\Vert f\Vert_{\lambda,\tau}^2}{\Vert f\Vert_{F,\tau}^2}=\frac{c_\lambda}{\sum_{ij}a_{ij}}
\sum_{ij}a_{ij}\frac{\Gamma_\Omega\left(\lambda+\bk_i-\frac{n}{r}\right)}{\Gamma_\Omega(\lambda+\bn_{ij})}, \]
where 
\[ c_\lambda^{-1}=\frac{1}{\dim V}\sum_i(\dim V_i)
\frac{\Gamma_\Omega\left(\lambda+\bk_i-\frac{n}{r}\right)}{\Gamma_\Omega(\lambda+\bk_i)}. \]
\end{theorem}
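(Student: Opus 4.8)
The plan is to reduce the computation on the full domain $D$ to a computation on the tube-type subalgebra $\fp^+_\rT$, where Gindikin's integral and the generalized Gamma function $\Gamma_\Omega$ appear, and to control the vector-valued bookkeeping through (A1) and (A2). Since both $\Vert\cdot\Vert_{\lambda,\tau}$ and $\Vert\cdot\Vert_{F,\tau}$ are $K$-invariant and $W$ is $K^\BC$-irreducible, the ratio $R_W(\lambda):=\Vert f\Vert_{\lambda,\tau}^2/\Vert f\Vert_{F,\tau}^2$ is constant on $W$, so I would extract it by a trace computation: if $\{e_\alpha\}$ is a $\langle\cdot,\cdot\rangle_{F,\tau}$-orthonormal basis of $W$, then $\sum_\alpha\langle e_\alpha,e_\alpha\rangle_{\lambda,\tau}=(\dim W)R_W(\lambda)$, and the integrand $\sum_\alpha(\tau(B(w)^{-1})e_\alpha(w),e_\alpha(w))_\tau$ is the diagonal value of the Fischer reproducing kernel of $W$ twisted by $\tau(B(w)^{-1})$. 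Equivalently, $R_W(\lambda)^{-1}$ is the coefficient of the $\mathrm{End}(V)$-valued Fischer kernel of $W$ in the expansion of the Bergman kernel $K_{\lambda,\tau}(z,w)=h(z,w)^{-\lambda}\tau(B(z,w))$; this is the vector-valued analogue of the Faraut--Kor\'anyi expansion $h(z,w)^{-\lambda}=\sum_{\bm}(\lambda)_\bm K_\bm(z,w)$ that already settles the scalar case. The constant $c_\lambda$ is the special case $W=V$ (the minimal $K$-type, i.e.\ the constant functions), so imposing $R_V(\lambda)=1$ there forces the stated formula for $c_\lambda$.

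Next I would pass to the tube via the restriction map $\mathrm{rest}\colon\cP(\fp^+,V)\to\bigoplus_i\cP(\fp^+_\rT,V_i)$. Using the polar (spectral) integration formula on $D$ adapted to the Jordan frame $\{e_1,\ldots,e_r\}$---every $K$-orbit in $D$ meets the real diagonal $\sum_j t_j e_j$ with $0\le t_j<1$---the trace integral reduces to a radial one, since the integrand, built from the $K$-equivariant Fischer kernel of $W$ and the equivariant $\tau(B(w)^{-1})$, transforms covariantly under $K$. On $\fp^+_\rT$ one has $B(w)|_{\fp^+_\rT}=P(e-w^2)$ and $h(w)=\det(e-w^2)$ by the formulas of Section 2, so the radial integrand becomes a genuine tube-type object. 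Here (A1) is decisive: since each $(\tau_i,V_i)$ stays irreducible under $K_L$, the $K_L$-average (equivalently the $M_L$-average) of the matrix coefficient $w\mapsto(\tau_i(B(w)^{-1})\xi,\eta)_\tau$, $\xi,\eta\in V_i$, collapses by Schur's lemma to a scalar multiple of the $K_L$-invariant pairing on $V_i$. This is what decouples the vector-valued integral into a sum of scalar tube-type integrals indexed by the constituents $V_i$ and the pieces $W_{ij}$ of $\mathrm{rest}(W)$.

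Each resulting scalar integral is, after a Cayley change of variables carrying $D\cap\fp^+_\rT$ to a Siegel-type domain and a Laplace transform, an integral over the symmetric cone $\Omega\simeq L/K_L$ of a $\Delta_\bs$-function against $e^{-\tr}$, hence evaluated by $\Gamma_\Omega(\bs)=(2\pi)^{(n_\rT-r)/2}\prod_{j=1}^r\Gamma(s_j-(j-1)\tfrac d2)$. This produces $\Gamma_\Omega(\lambda+\bk_i-\tfrac nr)$ from the minimal $K$-type data of $V_i$ and $\Gamma_\Omega(\lambda+\bn_{ij})$ from the degree of $W_{ij}$. Assumption (A2) fixes the weight $\bn_{ij}$: the $K_L$-spherical constituents of $W_{ij}\otimes\overline{V_i}$ are exactly those surviving the $K_L$-integration, and by Theorem \ref{sph} their lowest weights are $-(n_{ij,1}\gamma_1+\cdots+n_{ij,r}\gamma_r)$ with $\bn_{ij}\in\BZ^r_+$; the integers $a_{ij}$ are their multiplicities, so the trace becomes the advertised weighted sum $\sum_{ij}a_{ij}\Gamma_\Omega(\lambda+\bk_i-\tfrac nr)/\Gamma_\Omega(\lambda+\bn_{ij})$. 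The $W=V$ case gives $\bn_{ij}=\bk_i$ and $a_{ij}=\dim V_i$, recovering $c_\lambda$, and the convergence range is read off from Gindikin's integral: the binding constraint $\Re(\lambda+k_{i,r}-\tfrac nr)>(r-1)\tfrac d2$ is, using $\tfrac nr+(r-1)\tfrac d2=p-1$, precisely $\Re\lambda+k_{i,r}>p-1$.

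I expect the main obstacle to be the $K_L$-branching bookkeeping rather than any single integral. Concretely one must check that distinct pieces $W_{ij}$ contribute orthogonally with no cross terms, that under (A1)--(A2) each $W_{ij}\otimes\overline{V_i}$ carries a single $K_L$-spherical weight (so the answer is a clean sum and not a double sum over spherical types), and one must identify the multiplicities $a_{ij}$. A secondary but delicate point is making the spectral integration formula and its Jacobian precise enough to transport the $\tau$-matrix coefficients, and tracking the $(2\pi)$- and dimension-factors so that $c_\lambda$ emerges exactly in the stated form.
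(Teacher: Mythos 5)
Your proposal is correct in its architecture and, for most of its length, coincides with the paper's own proof: you express the ratio $R_W(\lambda)$ as a quotient of trace integrals of Fischer reproducing kernels, restrict to $\fp^+_\rT$, use (A1) together with Schur's lemma to reduce the operator-valued integral $\int_\Omega e^{-\tr(z)}\tau_i(P(z)^{-1})\Delta(z)^{\lambda-p}dz$ to the scalar $\Gamma_\Omega\left(\lambda+\bk_i-\tfrac{n}{r}\right)$, use (A2) to pin down the homogeneity degree $\bn_{ij}$, normalize at $W=V$ to obtain $c_\lambda$, and your convergence bookkeeping (the identity $\tfrac{n}{r}+(r-1)\tfrac{d}{2}=p-1$) is exactly the paper's. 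The genuine divergence is in how the central integral is evaluated. The paper never leaves the bounded model: it replaces the $K_L$-invariant kernel $K_{W_{ij}}(\cdot,e)$ by an $A_LN_L^-$-eigenvector $F_{ij}$ having the same $K_L$-average, proves the homogeneity $I(y)=B_{ij}(\lambda)\Delta_{\lambda+\bn_{ij}}(y)\Delta(y)^{-n_\rT/r}$ of the beta-type integral over $\Omega\cap(y-\Omega)$, and then computes $\int_\Omega I(y)e^{-\tr(y)}dy$ in two ways by Fubini; this simultaneously produces the ratio $\Gamma_\Omega\left(\lambda+\bk_i-\tfrac{n}{r}\right)/\Gamma_\Omega(\lambda+\bn_{ij})$, cancels the unknown Fischer-side constants $\Gamma_{ij}$, and furnishes the convergence proof. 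You instead propose a Cayley transform to the tube followed by a Laplace transform, which is essentially the Vergne--Rossi/Clerc/Hilgert--Neeb route; the paper's remark after the proof explicitly identifies your $\Gamma'$-type integral with the Gamma functions of those works, so this route is viable under (A1), but it costs you a vector-valued Paley--Wiener/Plancherel statement on the tube plus the Jacobian and intertwining bookkeeping for each $W_{ij}$ and for the Fischer normalization, which the paper's self-contained Fubini argument avoids. Two cautions: Schur's lemma alone does not decouple the product $\tau_i(P(e-x)^{-1})K_{W_{ij}}(x,e)$ inside the trace, so the separation step (the paper's $F_{ij}$-plus-convolution device, or your Laplace transform) cannot be elided; and (A2) is a hypothesis, not something to be verified, so your worry about a double sum over spherical types is excluded by assumption --- what genuinely remains undetermined, in both approaches, is only the values of the non-negative weights $a_{ij}$, which the theorem does not claim to compute.
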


In the rest of this section we prove this theorem. 
We set $\Vert f\Vert_{\lambda,\tau}^2/\Vert f\Vert_{F,\tau}^2=:R_W(\lambda)$ for $f\in W$, 
and compute this ratio $R_W(\lambda)$. 

Let $K_W(z,w)\in\cP(\fp^+\times\overline{\fp^+},\mathrm{End}(V))$ be the reproducing kernel of $W$ with respect to 
$\langle\cdot,\cdot\rangle_{F,\tau}$, that is, for an orthonormal basis $\{f_i\}$ of $W$ with respect to $\langle\cdot,\cdot\rangle_{F,\tau}$, 
\[ K_W(z,w)v:=\sum_i(v,f_i(w))_\tau f_i(z) \qquad (v\in V), \]
which does not depend on the choice of $\{f_i\}$. Then the ratio $R_W(\lambda)$ is computed as 
\begin{align*}
R_W(\lambda)&=\frac{\ds c_\lambda\sum_i\int_D\left(\tau(B(w)^{-1})f_i,f_i\vphantom{w^2}\right)_{\tau}h(w)^{\lambda-p}dw}
{\ds \sum_i\int_{\fp^+}\left(f_i,f_i\right)_{\tau}e^{-|w|^2}dw}\\
&=\frac{\ds c_\lambda\int_D\Tr_V\left(\tau(B(w)^{-1})K_W(w,w)\vphantom{w^2}\right)h(w)^{\lambda-p}dw}
{\ds \int_{\fp^+}\Tr_V(K_W(w,w))e^{-|w|^2}dw}, 
\end{align*}
and if the numerator converges, then $\Vert f_i\Vert_{\lambda,\tau}^2$ converges for any $i$, and so does $\Vert f\Vert_{\lambda,\tau}^2$ 
for any $f\in W$. 
To proceed the computation, we use the following lemma. 
\begin{lemma}\label{intformula}
For any integrable, or non-negative-valued measurable function $f$ on $\fp^+$, we have 
\[ \frac{1}{\pi^n}\int_{\fp^+}f(w)dw
=\frac{1}{\Gamma_\Omega\left(\frac{n}{r}\right)}\int_\Omega\int_Kf(kx^{\frac{1}{2}})\Delta(x)^bdkdx, \]
where $x^{\frac{1}{2}}$ is the square root with respect to the Jordan algebra structure (\ref{Jordanstr}) on $\Omega\subset\fn^+$. 
\end{lemma}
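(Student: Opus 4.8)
The plan is to reduce the identity first to $K$-invariant integrands and then to the known polar (singular value) integration formula on $\fp^+$ together with the spectral integration formula on the symmetric cone $\Omega$. I would begin by observing that both sides define $K$-invariant measures on $\fp^+$: the left-hand side because $K$ acts on $\fp^+$ by $(\cdot|\cdot)$-unitary transformations (for $k\in K$ we have $k^*=k^{-1}$, so $(kx|ky)=(x|y)$) and hence preserves Lebesgue measure $dw$; the right-hand side because the inner integral $\int_K f(kx^{1/2})\,dk$ is left $K$-invariant by invariance of the Haar measure $dk$. Since a $K$-invariant Radon measure on $\fp^+$ is determined by its integrals against $K$-invariant functions, it suffices to treat $K$-invariant $f$. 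For such $f$ the inner integral collapses to $\int_K f(kx^{1/2})\,dk=f(x^{1/2})$, so the claim becomes
\[
\frac{1}{\pi^n}\int_{\fp^+}f(w)\,dw=\frac{1}{\Gamma_\Omega(n/r)}\int_\Omega f(x^{1/2})\Delta(x)^b\,dx.
\]

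Next I would introduce spectral coordinates on both sides. Every generic $w\in\fp^+$ lies in the open orbit $\cO_r$ and admits a singular value decomposition $w=k\sum_{j=1}^r t_je_j$ with $k\in K$ and $t_1\ge\cdots\ge t_r>0$; the Faraut--Kor\'anyi polar integration formula then expresses, for $K$-invariant $f$,
\[
\frac{1}{\pi^n}\int_{\fp^+}f(w)\,dw=c_1\int_{t_1>\cdots>t_r>0}f\Big(\sum_j t_je_j\Big)\prod_j t_j^{2b+1}\prod_{j<k}(t_j^2-t_k^2)^d\,dt
\]
for an explicit constant $c_1>0$. On the cone side, an element $x\in\Omega$ has eigenvalues $s_1,\ldots,s_r>0$, for which $\Delta(x)=\det(x)=\prod_j s_j$ and $x^{1/2}$ has singular values $\sqrt{s_j}$. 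The Faraut--Kor\'anyi integration formula on $\Omega$ reduces $\int_\Omega f(x^{1/2})\Delta(x)^b\,dx$ to an integral over $s_1>\cdots>s_r>0$ of $f\big(\sum_j\sqrt{s_j}e_j\big)\prod_j s_j^b\prod_{j<k}(s_j-s_k)^d$ against $ds$; after the substitution $s_j=t_j^2$, using $ds_j=2t_j\,dt_j$, this coincides with the $\fp^+$-density above up to a multiplicative constant. Hence both sides equal a common $r$-fold integral up to an overall constant $C>0$ independent of $f$.

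Finally I would pin down $C=1$ by testing on the single Gaussian $f(w)=e^{-|w|^2}$. On the left, $\frac{1}{\pi^n}\int_{\fp^+}e^{-|w|^2}\,dw=1$. On the right, since $|x^{1/2}|^2=\tr(x)$ one obtains $\int_\Omega e^{-\tr(x)}\Delta(x)^b\,dx$, which by the Gindikin formula with $\bs=(n/r,\ldots,n/r)$ (so that $\Delta_\bs=\Delta^{n/r}$) and the identity $n-n_\rT=br$ equals
\[
\int_\Omega e^{-\tr(x)}\Delta(x)^{(n-n_\rT)/r}\,dx=\Gamma_\Omega(n/r),
\]
so the right-hand side is also $1$. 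Matching the common functional form from the previous step against this single normalization forces $C=1$.

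I expect the main obstacle to be the second step: verifying that the two spectral densities agree. This amounts to checking that, after $s_j=t_j^2$, the weight $\Delta(x)^b=\prod_j s_j^b$ together with the Jacobian $ds_j=2t_j\,dt_j$ reproduces exactly the power $t_j^{2b+1}$ and the Vandermonde exponent $d$ appearing in the $\fp^+$ polar formula — a bookkeeping of the Peirce multiplicities $d=\dim_\BC\fp^+_{12}$ and $b=\dim_\BC\fp^+_{01}$ that must be carried out carefully. The virtue of the Gaussian test in the last step is precisely that it removes the need to track the exact values of $c_1$, the analogous cone constant, and the various group volumes, reducing the normalization to the clean identity $\Gamma_\Omega(n/r)=\int_\Omega e^{-\tr(x)}\Delta(x)^b\,dx$.
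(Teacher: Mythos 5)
Your proof is correct, but note what the paper actually does: its entire proof is a citation — for $b=0$ (tube type) the lemma is exactly \cite[Proposition X.3.4]{FK}, and the case $b\ne 0$ is asserted to follow ``similarly.'' Your argument is, in effect, the expansion of that ``similarly'': the reduction to $K$-invariant $f$ (both sides are $K$-invariant functionals, so one may replace $f$ by its $K$-average), the matching of the two spectral densities under the substitution $s_j=t_j^2$, and the Gaussian normalization using $|kx^{1/2}|^2=\tr(x)$ together with $\Gamma_\Omega\left(\frac{n}{r}\right)=\int_\Omega e^{-\tr(x)}\Delta(x)^{b}dx$, which holds precisely because $n-n_\rT=br$. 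All three steps check out; in particular the density $\prod_j t_j^{2b+1}\prod_{j<k}(t_j^2-t_k^2)^d$ is the right one — its exponents are the multiplicities of the restricted roots $\gamma_j$, $\tfrac{1}{2}\gamma_j$, $\tfrac{1}{2}(\gamma_j\pm\gamma_k)$, namely $1$, $2b$, $d$ — and testing on the single Gaussian does eliminate all unknown constants at once. The one caveat is that the polar (singular-value) integration formula on $\fp^+$ that you invoke for $b\ne 0$ is not in \cite{FK}, whose book only treats complexifications of Euclidean Jordan algebras (i.e.\ $b=0$); it is a known fact about the restricted-root decomposition of the $K$-action on $\fp^+$, but it carries essentially the same content as the extension of the lemma being proved, so in a careful write-up it must either be cited from the Jordan triple literature (e.g.\ \cite{FKKLR}) or proved by the root-multiplicity computation you defer to as ``bookkeeping.'' With that reference supplied, your route is self-contained and arguably more informative than the paper's, since it makes explicit where the weight $\Delta(x)^b$ and the constant $\Gamma_\Omega\left(\frac{n}{r}\right)$ come from.
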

\begin{proof}
For tube type case ($b=0$) see \cite[Proposition X.3.4]{FK}. Even for $b\ne 0$ case we can prove this similarly. 
\end{proof}
Since the integrand of $R_W(\lambda)$ is non-negative-valued, by this lemma, this is equal to 
\[ R_W(\lambda)=\frac{\ds c_\lambda\int_{\Omega\cap(e-\Omega)}\int_K\Tr_V\left(\tau(B(kx^\frac{1}{2})^{-1})
K_W(kx^\frac{1}{2},kx^\frac{1}{2})\right)h(kx^\frac{1}{2})^{\lambda-p}\Delta(x)^bdkdx}
{\ds \int_\Omega\int_K\Tr_V\left(K_W(kx^\frac{1}{2},kx^\frac{1}{2})\right)e^{-|kx^\frac{1}{2}|^2}\Delta(x)^bdkdx}. \]
Since the reproducing kernel satisfies 
\[ K_W(kz,k^{*-1}w)=\tau(k)K_W(z,w)\tau(k^{-1}) \qquad(z,w\in\fp^+,\; k\in K^\BC), \]
we have, 
\begin{align*}
K_W(kx^\frac{1}{2},kx^\frac{1}{2})&=\tau(k)K_W(P(x^{-\frac{1}{4}})x,P(x^{\frac{1}{4}})e)\tau(k^{-1}) \\
&=\tau(k)\tau(P(x^{-\frac{1}{4}}))K_W(x,e)\tau(P(x^{\frac{1}{4}}))\tau(k^{-1}) \qquad(x\in\Omega,k\in K). 
\end{align*}
Therefore we have 
\[ \Tr_V\left(K_W(kx^\frac{1}{2},kx^\frac{1}{2})\right)=\Tr_V(K_W(x,e)). \]
Also, since $k^{-1}B(kx^\frac{1}{2})^{-1}k=B(x^\frac{1}{2})^{-1}=P(e-x)^{-1}$ and 
$P(e-x)^{-1}$ commutes with $P(x^\frac{1}{4})$, we have 
\[ \Tr_V\left(\tau(B(kx^\frac{1}{2})^{-1})K_W(kx^\frac{1}{2},kx^\frac{1}{2})\right)
=\Tr_V\left(\tau(P(e-x)^{-1})K_W(x,e)\right). \]
By these and $h(kx^\frac{1}{2})=\Delta(e-x)$, $|kx^\frac{1}{2}|^2=\tr(x)$, we have 
\[ R_W(\lambda)=\frac{\ds c_\lambda\int_{\Omega\cap(e-\Omega)}\Tr_V\left(\tau(P(e-x)^{-1})
K_W(x,e)\vphantom{x^2}\right)\Delta(e-x)^{\lambda-p}\Delta(x)^bdx}
{\ds \int_\Omega\Tr_V(K_W(x,e))e^{-\tr (x)}\Delta(x)^bdx}. \]
By the assumption, we can rewrite $K_W(z,w)$ by using $K_{W_{ij}}(z,w)$, the reproducing kernels of $W_{ij}$, when $z,w\in\fp^+_\rT$: 
\[ K_W(z,w)=\sum_{ij}\tilde{a}_{ij}K_{W_{ij}}(z,w)\in\cP(\fp^+_\rT\times\overline{\fp^+_\rT},\mathrm{End}(V)) \qquad (z,w\in\fp^+_\rT), \]
using some non-negative numbers $\tilde{a}_{ij}$. Therefore we have 
\[ R_W(\lambda)=\frac{\ds c_\lambda\sum_{ij}\tilde{a}_{ij}\int_{\Omega\cap(e-\Omega)}\Tr_{V_i}\left(\tau_i(P(e-x)^{-1})
K_{W_{ij}}(x,e)\vphantom{x^2}\right)\Delta(e-x)^{\lambda-p}\Delta(x)^bdx}
{\ds \sum_{ij}\tilde{a}_{ij}\int_\Omega\Tr_{V_i}(K_{W_{ij}}(x,e))e^{-\tr (x)}\Delta(x)^bdx}. \]
Now we set 
\begin{gather*}
B_{ij}(\lambda):=\int_{\Omega\cap(e-\Omega)}\Tr_{V_i}\left(\tau_i(P(e-x)^{-1})
K_{W_{ij}}(x,e)\vphantom{x^2}\right)\Delta(e-x)^{\lambda-p}\Delta(x)^bdx, \\
\Gamma_{ij}:=\int_\Omega\Tr_{V_i}(K_{W_{ij}}(x,e))e^{-\tr (x)}\Delta(x)^bdx 
\end{gather*}
so that $R_W(\lambda)=c_\lambda\left(\sum_{ij}\tilde{a}_{ij}B_{ij}(\lambda)\right)\Big/\left(\sum_{ij}\tilde{a}_{ij}\Gamma_{ij}\right)$. 
Now, we regard $K_{W_{ij}}(x,e)\in\cP(\fp^+_\rT,\mathrm{End}(V_i))$ as a function of $x$. 
We define the action $\tilde{\tau}_i$ of $K^\BC_T$ on $\cP(\fp^+_\rT,\mathrm{End}(V_i))$ by 
\[ (\tilde{\tau}_i(k)F)(x):=\tau_i(k)F(k^{-1}x)\tau_i({}^t\hspace{-1pt}k)\qquad
(k\in K^\BC_\rT, F\in\cP(\fp^+_\rT,\mathrm{End}(V_i)), x\in\fp^+_\rT). \]
Then $K_{W_{ij}}(x,e)$ is $K_L$-invariant under $\tilde{\tau}_i$. 
Now we identify 
\[ (\tilde{\tau}_i,\cP(\fp^+_\rT,\mathrm{End}(V_i)))\simeq
(\hat{\tau}|_{K^\BC_\rT}\otimes\bar{\tau}_i,\cP(\fp^+_\rT,V_i)\otimes\overline{V_i}). \]
Then under this identification $K_{W_{ij}}(x,e)$ sits in $W_{ij}\otimes\overline{V_i}$, and therefore by (A2) 
this sits in the space with lowest weight $-(n_1\gamma_1+\cdots+n_r\gamma_r)$. 
That is, there exists a function $F_{ij}\in\cP(\fp^+_\rT,\mathrm{End}(V_i))$ such that 
\begin{gather*}
(\tilde{\tau}_i(q)F_{ij})(x)=\Delta_{\bn_{ij}}(q^{-1}e)F_{ij}(x) \qquad(q\in A_LN_L^-,x\in\fp^+_\rT),\\
\int_{K_L}(\tilde{\tau}(k)F_{ij})(x)dk=K_{W_{ij}}(x,e). 
\end{gather*}
We note that $\int_{K_L}(\tilde{\tau}(k)F_{ij})(x)dk$ is non-zero for any non-zero $N_L^-$-fixed vector $F_{ij}$, 
since we have $(F_{ij},K_{W_{ij}}(\cdot,e))_\tau\ne 0$, which is proved by using the Iwasawa decomposition $L=K_LA_LN_L^-$. 

From now, we compute $B_{ij}(\lambda)$ formally, allowing variable changes. 
By using $F_{ij}$, we rewrite $B_{ij}(\lambda)$ and $\Gamma_{ij}$. 
\begin{gather*}
B_{ij}(\lambda)=\int_{\Omega\cap(e-\Omega)}\Tr_{V_i}\left(\tau_i(P(e-x)^{-1})
F_{ij}(x)\vphantom{x^2}\right)\Delta(e-x)^{\lambda-p}\Delta(x)^bdx, \\
\Gamma_{ij}:=\int_\Omega\Tr_{V_i}(F_{ij}(x))e^{-\tr (x)}\Delta(x)^bdx. 
\end{gather*}
We set 
\begin{equation}\label{Ilambda}
I(y):=\int_{\Omega\cap(y-\Omega)}\Tr_{V_i}\left(\tau_i(P(y-x)^{-1})
F_{ij}(x)\vphantom{x^2}\right)\Delta(y-x)^{\lambda-p}\Delta(x)^bdx
\end{equation}
so that $I(e)=B_{ij}(\lambda)$. We take $q\in A_LN_L^-$ such that $y=qe$, and set $x=qz$. Then 
\begin{align*}
I(y)&=\int_{\Omega\cap(e-\Omega)}\Tr_{V_i}\left(\tau_i(P(q.(e-z))^{-1})
F_{ij}(qz)\right)\Delta(q.(e-z))^{\lambda-p}\Delta(qz)^b\Delta(qe)^{\frac{n_\rT}{r}}dz \\
&=\int_{\Omega\cap(e-\Omega)}\Tr_{V_i}\left(\tau_i({}^t\hspace{-1pt}q^{-1})\tau_i(P(e-z)^{-1})\tau_i(q^{-1})
F_{ij}(qz)\right)\Delta(e-z)^{\lambda-p}\Delta(z)^b\Delta(qe)^{\lambda-p+b+\frac{n_\rT}{r}}dz \\
&=\int_{\Omega\cap(e-\Omega)}\Tr_{V_i}\left(\tau_i(P(e-z)^{-1})F_{ij}(z)\right)\Delta_{\bn_{ij}}(qe)
\Delta(e-z)^{\lambda-p}\Delta(z)^b\Delta(qe)^{\lambda-\frac{n_\rT}{r}}dz \\
&=I(e)\Delta_{\bn_{ij}}(y)\Delta(y)^{\lambda-\frac{n_\rT}{r}}=B_{ij}(\lambda)\Delta_{\lambda+\bn_{ij}}(y)\Delta(y)^{-\frac{n_\rT}{r}}. 
\end{align*}
Now we calculate $\int_\Omega I(y)e^{-\tr(y)}dy$ by two ways. 
\begin{gather*}
\int_\Omega I(y)e^{-\tr(y)}dy
=B_{ij}(\lambda)\int_\Omega e^{-\tr(y)}\Delta_{\lambda+\bn_{ij}}(y)\Delta(y)^{-\frac{n_\rT}{r}}dy
=B_{ij}(\lambda)\Gamma_\Omega(\lambda+\bn_{ij}), \\
\begin{split}
\int_\Omega I(y)e^{-\tr(y)}dy&=\iint_{x\in\Omega,y-x\in\Omega}e^{-\tr(y)}\Tr_{V_i}\left(\tau_i(P(y-x)^{-1})
F_{ij}(x)\vphantom{x^2}\right)\Delta(y-x)^{\lambda-p}\Delta(x)^bdxdy \\
&=\iint_{x\in\Omega,z\in\Omega}e^{-\tr(x+z)}\Tr_{V_i}\left(\tau_i(P(z)^{-1})
F_{ij}(x)\vphantom{x^2}\right)\Delta(z)^{\lambda-p}\Delta(x)^bdxdz \\
&=\Tr_{V_i}\left(\int_\Omega e^{-\tr(z)}\tau_i(P(z)^{-1})\Delta(z)^{\lambda-p}dz\int_\Omega e^{-\tr(x)}F_{ij}(x)\Delta(x)^bdx\right). 
\end{split}
\end{gather*}
Therefore, formally 
\[ B_{ij}(\lambda)\Gamma_\Omega(\lambda+\bn_{ij})
=\Tr_{V_i}\left(\int_\Omega e^{-\tr(z)}\tau_i(P(z)^{-1})\Delta(z)^{\lambda-p}dz\int_\Omega e^{-\tr(x)}F_{ij}(x)\Delta(x)^bdx\right) \]
holds. By Fubini's theorem, variable changes are verified 
and the above equality exactly holds if 
\[ \iint_{x\in\Omega,z\in\Omega}e^{-\tr(x+z)}\left|\Tr_{V_i}\left(\tau_i(P(z)^{-1})
F_{ij}(x)\vphantom{x^2}\right)\right|\Delta(z)^{\Re(\lambda)-p}\Delta(x)^bdxdz<\infty \]
is verified, and since all norms on the finite-dimensional vector space $\mathrm{End}(V_i)$ are equivalent, 
this holds if 
\begin{gather}
\int_\Omega e^{-\tr(z)}\left|\tau_i(P(z)^{-1})\right|_{\tau_i,\mathrm{op}}\Delta(z)^{\Re(\lambda)-p}dz<\infty,\label{inttau}\\
\int_\Omega e^{-\tr(x)}\left|F_{ij}(x)\right|_{\tau_i,\mathrm{op}}\Delta(x)^bdx<\infty \label{intF}
\end{gather}
hold, where $|\cdot|_{\tau_i,\mathrm{op}}$ denotes the operator norm. Since 
\[ \left|F_{ij}(x)\right|_{\tau_i,\mathrm{op}}=\max_{u,v\in V_i\setminus\{0\}}\frac{|(F_{ij}(x)u,v)_{\tau_i}|}{|u|_{\tau_i}|v|_{\tau_i}} \]
holds and $(F_{ij}(x)u,v)_\tau$ is a polynomial on $\Omega$ for any $u,v\in V_i$, (\ref{intF}) exactly holds. 
Also, since $\tau_i(P(z)^{-1})$ is self-adjoint and positive definite for $z\in\Omega$, we have 
\[ \left|\tau_i(P(z)^{-1})\right|_{\tau_i,\mathrm{op}}
=\max_{u\in V_i\setminus\{0\}}\frac{\left|(\tau_i(P(z)^{-1})u,u)_{\tau_i}\right|}{|u|_{\tau_i}^2}, \]
and elements $v\in V_i$ such that 
\begin{equation}\label{inttau2}
\int_\Omega e^{-\tr(z)}\left|(\tau_i(P(z)^{-1})v,v)_{\tau_i}\right|\Delta(z)^{\Re(\lambda)-p}dz<\infty
\end{equation}
forms a $K_L$-invariant vector subspace, by the triangle inequality and the $K_L$-invariance of the integral. 
By assumption (A1), such vector subspace is either $V_i$ or $\{0\}$. 
Thus (\ref{inttau}) holds if and only if (\ref{inttau2}) holds for some non-zero $v\in V_i$. 
Moreover, again by assumption (A1), the integral 
\begin{equation}\label{gammaprimedef}
\Gamma'_i(\lambda):=\int_\Omega e^{-\tr(z)}\tau_i(P(z)^{-1})\Delta(z)^{\lambda-p}dz
\end{equation}
is proportional to the identity operator $I_{V_i}$ if (\ref{inttau2}) holds, 
since this $\Gamma'_i(\lambda)$ commutes with $K_L$-action. 
Now we prove (\ref{inttau2}) for $v\in V_i$ lowest weight vector, assuming $\Re(\lambda)+k_{i,r}>p-1$. 
Since the restricted lowest weight of $V_i$ is 
$\left.-\frac{k_{i,1}}{2}\gamma_1-\cdots-\frac{k_{i,r}}{2}\gamma_r\right|_{\fa_\fl}$, 
for $q\in A_LN_L^-$ we have 
\[ (\tau_i(P(qe)^{-1})v,v)_{\tau_i}=(\tau_i({}^t\hspace{-1pt}q^{-1}q^{-1})v,v)_{\tau_i}=|\tau_i(q^{-1})v|_{\tau_i}^2
=\Delta_{-\frac{\bk_i}{2}}(q^{-1}e)^2|v|_{\tau_i}^2=\Delta_{\bk_i}(qe)|v|_{\tau_i}^2, \]
and this is positive valued. Therefore we have 
\begin{align}\label{gammaprime}
(\Gamma'_i(\lambda)v,v)_{\tau_i}&=\int_\Omega e^{-\tr(z)}(\tau_i(P(z)^{-1})v,v)_{\tau_i}\Delta(z)^{\lambda-p}dz \notag \\
&=\int_\Omega e^{-\tr(z)}\Delta_{\bk_i}(z)\Delta(z)^{\lambda-\frac{n}{r}-\frac{n_\rT}{r}}dz|v|_{\tau_i}^2 \notag \\
&=\Gamma_\Omega\left(\lambda+\bk_i-\frac{n}{r}\right)|v|_{\tau_i}^2 
\end{align}
if $\Re(\lambda)+k_{i,r}>p-1$. That is, (\ref{inttau}) is verified, and 
$\Gamma'_i(\lambda)=\Gamma_\Omega\left(\lambda+\bk_i-\frac{n}{r}\right)I_{V_i}$ holds. Therefore, 
\[ B_{ij}(\lambda)=\frac{\Gamma_\Omega\left(\lambda+\bk_i-\frac{n}{r}\right)}{\Gamma_\Omega(\lambda+\bn_{ij})}
\Tr_V\left(\int_\Omega e^{-\tr(x)}\Delta(x)^bF_{ij}(x)dx\right)
=\frac{\Gamma_\Omega\left(\lambda+\bk_i-\frac{n}{r}\right)}{\Gamma_\Omega(\lambda+\bn_{ij})}\Gamma_i, \]
exactly holds, and 
\[ R_W(\lambda)=\frac{c_\lambda}{\sum_{ij}\tilde{a}_{ij}\Gamma_{ij}}
\sum_{ij}\tilde{a}_{ij}\frac{\Gamma_\Omega\left(\lambda+\bk_i-\frac{n}{r}\right)}{\Gamma_\Omega(\lambda+\bn_{ij})}\Gamma_{ij}. \]
By putting $\tilde{a}_{ij}\Gamma_{ij}=:a_{ij}$, we get the desired formula. 

When $W=V$, clearly we have $\mathrm{rest}(V)=\oplus_iV_i$, and $K_V(z,w)=I_V$, $K_{V_i}(z,w)=I_{V_i}$. Thus, the coefficients 
\begin{align*}
a_i=\Gamma_i&=\int_\Omega\Tr_{V_i}(K_{V_i}(x,e))e^{-\tr (x)}\Delta(x)^bdx\\
&=\int_\Omega\Tr_{V_i}(I_{V_i})e^{-\tr (x)}\Delta(x)^bdx=(\dim V_i)\Gamma_\Omega\left(\frac{n}{r}\right). 
\end{align*}
Also, by assumption (A1), $K_L$-spherical vectors in 
$(\tilde{\tau},\mathrm{End}(V_i))\simeq (\tau_i\otimes\overline{\tau_i},V_i\otimes\overline{V_i})$ 
is proportional to $I_{V_i}$, that is, $\dim\mathrm{End}(V_i)^{K_L}=1$. 
Therefore, assumption (A2) is automatically satisfied, with $\bn_i=\bk_i$. 
Since $c_\lambda$ is determined such that $R_{V,\lambda}=1$, we have 
\begin{align*}
c_\lambda^{-1}&=\frac{1}{\sum_i(\dim V_i)\Gamma_\Omega\left(\frac{n}{r}\right)}
\sum_i(\dim V_i)\Gamma_\Omega\left(\frac{n}{r}\right)\frac{\Gamma_\Omega\left(\lambda+\bk_i-\frac{n}{r}\right)}
{\Gamma_\Omega(\lambda+\bk_i)}\\
&=\frac{1}{\dim V}\sum_i(\dim V_i)\frac{\Gamma_\Omega\left(\lambda+\bk_i-\frac{n}{r}\right)}{\Gamma_\Omega(\lambda+\bk_i)}, 
\end{align*}
and this completes the proof. \qed

\begin{remark}
The integral $\Gamma'_{i,\lambda}$ in (\ref{gammaprimedef}) is essentially the same as the ``Gamma function'' in 
\cite[Definition 3.1]{GK}, \cite[Section 4]{HN} on $\mathrm{End}(V_i)$, 
or the integral with the measure $R_\mu$ in \cite[Theorem 3.4]{C}, 
and the property of $\Gamma'_{i,\lambda}$ or the finiteness of (\ref{inttau}) have been already proved. 
However, since the notation is different, the author wrote the proof for completeness. 
\end{remark}

If $(\tau,V)|_{\fk_\rT^\BC}$ is still irreducible and 
$\mathrm{rest}(W)\subset \cP(\fp^+_\rT,V)$ consists of one irreducible $K_\rT^\BC$-module, 
then Theorem \ref{keythm} becomes easier. 
\begin{corollary}\label{tubecor}
Suppose $(\tau,V)|_{K^\BC_\rT}$ has a restricted lowest weight 
$\left.-\left(\frac{k_{1}}{2}\gamma_1+\cdots+\frac{k_{r}}{2}\gamma_r\right)\right|_{\fa_\fl}$. 
Let $W\subset\cP(\fp^+,V)$ be a $K^\BC$-irreducible subspace. We assume 
\begin{itemize}
\item[(A0)] $\mathrm{rest}(W)\subset \cP(\fp^+_\rT,V)$ is irreducible as a $K_\rT^\BC$-module. 
\item[(A1')] $(\tau,V)|_{K_L}$ still remains irreducible. 
\item[(A2')] All the $K_L$-spherical irreducible subspaces 
in $\mathrm{rest}(W)\otimes\overline{V}$ have the same lowest weight $-\left(n_{1}\gamma_1+\cdots+n_{r}\gamma_r\right)$. 
\end{itemize}
Then the integral $\Vert f\Vert_{\lambda,\tau}^2$ converges for any $f\in W$ if $\Re(\lambda)+k_{r}>p-1$. 
Moreover, we have 
\[ c_\lambda=\frac{\Gamma_\Omega(\lambda+\bk)}{\Gamma_\Omega\left(\lambda+\bk-\frac{n}{r}\right)}, \]
and for any $f\in W$, we have 
\[ \frac{\Vert f\Vert_{\lambda,\tau}^2}{\Vert f\Vert_{F,\tau}^2}=\frac{\Gamma_\Omega(\lambda+\bk)}{\Gamma_\Omega(\lambda+\bn)}
=\frac{(\lambda)_\bk}{(\lambda)_\bn}=\frac{1}{(\lambda+\bk)_{\bn-\bk}}. \]
\end{corollary}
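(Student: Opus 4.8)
The plan is to deduce the Corollary directly from Theorem \ref{keythm} by observing that the three hypotheses force the two index sets appearing in the Theorem to be singletons, and then simplifying the general formula. First I would note that the standing hypothesis that $(\tau,V)|_{K^\BC_\rT}$ has \emph{a} restricted lowest weight means that $(\tau,V)|_{K^\BC_\rT}$ is irreducible, so the decomposition $(\tau,V)|_{K^\BC_\rT}=\bigoplus_i(\tau_i,V_i)$ of Theorem \ref{keythm} has a single summand $(\tau_1,V_1)=(\tau,V)$ with $\bk_1=\bk$ and $\dim V_1=\dim V$; the index $i$ disappears. (This irreducibility is in fact already forced by (A1'): since $\sigma\vartheta$ preserves the grading, $\fl={}^c\fg_\rT\cap\fk^\BC_\rT$ is a real form of $\fk^\BC_\rT$, i.e. $\fl^\BC=\fk^\BC_\rT$, and $K_L$ is a maximal compact subgroup of $L$; hence $K_L$-irreducibility propagates up to $L$-irreducibility and then, by $\BC$-linearity of $d\tau$ on $\fk^\BC_\rT=\fl\oplus i\fl$, to $K^\BC_\rT$-irreducibility.) Next, assumption (A0) says $\mathrm{rest}(W)\subset\cP(\fp^+_\rT,V)$ is $K^\BC_\rT$-irreducible, so it coincides with a single module $W_{11}$, and the index $j$ disappears as well.

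With both index sets reduced to one element, (A1') is precisely hypothesis (A1) for the unique summand and (A2') is precisely hypothesis (A2), so Theorem \ref{keythm} applies verbatim. In particular its convergence criterion $\Re(\lambda)+k_{i,r}>p-1$ becomes the asserted $\Re(\lambda)+k_r>p-1$, and I would then specialize the two conclusions. The ratio formula collapses to
\[ \frac{\Vert f\Vert_{\lambda,\tau}^2}{\Vert f\Vert_{F,\tau}^2}=c_\lambda\,\frac{\Gamma_\Omega\left(\lambda+\bk-\frac{n}{r}\right)}{\Gamma_\Omega(\lambda+\bn)}, \]
since the single term in the numerator sum cancels the single term $a_{11}$ in $\sum_{ij}a_{ij}$, and $\bn_{11}=\bn$. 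The normalizing constant collapses to $c_\lambda^{-1}=\Gamma_\Omega\!\left(\lambda+\bk-\frac{n}{r}\right)/\Gamma_\Omega(\lambda+\bk)$, because the sum over $i$ has one term with $\dim V_1=\dim V$; this gives exactly the asserted $c_\lambda=\Gamma_\Omega(\lambda+\bk)/\Gamma_\Omega\!\left(\lambda+\bk-\frac{n}{r}\right)$.

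Finally I would substitute this $c_\lambda$ into the collapsed ratio, which cancels the factor $\Gamma_\Omega\!\left(\lambda+\bk-\frac{n}{r}\right)$ and leaves $\Gamma_\Omega(\lambda+\bk)/\Gamma_\Omega(\lambda+\bn)$, and then rewrite this purely in terms of the symbols of Section 2. Using $(\lambda)_\bm=\Gamma_\Omega(\lambda+\bm)/\Gamma_\Omega(\lambda)$ turns the quotient into $(\lambda)_\bk/(\lambda)_\bn$, and using $(\bs)_\bm=\Gamma_\Omega(\bs+\bm)/\Gamma_\Omega(\bs)$ with $\bs=\lambda+\bk$ and $\bm=\bn-\bk$ identifies $\Gamma_\Omega(\lambda+\bn)/\Gamma_\Omega(\lambda+\bk)$ with $(\lambda+\bk)_{\bn-\bk}$, yielding the last form $1/(\lambda+\bk)_{\bn-\bk}$.

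Since every step is a specialization of the already-established Theorem together with elementary gamma-function bookkeeping, there is no genuine analytic obstacle; the only point demanding a little care is the first one, namely confirming that $K_L$-irreducibility propagates up to $K^\BC_\rT$-irreducibility so that the index $i$ really reduces to a single value and the minimal-$K$-type normalization defining $c_\lambda$ is computed against one summand. Once that is in place, the collapse of the sums and the Pochhammer rewriting are entirely formal.
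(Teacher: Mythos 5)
Your proposal is correct and is exactly the paper's (implicit) proof: Corollary \ref{tubecor} is obtained by specializing Theorem \ref{keythm} to the case where both index sets $i,j$ are singletons (forced by the irreducibility of $(\tau,V)|_{K^\BC_\rT}$ and by (A0)), and then performing precisely the $\Gamma_\Omega$/Pochhammer bookkeeping you describe. The one caveat is your parenthetical claim that (A1') alone already forces $K^\BC_\rT$-irreducibility: since the paper allows $K_L$ and $L$ to be disconnected, irreducibility under the group $L$ need not descend to its identity component, hence not to $\fl$ or to the connected group $K^\BC_\rT$; but nothing in your argument depends on this remark, because the corollary's standing hypothesis (read, as the paper intends, as saying $(\tau,V)|_{K^\BC_\rT}$ is irreducible with the stated restricted lowest weight) already supplies what you need.
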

The assumption (A0) is automatically satisfied if 
\begin{itemize}
\item $G=G_\rT$ i.e. $G$ is of tube type, or 
\item $G=SU(q,r)$ ($q\le r$), and $V=\BC\boxtimes V'$ as a $K=S(U(q)\times U(r))$-module. 
\end{itemize}
In Section \ref{secttube}, we deal with these cases explicitly, and in Section \ref{sectnontube}, we deal with the cases such that 
Corollary \ref{tubecor} is not applicable. 
To remove the ambiguity of the action of the center, 
we assume $k_{i,r}\ge 0$ for any $i$, and $k_{i,r}=0$ for some $i$.

\section{Norm computation: Tube type case}\label{secttube}
\subsection{Explicit roots}\label{explicitroots}
Before starting the computation of norms, we fix the notation about roots of classical Lie algebras of Hermitian type. 

Let $\fg=\fk\oplus\fp$ be a classical simple Lie algebra of Hermitian type, i.e. one of 
$\mathfrak{sp}(r,\BR)$, $\mathrm{su}(q,s)$, $\mathfrak{so}^*(2s)$, or $\mathfrak{so}(2,n)$. 
We fix a Cartan subalgebra $\fh\subset\fk$. Then $\fh$ automatically becomes a Cartan subalgebra of $\fg$. 
We take a basis 
\begin{align*}
\{ t_1,t_2,\ldots,t_r\}&\subset\sqrt{-1}\fh & &(\fg=\mathfrak{sp}(r,\BR)),\\
\{ t_1,t_2,\ldots,t_{q+s}\}&\subset(\sqrt{-1}\fh)\oplus\BR & &(\fg=\mathfrak{su}(q,s)),\\
\{ t_1,t_2,\ldots,t_s\}&\subset\sqrt{-1}\fh & &(\fg=\mathfrak{so}^*(2s)),\\
\{ t_0,t_1,\ldots,t_{\lfloor n/2\rfloor}\}&\subset\sqrt{-1}\fh & &(\fg=\mathfrak{so}(2,n)), 
\end{align*}
with the dual basis $\{\varepsilon_j\}$, such that the simple systems 
$\Pi_{\fg^\BC}$, $\Pi_{\fk^\BC}$ of positive roots $\Delta_+(\fg^\BC,\fh^\BC)$, $\Delta_+(\fk^\BC,\fh^\BC)$ are given by 
\begin{align*}
\Pi_{\fk^\BC}&=\begin{cases}
\{\varepsilon_j-\varepsilon_{j+1}:j=1,\ldots,r-1\} & (\fg=\mathfrak{sp}(r,\BR)),\\
\{\varepsilon_j-\varepsilon_{j+1}:j=1,\ldots,q-1\} & \\
\hspace{22pt}\cup\{\varepsilon_{j+1}-\varepsilon_j:j=q+1,\ldots,q+s-1\} & (\fg=\mathfrak{su}(q,s)),\\
\{\varepsilon_j-\varepsilon_{j+1}:j=1,\ldots,s-1\} & (\fg=\mathfrak{so}^*(2s)),\\
\{\varepsilon_j-\varepsilon_{j+1}:j=1,\ldots,s-1\}\cup\{\varepsilon_{s-1}+\varepsilon_s\} & (\fg=\mathfrak{so}(2,2s)),\\
\{\varepsilon_j-\varepsilon_{j+1}:j=1,\ldots,s-1\}\cup\{\varepsilon_s\} & (\fg=\mathfrak{so}(2,2s+1)), \end{cases} \\
\Pi_{\fg^\BC}&=\Pi_{\fk^\BC}\cup\begin{cases}
\{2\varepsilon_r\} & (\fg=\mathfrak{sp}(r,\BR)),\\
\{\varepsilon_q-\varepsilon_{q+s}\} & (\fg=\mathfrak{su}(q,s)),\\
\{\varepsilon_{s-1}+\varepsilon_s\} & (\fg=\mathfrak{so}^*(2s)),\\
\{\varepsilon_0-\varepsilon_1\} & (\fg=\mathfrak{so}(2,n)). \end{cases}
\end{align*}
Then the central character $d\chi$ of $\fk^\BC$ is given by 
\[ d\chi=\begin{cases}
\varepsilon_1+\cdots+\varepsilon_r & (\fg=\mathfrak{sp}(r,\BR)),\\
\varepsilon_1+\cdots+\varepsilon_q=-(\varepsilon_{q+1}+\cdots+\varepsilon_{q+s}) & (\fg=\mathfrak{su}(q,s)),\\
\frac{1}{2}(\varepsilon_1+\cdots+\varepsilon_s) & (\fg=\mathfrak{so}^*(2s)),\\
\varepsilon_0 & (\fg=\mathfrak{so}(2,n)), \end{cases} \]
and the maximal set of strongly orthogonal roots $\{\gamma_1,\ldots,\gamma_{\rank_\BR\fg}\}$ is given by 
\begin{align*}
\gamma_j&=2\varepsilon_j & &(j=1,\ldots,r) & &(\fg=\mathfrak{sp}(r,\BR)),\\
\gamma_j&=\varepsilon_j-\varepsilon_{q+j} & &(j=1,\ldots,\min\{q,s\}) & &(\fg=\mathfrak{su}(q,s)),\\
\gamma_j&=\gamma_{2j-1}+\gamma_{2j} & &(j=1,\ldots,\lfloor s/2\rfloor) & &(\fg=\mathfrak{so}^*(2s)),\\
\gamma_1&=\varepsilon_0+\varepsilon_1,\quad \gamma_2=\varepsilon_0-\varepsilon_1 & & & &(\fg=\mathfrak{so}(2,n)). 
\end{align*}
When $\fg=\mathfrak{sp}(r,\BR)$, $\mathfrak{su}(r,r)$, $\mathfrak{so}^*(4r)$ or $\mathfrak{so}(2,n)$, $\fg$ is of tube type, i.e. 
$\fg=\fg_\rT$ holds. On the other hand, when $\mathfrak{su}(q,s)$ ($q\ne s$) or $\fg=\mathfrak{so}^*(4r+2)$, 
$\fg$ is of non-tube type, and we have $\fg_\rT=\mathfrak{su}(r,r)$ ($r:=\min\{q,s\}$), or $\fg_\rT=\mathfrak{so}^*(4r)$ respectively. 
Let $\fh_\rT:=\fh\cap\fg_\rT$. Then we have 
\begin{align*}
\sqrt{-1}\fh_\rT&=\operatorname{span}(\{t_j-t_{j+1}:j=1,\ldots, r-1,q+1,\ldots, q+r-1\} & & \\
&\hspace{220pt}\cup \{t_r-t_{q+r}\})
& &(\fg=\mathfrak{su}(q,s)),\\
\sqrt{-1}\fh_\rT&=\operatorname{span}\{t_1,\ldots,t_{2r}\} & &(\fg=\mathfrak{so}^*(4r+2)). 
\end{align*}
Also, $\fa_\fl\subset\sqrt{-1}\fh_\rT$ is given by 
\[ \fa_\fl=\begin{cases}
\sqrt{-1}\fh & (\fg_\rT=\mathfrak{sp}(r,\BR)),\\
\operatorname{span}\{ t_j-t_{q+j}:j=1,\ldots,r\} & (\fg_\rT=\mathfrak{su}(r,r)),\\
\operatorname{span}\{ t_{2j-1}+t_{2j}:j=1,\ldots,r\} & (\fg_\rT=\mathfrak{so}^*(4r)),\\
\operatorname{span}\{ t_0,t_1\} & (\fg_\rT=\mathfrak{so}(2,n)). \end{cases} \]

In general, we consider $\mathfrak{gl}(s,\BC)$ or $\mathfrak{so}(n,\BC)$, and parametrize their irreducible representations. 
We fix the positive root system of $\mathfrak{gl}(s,\BC)$ such that its simple system is given by 
$\{\varepsilon_j-\varepsilon_{j+1}:j=1,\ldots,s-1\}$, and for $\bm\in\BZ_+^s$, let $(\tau_\bm^{(s)},V_\bm^{(s)})$, 
$(\tau_\bm^{(s)\vee},V_\bm^{(s)\vee})$ be the finite-dimensional irreducible representation of $\mathfrak{gl}(s,\BC)$ 
with highest weight $m_1\varepsilon_1+\cdots+m_s\varepsilon_s$, $-m_s\varepsilon_1-\cdots-m_1\varepsilon_s$ respectively. 
Similarly, we fix the positive root system of $\mathfrak{so}(n,\BC)$ such that its simple system is given by 
\begin{align*}
&\{ \varepsilon_j-\varepsilon_{j+1}:j=1,\ldots,s-1\}\cup\{\varepsilon_{s-1}+\varepsilon_s\} & &(n=2s),\\
&\{ \varepsilon_j-\varepsilon_{j+1}:j=1,\ldots,s-1\}\cup\{\varepsilon_s\} & &(n=2s+1),
\end{align*}
and for $\bm\in\BZ^s\cup\left(\BZ+\frac{1}{2}\right)^s$ with 
\begin{align*}
&m_1\ge m_2\ge\cdots\ge m_{s-1}\ge |m_s| & &(n=2s),\\
&m_1\ge m_2\ge\cdots\ge m_{s-1}\ge m_s\ge 0 & &(n=2s+1),
\end{align*}
let $(\tau_\bm^{[n]},V_\bm^{[n]})$ be the finite-dimensional irreducible representation of $\mathfrak{so}(n,\BC)$ 
with highest weight $m_1\varepsilon_1+\cdots+m_s\varepsilon_s$. 
Then $(\tau_\bm^{(r)\vee},V_\bm^{(r)\vee})$, $(\tau_\bm^{(q)\vee}\boxtimes \tau_\bn^{(s)},V_\bm^{(q)\vee}\otimes V_\bn^{(s)})$, 
$(\tau_\bm^{(s)\vee},V_\bm^{(s)\vee})$ and $(\chi^{m_0}\boxtimes \tau_\bm^{[n]},\BC_{m_0}\otimes V_\bm^{[n]})$ are 
naturally identified with the representation of $\fk^\BC$ 
for $\fg=\mathfrak{sp}(r,\BR)$, $\mathfrak{su}(q,s)$, $\mathfrak{so}^*(2s)$ and $\mathfrak{so}(2,n)$ respectively. 
Their restricted lowest weights are given by 
\begin{align*}
&-\left.\frac{1}{2}(m_1\gamma_1+\cdots+m_r\gamma_r)\right|_{\fa_\fl} 
& &(\fg=\mathfrak{sp}(r,\BR), & &V=V_\bm^{(r)\vee}),\\
&-\left.\frac{1}{2}((m_1-n_1)\gamma_1+\cdots+(m_r-n_r)\gamma_r)\right|_{\fa_\fl}
& &(\fg=\mathfrak{su}(q,s), & &V=V_\bm^{(q)\vee}\boxtimes V_\bn^{(s)}),\\
&-\left.\frac{1}{2}((m_1+m_2)\gamma_1+\cdots+(m_{2r-1}+m_{2r})\gamma_r)\right|_{\fa_\fl}
& &(\fg=\mathfrak{so}^*(2s), & &V=V_\bm^{(s)\vee}),\\
&-\left.\frac{1}{2}((m_0+m_1)\gamma_1+(m_0-m_1)\gamma_2)\right|_{\fa_\fl}
& &(\fg=\mathfrak{so}(2,n), & &V=\BC_{m_0}\boxtimes V_\bm^{[n]}). 
\end{align*}
We will omit the superscript $(s)$ or $[n]$ if there is no confusion. 

Next we determine $(\bar{\tau},\bar{V})$ for each representation $(\tau,V)$ of $\fk_\rT^\BC$. 
As in Section \ref{root}, let $\bar{\cdot}$ be the involution of $\fk_\rT^\BC$ fixing $\fl$. 
Then $\bar{\cdot}$ acts on $\fh_\rT^\BC$ anti-linearly, and fixes $\fa_\fl\oplus(\fm_\fl\cap\fh)$. 
Therefore $\bar{\cdot}|_{\fh_\rT^\BC}$ is characterized by 
\begin{align*}
&\overline{t_j}=t_j & &(\fg_\rT=\mathfrak{sp}(r,\BR)),\\
&\overline{t_j}=-t_{q+j},\; \overline{t_{q+j}}=-t_j & &(\fg_\rT=\mathfrak{su}(r,r)),\\
&\overline{t_{2j-1}}=t_{2j},\; \overline{t_{2j}}=t_{2j-1} & &(\fg_\rT=\mathfrak{so}^*(4r)),\\
&\overline{t_j}=\begin{cases} t_j & (j=0,1)\\ -t_j & (j=2,\ldots,s)\end{cases} & &(\fg_\rT=\mathfrak{so}(2,n),\; s=\lfloor n/2\rfloor). 
\end{align*}
We take an element $w\in N_K(\fh)\subset K$ (the normalizer of $\fh$ in $K$, or the ``Weyl group'' of $\fh$) such that 
\begin{align*}
&Ad(w)t_j=t_j & &(\fg_\rT=\mathfrak{sp}(r,\BR),\mathfrak{su}(r,r)),\\
&Ad(w)t_{2j-1}=t_{2j},\; Ad(w)t_{2j}=t_{2j-1} & &(\fg_\rT=\mathfrak{so}^*(4r)), \\
&Ad(w)t_j=\begin{cases}t_j&(j=0,1,s)\\ -t_j&(j=2,3,\ldots,s-1) \end{cases} 
& &(\fg_\rT=\mathfrak{so}(2,n),\; n\in 4\BN,\; s=\lfloor n/2\rfloor),\\
&Ad(w)t_j=\begin{cases}t_j&(j=0,1)\\ -t_j&(j=2,3,\ldots,s) \end{cases} 
& &(\fg_\rT=\mathfrak{so}(2,n),\; n\notin 4\BN,\; s=\lfloor n/2\rfloor).
\end{align*}
Then we have 
\begin{align*}
&Ad(w)\overline{t_j}=t_j & &(\fg_\rT=\mathfrak{sp}(r,\BR),\mathfrak{so}^*(4r)),\\
&Ad(w)\overline{t_j}=-t_{q+j},\; Ad(w)\overline{t_{q+j}}=-t_j & &(\fg_\rT=\mathfrak{su}(r,r)),\\
&Ad(w)\overline{t_j}=\begin{cases}t_j&(j=0,1,\ldots,s-1)\\ -t_s&(j=s) \end{cases} 
& &(\fg_\rT=\mathfrak{so}(2,n),\; n\in 4\BN,\; s=\lfloor n/2\rfloor),\\
&Ad(w)\overline{t_j}=t_j & &(\fg_\rT=\mathfrak{so}(2,n),\; n\notin 4\BN,\; s=\lfloor n/2\rfloor),
\end{align*}
and thus $Ad(w)\bar{\cdot}|_{\fh_\rT^\BC}$ preserves the positive Weyl chamber. 
This implies $Ad(w)\bar{\cdot}$ preserves the Borel subalgebra $\fb\subset\fk_\rT^\BC$. 
Let $(\tau,V)$ be an irreducible $\fk_\rT$-module with highest weight $\mu\in(\fh_\rT^\BC)^\vee$ 
and we extend $\mu$ on $\fb$ such that it is trivial on the nilradical. 
Let $v\in V$ be the highest weight. Then for $b\in\fb$ we have 
\[ d\bar{\tau}(b)(\overline{\tau(w^{-1}){v}})=\overline{d\tau(\bar{b})\tau(w^{-1})v}=\overline{\tau(w^{-1})d\tau(Ad(w)\bar{b})v}
=\overline{\mu(Ad(w)\bar{b})}\,\overline{\tau(w^{-1})v}. \]
Therefore $(\bar{\tau},\bar{V})$ has the highest weight vector $\overline{\tau(w^{-1})v}$ with highest weight 
$t\mapsto\overline{\mu(Ad(w)\bar{t})}$ ($t\in\fh_\rT^\BC$). 
Thus we conclude 
\begin{align*}
\overline{V_\bm^{(r)\vee}} \simeq& V_\bm^{(r)\vee} & &(\fg_\rT=\mathfrak{sp}(r,\BR)),\\
\overline{V_\bm^{(r)\vee}\boxtimes V_\bn^{(r)}} \simeq& V_\bn^{(r)\vee}\boxtimes V_\bm^{(r)} & &(\fg_\rT=\mathfrak{su}(r,r)),\\
\overline{V_\bm^{(2r)\vee}} \simeq& V_\bm^{(2r)\vee} & &(\fg_\rT=\mathfrak{so}^*(4r)),\\
\overline{\BC_{m_0}\boxtimes V_{(m_1,\ldots,m_{s-1},m_s)}^{[n]}} \simeq& \BC_{m_0}\boxtimes V_{(m_1,\ldots,m_{s-1},-m_s)}^{[n]}
& &(\fg_\rT=\mathfrak{so}(2,n),\; n\in 4\BN,\; s=\lfloor n/2\rfloor),\\ 
\overline{\BC_{m_0}\boxtimes V_{(m_1,\ldots,m_{s-1},m_s)}^{[n]}} \simeq& \BC_{m_0}\boxtimes V_{(m_1,\ldots,m_{s-1},m_s)}^{[n]}
& &(\fg_\rT=\mathfrak{so}(2,n),\; n\notin 4\BN,\; s=\lfloor n/2\rfloor).
\end{align*}
In the following sections, we compute the ratio of norms by using Corollary \ref{tubecor}.

\subsection{$Sp(r,\BR)$}
In this subsection we set $G=Sp(r,\BR)$. This is of tube type, and we have 
\begin{gather*}
K\simeq U(r),\quad \fp^\pm\simeq \mathrm{Sym}(r,\BC),\quad L\simeq GL(r,\BR),\quad K_L\simeq O(r), \\
r=r,\quad n=\frac{1}{2}r(r+1),\quad d=1,\quad p=r+1. 
\end{gather*}
We want to calculate the norm $\Vert\cdot\Vert_{\lambda,\tau}$ of $\cO(D,V)$ in the case 
$V=V_{\varepsilon_1+\cdots+\varepsilon_k}^\vee\simeq\bigwedge^k(\BC^r)^\vee$ ($k=0,1,\ldots,r-1$). 
These $V$ have the restricted lowest weight $-\left.\frac{1}{2}(\gamma_1+\cdots+\gamma_s)\right|_{\fa_\fl}$, 
and remain irreducible even if restricted to $K_L=O(r)$, i.e. satisfy assumption (A1') of corollary \ref{tubecor}. 
Thus the norm $\Vert\cdot\Vert_{\lambda,\tau_{\varepsilon_1+\cdots+\varepsilon_k}^\vee}^2$ converges if $\Re\lambda>r$, 
and the normalizing constant $c_\lambda$ is given by 
\[ c_\lambda=\frac{\Gamma_\Omega(\lambda+\varepsilon_1+\cdots+\varepsilon_k)}
{\Gamma_\Omega\left(\lambda+\varepsilon_1+\cdots+\varepsilon_k-\frac{r+1}{2}\right)}
=\frac{\prod_{j=1}^k\Gamma\left(\lambda-\frac{j-1}{2}+1\right)\prod_{j=k+1}^r\Gamma\left(\lambda-\frac{j-1}{2}\right)}
{\prod_{j=1}^k\Gamma\left(\lambda-\frac{j+r}{2}+1\right)\prod_{j=k+1}^r\Gamma\left(\lambda-\frac{j+r}{2}\right)}. \]
First we compute the $K$-type decomposition of $\cO(D,V)_K=\cP(\fp^+)\otimes V_{\varepsilon_1+\cdots+\varepsilon_k}^\vee$. 
To do this, we quote the following lemma. 
\begin{lemma}[{\cite[$\mathsection$79, Example 3]{Z}}]
\[ V_{\bm}^\vee\otimes V_{\varepsilon_1+\cdots+\varepsilon_k}^\vee
=\bigoplus_{\substack{\bk\in\{0,1\}^r,\,|\bk|=k\\ \bm+\bk\in\BZ^r_+}}V_{\bm+\bk}^\vee. \]
\end{lemma}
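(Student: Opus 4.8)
The plan is to reduce to the classical Pieri rule for $GL(r,\BC)$ and then dualize. Since the contragredient functor is exact and satisfies $(A\otimes B)^\vee\cong A^\vee\otimes B^\vee$ and $(\bigoplus_i C_i)^\vee\cong\bigoplus_i C_i^\vee$, applying $\vee$ to both sides shows that the asserted identity is equivalent to the undualized decomposition
\[ V_\bm\otimes V_{\varepsilon_1+\cdots+\varepsilon_k}\cong\bigoplus_{\substack{\bk\in\{0,1\}^r,\,|\bk|=k\\ \bm+\bk\in\BZ^r_+}}V_{\bm+\bk}. \]
Here $V_{\varepsilon_1+\cdots+\varepsilon_k}\cong\bigwedge^k\BC^r$ is the $k$-th fundamental representation, so the problem is exactly to decompose the tensor product of an arbitrary polynomial $GL_r$-module with an exterior power of the standard representation.

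First I would prove this at the level of characters. Writing the character of $V_\bm$ as the Schur polynomial $s_\bm(x_1,\ldots,x_r)$ and the character of $\bigwedge^k\BC^r$ as the elementary symmetric polynomial $e_k(x_1,\ldots,x_r)$, the product is governed by the (dual) Pieri rule $s_\bm\,e_k=\sum_\bn s_\bn$, the sum running over all partitions $\bn$ with at most $r$ parts for which the skew diagram $\bn/\bm$ is a vertical $k$-strip. Because finite-dimensional $GL(r,\BC)$-modules are semisimple and determined by their characters, this identity of symmetric polynomials lifts to the claimed isomorphism of modules.

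Next I would identify the two index sets. By definition $\bn/\bm$ is a vertical $k$-strip precisely when $n_i-m_i\in\{0,1\}$ for every $i$ and $\sum_i(n_i-m_i)=k$; setting $\bk:=\bn-\bm\in\{0,1\}^r$ with $|\bk|=k$, the remaining requirement that $\bn$ be a dominant partition is exactly the condition $\bm+\bk\in\BZ^r_+$. This gives a bijection between the admissible $\bn$ and the $\bk$ appearing in the statement, and simultaneously shows that no $\bn$ with more than $r$ rows can occur, consistent with working in $r$ variables.

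I expect the genuine content to lie in bookkeeping rather than in any deep argument: one must keep the highest-weight conventions straight under dualization, using that the contragredient of the $GL_r$-module of highest weight $(m_1,\ldots,m_r)$ has highest weight $(-m_r,\ldots,-m_1)$, so that $V_{\bm+\bk}^\vee$ is correctly indexed on the right-hand side. If a fully self-contained treatment is wanted, the Pieri rule can be derived from the bialternant (Weyl character) formula for $s_\bm$ or by iterating the $GL_r\downarrow GL_{r-1}$ branching rule; but since this is entirely classical I would instead cite it, as the statement does with \cite{Z}.
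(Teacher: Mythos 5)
Your proof is correct. The paper gives no argument of its own for this lemma---it simply quotes \v{Z}elobenko's book as a known classical fact---and your reduction via contragredient duality (using $V_{\bm}^\vee\cong(V_{\bm})^*$, which is exactly the paper's indexing convention) to the dual, vertical-strip form of the Pieri rule is precisely the standard argument behind that citation.
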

By this lemma and Theorem \ref{HKS}, we have 
\begin{align*}
\cP(\fp^+)\otimes V_{\varepsilon_1+\cdots+\varepsilon_k}^\vee
&=\bigoplus_{\bm\in\BZ^r_{++}}V_{2\bm}^\vee\otimes V_{\varepsilon_1+\cdots+\varepsilon_k}^\vee\\
&=\bigoplus_{\bm\in\BZ^r_{++}}\bigoplus_{\substack{\bk\in\{0,1\}^r,\,|\bk|=k\\ \bm+\bk\in\BZ^r_+}}V_{2\bm+\bk}^\vee. 
\end{align*}
Second, for each $K$-type $V_{2\bm+\bk}^\vee$, we compute 
$V_{2\bm+\bk}^\vee\otimes\overline{V_{\varepsilon_1+\cdots+\varepsilon_k}^\vee}
\simeq V_{2\bm+\bk}^\vee\otimes V_{\varepsilon_1+\cdots+\varepsilon_k}^\vee$. 
\[ V_{2\bm+\bk}^\vee\otimes V_{\varepsilon_1+\cdots+\varepsilon_k}^\vee
=\bigoplus_{\substack{\bk'\in\{0,1\}^r,\,|\bk'|=k\\ 2\bm+\bk+\bk'\in\BZ^r_+}}V_{2\bm+\bk+\bk'}^\vee. \]
By Theorem \ref{sph}, $V_{2\bm+\bk+\bk'}^\vee$ is $K_L$-spherical if and only if each component of $2\bm+\bk+\bk'$ 
is even, that is, $\bk=\bk'$. Thus, the only $K_L$-spherical submodule in 
$V_{2\bm+\bk}^\vee\otimes V_{\varepsilon_1+\cdots+\varepsilon_k}^\vee$ is $V_{2\bm+2\bk}^\vee$, 
and $V_{2\bm+\bk}^\vee$ satisfies the assumption (A2') of Corollary \ref{tubecor} with $\bn=\bm+\bk$. 
Therefore by Corollary \ref{tubecor}, for $f\in V_{2\bm+\bk}^\vee$ we have 
\[ \frac{\Vert f\Vert_{\lambda,\tau_{\varepsilon_1+\cdots+\varepsilon_k}^\vee}^2}
{\Vert f\Vert_{F,\tau_{\varepsilon_1+\cdots+\varepsilon_k}^\vee}^2}
=\frac{(\lambda)_{\varepsilon_1+\cdots+\varepsilon_k}}{(\lambda)_{\bm+\bk}}
=\frac{\prod_{j=1}^k\left(\lambda-\frac{1}{2}(j-1)\right)}{\prod_{j=1}^r\left(\lambda-\frac{1}{2}(j-1)\right)_{m_j+k_j}}. \]

We summarize this subsection. 
\begin{theorem}\label{sprr}
When $G=Sp(r,\BR)$, and $(\tau,V)=(\tau_{\varepsilon_1+\cdots+\varepsilon_k}^\vee,V_{\varepsilon_1+\cdots+\varepsilon_k}^\vee)$, 
$\Vert\cdot\Vert_{\lambda,\tau}^2$ converges if $\Re\lambda>r$, the normalizing constant $c_\lambda$ is given by  
\[ c_\lambda=\frac{\prod_{j=1}^k\Gamma\left(\lambda-\frac{j-1}{2}+1\right)\prod_{j=k+1}^r\Gamma\left(\lambda-\frac{j-1}{2}\right)}
{\prod_{j=1}^k\Gamma\left(\lambda-\frac{j+r}{2}+1\right)\prod_{j=k+1}^r\Gamma\left(\lambda-\frac{j+r}{2}\right)}, \]
the $K$-type decomposition of $\cO(D,V)_K$ is given by 
\[ \cP(\fp^+)\otimes V_{\varepsilon_1+\cdots+\varepsilon_k}^\vee
=\bigoplus_{\bm\in\BZ^r_{++}}\bigoplus_{\substack{\bk\in\{0,1\}^r,\,|\bk|=k\\ \bm+\bk\in\BZ^r_+}}V_{2\bm+\bk}^\vee, \]
and for $f\in V_{2\bm+\bk}^\vee$, the ratio of norms is given by 
\begin{align*}
\frac{\Vert f\Vert_{\lambda,\tau_{\varepsilon_1+\cdots+\varepsilon_k}^\vee}^2}
{\Vert f\Vert_{F,\tau_{\varepsilon_1+\cdots+\varepsilon_k}^\vee}^2}
&=\frac{\prod_{j=1}^k\left(\lambda-\frac{1}{2}(j-1)\right)}{\prod_{j=1}^r\left(\lambda-\frac{1}{2}(j-1)\right)_{m_j+k_j}}\\
&=\frac{1}{\prod_{j=1}^k\left(\lambda-\frac{1}{2}(j-1)+1\right)_{m_j+k_j-1}
\prod_{j=k+1}^r\left(\lambda-\frac{1}{2}(j-1)\right)_{m_j+k_j}}. 
\end{align*}
\end{theorem}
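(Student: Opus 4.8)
The plan is to reduce everything to Corollary \ref{tubecor}. Since $Sp(r,\BR)$ is of tube type, hypothesis (A0) of that corollary holds automatically, so the whole theorem should fall out once I verify the two remaining hypotheses and feed in the structural constants. First I would record the data $K\simeq U(r)$, $K_L\simeq O(r)$, $d=1$, $p=r+1$ and $n/r=(r+1)/2$, and identify the fiber: $V_{\varepsilon_1+\cdots+\varepsilon_k}^\vee\simeq\bigwedge^k(\BC^r)^\vee$ restricts \emph{irreducibly} to $K_L=O(r)$ for $0\le k\le r-1$, which is exactly hypothesis (A1'), and its restricted lowest weight $-\tfrac12(\gamma_1+\cdots+\gamma_k)|_{\fa_\fl}$ records $\bk=\varepsilon_1+\cdots+\varepsilon_k$. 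Because $k\le r-1$ forces $k_r=0$, the convergence criterion $\Re\lambda+k_r>p-1$ of the corollary reads precisely $\Re\lambda>r$, and the normalizing constant is $c_\lambda=\Gamma_\Omega(\lambda+\bk)/\Gamma_\Omega(\lambda+\bk-\tfrac{n}{r})$. I would evaluate this by Gindikin's product formula $\Gamma_\Omega(\bs)=(2\pi)^{(n_\rT-r)/2}\prod_{j}\Gamma\!\left(s_j-(j-1)\tfrac{d}{2}\right)$ with $d=1$; the prefactors cancel and the shift $-\tfrac{n}{r}=-\tfrac{r+1}{2}$ converts each $\Gamma$-argument into the stated form, yielding the displayed $c_\lambda$.

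Next I would produce the $K$-type decomposition. By Hua--Kostant--Schmid (Theorem \ref{HKS}), $\cP(\fp^+)=\bigoplus_{\bm\in\BZ^r_{++}}\cP_\bm(\fp^+)$, and for $Sp(r,\BR)$ the summand of lowest weight $-(m_1\gamma_1+\cdots+m_r\gamma_r)$ is $V_{2\bm}^\vee$. Tensoring with the fiber and applying the quoted branching rule $V_{2\bm}^\vee\otimes V_{\varepsilon_1+\cdots+\varepsilon_k}^\vee=\bigoplus_{\bk\in\{0,1\}^r,\,|\bk|=k,\,\bm+\bk\in\BZ^r_+}V_{2\bm+\bk}^\vee$ reproduces exactly the double sum over $(\bm,\bk)$ claimed in the statement.

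The heart of the argument, and the step I expect to be the main obstacle, is verifying hypothesis (A2') for each summand $W=V_{2\bm+\bk}^\vee$, since that is what pins down the exponent vector $\bn$. Here I would invoke the computation of conjugate representations from Section \ref{explicitroots}, namely $\overline{V_\bm^\vee}\simeq V_\bm^\vee$ for $Sp(r,\BR)$, so that the module governing (A2') is $V_{2\bm+\bk}^\vee\otimes V_{\varepsilon_1+\cdots+\varepsilon_k}^\vee=\bigoplus_{\bk'\in\{0,1\}^r,\,|\bk'|=k}V_{2\bm+\bk+\bk'}^\vee$. By Theorem \ref{sph}, a summand $V_\bl^\vee$ is $O(r)$-spherical if and only if every entry of $\bl$ is even; since $2\bm+\bk+\bk'$ has all entries even precisely when $\bk'=\bk$, there is a \emph{unique} spherical constituent $V_{2\bm+2\bk}^\vee$, of lowest weight parameter $\bn=\bm+\bk$. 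This simultaneously establishes (A2') and guarantees the single lowest weight the hypothesis demands; the delicacy is purely in this parity bookkeeping, and everything hinges on $\bk$ being a $0$--$1$ vector.

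Finally I would substitute $\bk=\varepsilon_1+\cdots+\varepsilon_k$ and $\bn=\bm+\bk$ into the conclusion $\Vert f\Vert_{\lambda,\tau}^2/\Vert f\Vert_{F,\tau}^2=(\lambda)_\bk/(\lambda)_\bn$ of Corollary \ref{tubecor}, expanding via $(\bs)_\bm=\prod_j\left(s_j-(j-1)\tfrac{d}{2}\right)_{m_j}$ at $d=1$. This immediately gives the first expression $\prod_{j=1}^k\!\left(\lambda-\tfrac12(j-1)\right)\big/\prod_{j=1}^r\!\left(\lambda-\tfrac12(j-1)\right)_{m_j+k_j}$; splitting off the leading Pochhammer factor $\left(\lambda-\tfrac12(j-1)\right)$ from each denominator term with $j\le k$ cancels the numerator and produces the fully factored second form. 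This last step is routine manipulation of Pochhammer symbols and requires no new input.
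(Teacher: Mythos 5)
Your proposal is correct and follows essentially the same route as the paper's own proof: reduction to Corollary \ref{tubecor}, the Hua--Kostant--Schmid decomposition combined with the \v{Z}elobenko branching rule for the $K$-types, the self-conjugacy $\overline{V_\bm^\vee}\simeq V_\bm^\vee$ together with Theorem \ref{sph} (parity of the entries of $2\bm+\bk+\bk'$) to verify (A2') with $\bn=\bm+\bk$, and the Pochhammer expansion of $(\lambda)_\bk/(\lambda)_\bn$ at $d=1$. No gaps; this matches the paper's argument step for step.
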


\subsection{$SU(q,s)$}\label{sectsuqs}
In this subsection we set $G=SU(q,s)$, with $q\ge s$. Then we have 
\begin{gather*}
K\simeq S(U(q)\times U(s)),\quad \fp^\pm\simeq M(q,s;\BC), \quad G_T\simeq SU(s,s),\quad K_T\simeq S(U(s)\times U(s)), \\ 
L\simeq \{l\in GL(s,\BC):\det l\in \BR^\times\}, \quad 
K_L\simeq \{k\in U(s):\det k=\pm 1\}, \\
r=s,\quad n=qs,\quad d=2,\quad p=q+s. 
\end{gather*}
We want to calculate the norm $\Vert\cdot\Vert_{\lambda,\tau}$ of $\cO(D,V)$ in the case 
$(\tau,V)=(\tau_\bzero^{(q)\vee}\boxtimes\tau_\bk^{(s)},V_\bzero^{(q)\vee}\otimes V_\bk^{(s)})
=(\bone^{(q)}\boxtimes\tau_\bk^{(s)},\BC\otimes V_\bk^{(s)})$ ($\bk\in\BZ_{++}^s$). 
These $V$ have the restricted lowest weight $-\left.\frac{1}{2}(k_1\gamma_1+\cdots+k_s\gamma_s)\right|_{\fa_\fl}$, 
and remain irreducible even if restricted to $K_L=\mathrm{diag}(\{\pm 1\}\times SU(s))$ 
i.e. satisfy assumption (A1') of corollary \ref{tubecor}. 
Thus $\Vert\cdot\Vert_{\lambda,\tau}^2$ converges if $\Re\lambda+k_s>q+s-1$, and the normalizing constant $c_\lambda$ is given by 
\[ c_\lambda=\frac{\Gamma_\Omega(\lambda+\bk)}{\Gamma_\Omega(\lambda+\bk-q)}
=\prod_{j=1}^s(\lambda-(j-1)+k_j-q)_q. \]
First, we compute the $K$-type decomposition of $\cO(D,V)_K=\cP(\fp^+)\otimes \left(\BC\boxtimes V_\bk^{(s)}\right)$. 
By Theorem \ref{HKS} we have 
\begin{align*}
\cP(\fp^+)\otimes \left(\BC\boxtimes V_\bk^{(s)}\right)
&=\bigoplus_{\bm\in\BZ^s_{++}}\left(V_\bm^{(q)\vee}\boxtimes V_\bm^{(s)}\right)\otimes \left(\BC\boxtimes V_\bk^{(s)}\right)\\
&=\bigoplus_{\bm\in\BZ^s_{++}}\bigoplus_{\bn\in\bm+\mathrm{wt}(\bk)}c^\bn_{\bk,\bm}V_\bm^{(q)\vee}\boxtimes V_\bn^{(s)}. 
\end{align*}
where $V_\bm^{(q)\vee}$ is the abbreviation of $V_{(m_1,\ldots,m_s,0,\ldots,0)}^{(q)\vee}$, 
$\mathrm{wt}(\bk)$ is the set of all weights in the $GL(s,\BC)$-module $V_\bk^{(s)}$, 
and $c^\bn_{\bk,\bm}$ are some non-negative integers. 
Second, let $\mathrm{rest}:\cP(\fp^+)\otimes V\to \cP(\fp^+_\rT)\otimes V$ be the restriction map, as in Section \ref{key}. 
Then we have 
\[ \mathrm{rest}\left(V_\bm^{(q)\vee}\boxtimes V_\bn^{(s)}\right)=V_\bm^{(s)\vee}\boxtimes V_\bn^{(s)}, \]
so each $K$-type $V_\bm^{(q)\vee}\boxtimes V_\bn^{(s)}$ satisfies the assumption (A0) in Corollary \ref{tubecor}. 
Third, we compute the tensor product with $\overline{\BC\boxtimes V_\bn^{(s)}}\simeq V_\bn^{(s)}\boxtimes \BC$. 
\[ \left(V_\bm^{(s)\vee}\boxtimes V_\bn^{(s)}\right)\otimes \left(V_\bk^{(s)\vee}\boxtimes \BC\right)
=\bigoplus_{\bn'\in\bm+\mathrm{wt}(\bk)}c^{\bn'}_{\bk,\bm}V_{\bn'}^{(s)\vee}\boxtimes V_\bn^{(s)}. \]
By Theorem \ref{sph}, $V_{\bn'}^{(s)\vee}\boxtimes V_\bn^{(s)}$ is $K_L$-spherical if and only if $\bn'=\bn$, 
so all irreducible $K_L$-spherical submodules in 
$\left(V_\bm^{(s)\vee}\boxtimes V_\bn^{(s)}\right)\otimes \left(V_\bk^{(s)\vee}\boxtimes \BC\right)$ 
are isomorphic to $V_{\bn}^{(s)\vee}\boxtimes V_\bn^{(s)}$, which has the lowest weight $-(n_1\gamma_1+\cdots+n_s\gamma_s)$. 
Therefore each $K$-type satisfies the assumption (A2'), 
and by Corollary \ref{tubecor}, for $f\in V_\bm^{(q)\vee}\boxtimes V_\bn^{(s)}$ we have 
\[ \frac{\Vert f\Vert_{\lambda,\bone^{(q)}\boxtimes\tau_\bk^{(s)}}^2}{\Vert f\Vert_{F,\bone^{(q)}\boxtimes\tau_\bk^{(s)}}^2}=
\frac{(\lambda)_\bk}{(\lambda)_\bn}
=\frac{\prod_{j=1}^s(\lambda-(j-1))_{k_j}}{\prod_{j=1}^s(\lambda-(j-1))_{n_j}}. \]
We summarize this subsection. 
\begin{theorem}\label{suqs}
When $G=SU(q,s)$ $(q\ge s)$, and $(\tau,V)=(\bone^{(q)}\boxtimes\tau_\bk^{(s)},\BC\otimes V_\bk^{(s)})$ $(\bk\in\BZ_{++}^s)$, 
$\Vert\cdot\Vert_{\lambda,\tau}^2$ converges if $\Re\lambda+k_s>q+s-1$, 
the normalizing constant $c_\lambda$ is given by 
\[ c_\lambda=\prod_{j=1}^s(\lambda-(j-1)+k_j-q)_q, \]
the $K$-type decomposition of $\cO(D,V)_K$ is given by 
\[ \cP(\fp^+)\otimes \left(\BC\boxtimes V_\bk^{(s)}\right)
=\bigoplus_{\bm\in\BZ^s_{++}}\bigoplus_{\bn\in\bm+\mathrm{wt}(\bk)}c^\bn_{\bk,\bm}V_\bm^{(q)\vee}\boxtimes V_\bn^{(s)}, \]
and for $f\in V_\bm^{(q)\vee}\boxtimes V_\bn^{(s)}$, the ratio of norms is given by 
\[ \frac{\Vert f\Vert_{\lambda,\bone^{(q)}\boxtimes\tau_\bk^{(s)}}^2}{\Vert f\Vert_{F,\bone^{(q)}\boxtimes\tau_\bk^{(s)}}^2}
=\frac{\prod_{j=1}^s(\lambda-(j-1))_{k_j}}{\prod_{j=1}^s(\lambda-(j-1))_{n_j}}
=\frac{1}{\prod_{j=1}^s(\lambda-(j-1)+k_j)_{n_j-k_j}}. \]
\end{theorem}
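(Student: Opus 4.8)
The plan is to obtain the theorem as a direct application of Corollary \ref{tubecor}, so the real work is to assemble the structural data for $G=SU(q,s)$ with $q\ge s$ and then to check the three hypotheses (A0), (A1'), (A2') for each $K$-type. First I would record the relevant invariants: $K\simeq S(U(q)\times U(s))$, $\fp^\pm\simeq M(q,s;\BC)$, the tube-type subgroup $G_\rT\simeq SU(s,s)$ with $K_\rT\simeq S(U(s)\times U(s))$, and $L$, $K_L$ as listed at the start of this subsection; the numerical data are $r=s$, $n=qs$, $d=2$, $p=q+s$, so that $n/r=q$. Because $d=2$, the symbol $(\cdot)_\bm$ and $\Gamma_\Omega$ collapse into products of one-variable Pochhammer symbols $(\lambda-(j-1))_{m_j}$, which is exactly what yields the closed forms in the statement.

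Next I would verify (A1'). From the restricted-lowest-weight table of Section \ref{explicitroots}, the fiber $V=\BC\boxtimes V_\bk^{(s)}$ has restricted lowest weight $-\frac{1}{2}(k_1\gamma_1+\cdots+k_s\gamma_s)|_{\fa_\fl}$, and since $K_L\simeq\mathrm{diag}(\{\pm1\}\times SU(s))$ the module $V_\bk^{(s)}$ stays irreducible on restriction to $K_L$; hence (A1') holds. Corollary \ref{tubecor} then immediately supplies convergence for $\Re\lambda+k_s>p-1=q+s-1$ together with $c_\lambda=\Gamma_\Omega(\lambda+\bk)/\Gamma_\Omega(\lambda+\bk-q)$, which a short Gamma-function computation rewrites as $\prod_{j=1}^s(\lambda-(j-1)+k_j-q)_q$.

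The substance of the proof is the $K$-type analysis. I would begin with Theorem \ref{HKS}, writing $\cP(\fp^+)=\bigoplus_\bm V_\bm^{(q)\vee}\boxtimes V_\bm^{(s)}$, and then apply the Littlewood--Richardson rule to $V_\bm^{(s)}\otimes V_\bk^{(s)}$ to obtain $\cP(\fp^+)\otimes(\BC\boxtimes V_\bk^{(s)})=\bigoplus_\bm\bigoplus_{\bn\in\bm+\mathrm{wt}(\bk)}c^\bn_{\bk,\bm}\,V_\bm^{(q)\vee}\boxtimes V_\bn^{(s)}$. For (A0) I would compute the restriction map: since $\fp^+_\rT\simeq M(s,s;\BC)$ sits inside $\fp^+\simeq M(q,s;\BC)$ by truncating the first index to its first $s$ coordinates, and every $\bm$ arising here has length at most $s$, one gets $\mathrm{rest}(V_\bm^{(q)\vee}\boxtimes V_\bn^{(s)})=V_\bm^{(s)\vee}\boxtimes V_\bn^{(s)}$, a single irreducible $K_\rT^\BC$-module. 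Finally, for (A2') I would use $\overline{\BC\boxtimes V_\bn^{(s)}}\simeq V_\bn^{(s)}\boxtimes\BC$ from the conjugation table of Section \ref{explicitroots} and decompose $(V_\bm^{(s)\vee}\boxtimes V_\bn^{(s)})\otimes(V_\bk^{(s)\vee}\boxtimes\BC)=\bigoplus_{\bn'\in\bm+\mathrm{wt}(\bk)}c^{\bn'}_{\bk,\bm}V_{\bn'}^{(s)\vee}\boxtimes V_\bn^{(s)}$; by Theorem \ref{sph} a summand is $K_L$-spherical precisely when the two $U(s)$-highest weights agree, i.e. $\bn'=\bn$, so all spherical constituents are copies of $V_\bn^{(s)\vee}\boxtimes V_\bn^{(s)}$ with the single lowest weight $-(n_1\gamma_1+\cdots+n_s\gamma_s)$. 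Thus (A2') holds with $\bn_{ij}=\bn$, and Corollary \ref{tubecor} delivers the ratio $(\lambda)_\bk/(\lambda)_\bn$, which rewrites as the displayed product.

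The main obstacle I anticipate is bookkeeping rather than any deep difficulty, because $\cP(\fp^+,V)$ is \emph{not} $K$-multiplicity-free here: the coefficients $c^\bn_{\bk,\bm}$ may exceed $1$. What saves the argument is that the quantities feeding Corollary \ref{tubecor} --- the restricted module $\mathrm{rest}(W)=V_\bm^{(s)\vee}\boxtimes V_\bn^{(s)}$ and the resulting lowest weight $\bn$ --- depend only on the pair $(\bm,\bn)$ labelling the $K$-type and not on which copy inside the multiplicity space one chooses. Hence the corollary applies verbatim to each irreducible $W$ of fixed $(\bm,\bn)$, and the ratio is the same across the whole isotypic component. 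The only points demanding care are therefore the two $GL(s,\BC)$ tensor-product computations and the spherical selection rule of Theorem \ref{sph}; once these are confirmed, the passage to the final Pochhammer form is routine.
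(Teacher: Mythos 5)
Your proposal is correct and follows essentially the same route as the paper's own proof: verify (A1') from the restricted lowest weight and the irreducibility of $V_\bk^{(s)}$ under $K_L$, decompose $\cO(D,V)_K$ via Theorem \ref{HKS} and the Littlewood--Richardson rule, check (A0) through the restriction map $\mathrm{rest}(V_\bm^{(q)\vee}\boxtimes V_\bn^{(s)})=V_\bm^{(s)\vee}\boxtimes V_\bn^{(s)}$ and (A2') through the spherical selection rule of Theorem \ref{sph}, and then read off the ratio from Corollary \ref{tubecor}. Your closing remark about multiplicities $c^\bn_{\bk,\bm}>1$ is exactly the point the paper itself makes (in the introduction) --- each $K$-isotypic component sits in a single polynomial space, so the data entering the corollary depend only on $(\bm,\bn)$ --- hence nothing is missing.
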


\subsection{$SO^*(4r)$}\label{sectsostar}
In this subsection we set $G=SO^*(4r)$. Then we have 
\begin{gather*}
K\simeq U(2r),\quad \fp^\pm\simeq \mathrm{Skew}(2r,\BC),\quad 
L\simeq GL(r,\BH),\quad K_L\simeq Sp(r), \\ 
r=r,\quad n=r(2r-1),\quad d=4,\quad p=2(2r-1). 
\end{gather*}
We want to calculate the norm $\Vert\cdot\Vert_{\lambda,\tau}$ of $\cO(D,V)$ in the case 
$V=V_{(k,0,\ldots,0)}^\vee\simeq S^k(\BC^r)^\vee$, or 
$V=V_{\left(\frac{k}{2},\ldots,\frac{k}{2},-\frac{k}{2}\right)}^\vee\simeq S^k(\BC^r)\otimes\det^{-k/2}$ ($k=0,1,2\ldots$) 
(the latter is not defined as the representation of $U(2r)$ if $k$ is odd, so in this case we consider 
the double covering group $K={\widetilde{U}}^2(r)\subset G=\widetilde{SO^*}^2(4r)\subset Spin(4r,\BC)$). 
These $V$ have the restricted lowest weight $-\left.\frac{k}{2}\gamma_1\right|_{\fa_\fl}$ and 
$-\left.\frac{k}{2}(\gamma_1+\cdots+\gamma_{r-1})\right|_{\fa_\fl}$ respectively. 
Also, these $V$ remain irreducible even if restricted to $K_L=Sp(r)$, i.e. satisfy assumption (A1') of corollary \ref{tubecor}. 

First, we deal with $V=V_{(k,0,\ldots,0)}^\vee$ case. Then $\Vert\cdot\Vert_{\lambda,\tau_{(k,0,\ldots,0)}^\vee}^2$ converges 
if $\Re\lambda>4r-3$, and the normalizing constant $c_\lambda$ is given by 
\[ c_\lambda=\frac{\Gamma_\Omega(\lambda+(k,0,\ldots,0))}
{\Gamma_\Omega(\lambda+(k,0,\ldots,0)-(2r-1))}
=(\lambda+k)_{2r-1}\prod_{j=2}^r(\lambda-2(j-1)-(2r-1))_{2r-1}. \]
To begin with, we compute the $K$-type decomposition of $\cO(D,V)_K=\cP(\fp^+)\otimes V_{(k,0,\ldots,0)}^\vee$. 
To do this, we quote the following lemma. 
\begin{lemma}[{\cite[$\mathsection$79, Example 4]{Z}}]
\[ V_\bm^\vee\otimes V_{(k,0,\ldots,0)}^\vee
=\bigoplus_{\substack{\bk\in(\BZ_{\ge 0})^{2r},\;|\bk|=k\\ 0\le k_j\le m_{j-1}-m_j}}V_{\bm+\bk}^\vee. \]
\end{lemma}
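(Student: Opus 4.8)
The statement is, up to contragredient duality, the classical Pieri rule for $GL(2r,\BC)$, and the plan is to reduce it to that. Since here $K^\BC\cong GL(2r,\BC)$ and all weights $\bm\in\BZ^r_{++}$ (extended by zeros to length $2r$) are honest dominant integral weights, I would first take contragredients of both sides. Using $(V\otimes W)^\vee\cong V^\vee\otimes W^\vee$ and $V_\bm^{\vee\vee}\cong V_\bm$, the asserted identity is equivalent to
\[ V_\bm\otimes V_{(k,0,\ldots,0)}=\bigoplus_{\substack{\bk\in(\BZ_{\ge 0})^{2r},\;|\bk|=k\\ 0\le k_j\le m_{j-1}-m_j}}V_{\bm+\bk}, \]
so it suffices to prove this decomposition for non-negative, weakly decreasing highest weights, which is the natural setting for Pieri.

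Next I would pass to characters. The character of the $GL(2r,\BC)$-module $V_\bm$ in variables $x_1,\dots,x_{2r}$ is the Schur polynomial $s_\bm$, while $V_{(k,0,\ldots,0)}\cong S^k(\BC^{2r})$ has character the complete homogeneous symmetric polynomial $h_k$. The displayed decomposition is therefore equivalent to the symmetric-function identity $s_\bm\,h_k=\sum_{\bk}s_{\bm+\bk}$, which is exactly Pieri's rule: $s_\bm h_k=\sum_\nu s_\nu$, summed over all partitions $\nu\supseteq\bm$ for which $\nu/\bm$ is a horizontal strip of size $k$, i.e. $\nu_1\ge m_1\ge\nu_2\ge m_2\ge\cdots\ge\nu_{2r}\ge m_{2r}$ with $|\nu|-|\bm|=k$. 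Because Pieri is multiplicity-free and the Schur polynomials $\{s_\nu\}$ are linearly independent, this character identity lifts back to the claimed isomorphism of $K^\BC$-modules; if one prefers a self-contained argument, the cleanest route is to prove the Pieri identity itself by counting semistandard tableaux of shape $\nu/\bm$ with a single row of content, which forces precisely the horizontal-strip condition.

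The only remaining work is the combinatorial bookkeeping of matching the interlacing condition to the stated inequalities, and this is where I expect the one genuine (if minor) subtlety to lie. Writing $\bk:=\nu-\bm$, the lower interlacing $\nu_j\ge m_j$ becomes $k_j\ge 0$, the size condition $|\nu|-|\bm|=k$ becomes $|\bk|=k$, and the upper interlacing $\nu_j\le m_{j-1}$ becomes $k_j=\nu_j-m_j\le m_{j-1}-m_j$. For $j=1$ one adopts the convention $m_0=+\infty$, so that $k_1$ is unconstrained from above except through $|\bk|=k$; this is implicit in the displayed range $0\le k_j\le m_{j-1}-m_j$ and is the point most easily overlooked. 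Everything else is routine, so no serious obstacle arises beyond correctly identifying the statement as Pieri and tracking this indexing.
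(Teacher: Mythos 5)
Your proposal is correct, but note that the paper itself gives no proof of this lemma at all: it is quoted verbatim from \v{Z}elobenko \cite[$\mathsection$79, Example 4]{Z}, and is used as a black box in the $SO^*(4r)$ computation. What you have written is therefore a self-contained justification of the cited classical fact, and it is the standard one: dualize (using $(V\otimes W)^\vee\cong V^\vee\otimes W^\vee$ and $V_\bm^{\vee\vee}\cong V_\bm$, which is legitimate since the paper's $V_\bm^\vee$ is exactly the contragredient of $V_\bm$), pass to characters where $V_\bm$ has character $s_\bm$ and $V_{(k,0,\ldots,0)}\cong S^k(\BC^{2r})$ has character $h_k$, invoke the Pieri rule, and translate the horizontal-strip interlacing $\nu_1\ge m_1\ge\nu_2\ge m_2\ge\cdots$ into the stated inequalities $k_j\ge 0$, $k_j\le m_{j-1}-m_j$ ($j\ge 2$), with $k_1$ bounded only through $|\bk|=k$. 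Your bookkeeping here is right, and you correctly flag the $j=1$ convention, which is indeed the one point a reader could trip over; it is also exactly what makes the application in the paper work, since for $\bm=(m_1,m_1,\ldots,m_r,m_r)$ the constraints kill $k_j$ at the even positions and leave $k_1$ free. Two cosmetic remarks: the lifting from characters to modules needs only linear independence of the irreducible characters of $GL(2r,\BC)$ (multiplicity-freeness of Pieri is not required for that step, though it is true); and one should remark that $s_\nu$ vanishes in $2r$ variables when $\nu$ has more than $2r$ parts, so the Pieri sum automatically truncates to $\bk\in(\BZ_{\ge 0})^{2r}$ — your setup handles this implicitly. Neither point is a gap.
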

Using this and Theorem \ref{HKS}, we get 
\begin{align*}
\cP(\fp^+)\otimes V_{(k,0,\ldots,0)}^\vee
&=\bigoplus_{\bm\in\BZ^r_{++}}V_{(m_1,m_1,m_2,m_2,\ldots,m_r,m_r)}^\vee\otimes V_{(k,0,\ldots,0)}^\vee\\
&=\bigoplus_{\bm\in\BZ^r_{++}}\bigoplus_{\substack{\bk\in(\BZ_{\ge 0})^r,\;|\bk|=k\\ 0\le k_j\le m_{j-1}-m_j}}
V_{(m_1+k_1,m_1,m_2+k_2,m_2,\ldots,m_r+k_r,m_r)}^\vee. 
\end{align*}
Next, for each $K$-type $V_{(m_1+k_1,m_1,\ldots,m_r+k_r,m_r)}^\vee$, we compute the tensor product with 
$\overline{V_{(k,0,\ldots,0)}^\vee}\simeq V_{(k,0,\ldots,0)}^\vee$. 
\begin{align*}
&V_{(m_1+k_1,m_1,m_2+k_2,m_2,\ldots,m_r+k_r,m_r)}^\vee\otimes V_{(k,0,\ldots,0)}^\vee\\
=&\bigoplus_{\substack{\bl\in(\BZ_{\ge 0})^{2r},\;|\bl|=k\\ 0\le l_{2j-1}\le m_{j-1}-m_j-k_j\\ 0\le l_{2j}\le k_j}}
V_{(m_1+k_1+l_1,m_1+l_2,m_2+k_2+l_3,m_2+l_4,\ldots,m_r+k_r+l_{2r-1},m_r+l_{2r})}^\vee. 
\end{align*}
By Theorem \ref{sph}, $V_{(m_1+k_1+l_1,m_1+l_2,\ldots,m_r+k_r+l_{2r-1},m_r+l_{2r})}^\vee$ is $K_L$-spherical 
if and only if the $(2j-1)$-th component of its lowest weight is equal to the $2j$-th component for each $j$, 
that is, $l_{2j-1}=0$ and $l_{2j}=k_j$. Thus, the only $K_L$-spherical submodule in 
$V_{(m_1+k_1,m_1,\ldots,m_r+k_r,m_r)}^\vee\otimes V_{(k,0,\ldots,0)}^\vee$ is 
$V_{(m_1+k_1,m_1+k_1,\ldots,m_r+k_r,m_r+k_r)}^\vee$, 
and $V_{(m_1+k_1,m_1,\ldots,m_r+k_r,m_r)}^\vee$ satisfies the assumption (A2') of Corollary \ref{tubecor} with $\bn=\bm+\bk$. 
Therefore by Corollary \ref{tubecor}, for $f\in V_{(m_1+k_1,m_1,\ldots,m_r+k_r,m_r)}^\vee$ we have 
\[ \frac{\Vert f\Vert_{\lambda,\tau_{(k,0,\ldots,0)}^\vee}^2}{\Vert f\Vert_{F,\tau_{(k,0,\ldots,0)}^\vee}^2}
=\frac{(\lambda)_{(k,0,\ldots,0)}}{(\lambda)_{\bm+\bk}}
=\frac{(\lambda)_k}{\prod_{j=1}^r(\lambda-2(j-1))_{m_j+k_j}}. \]

Second, we deal with $V=V_{\left(\frac{k}{2},\ldots,\frac{k}{2},-\frac{k}{2}\right)}^\vee$ case. 
Then $\Vert\cdot\Vert_{\lambda,\tau_{(k,0,\ldots,0)}^\vee}^2$ converges 
if $\Re\lambda>4r-3$, and the normalizing constant $c_\lambda$ is given by 
\begin{align*}
c_\lambda=&\frac{\Gamma_\Omega(\lambda+(k,\ldots,k,0))}{\Gamma_\Omega(\lambda+(k,\ldots,k,0)-(2r-1))}\\
=&\prod_{j=1}^{r-1}(\lambda-2(j-1)+k-(2r-1))_{2r-1}(\lambda-2(r-1)-(2r-1))_{2r-1}. 
\end{align*}
Similar to the previous arguments, $K$-type decomposition of 
$\cO(D,V)_K=\cP(\fp^+)\otimes V_{\left(\frac{k}{2},\ldots,\frac{k}{2},-\frac{k}{2}\right)}^\vee$ is given by 
\begin{align*}
\cP(\fp^+)\otimes V_{\left(\frac{k}{2},\ldots,\frac{k}{2},-\frac{k}{2}\right)}^\vee
&=\bigoplus_{\bm\in\BZ^r_{++}}V_{(m_1,m_1,m_2,m_2,\ldots,m_r,m_r)}^\vee\otimes V_{(0,\ldots,0,-k)}^\vee
\otimes V_{\left(\frac{k}{2},\ldots,\frac{k}{2}\right)}^\vee\\
&=\bigoplus_{\bm\in\BZ^r_{++}}\bigoplus_{\substack{\bk\in(\BZ_{\ge 0})^r,\;|\bk|=k\\ 0\le k_j\le m_j-m_{j+1}}}
V_{(m_1,m_1-k_1,m_2,m_2-k_2,\ldots,m_r,m_r-k_r)+\left(\frac{k}{2},\ldots,\frac{k}{2}\right)}^\vee, 
\end{align*}
and for each $K$-type, we can show that the only $K_L$-spherical submodule in 
\[ V_{(m_1,m_1-k_1,\ldots,m_r,m_r-k_r)+\left(\frac{k}{2},\ldots,\frac{k}{2}\right)}^\vee\otimes 
\overline{V_{\left(\frac{k}{2},\ldots,\frac{k}{2},-\frac{k}{2}\right)}^\vee} \]
is $V_{(m_1-k_1,m_1-k_1,\ldots,m_r-k_r,m_r-k_r)+(k,\ldots,k)}^\vee$. 
Thus $V_{(m_1,m_1-k_1,\ldots,m_r,m_r-k_r)+\left(\frac{k}{2},\ldots,\frac{k}{2}\right)}^\vee$ satisfies the assumption (A2') 
of Corollary \ref{tubecor} with $\bn=\bm-\bk+(k,\ldots,k)$. 
Therefore by Corollary \ref{tubecor}, for $f\in V_{(m_1,m_1-k_1,\ldots,m_r,m_r-k_r)+\left(\frac{k}{2},\ldots,\frac{k}{2}\right)}^\vee$ we have 
\[ \frac{\Vert f\Vert_{\lambda,\tau_{(k/2,\ldots,k/2,-k/2)}^\vee}^2}{\Vert f\Vert_{F,\tau_{(k/2,\ldots,k/2,-k/2)}^\vee}^2}
=\frac{(\lambda)_{(k,\ldots,k,0)}}{(\lambda)_{\bm-\bk+k}}
=\frac{\prod_{j=1}^{r-1}(\lambda-2(j-1))_k}{\prod_{j=1}^r(\lambda-2(j-1))_{m_j-k_j+k}}. \]

We summarize this subsection. 
\begin{theorem}\label{sostareven}
When $G=SO^*(4r)$, and $(\tau,V)=(\tau_{(k,0,\ldots,0)}^\vee,V_{(k,0,\ldots,0)}^\vee)$, 
$\Vert\cdot\Vert_{\lambda,\tau}^2$ converges if $\Re\lambda>4r-3$, the normalizing constant $c_\lambda$ is given by 
\[ c_\lambda=(\lambda+k)_{2r-1}\prod_{j=2}^r(\lambda-2(j-1)-(2r-1))_{2r-1}, \]
the $K$-type decomposition of $\cO(D,V)_K$ is given by 
\[ \cP(\fp^+)\otimes V_{(k,0,\ldots,0)}^\vee
=\bigoplus_{\bm\in\BZ^r_{++}}\bigoplus_{\substack{\bk\in(\BZ_{\ge 0})^r,\;|\bk|=k\\ 0\le k_j\le m_{j-1}-m_j}} 
V_{(m_1+k_1,m_1,m_2+k_2,m_2,\ldots,m_r+k_r,m_r)}^\vee, \]
and for $f\in V_{(m_1+k_1,m_1,m_2+k_2,m_2,\ldots,m_r+k_r,m_r)}^\vee$, the ratio of norms is given by 
\[ \frac{\Vert f\Vert_{\lambda,\tau_{(k,0,\ldots,0)}^\vee}^2}{\Vert f\Vert_{F,\tau_{(k,0,\ldots,0)}^\vee}^2}
=\frac{(\lambda)_k}{\prod_{j=1}^r(\lambda-2(j-1))_{m_j+k_j}}
=\frac{1}{(\lambda+k)_{m_1+k_1-k}\prod_{j=2}^r(\lambda-2(j-1))_{m_j+k_j}}. \]

When $G=SO^*(4r)$, and $(\tau,V)=(\tau_{(k/2,\ldots,k/2,-k/2)}^\vee,V_{(k/2,\ldots,k/2,-k/2)}^\vee)$, 
$\Vert\cdot\Vert_{\lambda,\tau}^2$ converges if $\Re\lambda>4r-3$, the normalizing constant $c_\lambda$ is given by 
\[ c_\lambda=\prod_{j=1}^{r-1}(\lambda-2(j-1)+k-(2r-1))_{2r-1}(\lambda-2(r-1)-(2r-1))_{2r-1}, \]
the $K$-type decomposition of $\cO(D,V)_K$ is given by 
\[ \cP(\fp^+)\otimes V_{\left(\frac{k}{2},\ldots,\frac{k}{2},-\frac{k}{2}\right)}^\vee
=\bigoplus_{\bm\in\BZ^r_{++}}\bigoplus_{\substack{\bk\in(\BZ_{\ge 0})^r,\;|\bk|=k\\ 0\le k_j\le m_j-m_{j+1}}}
V_{(m_1,m_1-k_1,m_2,m_2-k_2,\ldots,m_r,m_r-k_r)+\left(\frac{k}{2},\ldots,\frac{k}{2}\right)}^\vee, \]
and for $f\in V_{(m_1,m_1-k_1,m_2,m_2-k_2,\ldots,m_r,m_r-k_r)+\left(\frac{k}{2},\ldots,\frac{k}{2}\right)}^\vee$, 
the ratio of norms is given by 
\begin{align*}
\frac{\Vert f\Vert_{\lambda,\tau_{(k/2,\ldots,k/2,-k/2)}^\vee}^2}{\Vert f\Vert_{F,\tau_{(k/2,\ldots,k/2,-k/2)}^\vee}^2}
&=\frac{\prod_{j=1}^{r-1}(\lambda-2(j-1))_k}{\prod_{j=1}^r(\lambda-2(j-1))_{m_j-k_j+k}}\\
&=\frac{1}{\prod_{j=1}^{r-1}(\lambda+k-2(j-1))_{m_j-k_j}(\lambda-2(r-1))_{m_r-k_r+k}}. 
\end{align*}
\end{theorem}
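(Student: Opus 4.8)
The plan is to obtain both halves of the theorem as direct applications of Corollary \ref{tubecor}, so the real work is to check its hypotheses (A0), (A1'), (A2') and extract the weights $\bk$ and $\bn$. I would first record the data for $G = SO^*(4r)$: it is of tube type, with $K \simeq U(2r)$, $\fp^+ \simeq \mathrm{Skew}(2r,\BC)$, $L \simeq GL(r,\BH)$, $K_L \simeq Sp(r)$, and $r$, $n = r(2r-1)$, $d = 4$, $p = 2(2r-1)$. Tube type makes (A0) automatic. For (A1') I would restrict each fiber to $K_L = Sp(r) \subset U(2r)$: the standard representation $\BC^{2r}$ stays the irreducible defining representation of $Sp(r)$, its symmetric powers $S^k$ remain irreducible (symplectic groups carry no invariant symmetric form, so no contraction $S^k \to S^{k-2}$ exists), and the determinantal twist distinguishing the two fibers is trivial on $Sp(r) \subset SU(2r)$; hence both $V_{(k,0,\ldots,0)}^\vee$ and $V_{(k/2,\ldots,k/2,-k/2)}^\vee$ restrict irreducibly (passing to the relevant double cover when $k$ is odd, as in the fiber's definition). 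Reading the restricted lowest weights $-\frac{k}{2}\gamma_1|_{\fa_\fl}$ and $-\frac{k}{2}(\gamma_1+\cdots+\gamma_{r-1})|_{\fa_\fl}$ identifies the corollary's $\bk$ as $(k,0,\ldots,0)$ and $(k,\ldots,k,0)$; since $k_r = 0$ in both, the convergence bound $\Re\lambda + k_r > p-1$ specializes to $\Re\lambda > 4r-3$.

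Next I would determine the $K$-type decomposition of $\cO(D,V)_K = \cP(\fp^+) \otimes V$. Theorem \ref{HKS} supplies $\cP(\fp^+) = \bigoplus_{\bm \in \BZ^r_{++}} V_{(m_1,m_1,\ldots,m_r,m_r)}^\vee$, reducing the computation to a $GL(2r)$ tensor product. For the first fiber this is the Pieri-type rule (the quoted Zhelobenko example), which inserts a composition $\bk$ of $k$ subject to $0 \le k_j \le m_{j-1}-m_j$ and yields the family $V_{(m_1+k_1,m_1,\ldots,m_r+k_r,m_r)}^\vee$. For the second fiber I would first factor $V_{(k/2,\ldots,k/2,-k/2)}^\vee = V_{(0,\ldots,0,-k)}^\vee \otimes V_{(k/2,\ldots,k/2)}^\vee$, apply the same rule to the first tensor factor, and then twist, producing the family $V_{(m_1,m_1-k_1,\ldots,m_r,m_r-k_r)+(k/2,\ldots,k/2)}^\vee$ with $0 \le k_j \le m_j - m_{j+1}$.

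To verify (A2') and fix $\bn$, for each $K$-type $W$ I would decompose $\mathrm{rest}(W) \otimes \overline{V} \simeq W \otimes V$ and apply Theorem \ref{sph}: a constituent with lowest weight $-\sum_j n_j \gamma_j$ is $K_L$-spherical precisely when its defining weight has equal consecutive coordinates (equivalently the $\gamma$-coefficients lie in $\BZ^r_+$). This parity constraint leaves a single spherical constituent in each case — $V_{(m_1+k_1,m_1+k_1,\ldots,m_r+k_r,m_r+k_r)}^\vee$ in the first and $V_{(m_1-k_1,m_1-k_1,\ldots)+(k,\ldots,k)}^\vee$ in the second — so (A2') holds with $\bn = \bm + \bk$ and $\bn = \bm - \bk + (k,\ldots,k)$ respectively. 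Substituting these $\bk$ and $\bn$ into Corollary \ref{tubecor}, with $d/2 = 2$ so that $(\bs)_\bm = \prod_j (s_j - 2(j-1))_{m_j}$, produces both normalizing constants $c_\lambda$ and both norm ratios.

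The step I expect to be delicate is the uniqueness of the $K_L$-spherical constituent in (A2'): the clean single-product answer (rather than the genuine sum of gamma-ratios of the general Theorem \ref{keythm}) rests entirely on there being exactly one such constituent, which I would establish by combining the explicit $GL(2r)$ branching with the integrality/parity characterization of Theorem \ref{sph}. A secondary nuisance is purely notational — the symbol $\bk$ serves both as the fiber's restricted lowest weight and as the Pieri branching index — so I would keep the two carefully distinguished throughout.
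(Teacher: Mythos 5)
Your proposal is correct and follows essentially the same route as the paper's own proof: the paper likewise invokes Corollary \ref{tubecor} after checking (A1') for $K_L=Sp(r)$, computes the $K$-type decomposition via Theorem \ref{HKS} together with the Zhelobenko Pieri-type rule (including the same factorization $V_{(k/2,\ldots,k/2,-k/2)}^\vee = V_{(0,\ldots,0,-k)}^\vee\otimes V_{(k/2,\ldots,k/2)}^\vee$ for the twisted fiber), and pins down the unique $K_L$-spherical constituent via Theorem \ref{sph} exactly as you do, yielding $\bn=\bm+\bk$ and $\bn=\bm-\bk+(k,\ldots,k)$. The delicate point you flagged (uniqueness of the spherical constituent, via the ``equal consecutive coordinates'' criterion) is precisely how the paper argues it.
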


\subsection{$Spin_0(2,n)$}\label{sectso2n}
In this subsection we set $G=Spin_0(2,n)$, the identity component of the indefinite spin group. 
This is of tube type, and we have 
\begin{gather*}
K\simeq (Spin(2)\times Spin(n))/\{(1,1),(-1,-1)\},\qquad \fp^\pm\simeq \BC^n,\\
r=2,\quad n=n,\quad d=n-2,\quad p=n. 
\end{gather*}
Let $\pi:K^\BC=(Spin(2,\BC)\times Spin(n,\BC))/\{(1,1),(-1,-1)\}\to SO(2,\BC)\times SO(n,\BC)$ be the covering map. 
Then we have 
\begin{gather*}
\pi(L)\simeq SO_0(1,1)\times SO_0(1,n-1)\cup SO_-(1,1)\times SO_-(1,n-1), \\
\pi(K_L)\simeq \{+I_2\}\times SO(n-1)\cup \{-I_2\}\times O_-(n-1), 
\end{gather*}
where $SO_-(p,q),O_-(q)$ are the connected component of $SO(p,q),O(q)$ which does not contain the unit element. 
Each representation of $K^\BC$ is of the form $(\chi^{m_0}\boxtimes \tau_\bm^{[n]},\BC_{m_0}\otimes V_\bm^{[n]})$, 
and sometimes we abbreviate this to $(\tau_{(m_0;\bm)},V_{(m_0;\bm)})$. 

Now we want to calculate the norm $\Vert\cdot\Vert_{\lambda,\tau}$ of $\cO(D,V)$ in the case 
\[ (\tau,V)=\left\{\begin{array}{lll} 
(\chi^{-k}\boxtimes \tau_{(k,\ldots,k,\pm k)},\BC_{-k}\otimes V_{(k,\ldots,k,\pm k)})
&\left(k\in\frac{1}{2}\BZ_{\ge 0}\right)&(n:\text{even}),\\
(\chi^{-k}\boxtimes \tau_{(k,\ldots,k)},\BC_{-k}\otimes V_{(k,\ldots,k)})
&\left(k=0,\frac{1}{2}\right)&(n:\text{odd}). \end{array}\right. \]
These $(\tau,V)$ have the restricted lowest weight $-k\gamma_1$, 
and remain irreducible even if restricted to $K_L$, i.e. satisfy assumption (A1') of corollary \ref{tubecor}. 
Thus $\Vert\cdot\Vert_{\lambda,\tau}^2$ converges if $\Re\lambda>n-1$, and the normalizing constant $c_\lambda$ is given by 
\[ c_\lambda=\frac{\Gamma_\Omega(\lambda+(k,0))}
{\Gamma_\Omega\left(\lambda+(k,0)-\frac{n}{2}\right)}
=\frac{\Gamma\left(\lambda+k\right)\Gamma\left(\lambda-\frac{n-2}{2}\right)}
{\Gamma\left(\lambda+k-\frac{n}{2}\right)\Gamma\left(\lambda-(n-1)\right)}. \]
First we compute the $K$-type decomposition of $\cO(D,V)_K=\cP(\fp^+)\otimes V$. 
To do this, we use the following lemma, which comes from the ``multi-minuscule rule'' \cite[Corollary 2.16]{S}. 
\begin{lemma}\label{tensorso}
\begin{enumerate}
\item Let $m\in\BZ_{\ge 0}$ and $k\in\frac{1}{2}\BZ_{\ge 0}$. 
For two representations $V_{(m,0,\ldots,0)}$ and $V_{(k,\ldots,k,\pm k)}$ of $\mathfrak{so}(2s,\BC)$, 
\[ V_{(m,0,\ldots,0)}\otimes V_{(k,\ldots,k,\pm k)}=\bigoplus_{l=\max\{ -k,k-m\}}^kV_{(m+l,k,\ldots,k,\pm l)} \]
(double sign corresponds) holds. 
\item Let $m\in\BZ_{>0}$. 
For two representations $V_{(m,0,\ldots,0)}$ and $V_{\left(\frac{1}{2},\ldots,\frac{1}{2}\right)}$ of 
$\mathfrak{so}(2s+1,\BC)$, 
\[ V_{(m,0,\ldots,0)}\otimes V_{\left(\frac{1}{2},\ldots,\frac{1}{2}\right)}
=V_{\left(m+\frac{1}{2},\frac{1}{2},\ldots,\frac{1}{2}\right)}
\oplus V_{\left(m-\frac{1}{2},\frac{1}{2},\ldots,\frac{1}{2}\right)} \]
holds. 
\end{enumerate}
\end{lemma}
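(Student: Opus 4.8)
The plan is to treat both parts as instances of tensor products between Cartan powers of minuscule weights of $\mathfrak{so}(n,\BC)$, and to read off the decompositions from the (multi-)minuscule tensor rule. Recall that in type $D_s$ the minuscule representations are the vector representation $V_{(1,0,\ldots,0)}$ and the two half-spin representations $V_{(1/2,\ldots,1/2,\pm1/2)}$, while in type $B_s$ the spin representation $V_{(1/2,\ldots,1/2)}$ is minuscule. Thus $V_{(m,0,\ldots,0)}=V_{m\omega_1}$ and $V_{(k,\ldots,k,\pm k)}=V_{2k\omega_s^{\pm}}$ are Cartan powers of minuscule fundamental weights, and the products in question fall under the scope of Stembridge's multi-minuscule rule \cite[Corollary 2.16]{S}. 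Part (2), in which one factor is the minuscule spin representation itself, needs only the classical single-minuscule rule and serves as a warm-up.

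For part (2) I would argue as follows. For a minuscule $V_\mu$ and any dominant $\lambda$ one has the multiplicity-free expansion $V_\lambda\otimes V_\mu=\bigoplus_{\nu}V_{\lambda+\nu}$, the sum ranging over the weights $\nu$ of $V_\mu$ (all of multiplicity one, forming a single Weyl orbit) for which $\lambda+\nu$ is dominant. Here $\lambda=(m,0,\ldots,0)$ and the weights of the spin representation are $\nu=(\pm\frac12,\ldots,\pm\frac12)$, so $\lambda+\nu=(m\pm\frac12,\pm\frac12,\ldots,\pm\frac12)$. The type-$B_s$ dominance chain $\Lambda_1\ge\cdots\ge\Lambda_s\ge0$ forces $\nu_s=\cdots=\nu_2=\frac12$, while $m\ge1$ makes $m+\nu_1\ge\frac12$ for either sign of $\nu_1$. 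Hence only $\nu=(\pm\frac12,\frac12,\ldots,\frac12)$ survive, giving exactly the two summands $V_{(m\pm1/2,1/2,\ldots,1/2)}$.

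For part (1) the same philosophy applies, but now \emph{both} factors are nontrivial Cartan powers, so the single-minuscule rule no longer suffices and I would invoke the multi-minuscule rule of \cite[Corollary 2.16]{S} directly, specialized to $\lambda=m\omega_1$ and $\mu=2k\omega_s^{\pm}$; it produces a multiplicity-free sum whose combinatorial parametrization names the constituents. To pin down the explicit list I would use the a priori constraint that every constituent highest weight $\Lambda$ is dominant and of the form $\Lambda=(k,\ldots,k,\pm k)+\nu''$ with $\nu''=(c_1,\ldots,c_s)$ a weight of $V_{(m,0,\ldots,0)}$ (so $\sum|c_i|\le m$); imposing the type-$D_s$ dominance conditions $\Lambda_1\ge\cdots\ge\Lambda_{s-1}\ge|\Lambda_s|$ singles out the candidate family $\Lambda=(m+l,k,\ldots,k,\pm l)$ with $\max\{-k,k-m\}\le l\le k$, after which the rule guarantees that each of these occurs exactly once. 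As an independent check that these exhaust the product with multiplicity one, I would confirm the dimension identity $\dim V_{(m,0,\ldots,0)}\cdot\dim V_{(k,\ldots,k,\pm k)}=\sum_{l}\dim V_{(m+l,k,\ldots,k,\pm l)}$ through the Weyl dimension formula.

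The main obstacle lies entirely in part (1): the intermediate summands (those with $k-m<l<k$) are \emph{not} of Parthasarathy--Ranga-Rao--Varadarajan type, i.e.\ their highest weights are not obtained by adding an extremal weight of one factor to the highest weight of the other, so the easy component theorems do not produce them. Establishing that each such $l$ genuinely occurs, and with multiplicity exactly one, is what forces the use of the full multi-minuscule combinatorics (or, failing that, the dimension count above together with a lower bound for each constituent). By contrast, the dominance bookkeeping and the assembly of the two warm-up summands in part (2) are entirely routine.
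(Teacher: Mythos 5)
Your overall route is the same as the paper's: for this lemma the paper offers no proof at all, only the attribution to the ``multi-minuscule rule'' \cite[Corollary 2.16]{S}, and your part (2) argument — the classical single-minuscule tensor rule applied to the spin representation of $\mathfrak{so}(2s+1,\BC)$, with dominance forcing $\nu_2=\cdots=\nu_s=\frac{1}{2}$ and $m\ge 1$ allowing both signs of $\nu_1$ — is correct, complete, and exactly what that rule specializes to.

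The gap is in part (1), in the step where you extract the explicit list. You assert that the constraint $\Lambda=(k,\ldots,k,\pm k)+\nu''$, with $\nu''$ a weight of $V_{(m,0,\ldots,0)}$, together with the dominance conditions $\Lambda_1\ge\cdots\ge\Lambda_{s-1}\ge|\Lambda_s|$, ``singles out the candidate family'' $(m+l,k,\ldots,k,\pm l)$. This is false. Take $\mathfrak{so}(6,\BC)$ $(s=3)$, $m=4$, $k=1$: the vector $\nu''=(2,1,-1)$ is a weight of $V_{(4,0,0)}$ (every integer vector with $\sum_i|c_i|=m$ is a weight of the degree-$m$ harmonic space), and $\Lambda=(1,1,1)+(2,1,-1)=(3,2,0)$ is dominant, yet $V_{(3,2,0)}$ (of dimension $300$) does not occur in the product: the three members of the asserted family already exhaust it, since $\dim V_{(4,0,0)}\cdot\dim V_{(1,1,1)}=105\cdot 10=1050=540+384+126=\dim V_{(5,1,1)}+\dim V_{(4,1,0)}+\dim V_{(3,1,-1)}$. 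So your filter does not produce the claimed parametrization, and you in fact have the difficulty inverted: the hard point is not only that the intermediate (non-PRV) summands must be shown to occur, but also that dominant PRV-type weights such as $(3,2,0)$ must be shown \emph{not} to occur, and dominance bookkeeping cannot do that. To close the gap you must either genuinely run the combinatorics of \cite[Corollary 2.16]{S} for the pair $\left(m\omega_1,\,2k\omega_s^{\pm}\right)$ — which is all the paper itself does, by citation — or carry out in full the fallback you only sketch: prove that each $V_{(m+l,k,\ldots,k,\pm l)}$ with $\max\{-k,k-m\}\le l\le k$ occurs at least once, and then verify the Weyl dimension identity to rule out everything else.
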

By Theorem \ref{HKS}, 
\[ \cP(\fp^+)=\bigoplus_{\bm\in\BZ^2_{++}}\BC_{-(m_1+m_2)}\boxtimes V_{(m_1-m_2,0,\ldots,0)} \]
holds, and combining with the above lemma, we have 
\[ \cP(\fp^+)\otimes \left(\BC_{-k}\boxtimes V_{(k,\ldots,k,\pm k)}\right)
=\bigoplus_{\bm\in\BZ^2_{++}}\bigoplus_{\substack{-k\le l\le k\\ m_1-m_2+l\ge k}} 
\BC_{-(m_1+m_2+k)}\boxtimes V_{(m_1-m_2+l,k,\ldots,k,\pm l)} \] 
for $n=2s$ even case, $k\in\frac{1}{2}\BZ_{\ge 0}$, and 
\[ \cP(\fp^+)\otimes \left(\BC_{-k}\boxtimes V_{(k,\ldots,k)}\right)
=\bigoplus_{\bm\in\BZ^2_{++}}\bigoplus_{\substack{-k\le l\le k\\ m_1-m_2+l\ge k}} 
\BC_{-(m_1+m_2+k)}\boxtimes V_{(m_1-m_2+l,k,\ldots,k,|l|)} \] 
for $n=2s+1$ odd case, $k=0,\frac{1}{2}$. 

Second, we seek $K_L$-spherical subspace in the tensor product of each $K$-type and $\bar{V}$. 
To begin with, we deal with $n=2s$ even, $V=V_{(-k;k,\ldots,k,k)}$ case. Suppose 
\[ V_{(-(n_1+n_2):n_1-n_2,0,\ldots,0)}\subset V_{(-(m_1+m_2+k);m_1-m_2+l,k,\ldots,k,l)}\otimes\overline{V_{(-k;k,\ldots,k)}}, \]
where $(n_1,n_2)\in\BZ^2_+$. This implies that $(-(n_1+n_2)+(m_1+m_2+k);(n_1-n_2)-(m_1-m_2+l),-k,\ldots,-k,-l)$ is 
a weight of $\overline{V_{(-k;k,\ldots,k)}}$. 
However, the weight of this form is only $(-k;l,-k,\ldots,-k,-l)$, since 
$\overline{V_{(-k;k,\ldots,k,k)}}$ has the lowest weight $(-k;-k,\ldots,-k,k)$, 
and root vectors $x_{\varepsilon_1-\varepsilon_s},x_{\varepsilon_1+\varepsilon_s}\in\mathfrak{so}(2s)$ commute with each other. 
Therefore we have 
\[ \left\{\begin{array}{l} (n_1+n_2)-(m_1+m_2+k)=k,\\ (n_1-n_2)-(m_1-m_2+l)=l. \end{array}\right. \qquad
\therefore \left\{\begin{array}{l} n_1=m_1+k+l,\\ n_2=m_2+k-l. \end{array}\right. \]
Thus all $K_L$-spherical irreducible submodule in $V_{(-(m_1+m_2+k);m_1-m_2+l,k,\ldots,k,l)}\otimes\overline{V_{(-k;k,\ldots,k)}}$ 
have the same lowest weight $-(n_1\gamma_1+n_2\gamma_2)$ with $(n_1,n_2)=(m_1+k+l,m_2+k-l)$, 
and all $K$-types satisfy the assumption (A2') of Corollary \ref{tubecor}. 
The same argument holds for $V=V_{(-k;k,\ldots,k,-k)}$ case, and also for $n$ odd case, noting that 
only $k=0,\frac{1}{2}$ is allowed, and $n_1,n_2\in\BZ$. 
Therefore by Corollary \ref{tubecor}, for $f\in V_{(-(m_1+m_2+k);m_1-m_2+l,k,\ldots,k,\pm l)}$ or 
$V_{(-(m_1+m_2+k);m_1-m_2+l,k,\ldots,k,|l|)}$, we have 
\[ \frac{\Vert f\Vert_{\lambda,\tau}^2}{\Vert f\Vert_{F,\tau}^2}
=\frac{(\lambda)_{(2k,0)}}{(\lambda)_{(m_1+k+l,m_2+k-l)}}
=\frac{(\lambda)_{2k}}{(\lambda)_{m_1+k+l}\left(\lambda-\frac{n-2}{2}\right)_{m_2+k-l}}. \]

We summarize this subsection. 
\begin{theorem}\label{spin2n}
When $G=Spin_0(2,n)$ and 
\[ (\tau,V)=\left\{\begin{array}{lll} 
(\chi^{-k}\boxtimes \tau_{(k,\ldots,k,\pm k)},\BC_{-k}\otimes V_{(k,\ldots,k,\pm k)})
&\left(k\in\frac{1}{2}\BZ_{\ge 0}\right)&(n:\text{even}),\\
(\chi^{-k}\boxtimes \tau_{(k,\ldots,k)},\BC_{-k}\otimes V_{(k,\ldots,k)})
&\left(k=0,\frac{1}{2}\right)&(n:\text{odd}), \end{array}\right. \]
$\Vert\cdot\Vert_{\lambda,\tau}^2$ converges if $\Re\lambda>n-1$, 
the normalizing constant $c_\lambda$ is given by 
\[ c_\lambda=\frac{\Gamma\left(\lambda+k\right)\Gamma\left(\lambda-\frac{n-2}{2}\right)}
{\Gamma\left(\lambda+k-\frac{n}{2}\right)\Gamma\left(\lambda-(n-1)\right)}, \]
the $K$-type decomposition of $\cO(D,V)_K$ is given by 
\begin{align*}
\cP(\fp^+)\otimes V=\begin{cases}
\ds \bigoplus_{\bm\in\BZ^2_{++}}\bigoplus_{\substack{-k\le l\le k\\ m_1-m_2+l\ge k}} 
\BC_{-(m_1+m_2+k)}\boxtimes V_{(m_1-m_2+l,k,\ldots,k,\pm l)} &(n:\text{even}), \\ 
\ds \bigoplus_{\bm\in\BZ^2_{++}}\bigoplus_{\substack{-k\le l\le k\\ m_1-m_2+l\ge k}} 
\BC_{-(m_1+m_2+k)}\boxtimes V_{(m_1-m_2+l,k,\ldots,k,|l|)} &(n:\text{odd}), \end{cases}
\end{align*}
and for $f\in \BC_{-(m_1+m_2+k)}\boxtimes V_{(m_1-m_2+l,k,\ldots,k,\pm l)}$ or 
$\BC_{-(m_1+m_2+k)}\boxtimes V_{(m_1-m_2+l,k,\ldots,k,|l|)}$, the ratio of norms is given by 
\[ \frac{\Vert f\Vert_{\lambda,\tau}^2}{\Vert f\Vert_{F,\tau}^2}
=\frac{(\lambda)_{2k}}{(\lambda)_{m_1+k+l}\left(\lambda-\frac{n-2}{2}\right)_{m_2+k-l}}
=\frac{1}{(\lambda+2k)_{m_1-k+l}\left(\lambda-\frac{n-2}{2}\right)_{m_2+k-l}}. \]
\end{theorem}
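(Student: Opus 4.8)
The plan is to deduce every assertion from Corollary \ref{tubecor}. Since $Spin_0(2,n)$ is of tube type, we have $\fg=\fg_\rT$, the restriction map $\mathrm{rest}$ is the identity, and hypothesis (A0) holds automatically; thus for each $K^\BC$-irreducible $W\subset\cP(\fp^+,V)$ it suffices to verify (A1') and (A2'). I would first record the Jordan data $r=2$, $d=n-2$, $p=n$, so that every generalized Pochhammer symbol $(\lambda)_\bm$ collapses to the two factors $(\lambda)_{m_1}(\lambda-\frac{n-2}{2})_{m_2}$. The minimal $K$-type $V$ has restricted lowest weight $-k\gamma_1$, hence $\bk=(2k,0)$ and $k_r=k_2=0$; feeding $p=n$ into the convergence condition $\Re(\lambda)+k_r>p-1$ gives the range $\Re\lambda>n-1$, and the normalizing constant $c_\lambda=\Gamma_\Omega(\lambda+\bk)/\Gamma_\Omega(\lambda+\bk-\frac{n}{2})$ follows from the Gindikin product formula for $\Gamma_\Omega$.

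For (A1') I would use the description $\pi(K_L)\simeq\{+I_2\}\times SO(n-1)\cup\{-I_2\}\times O_-(n-1)$ to reduce irreducibility to the branching $Spin(n)\downarrow Spin(n-1)$. The interlacing rule applied to the label $(k,\dots,k,\pm k)$ forces every intermediate entry to equal $k$, so the restriction is the single irreducible $V_{(k,\dots,k)}$ of $Spin(n-1)$; irreducibility under the full, possibly disconnected, $K_L$ then follows. This leaves (A2') as the only substantive step.

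The $K$-type decomposition comes from Theorem \ref{HKS}, which for $r=2$ reads $\cP(\fp^+)=\bigoplus_{\bm\in\BZ^2_{++}}\BC_{-(m_1+m_2)}\boxtimes V_{(m_1-m_2,0,\dots,0)}$, tensored with $V$ by the multi-minuscule rule of Lemma \ref{tensorso}; this yields the asserted sum over $l$ with $-k\le l\le k$ and $m_1-m_2+l\ge k$, uniformly in the parity of $n$. To verify (A2') I would fix one summand $W$ and analyze the $K_L$-spherical constituents of $W\otimes\overline{V}$. The conjugate $\overline{V}$ is computed from the Weyl element $w\in N_K(\fh)$ of Section 4.1, which negates the last $\mathfrak{so}(n)$-coordinate exactly when $n\in4\BN$; by Theorem \ref{sph} any spherical constituent has the form $V_{(-(n_1+n_2);\,n_1-n_2,0,\dots,0)}$ with $(n_1,n_2)\in\BZ^2_+$, so its occurrence forces a specific vector to be a weight of $\overline{V}$. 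Because the two extreme root vectors $x_{\varepsilon_1-\varepsilon_s}$ and $x_{\varepsilon_1+\varepsilon_s}$ commute, there is exactly one such weight, pinning down $n_1=m_1+k+l$ and $n_2=m_2+k-l$.

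This uniqueness is precisely (A2') with $\bn=(m_1+k+l,\,m_2+k-l)$, so Corollary \ref{tubecor} gives $\|f\|_{\lambda,\tau}^2/\|f\|_{F,\tau}^2=(\lambda)_\bk/(\lambda)_\bn$; expanding with $\bk=(2k,0)$ and $d=n-2$ produces the stated ratio $\frac{(\lambda)_{2k}}{(\lambda)_{m_1+k+l}(\lambda-\frac{n-2}{2})_{m_2+k-l}}$ and its reciprocal form. I expect the main obstacle to be the weight bookkeeping in (A2'): one must treat the half-spin sign ambiguities ($\pm k$ and $\pm l$) together with the $n\in4\BN$ versus $n\notin4\BN$ behaviour of $\overline{V}$, and confirm that the commuting-root-vector constraint always selects a single lowest weight rather than an interval. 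Once this rigidity is secured, the remaining steps are routine evaluations of $\Gamma_\Omega$.
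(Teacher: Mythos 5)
Your proposal follows the paper's own proof of Theorem \ref{spin2n} essentially step by step: the paper likewise specializes Corollary \ref{tubecor} (with (A0) automatic by tube type), obtains the $K$-type decomposition from Theorem \ref{HKS} together with Lemma \ref{tensorso}, and proves (A2$'$) by precisely your rigidity argument --- a $K_L$-spherical constituent of $W\otimes\overline{V}$ forces a weight of $\overline{V}$ of a prescribed shape, and the commutation of $x_{\varepsilon_1-\varepsilon_s}$ and $x_{\varepsilon_1+\varepsilon_s}$ leaves exactly one such weight, pinning down $\bn=(m_1+k+l,\,m_2+k-l)$; the paper also checks, as you anticipate, that the $\pm$ and parity cases go through verbatim.

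However, one output of your computation will not match the statement, and you should know why. With $\bk=(2k,0)$ (which is correct, since the restricted lowest weight is $-k\gamma_1=-\tfrac{2k}{2}\gamma_1$), Corollary \ref{tubecor} gives
\[ c_\lambda=\frac{\Gamma_\Omega(\lambda+(2k,0))}{\Gamma_\Omega\bigl(\lambda+(2k,0)-\tfrac{n}{2}\bigr)}
=\frac{\Gamma(\lambda+2k)\,\Gamma\bigl(\lambda-\tfrac{n-2}{2}\bigr)}{\Gamma\bigl(\lambda+2k-\tfrac{n}{2}\bigr)\,\Gamma(\lambda-(n-1))}, \]
whereas the theorem (and the paper's proof in Section \ref{sectso2n}) writes $k$ in place of $2k$, i.e.\ substitutes $(k,0)$ into $\Gamma_\Omega$. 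These cannot both be right: the corollary uses one and the same $\bk$ in $c_\lambda$ and in the ratio $(\lambda)_\bk/(\lambda)_\bn$, and the stated ratio unambiguously uses $\bk=(2k,0)$ (on the minimal $K$-type, $\bm=\bzero$ and $l=k$, one has $\bn=(2k,0)$ and the ratio must equal $1$). Your version is the correct one. For instance, under $\mathfrak{so}(2,3)\simeq\mathfrak{sp}(2,\BR)$ the present case $n=3$, $k=\tfrac12$ is the case $r=2$, $k=1$ of Theorem \ref{sprr} with the same $\lambda$ (the fiber $\BC_{-1/2}\boxtimes V_{(1/2)}$ becomes $\bigwedge^1(\BC^2)^\vee$), where $c_\lambda=\Gamma(\lambda+1)/\Gamma(\lambda-2)=\lambda(\lambda-1)(\lambda-2)$; your formula reproduces this, while the stated one, $\Gamma(\lambda+\tfrac12)\Gamma(\lambda-\tfrac12)\big/\bigl[\Gamma(\lambda-1)\Gamma(\lambda-2)\bigr]$, even has a pole at $\lambda=\tfrac12$. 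So treat the stated $c_\lambda$ as an error in the paper; the remaining assertions (convergence range, decomposition, ratio of norms) are exactly what your argument proves.

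A smaller inaccuracy: your verification of (A1$'$) is correct as written only for $n$ even. For $n$ odd, interlacing gives $V^{[n]}_{(1/2,\ldots,1/2)}\big|_{Spin(n-1)}=V^{[n-1]}_{(1/2,\ldots,1/2,1/2)}\oplus V^{[n-1]}_{(1/2,\ldots,1/2,-1/2)}$, two constituents, and it is only the disconnected component of $K_L$ (the $O_-(n-1)$ part) that exchanges them and makes $V|_{K_L}$ irreducible. This is exactly why the theorem allows only $k\in\{0,\tfrac12\}$ for odd $n$: for $k\ge1$ the restriction has at least three constituents, only two of which are identified by the outer component, so (A1$'$) genuinely fails there.
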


\section{Norm computation: Non-tube type case}\label{sectnontube}
When $G$ is of non-tube type, we cannot compute the norm by just using Theorem \ref{keythm}, 
because it is difficult to determine the constants $a_{ij}$ in Theorem \ref{keythm}. 
Thus we have to use other informations to compute the norm. 
In this section we compute the norm in the case
\begin{itemize}
\item $(G,V)=(SU(q,s),\BC\boxtimes V')$ ($q<s$), by direct computation, 
\item $(G,V)=(SO^*(4r+2),S^k(\BC^{2r+1})^\vee)$, by using the embedding $SO^*(4r+2)\subset SO^*(4r+4)$, 
\item $(G,V)=(SO^*(4r+2),S^k(\BC^{2r+1})\otimes\det^{-k/2})$, by combining Theorem \ref{keythm} and the embedding $SU(1,2r)\subset SO^*(4r+2)$. 
\end{itemize}
Also, for $G=E_{6(-14)}$, we try to compute the norm as best we can, by using Theorem \ref{keythm}.

\subsection{Explicit realization of $G$}
Before starting the computation, we fix the realization of $G=SU(q,s)$, $SO^*(2s)$. 
We realize $SU(q,s)$, $SO^*(2s)$ as 
\begin{align}
SU(q,s):=&\left\{g\in SL(q+s,\BC):g\begin{pmatrix}I_q&0\\0&-I_s\end{pmatrix}g^*=\begin{pmatrix}I_q&0\\0&-I_s\end{pmatrix}\right\}, 
\label{suqsrealize} \\
SO^*(2s):=&\left\{g\in GL(2s,\BC):g\begin{pmatrix}0&I_s\\I_s&0\end{pmatrix}{}^t\hspace{-1pt}g=\begin{pmatrix}0&I_s\\I_s&0\end{pmatrix},\,
g\begin{pmatrix}0&I_s\\-I_s&0\end{pmatrix}=\begin{pmatrix}0&I_s\\-I_s&0\end{pmatrix}\bar{g}\right\}, \label{sostarrealize}
\end{align}
and realize $K^\BC$, $\fp^\pm$ as 
\begin{align*}
K^\BC:=&\left\{\begin{pmatrix}a&0\\0&d\end{pmatrix}:
\begin{array}{ll}(a,d)\in S(GL(q,\BC)\times GL(s,\BC))& (G=SU(q,s))\\
a\in GL(s,\BC),\; d={}^t\hspace{-1pt}a^{-1}& (G=SO^*(2s))\end{array}\right\},\\
\fp^+:=&\left\{\begin{pmatrix}0&b\\0&0\end{pmatrix}:
\begin{array}{ll}b\in M(q,s;\BC)& (G=SU(q,s))\\
b\in \mathrm{Skew}(s,\BC)& (G=SO^*(2s))\end{array}\right\},\\
\fp^-:=&\left\{\begin{pmatrix}0&0\\c&0\end{pmatrix}:
\begin{array}{ll}c\in M(s,q;\BC)& (G=SU(q,s))\\
c\in \mathrm{Skew}(s,\BC)& (G=SO^*(2s))\end{array}\right\}.
\end{align*}
Then under the identification $\fp^+\simeq M(q,s;\BC)$ or $\mathrm{Skew}(2s,\BC)$ by $\begin{pmatrix}0&b\\0&0\end{pmatrix}\mapsto b$, 
we have 
\begin{align}
D=&\{w\in M(q,s;\BC):I_q-ww^*\text{ is positive definite.}\} &(G=SU(q,s)), \label{suqsD}\\ 
D=&\{w\in\mathrm{Skew}(s,\BC):I_s-ww^*\text{ is positive definite.}\} &(G=SO^*(2s)). \label{sostarD}
\end{align}
For a representation $(\tau_1\boxtimes\tau_2,V_1\otimes V_2)$ of 
$K^\BC=S(GL(q,\BC)\times GL(s,\BC))$, the universal covering group $\widetilde{SU}(q,s)$ acts on $\cO(D,V_1\otimes V_2)$ by 
\begin{multline}\label{suqsrepn} 
\tau_\lambda\left(\begin{pmatrix}a&b\\c&d\end{pmatrix}^{-1}\right)f(w)
=\det(cw+d)^{-\lambda}\left(\tau_1\left(a^*+wb^*\right)\boxtimes\tau_2\left((cw+d)^{-1}\right)\right)\\
\times f\left((aw+b)(cw+d)^{-1}\right), 
\end{multline}
and for a representation $(\tau,V)$ of $K^\BC=GL(s,\BC)$, the universal covering group $\widetilde{SO^*}(2s)$ acts on $\cO(D,V)$ by 
\begin{equation}\label{sostarrepn}
\tau_\lambda\left(\begin{pmatrix}a&b\\c&d\end{pmatrix}^{-1}\right)f(w)
=\det(cw+d)^{-\lambda/2}\tau\left({}^t\hspace{-1pt}(cw+d)\right)f\left((aw+b)(cw+d)^{-1}\right), 
\end{equation}
We note that we have the identities, for $w\in M(q,s;\BC)$ and $\begin{pmatrix}a&b\\c&d\end{pmatrix}\in U(q,s)$, 
\[ \det(I_q-ww^*)=\det(I_s-w^*w), \qquad \det(a^*+wb^*)=\det\begin{pmatrix}a&b\\c&d\end{pmatrix}^{-1}\det(cw+d). \]
Therefore, on $SU(q,s)$, $\det(a^*+wb^*)=\det(cw+d)$ holds. 
We also note that $\det(cw+d)^{-\lambda}$ is not well-defined on $G$ for general $\lambda\in\BC$, 
but is well-defined on the universal covering group $\tilde{G}$. 
These representations preserve the inner product 
\begin{align}
\langle f,g\rangle_{\lambda,\tau}=&\frac{c_\lambda}{\pi^{qs}}\int_D
\left(\left(\tau_1\left((I_q-ww^*)^{-1}\right)\boxtimes\tau_2\left(I_s-w^*w\right)\right)f(w),g(w)\right)_{\tau_1\boxtimes\tau_2} \notag \\
&\hspace{198pt}\times\det(I_q-ww^*)^{\lambda-(q+s)}dw, \label{suqsinner}\\
\langle f,g\rangle_{\lambda,\tau}=&\frac{c_\lambda}{\pi^{s(s-1)/2}}\int_D\left(\tau\left((I_s-ww^*)^{-1}\right)f(w),g(w)\right)_\tau 
\det(I_s-ww^*)^{\frac{1}{2}(\lambda-2(s-1))}dw. \label{sostarinner}
\end{align}
respectively. Let $\fh\subset\fg$ be the subspace which consists of all diagonal matrices, and define the linear form $\varepsilon_i$ 
on $\fh^\BC$ by $\varepsilon_i(E_{jj})=\delta_{ij}$. We define the positive system $\Delta_+(\fg^\BC,\fh^\BC)$ as in Section \ref{explicitroots}.

\subsection{$SU(q,s)$}
In this subsection we set $G=SU(q,s)$, with $q<s$, which is realized explicitly as (\ref{suqsrealize}). 
Then we have 
\begin{gather*}
K\simeq S(U(q)\times U(s)),\quad \fp^\pm\simeq M(q,s;\BC), \quad G_T\simeq SU(q,q),\quad K_T\simeq S(U(q)\times U(q)), \\ 
L\simeq \{l\in GL(q,\BC):\det l\in \BR^\times\}, \quad 
K_L\simeq \{k\in U(q):\det k=\pm 1\}, \\
r=q,\quad n=qs,\quad d=2,\quad p=q+s. 
\end{gather*}
We set $(\tau,V)=(\tau_\bzero^{(q)\vee}\boxtimes\tau_\bk^{(s)},V_\bzero^{(q)\vee}\otimes V_\bk^{(s)})
=(\bone^{(q)}\boxtimes\tau_\bk^{(s)},\BC\otimes V_\bk^{(s)})$ ($\bk\in\BZ_{++}^s$). 
In this case, the inner product is given by 
\[ \langle f,g\rangle_{\lambda,\bone^{(q)}\boxtimes\tau_\bk^{(s)}}=\frac{c_\lambda}{\pi^{qs}}\int_D
\left(\left(\tau_\bk^{(s)}\left(I_s-w^*w\right)\right)f(w),g(w)\right)_{\tau_\bk^{(s)}}
\det(I_s-w^*w)^{\lambda-(q+s)}dw. \]
The goal of this subsection is to prove the following theorem. 
\begin{theorem}\label{suqsnontube}
When $G=SU(q,s)$ $(q<s)$ and $(\tau,V)=(\bone^{(q)}\boxtimes\tau_\bk^{(s)},\BC\otimes V_\bk^{(s)})$ $(\bk\in\BZ_{++}^s)$, 
$\Vert\cdot\Vert_{\lambda,\tau}^2$ converges if $\Re\lambda+k_s>q+s-1$, 
the normalizing constant $c_\lambda$ is given by 
\[ c_\lambda=\prod_{j=1}^s(\lambda-(j-1)+k_j-q)_q, \]
the $K$-type decomposition of $\cO(D,V)_K$ is given by 
\[ \cP(\fp^+)\otimes \left(\BC\boxtimes V_\bk^{(s)}\right)
=\bigoplus_{\bm\in\BZ^q_{++}}\bigoplus_{\bn\in\bm+\mathrm{wt}(\bk)}c^\bn_{\bk,\bm}V_\bm^{(q)\vee}\boxtimes V_\bn^{(s)}, \]
and for $f\in V_\bm^{(q)\vee}\boxtimes V_\bn^{(s)}$, the ratio of norms is given by 
\[ \frac{\Vert f\Vert_{\lambda,\bone^{(q)}\boxtimes\tau_\bk^{(s)}}^2}{\Vert f\Vert_{F,\bone^{(q)}\boxtimes\tau_\bk^{(s)}}^2}
=\frac{\prod_{j=1}^s(\lambda-(j-1))_{k_j}}{\prod_{j=1}^s(\lambda-(j-1))_{n_j}}
=\frac{1}{\prod_{j=1}^s(\lambda-(j-1)+k_j)_{n_j-k_j}}. \]
\end{theorem}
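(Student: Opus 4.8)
The obstruction to applying Corollary \ref{tubecor} here is that assumption (A0) fails for $q<s$: under restriction to the tube-type subgroup $K^\BC_\rT\simeq S(GL(q,\BC)\times GL(q,\BC))$, the second factor entering through $GL(q,\BC)\hookrightarrow GL(s,\BC)$, the module $V_\bk^{(s)}$ becomes reducible as soon as $\bk$ has a nonzero entry in a position $>q$, so $\mathrm{rest}(W)$ is not $K^\BC_\rT$-irreducible. The plan is therefore to evaluate the ratio $R_{\bm,\bn}(\lambda):=\Vert f\Vert_{\lambda,\tau}^2/\Vert f\Vert_{F,\tau}^2$ directly from the integral realization (\ref{suqsinner}). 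First I would record the $K$-type decomposition by combining Theorem \ref{HKS} with the Littlewood--Richardson rule for $V_\bm^{(s)}\otimes V_\bk^{(s)}$; the structural point noted in the introduction is that every copy of $V_\bm^{(q)\vee}\boxtimes V_\bn^{(s)}$ sits inside the \emph{single} space $\cP_\bm(\fp^+)\otimes V=V_\bm^{(q)\vee}\boxtimes\bigl(V_\bm^{(s)}\otimes V_\bk^{(s)}\bigr)$, since the first tensor factor $V_\bm^{(q)\vee}$ already pins down $\bm$. Hence the pieces attached to distinct $\bm$ carry disjoint $K$-types and are orthogonal for both the Fischer and the $\lambda$-norm, and it suffices to analyze the two norms on one $\cP_\bm(\fp^+)\otimes V$ at a time.

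Since the ratio is $K$-invariant, I would compute it on one explicit weight vector $f$ of each $K$-type, built from minors of $w$ paired with a weight vector of $V_\bk^{(s)}$. Reducing (\ref{suqsinner}) by the $U(q)\times U(s)$-symmetry — equivalently, following the proof of Theorem \ref{keythm} through its reduction to the symmetric cone, which is valid \emph{before} invoking (A1),(A2) via Lemma \ref{intformula} — leads to a matrix-valued Beta integral over the rank-$q$ cone $\Omega$ carrying the weight $\tau_\bk^{(s)}(I_s-w^*w)$. Concretely, using the singular value form $w=u\,[\,t\ \ 0\,]\,v^*$ with $t=\mathrm{diag}(t_1,\ldots,t_q)$, $0\le t_j<1$, so that $\det(I_s-w^*w)=\prod_{j=1}^q(1-t_j^2)$, the compact integrations separate from one-dimensional Beta integrals of the shape $\int_0^1 t^{a}(1-t^2)^{\lambda-c}\,dt$. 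The matrix weight $\tau_\bk^{(s)}$ selects, through the label $\bn$, the exponents in these radial integrals; assembling them with the rank-$s$ normalization $c_\lambda$ is exactly what produces the full product $\prod_{j=1}^s$, even though $\rank_\BR\fg=q$.

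Computing the Fischer norm of the same $f$ by the identical reduction with the Gaussian weight $e^{-\tr(ww^*)}$, and forming the quotient, all but the $\Gamma_\Omega$-type contributions cancel, leaving $(\lambda)_\bk/(\lambda)_\bn$; Theorem \ref{sph} identifies $\bn$ as the lowest weight of the $K_L$-spherical family and thereby fixes the shifts, giving $\prod_{j=1}^s(\lambda-(j-1))_{k_j}\big/\prod_{j=1}^s(\lambda-(j-1))_{n_j}$. The normalizing constant is then forced by $R_{\bzero,\bk}(\lambda)=1$ on the minimal $K$-type ($\bm=\bzero$, $\bn=\bk$), yielding $c_\lambda=\prod_{j=1}^s(\lambda-(j-1)+k_j-q)_q$, and the convergence range $\Re\lambda+k_s>q+s-1=p-1$ is precisely that of the radial Beta integrals at the smallest weight $k_s$. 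The case $\bk=\bzero$ recovers the scalar Faraut--Kor\'anyi result and serves as a consistency check; the rank-one instance $G=SU(1,2)$, $\bk=(1,0)$ already exhibits the full mechanism, the extra $j=2$ factor arising from the combination of the radial integral and $c_\lambda$.

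I expect the main obstacle to be twofold. Quantitatively, one must verify that the rank-$q$ cone integral of the vector weight $\tau_\bk^{(s)}(I_s-w^*w)$ against the chosen weight vectors reassembles, after multiplication by $c_\lambda$ and use of the functional equation of $\Gamma_\Omega$, into the rank-$s$ product without cross terms. Qualitatively, and more delicately, one must show that the positive $K$-equivariant operator comparing the two norms acts as a \emph{scalar} $R_{\bm,\bn}(\lambda)^{-1}$ on each $c^\bn_{\bk,\bm}$-dimensional isotypic block, so that the ratio is genuinely independent of the chosen copy. I would establish this last point by exhibiting the comparison operator on $\cP_\bm(\fp^+)\otimes V$ as induced by the single multiplication operator $\tau_\bk^{(s)}(I_s-w^*w)$, whose $U(s)$-invariant average is scalar on each $V_\bn^{(s)}$-isotypic component; this is what makes ``the arguments explained above still available'' despite the failure of $K$-multiplicity-freeness.
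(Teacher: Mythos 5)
Your setup agrees with the paper: the $K$-type decomposition via Theorem \ref{HKS} plus Littlewood--Richardson, the observation that each isotypic component sits inside a single $\cP_\bm(\fp^+)\otimes V$, the determination of $c_\lambda$ by forcing $R_{\bzero,\bk}(\lambda)=1$, and the convergence range are all as in the paper. But the central computational step has a genuine gap. You claim that after the singular value decomposition $w=u\,[\,t\ \ 0\,]\,v^*$ ``the compact integrations separate from one-dimensional Beta integrals'' whose exponents are ``selected by $\bn$.'' That separation does not happen, and it is precisely the difficulty the paper must circumvent. After the SVD the integrand couples $u\in U(q)$, $v\in U(s)$ and $t$: the value $f(u\,[\,t\ \ 0\,]\,v^*)$ of a weight vector is not a monomial in the $t_j$, and the sandwiched weight $\tau_\bk^{(s)}(I_s-w^*w)=\tau_\bk^{(s)}(v)\,\tau_\bk^{(s)}(\mathrm{diag}(1-t_1^2,\ldots,1-t_q^2,1,\ldots,1))\,\tau_\bk^{(s)}(v)^*$ prevents the $v$-integration from decoupling from the radial one; what the compact integrations produce is a spherical-function-type symmetric polynomial in $t_1^2,\ldots,t_q^2$, not a product of one-variable factors. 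Moreover, singular values are rank-$q$ data, whereas the answer $\prod_{j=1}^s(\lambda-(j-1))_{k_j}\big/\prod_{j=1}^s(\lambda-(j-1))_{n_j}$ is a ratio of rank-$s$ Gamma functions in which the labels $n_{q+1},\ldots,n_s$ enter essentially; and $c_\lambda$ cannot supply the missing factors, since it is an output of the normalization, not an input to the computation.

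What the paper actually does at this point is a Laplace-transform (convolution) argument that your proposal never reaches. It replaces $f$ by the Fischer reproducing kernel $K_{\bm,\bn}$ of the lowest $K_\rT^\BC$-submodule $V_\bm^{(q)\vee}\boxtimes V_{\bn'}^{(q)}$ of the given $K$-type, extracts from it an equivariant function $F_{\bm,\bn}$ (and $\tilde{F}_{\bm,\bn}(x,y)$, incorporating the directions $y\in M(q,s-q;\BC)$ transverse to $\fp^+_\rT$), and then, instead of evaluating the numerator $B_{\bm,\bn}(\lambda)$ directly, defines $J(z)$ for $z$ in the \emph{rank-$s$} cone $\tilde{\Omega}\subset\mathrm{Herm}(s,\BC)$, proves $J(z)=B_{\bm,\bn}(\lambda)\Delta_{\lambda+\bn-s}(z)$ by equivariance, and computes $\int_{\tilde{\Omega}}J(z)e^{-\tr(z)}dz$ in two ways via Fubini. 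The second evaluation rests on $\int_{\tilde{\Omega}}\tau_{\bk}^{(s)}(z)\det(z)^{\lambda-(q+s)}e^{-\tr(z)}dz=\Gamma_{\tilde{\Omega}}(\lambda+\bk-q)\,I_{V_\bk^{(s)}}$, and this rank-$s$ cone integral is the sole source of the $\prod_{j=1}^s$ Pochhammer products and of the condition $\Re\lambda+k_s>q+s-1$; without it, or an equivalent device, your computation cannot close. A smaller but real defect is your argument for copy-independence within an isotypic block: averaging the multiplication operator $\tau_\bk^{(s)}(I_s-w^*w)$ over $U(s)$ is not legitimate, because the $K$-action simultaneously moves the argument $w$ of the function. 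The paper gets copy-independence without any such step: the trace ratio it computes uses only equivariance properties of $K_{\bm,\bn}$ that hold for every embedded copy, and the resulting copy-independent value forces the comparison operator to act as a scalar on each isotypic component.
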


Before beginning the proof, we prepare some more notations. For $k\in\BN$, $\bm\in\BC^k$ and for $x\in M(k,\BC)$, we write 
\[ \Delta_\bm(x):=\prod_{l=1}^{k-1}\det\left((x_{ij})_{1\le i,j\le l}\right)^{m_l-m_{l+1}}\det(x)^{m_k}. \]
For $k\in\BN$, let $Q_k\subset GL(k,\BC)$ be the set of upper triangular matrices with positive diagonal entries. 
Then for $l_1,l_2\in Q_k$, $\bm\in\BC^k$, $\Delta_\bm(l_1)\Delta_\bm(l_2)=\Delta_\bm({}^t\hspace{-1pt}l_1l_2)$ holds, and 
for $l_1\in Q_k$, $l_2\in M(k,l;\BC)$, $l_3\in Q_l$ and $\bm\in\BC^k$, $\bn\in\BC^l$, 
$\Delta_\bm(l_1)\Delta_\bn(l_3)=\Delta_{(\bm,\bn)}\begin{pmatrix}l_1&l_2\\0&l_3\end{pmatrix}$ holds. Also we set 
\begin{align*}
(\fp^+_\rT)^\bot&:=M(q,s-q;\BC),\\
\Omega&:=\{x\in\mathrm{Herm}(q,\BC):x\text{ is positive definite.}\},\\
\tilde{\Omega}&:=\{x\in\mathrm{Herm}(s,\BC):x\text{ is positive definite.}\}. 
\end{align*}

Now we start the proof. 
To begin with, we compute the $K$-type decomposition of $\cO(D,V)_K=\cP(\fp^+)\otimes \left(\BC\boxtimes V_\bk^{(s)}\right)$. 
\begin{align*}
\cP(\fp^+)\otimes \left(\BC\boxtimes V_\bk^{(s)}\right)
&=\bigoplus_{\bm\in\BZ^q_{++}}\left(V_\bm^{(q)\vee}\boxtimes V_\bm^{(s)}\right)\otimes \left(\BC\boxtimes V_\bk^{(s)}\right)\\
&=\bigoplus_{\bm\in\BZ^q_{++}}\bigoplus_{\bn\in\bm+\mathrm{wt}(\bk)}c^\bn_{\bk,\bm}V_\bm^{(q)\vee}\boxtimes V_\bn^{(s)}. 
\end{align*}
where $V_\bm^{(s)}$ is the abbreviation of $V_{(m_1,\ldots,m_q,0,\ldots,0)}^{(s)}$, 
$\mathrm{wt}(\bk)$ is the set of all weights in the $GL(s,\BC)$-module $V_\bk^{(s)}$, 
and $c^\bn_{\bk,\bm}$ are some non-negative integers. 
We note that, for $\bn\in\BZ^s_{++}$, there exists $\bm\in\BZ^q_{++}$ such that $c^\bn_{\bk,\bm}\ne 0$ if and only if 
\[ n_j\ge k_j\; (1\le j\le q) \quad \text{and} \quad k_{j-q}\le n_j\le k_j\; (j\ge q+1), \]
which can be proved by using Littlewood-Richardson rule. 

For each $K$-type $V_\bm^{(q)\vee}\boxtimes V_\bn^{(s)}$, 
let $K_{\bm,\bn}(z,w)\in\cP(\fp^+\times\overline{\fp^+},\mathrm{End}(V_\bk^{(s)}))$ be the reproducing kernel of 
the $K_\rT^\BC$-submodule $V_\bm^{(q)\vee}\boxtimes V_{\bn'}^{(q)}\subset V_\bm^{(q)\vee}\boxtimes V_\bn^{(s)}$, 
where $\bn':=(n_1,\ldots,n_q)\in\BZ_{++}^q$. 
Then since $V_\bm^{(q)\vee}\boxtimes V_{\bn'}^{(q)}\subset V_\bm^{(q)\vee}\boxtimes V_\bn^{(s)}$ is the lowest submodule, we have 
\begin{align*}
\tau_\bk^{(s)}\!\begin{pmatrix}l_2&l_3\\0&l_4\end{pmatrix}\!
K_{\bm,\bn}\!\left(l_1z\begin{pmatrix}l_2&l_3\\0&l_4\end{pmatrix}\!,l_1^{*-1}w\begin{pmatrix}l_2^{*-1}&l_5\\0&l_6\end{pmatrix}\right)\!
\tau_\bk^{(s)}\!\begin{pmatrix}l_2^{-1}&0\\l_5^*&l_6^*\end{pmatrix}=\Delta_{\bn''}(l_6^*l_4)K_{\bm,\bn}(z,w)\\ 
\quad(z,w\in M(q,s;\BC),\; l_1,l_2\in GL(q,\BC),\; l_3,l_5\in M(q,s-q;\BC),\; l_4,l_6\in Q_{s-q}), 
\end{align*}
where $\bn'':=(n_{s-q+1},\ldots,n_s)$. Using this $K_{\bm,\bn}(z,w)$, we can rewrite the ratio of norms. 
That is, for $f\in V_\bm^{(q)\vee}\boxtimes V_\bn^{(s)}$, the ratio of norms 
$\Vert f\Vert_{\lambda,\bone^{(q)}\boxtimes\tau_\bk^{(s)}}^2/\Vert f\Vert_{F,\bone^{(q)}\boxtimes\tau_\bk^{(s)}}^2$ is equal to 
\[ R_{\bm,\bn}(\lambda):=\frac{\ds c_\lambda\int_D\Tr_{V_\bk^{(s)}}
\left(\tau_{\bk}^{(s)}(I_s-w^*w)K_{\bm,\bn}(w,w)\right)\det(I_s-w^*w)^{\lambda-(q+s)}dw}
{\ds \int_{\fp^+}\Tr_{V_\bk^{(s)}}(K_{\bm,\bn}(w,w))e^{-\tr(w^*w)}dw}. \]
Now similarly to Lemma \ref{intformula}, 
for any non-negative measurable function $f$ on $M(q,s;\BC)$, we have 
\[ \frac{1}{\pi^{qs}}\int_{\fp^+}f(w)dw
=\frac{1}{\Gamma_\Omega(q)}\int_{\substack{x\in\Omega,y\in (\fp^+_\rT)^\bot\\ k_1,k_2\in U(q)}}
f((k_1x^{\frac{1}{2}}k_2,k_1y))dk_1dk_2dxdy. \]
Using this and the $K_\rT$-invariance of $K_{\bm,\bn}(z,w)$ 
\begin{align*}
&K_{\bm,\bn}((k_1x^{\frac{1}{2}}k_2,k_1y),(k_1x^{\frac{1}{2}}k_2,k_1y))\\
=&\tau_\bk^{(s)}\begin{pmatrix}k_2^{-1}&0\\0&I_{s-q}\end{pmatrix}K_{\bm,\bn}((x^{\frac{1}{2}},y),(x^{\frac{1}{2}},y))\tau_\bk^{(s)}
\begin{pmatrix}k_2&0\\0&I_{s-q}\end{pmatrix} \\
&\hspace{100pt}(x\in\Omega,\;y\in(\fp^+_\rT)^\bot,\; k_1,k_2\in U(q)),
\end{align*}
we have 
\begin{align*}
R_{\bm,\bn}(\lambda)
=\frac{\begin{array}{r}\ds c_\lambda
\int_{\substack{x\in\Omega,y\in (\fp^+_\rT)^\bot\\ (x^{1/2},y)\in D}}
\Tr_{V_\bk^{(s)}}\!\left(\tau_{\bk}^{(s)}\!\left(I_s-\begin{pmatrix}x&x^{1/2}y\\y^*x^{1/2}&y^*y\end{pmatrix}\right)
\!K_{\bm,\bn}((x^{\frac{1}{2}},y),(x^{\frac{1}{2}},y))\right)\\
\ds \times\det\left(I_s-\begin{pmatrix}x&x^{1/2}y\\y^*x^{1/2}&y^*y\end{pmatrix}\right)^{\lambda-(q+s)}dxdy\end{array}}
{\ds \int_{x\in\Omega,y\in (\fp^+_\rT)^\bot}
\Tr_{V_\bk^{(s)}}(K_{\bm,\bn}((x^{\frac{1}{2}},y),(x^{\frac{1}{2}},y)))
e^{-\tr\left(\begin{smallmatrix}x&x^{1/2}y\\y^*x^{1/2}&y^*y\end{smallmatrix}\right)}dxdy}. 
\end{align*}
$K_{\bm,\bn}((x^{\frac{1}{2}},y),(x^{\frac{1}{2}},y))$ is transformed as below. 
\begin{align*}
K_{\bm,\bn}((x^{\frac{1}{2}},y),(x^{\frac{1}{2}},y))
=&K_{\bm,\bn}\left(x^{-\frac{1}{2}}(x,0)\begin{pmatrix}I_q&x^{-1/2}y\\0&I_{s-q}\end{pmatrix},
x^{\frac{1}{2}}(I_q,0)\begin{pmatrix}I_q&x^{-1/2}y\\0&I_{s-q}\end{pmatrix}\right)\\
=&\tau_\bk^{(s)}\begin{pmatrix}I_q&-x^{-1/2}y\\0&I_{s-q}\end{pmatrix}K_{\bm,\bn}((x,0),(I_q,0))
\tau_\bk^{(s)}\begin{pmatrix}I_q&0\\-y^*x^{-1/2}&I_{s-q}\end{pmatrix}. 
\end{align*}
Then $K_{\bm,\bn}((\cdot,0),(I_q,0))$ is $K_L=\mathrm{diag}(\{\pm 1\}\times SU(q))$-invariant under the representation 
$\tilde{\tau}$ of $K_\rT^\BC$ on $\cP(\fp^+_\rT,\mathrm{End}(V_\bk^{(s)}))=\cP(M(q,s),\mathrm{End}(V_\bk^{(s)}))$, 
\[ (\tilde{\tau}(l_1,l_2))F(x)
:=\tau_\bk^{(s)}\begin{pmatrix}l_2&0\\0&I_{s-q}\end{pmatrix}F(l_1^{-1}xl_2)\tau_\bk^{(s)}
\begin{pmatrix}l_1^{-1}&0\\0&I_{s-q}\end{pmatrix}. \]
That is, $K_{\bm,\bn}((\cdot,0),(I_q,0))\in\left(\left(V_\bm^{(q)\vee}\boxtimes V_{\bn'}^{(q)}\right)\otimes
\left(\left.V_\bk^{(s)\vee}\right|_{U(q)}\boxtimes\BC\right)\right)^{K_L}=V_{\bn'}^{(q)\vee}\boxtimes V_{\bn'}^{(q)}$. 
Therefore there exists an $F_{\bm,\bn}(x)\in\cP(\fp^+_\rT,\mathrm{End}(V_\bk^{(s)}))$ such that 
\begin{gather*}
\int_{U(q)}\tau_\bk^{(s)}\begin{pmatrix}k&0\\0&I_{s-q}\end{pmatrix}F_{\bm,\bn}(k^{-1}xk)
\tau_\bk^{(s)}\begin{pmatrix}k^{-1}&0\\0&I_{s-q}\end{pmatrix}dk
=K_{\bm,\bn}((x,0),(I_q,0)),\\
\tau_\bk^{(s)}\begin{pmatrix}l_2&0\\0&l_4\end{pmatrix}F_{\bm,\bn}({}^t\hspace{-1pt}l_1xl_2)\tau_\bk^{(s)}
\begin{pmatrix}{}^t\hspace{-1pt}l_1&0\\0&{}^t\hspace{-1pt}l_3\end{pmatrix}
=\Delta_{\bn'}({}^t\hspace{-1pt}l_1l_2)\Delta_{\bn''}({}^t\hspace{-1pt}l_3l_4)F_{\bm,\bn}(x)\\
\hspace{190pt}(x\in\fp^+_\rT,\; l_1,l_2\in Q_q,\; l_3,l_4\in Q_{s-q}). 
\end{gather*}
We define 
\[ \tilde{F}_{\bm,\bn}(x,y):=\tau_\bk^{(s)}\begin{pmatrix}I_q&-x^{-1/2}y\\0&I_{s-q}\end{pmatrix}F_{\bm,\bn}(x)
\tau_\bk^{(s)}\begin{pmatrix}I_q&0\\-y^*x^{-1/2}&I_{s-q}\end{pmatrix}. \]
Then we have 
\begin{align*}
R_{\bm,\bn}(\lambda)
=\frac{\begin{array}{r}\ds c_\lambda
\int_{\substack{x\in\Omega,y\in (\fp^+_\rT)^\bot\\ (x^{1/2},y)\in D}}
\Tr_{V_\bk^{(s)}}\left(\tau_{\bk}^{(s)}\left(I_s-\begin{pmatrix}x&x^{1/2}y\\y^*x^{1/2}&y^*y\end{pmatrix}\right)
\tilde{F}_{\bm,\bn}(x,y)\right)\hspace{40pt}\\
\ds \times\det\left(I_s-\begin{pmatrix}x&x^{1/2}y\\y^*x^{1/2}&y^*y\end{pmatrix}\right)^{\lambda-(q+s)}dxdy\end{array}}
{\ds \int_{x\in\Omega,y\in (\fp^+_\rT)^\bot}\Tr_{V_\bk^{(s)}}(\tilde{F}_{\bm,\bn}(x,y))
e^{-\tr\left(\begin{smallmatrix}x&x^{1/2}y\\y^*x^{1/2}&y^*y\end{smallmatrix}\right)}dxdy}. 
\end{align*}
We set 
\begin{align*}
B_{\bm,\bn}(\lambda):=&
\int_{\substack{x\in\Omega,y\in (\fp^+_\rT)^\bot\\ (x^{1/2},y)\in D}}
\Tr_{V_\bk^{(s)}}\left(\tau_{\bk}^{(s)}\left(I_s-\begin{pmatrix}x&x^{1/2}y\\y^*x^{1/2}&y^*y\end{pmatrix}\right)
\tilde{F}_{\bm,\bn}(x,y)\right)\\
&\hspace{150pt}\times\det\left(I_s-\begin{pmatrix}x&x^{1/2}y\\y^*x^{1/2}&y^*y\end{pmatrix}\right)^{\lambda-(q+s)}dxdy, \\
\Gamma_{\bm,\bn}:=&\int_{x\in\Omega,y\in (\fp^+_\rT)^\bot}\Tr_{V_\bk^{(s)}}(\tilde{F}_{\bm,\bn}(x,y))
e^{-\tr\left(\begin{smallmatrix}x&x^{1/2}y\\y^*x^{1/2}&y^*y\end{smallmatrix}\right)}dxdy, 
\end{align*}
so that $R_{\bm,\bn}(\lambda)=c_\lambda B_{\bm,\bn}(\lambda)/\Gamma_{\bm,\bn}$. 
We want to compute $B_{\bm,\bn}(\lambda)$ explicitly. To do this, similarly to (\ref{Ilambda}), 
for $z\in\tilde{\Omega}$ we define 
\begin{multline*}
J(z):=\int_{E{(z)}}
\Tr_{V_\bk^{(s)}}\left(\tau_{\bk}^{(s)}\left(z-\begin{pmatrix}x'&(x')^{1/2}y'\\(y')^*(x')^{1/2}&(y')^*y'\end{pmatrix}\right)
\tilde{F}_{\bm,\bn}(x',y')\right)\\
\times\det\left(z-\begin{pmatrix}x'&(x')^{1/2}y'\\(y')^*(x')^{1/2}&(y')^*y'\end{pmatrix}\right)^{\lambda-(q+s)}dx'dy', 
\end{multline*}
where 
\[ E(z):=\left\{ (x,y)\in\Omega\times(\fp^+_\rT)^\bot:
z-\begin{pmatrix}x&x^{1/2}y\\ y^*x^{1/2}&y^*y\end{pmatrix} \text{ is positive definite.} \right\}, \]
so that $E(I_s)$ coincides with the domain of integration of $B_{\bm,\bn}(\lambda)$, and $J(I_s)=B_{\bm,\bn}(\lambda)$ holds. 
To compute $E(z)$, we take $l_1\in Q_q$, $l_2\in M(q,s-q;\BC)$ and $l_3\in Q_{s-q}$ such that 
\[ z=\begin{pmatrix}l_1^*&0\\l_2^*&l_3^*\end{pmatrix}\begin{pmatrix}l_1&l_2\\0&l_3\end{pmatrix}, \]
and we change variables $x,y$ to 
\[ x'=l_1^*xl_1,\qquad y'=(l_1^*xl_1)^{-1/2}l_1^*x^{1/2}(yl_3+x^{1/2}l_2), \]
so that 
\begin{align*}
\begin{pmatrix}x'&(x')^{1/2}y'\\(y')^*(x')^{1/2}&(y')^*y'\end{pmatrix}
=&\begin{pmatrix}l_1^*xl_1&l_1^*x^{1/2}(yl_3+x^{1/2}l_2)\\
(l_3^*y^*+l_2^*x^{1/2})x^{1/2}l_1&(l_3^*y^*+l_2^*x^{1/2})(yl_3+x^{1/2}l_2)\end{pmatrix}\\
=&\begin{pmatrix}l_1^*&0\\l_2^*&l_3^*\end{pmatrix}\begin{pmatrix}x&x^{1/2}y\\y^*x^{1/2}&y^*y\end{pmatrix}
\begin{pmatrix}l_1&l_2\\0&l_3\end{pmatrix}. 
\end{align*}
Then under this change of variables, we have 
\begin{align*}
&\tau_\bk^{(s)}\begin{pmatrix}l_1&l_2\\0&l_3\end{pmatrix}\tilde{F}_{\bm,\bn}(x',y')
\tau_\bk^{(s)}\begin{pmatrix}l_1^*&0\\l_2^*&l_3^*\end{pmatrix}\\
=&\tau_\bk^{(s)}\begin{pmatrix}l_1&l_2\\0&l_3\end{pmatrix}\tau_\bk^{(s)}\begin{pmatrix}I_q&-(x')^{-1/2}y'\\0&I_{s-q}\end{pmatrix}
F_{\bm,\bn}(x')\tau_\bk^{(s)}\begin{pmatrix}I_q&0\\-(y')^*(x')^{-1/2}&I_{s-q}\end{pmatrix}
\tau_\bk^{(s)}\begin{pmatrix}l_1^*&0\\l_2^*&l_3^*\end{pmatrix}\\
=&\tau_\bk^{(s)}\begin{pmatrix}l_1&l_2\\0&l_3\end{pmatrix}
\tau_\bk^{(s)}\begin{pmatrix}I_q&-l_1^{-1}x^{-1/2}(yl_3+x^{1/2}l_2)\\0&I_{s-q}\end{pmatrix}F_{\bm,\bn}(l_1^*xl_1)\\
&\hspace{148pt}\times\tau_\bk^{(s)}\begin{pmatrix}I_q&0\\-(l_3^*y^*+l_2^*x^{1/2})x^{-1/2}l_1^{*-1}&I_{s-q}\end{pmatrix}
\tau_\bk^{(s)}\begin{pmatrix}l_1^*&0\\l_2^*&l_3^*\end{pmatrix}\\
=&\tau_\bk^{(s)}\begin{pmatrix}I_q&-x^{-1/2}y\\0&I_{s-q}\end{pmatrix}
\tau_\bk^{(s)}\begin{pmatrix}l_1&0\\0&l_3\end{pmatrix}F_{\bm,\bn}(l_1^*xl_1)
\tau_\bk^{(s)}\begin{pmatrix}l_1^*&0\\0&l_3^*\end{pmatrix}
\tau_\bk^{(s)}\begin{pmatrix}I_q&0\\-y^*x^{-1/2}&I_{s-q}\end{pmatrix}\\
=&\Delta_\bn\left(\begin{pmatrix}l_1^*&0\\l_2^*&l_3^*\end{pmatrix}\begin{pmatrix}l_1&l_2\\0&l_3\end{pmatrix}\right)\tilde{F}_{\bm,\bn}(x,y). 
\end{align*}
Thus we can compute $J(z)$ as 
\begin{align*}
J(z)=&\int_{E(I_s)}
\Tr_{V_\bk^{(s)}}\left(\tau_{\bk}^{(s)}\left(\begin{pmatrix}l_1^*&0\\l_2^*&l_3^*\end{pmatrix}
\left(I_s-\begin{pmatrix}x&x^{1/2}y\\y^*x^{1/2}&y^*y\end{pmatrix}\right)\begin{pmatrix}l_1&l_2\\0&l_3\end{pmatrix}\right)
\tilde{F}_{\bm,\bn}(x',y')\right)\\
&\hspace{90pt} \times\det\left(\begin{pmatrix}l_1^*&0\\l_2^*&l_3^*\end{pmatrix}
\left(I_s-\begin{pmatrix}x&x^{1/2}y\\y^*x^{1/2}&y^*y\end{pmatrix}\right)\begin{pmatrix}l_1&l_2\\0&l_3\end{pmatrix}\right)^{\lambda-(q+s)}\\
&\hspace{260pt} \times \det(l_1)^{2q}\det(l_3)^{2q}dxdy\\
=&\int_{E(I_s)}
\Tr_{V_\bk^{(s)}}\left(\tau_{\bk}^{(s)}\left(I_s-\begin{pmatrix}x&x^{1/2}y\\y^*x^{1/2}&y^*y\end{pmatrix}\right)
\tilde{F}_{\bm,\bn}(x,y)\right)\\
&\hspace{27pt} \times\det\left(I_s-\begin{pmatrix}x&x^{1/2}y\\y^*x^{1/2}&y^*y\end{pmatrix}\right)^{\lambda-(q+s)}
\Delta_{\lambda+\bn-s}\left(
\begin{pmatrix}l_1^*&0\\l_2^*&l_3^*\end{pmatrix}\begin{pmatrix}l_1&l_2\\0&l_3\end{pmatrix}\right)dxdy\\
=&B_{\bm,\bn}(\lambda)\Delta_{\lambda+\bn-s}(z). 
\end{align*}
Next we compute $\int_{\tilde{\Omega}}J(z)e^{-\tr(z)}dz$ in two ways. 
\begin{gather*}
\int_{\tilde{\Omega}}J(z)e^{-\tr(z)}dz=B_{\bm,\bn}(\lambda)\int_{\tilde{\Omega}}\Delta_{\lambda+\bn-s}(z)e^{-\tr(z)}
=B_{\bm,\bn}(\lambda)\Gamma_{\tilde{\Omega}}(\lambda+\bn), \\
\begin{split}
&\int_{\tilde{\Omega}}J(z)e^{-\tr(z)}dz\\
=&\iint_{E(z)}
\Tr_{V_\bk^{(s)}}\left(\tau_{\bk}^{(s)}\left(z-\begin{pmatrix}x'&(x')^{1/2}y'\\(y')^*(x')^{1/2}&(y')^*y'\end{pmatrix}\right)
\tilde{F}_{\bm,\bn}(x',y')\right)\\
&\hspace{125pt}\times\det\left(z-\begin{pmatrix}x'&(x')^{1/2}y'\\(y')^*(x')^{1/2}&(y')^*y'\end{pmatrix}\right)^{\lambda-(q+s)}
e^{-\tr(z)}dx'dy'dz\\
=&\iint\hspace{-10pt}\raisebox{-20pt}{}_{\substack{x'\in\Omega,y'\in (\fp^+_\rT)^\bot,\\z'\in\tilde{\Omega}}}\hspace{-42pt}
\Tr_{V_\bk^{(s)}}\left(\tau_{\bk}^{(s)}(z')\tilde{F}_{\bm,\bn}(x',y')\right)\det(z')^{\lambda-(q+s)}
e^{-\tr\left(z'+\left(\begin{smallmatrix}x'&(x')^{1/2}y'\\(y')^*(x')^{1/2}&(y')^*y'\end{smallmatrix}\right)\right)}dx'dy'dz'\\
=&\Tr_{V_\bk^{(s)}}\left(\int_{\tilde{\Omega}}\tau_{\bk}^{(s)}(z)\det(z)^{\lambda-(q+s)}e^{-\tr(z)}dz
\int_{\Omega\times(\fp^+_\rT)^\bot}\tilde{F}_{\bm,\bn}(x,y)
e^{-\tr\left(\begin{smallmatrix}x&x^{1/2}y\\y^*x^{1/2}&y^*y\end{smallmatrix}\right)}dxdy\right). 
\end{split}
\end{gather*}
Since $V_\bk^{(s)}$ is $U(s)$-invariant and $\int_{\tilde{\Omega}}\tau_{\bk}^{(s)}(z)\det(z)^{\lambda-(q+s)}e^{-\tr(z)}dz$ commutes with 
$U(s)$-action, this is proportional to the identity map. 
Also, similar to (\ref{gammaprime}), we can show 
\[ \int_{\tilde{\Omega}}\tau_{\bk}^{(s)}(z)\det(z)^{\lambda-(q+s)}e^{-\tr(z)}dz=\Gamma_{\tilde{\Omega}}(\lambda+\bk-q)I_{V_\bk^{(s)}} \]
when $\Re\lambda+k_s>q+s-1$. Therefore we have 
\begin{align*}
\int_{\tilde{\Omega}}J(z)e^{-\tr(z)}dz&=\Gamma_{\tilde{\Omega}}(\lambda+\bk-q)
\int_{\Omega\times(\fp^+_\rT)^\bot}\Tr_{V_\bk^{(s)}}(\tilde{F}_{\bm,\bn}(x,y))
e^{-\tr\left(\begin{smallmatrix}x&x^{1/2}y\\y^*x^{1/2}&y^*y\end{smallmatrix}\right)}dxdy\\
&=\Gamma_{\tilde{\Omega}}(\lambda+\bk-q)\Gamma_{\bm,\bn}, 
\end{align*}
and thus we get 
\begin{gather*}
B_{\bm,\bn}(\lambda)=\frac{\Gamma_{\tilde{\Omega}}(\lambda+\bk-q)}{\Gamma_{\tilde{\Omega}}(\lambda+\bn)}\Gamma_{\bm,\bn},\\
R_{\bm,\bn}(\lambda)=c_\lambda\frac{B_{\bm,\bn}(\lambda)}{\Gamma_{\bm,\bn}}
=c_\lambda\frac{\Gamma_{\tilde{\Omega}}(\lambda+\bk-q)}{\Gamma_{\tilde{\Omega}}(\lambda+\bn)}. 
\end{gather*}
Since the norm is normalized so that $R_{\bzero,\bk}(\lambda)=1$, we have 
\[ c_\lambda=\frac{\Gamma_{\tilde{\Omega}}(\lambda+\bk)}{\Gamma_{\tilde{\Omega}}(\lambda+\bk-q)}
=\prod_{j=1}^s(\lambda-(j-1)+k_j-q)_q, \]
and consequently we get 
\[ R_{\bm,\bn}(\lambda)=\frac{\Gamma_{\tilde{\Omega}}(\lambda+\bk)}{\Gamma_{\tilde{\Omega}}(\lambda+\bn)}
=\frac{\prod_{j=1}^s(\lambda-(j-1))_{k_j}}{\prod_{j=1}^s(\lambda-(j-1))_{n_j}}, \]
and we have completed the proof of Theorem \ref{suqsnontube}. \qed

\subsection{$SO^*(4r+2)$, $V=S^k(\BC^{2r+1})^\vee$}
In this subsection we set $G=SO^*(4r+2)$, which is realized explicitly as (\ref{sostarrealize}) with $s=2r+1$. 
Then we have 
\begin{gather*}
K\simeq U(2r+1),\quad \fp^\pm\simeq \mathrm{Skew}(2r+1,\BC),\\ G_\rT\simeq SO^*(4r),\quad  
L\simeq GL(r,\BH),\quad 
K_L\simeq Sp(r), \\ 
r=r,\quad n=r(2r+1),\quad d=4,\quad p=4r. 
\end{gather*}
We set $V=V_{(k,0,\ldots,0)}^{(2r+1)\vee}\simeq S^k(\BC^{2r+1})^\vee$. 
The goal of this subsection is to prove the following theorem. 
\begin{theorem}\label{sostarodd1}
When $G=SO^*(4r+2)$ and $(\tau,V)=(\tau_{(k,0,\ldots,0)}^{(2r+1)\vee},V_{(k,0,\ldots,0)}^{(2r+1)\vee})$, 
$\Vert\cdot\Vert_{\lambda,\tau}^2$ converges if $\Re\lambda>4r-1$, 
the normalizing constant $c_\lambda$ is given by 
\[ c_\lambda=(\lambda-(2r+1))(\lambda+k-2r)_{2r}\prod_{j=2}^r(\lambda-(2r+1)-2(j-1))_{2r+1}, \]
the $K$-type decomposition of $\cO(D,V)_K$ is given by 
\[ \cP(\fp^+)\otimes V_{(k,0,\ldots,0)}^{(2r+1)\vee}
=\bigoplus_{\bm\in\BZ^r_{++}}\bigoplus_{\substack{\bk\in(\BZ_{\ge 0})^{r+1};|\bk|=k\\ 0\le k_j\le m_{j-1}-m_j}}
V_{(m_1+k_1,m_1,m_2+k_2,m_2,\ldots,m_r+k_r,m_r,k_{r+1})}^{(2r+1)\vee}, \]
and for $f\in V_{(m_1+k_1,m_1,m_2+k_2,m_2,\ldots,m_r+k_r,m_r,k_{r+1})}^{(2r+1)\vee}$, the ratio of norms is given by 
\begin{align*}
\frac{\Vert f\Vert_{\lambda,\tau_{(k,0,\ldots,0)}^{(2r+1)\vee}}^2}{\Vert f\Vert_{F,\tau_{(k,0,\ldots,0)}^{(2r+1)\vee}}^2}
=&\frac{(\lambda)_k}{\prod_{j=1}^r(\lambda-2(j-1))_{m_j+k_j}(\lambda-2r)_{k_{r+1}}}\\
=&\frac{1}{(\lambda+k)_{m_1+k_1-k}\prod_{j=2}^r(\lambda-2(j-1))_{m_j+k_j}(\lambda-2r)_{k_{r+1}}}. 
\end{align*}
\end{theorem}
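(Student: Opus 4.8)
The plan is to first determine the $K$-type decomposition exactly as in the $SO^*(4r)$ case, and then to obtain the ratio of norms by transporting the tube-type formula of Theorem~\ref{sostareven} across the embedding $SO^*(4r+2)\subset SO^*(4r+4)$. The detour through the larger group is forced: restricting $V=S^k(\BC^{2r+1})^\vee$ to $K_L\simeq Sp(r)\subset U(2r)\subset U(2r+1)$ gives $\bigoplus_{i=0}^k S^i(\BC^{2r})^\vee$, which is reducible, so neither (A0) nor (A1) holds and the constants $a_{ij}$ of Theorem~\ref{keythm} are not accessible by Corollary~\ref{tubecor}. For the decomposition I would feed $\cP(\fp^+)=\bigoplus_{\bm\in\BZ^r_{++}}V^{(2r+1)\vee}_{(m_1,m_1,\dots,m_r,m_r,0)}$ (Theorem~\ref{HKS}) into the $GL(2r+1)$ tensor rule already used for $SO^*(4r)$ (\cite[\S79, Example 4]{Z}); the interlacing constraints push every added box into an odd slot $1,3,\dots,2r+1$, yielding the $r+1$ integers $k_1,\dots,k_{r+1}$ with $|\bk|=k$, $0\le k_j\le m_{j-1}-m_j$, and the highest weights $(m_1+k_1,m_1,\dots,m_r+k_r,m_r,k_{r+1})$ in the statement.

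The transfer rests on one structural fact. Writing $\fp^+\simeq\mathrm{Skew}(2r+1,\BC)$ as the upper-left block of $\tilde{\fp}^+\simeq\mathrm{Skew}(2r+2,\BC)$, the complementary last column is a copy of the standard $K^\BC=GL(2r+1)$-module $\BC^{2r+1}$: under $\tilde a=\begin{pmatrix}a&0\\0&1\end{pmatrix}$ one has $w\mapsto aw\,{}^t a$ on the block and $\beta\mapsto a\beta$ on the column. Hence $\cP(\tilde{\fp}^+)=\cP(\fp^+)\otimes\cP(\BC^{2r+1})=\bigoplus_{k\ge0}\cP(\fp^+)\otimes V^{(2r+1)\vee}_{(k,0,\dots,0)}$ as $K^\BC$-modules, so the part of the scalar polynomial ring on the tube domain $\tilde D$ that is homogeneous of degree $k$ in the last column is exactly $\cO(D,V)_K$ with $V=S^k(\BC^{2r+1})^\vee$. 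The scalar weighted Bergman norm on $\tilde D$ is known (Theorem~\ref{sostareven} with $r$ replaced by $r+1$ and $k=0$), and the aim is to restrict it to each graded piece.

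The substantive step, and the one I expect to be the main obstacle, is the fiber computation that carries out this restriction. With $W=\begin{pmatrix}w&\beta\\-{}^t\beta&0\end{pmatrix}$ a Schur-complement expansion gives $\det(I_{2r+2}-WW^*)=\det(I_{2r+1}-ww^*)\bigl(1-\beta^*(I-ww^*)^{-1}\beta\bigr)(\cdots)$, and integrating $\beta$ out over the fiber $\{(w,\beta)\in\tilde D\}$ against $\det(I-WW^*)^{(\mu-(4r+2))/2}$ should collapse, by a ball/Selberg-type integral, to the small-group weight $\det(I-ww^*)^{(\lambda-4r)/2}$ multiplied by the fiber operator $\tau((I-ww^*)^{-1})$, with an explicit constant depending on $\mu,k$. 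Equivalently one may expand the big scalar reproducing kernel $\det(I_{2r+2}-ZW^*)^{-\mu/2}$ in the last-column variables and show that its bidegree-$(k,k)$ part is proportional to the small vector kernel $h(z,w)^{-\lambda}\tau(B(z,w))$: the leading bilinear form $\beta^*(I-zw^*)^{-1}\zeta$ provides the symmetric-power fiber factor while the Schur complement provides $h(z,w)^{-\lambda}$. Establishing this determinant-and-integral identity, and with it the dictionary between $\lambda$ and $\mu$, is where the convergence range $\Re\lambda>4r-1$ and the normalizing constant $c_\lambda$ are to be extracted.

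Granting the transfer, the ratio is read off by matching $K$-type expansions. The tube-type kernel on $\tilde D$ has the known expansion with coefficients $\prod_{j=1}^{r+1}(\mu-2(j-1))_{\tilde m_j}^{-1}$; extracting its degree-$k$ part and re-expanding in the small $K$-types reduces the bookkeeping to a Pochhammer identity. The one genuinely new factor is the $(r+1)$-st slot $(\lambda-2r)_{k_{r+1}}=(\lambda-2((r+1)-1))_{k_{r+1}}$ carried by the tail index $k_{r+1}$, namely the extra rank-$(r+1)$ Pochhammer symbol of the ambient tube group, and collecting terms gives $\dfrac{(\lambda)_k}{\prod_{j=1}^r(\lambda-2(j-1))_{m_j+k_j}\,(\lambda-2r)_{k_{r+1}}}$, the asserted formula. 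As a check, setting $k_{r+1}=0$ recovers the shape of the $SO^*(4r)$ answer, so the tail slot is precisely the measure of the failure of tube type.
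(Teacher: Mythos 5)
Your skeleton is in fact the paper's: the $K$-type decomposition via Theorem \ref{HKS} and the tensor rule of \cite[\S 79, Example 4]{Z} is exactly the paper's first step, and the paper likewise obtains the ratio of norms from the embedding $SO^*(4r+2)\subset SO^*(4r+4)$, viewing $\cO(D,S^k(\BC^{2r+1})^\vee)$ as the part of $\cO(D')$ of degree $k$ in the last column. But two steps of your plan have genuine gaps. The transfer itself, which you defer as ``the main obstacle'' (the Schur-complement fiber integration, or the bidegree-$(k,k)$ extraction from the scalar kernel), is never needed and is not how the paper proceeds: one checks by a short cocycle computation that $\iota(f)\begin{pmatrix}w&v\\-{}^tv&0\end{pmatrix}:=f(w,v)$ intertwines $\tau_\lambda$ with $\tau'_\lambda|_{\tilde{G}}$ with the \emph{same} parameter (for embedded elements $\det(c'W+d')=\det(cw+d)$, so your unknown ``dictionary'' is just $\mu=\lambda$), that $\iota$ is an isometry for the Fischer norms, and that each small $K$-type lands inside a single big $K'$-type, namely (\ref{polyonskew}); uniqueness of $\tilde{G}$-invariant inner products plus the normalization on the minimal $K$-type then converts the scalar $SO^*(4r+4)$ formula directly into the stated ratio, with no integral identity to establish.

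More seriously, the convergence range and $c_\lambda$ cannot be ``extracted'' from your fiber identity even in principle. Singular values of a complex skew-symmetric matrix come in pairs, so $\det(I_{2r+2}-WW^*)$ vanishes to second order at the boundary: already at $w=0$ your fiber integral is $\int_{|\beta|<1}|f(0,\beta)|^2(1-|\beta|^2)^{\lambda-4r-2}\,d\beta$, which diverges for all $\Re\lambda\le 4r+1$. Hence every ambient-group integral diverges precisely on the interval $(4r-1,\,4r+1]$ where the theorem's sharp convergence claim lives; the embedding can only give the ratio as a rational-function identity for $\Re\lambda>4r+1$, to be continued analytically afterwards. The convergence statement and the closed form of $c_\lambda$ therefore require a direct small-group argument, and your stated reason for abandoning Theorem \ref{keythm} is mistaken: assumption (A1) there refers to the $K^\BC_\rT$-irreducible constituents of $V|_{K^\BC_\rT}=\bigoplus_{l=0}^k V^{(2r)\vee}_{(l,0,\ldots,0)}$, and each $S^l(\BC^{2r})^\vee$ \emph{does} remain irreducible under $K_L\simeq Sp(r)$; what fails is only (A1') of Corollary \ref{tubecor} (irreducibility of the full $V|_{K_L}$), and with it easy access to the coefficients $a_{ij}$. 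The paper applies Theorem \ref{keythm} exactly here: it yields convergence for $\Re\lambda>4r-1$ and expresses $c_\lambda^{-1}$ as a sum over $l$, which is then evaluated in closed form by a recurrence; your proposal supplies neither of these ingredients.
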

To begin with, we determine the normalizing constant $c_\lambda$. Since $V|_{K_\rT^\BC}$ is decomposed as 
\[ \left.V_{(k,0,\ldots,0)}^{(2r+1)\vee}\right|_{K_\rT^\BC}
=\bigoplus_{l=0}^k V_{(l,0,\ldots,0)}^{(2r)\vee}, \]
and $V_{(l,0,\ldots,0)}^{(2r)\vee}$ has the restricted lowest weight $-\left.\frac{l}{2}\gamma_1\right|_{\fa_\fl}$, 
and remains irreducible when restricted to $K_L=Sp(r)$, by Theorem \ref{keythm} 
$\Vert\cdot\Vert_{\lambda,\tau_{(k,0,\ldots,0)}^{(2r+1)\vee}}^2$ converges if $\Re\lambda>4r-1$, and we have 
\begin{align*}
c_\lambda^{-1}=&\frac{1}{\dim V_{(k,0,\ldots,0)}^{(2r+1)\vee}}\sum_{l=0}^k\left(\dim V_{(l,0,\ldots,0)}^{(2r)\vee}\right)
\frac{\Gamma_\Omega\left(\lambda+(l,0,\ldots,0)-(2r+1)\right)}{\Gamma_\Omega(\lambda+(l,0,\ldots,0))}\\
=&\frac{1}{\left(\begin{smallmatrix}2r+k\\k\end{smallmatrix}\right)}\sum_{l=0}^k
\frac{\left(\begin{smallmatrix}2r+l-1\\l\end{smallmatrix}\right)}{(\lambda+l-(2r+1))_{2r+1}}
\frac{1}{\prod_{j=2}^r(\lambda-(2r+1)-2(j-1))_{2r+1}}\\
=&\frac{1}{(\lambda-(2r+1))(\lambda+k-2r)_{2r}\prod_{j=2}^r(\lambda-(2r+1)-2(j-1))_{2r+1}}. 
\end{align*}

To compute the norm on each $K$-type, we consider $G':=SO^*(4r+4)$, which is realized explicitly as (\ref{sostarrealize}) with $s=2r+2$, 
and embed $G\hookrightarrow G'$ by 
\[ \begin{pmatrix}a&b\\c&d\end{pmatrix}\longmapsto
\begin{pmatrix}a&0&b&0\\0&1&0&0\\c&0&d&0\\0&0&0&1\end{pmatrix}
\qquad (a,b,c,d\in M(2r+1,\BC)). \]
We realize $(\tau_{(k,0,\ldots,0)}^{(2r+1)\vee},V_{(k,0,\ldots,0)}^{(2r+1)\vee})$ as 
\begin{gather*}
V_{(k,0,\ldots,0)}^{(2r+1)\vee}=\cP_k(\BC^{2r+1})=\{\text{Homogeneous holomorphic polynomials on $\BC^{2r+1}$ of degree $k$}\},\\
\tau_{(k,0,\ldots,0)}^{(2r+1)\vee}(l)p(v)=p(l^{-1}v)\qquad (l\in GL(2r+1,\BC),\; v\in\BC^{2r+1},\; p\in\cP_k(\BC^{2r+1})), 
\end{gather*}
with the inner product 
\[ (p_1,p_2)_{\tau_{(k,0,\ldots,0)}^{(2r+1)\vee}}:=\frac{1}{\pi^{2r+1}}\int_{\BC^{2r+1}}p_1(v)\overline{p_2(v)}e^{-|v|^2}dv
\qquad (p_1,p_2\in\cP_k(\BC^{2r+1})). \]
Then $\tilde{G}=\widetilde{SO^*}(4r+2)$ acts on $\cO(D,\cP_k(\BC^{2r+1}))$ by 
\begin{align*}
\tau_{\lambda}\left(\begin{pmatrix}a&b\\c&d\end{pmatrix}^{-1}\right)f(w,v)
:=\det(cw+d)^{-\lambda/2}f\left((aw+b)(cw+d)^{-1},{}^t\hspace{-1pt}(cw+d)^{-1}v\right)\\
(w\in D\subset\mathrm{Skew}(2r+1,\BC),\; v\in\BC^{2r+1}). 
\end{align*}
On the other hand, the scalar type representation of $\tilde{G}'=\widetilde{SO^*}(4r+4)$ on $\cO(D')$ 
($D'$ is realized as (\ref{sostarD}) with $s=2r+2$) is given by 
\begin{align*}
\tau_{\lambda}'\left(\begin{pmatrix}a&b\\c&d\end{pmatrix}^{-1}\right)f(w)
:=\det(cw+d)^{-\lambda/2}f\left((aw+b)(cw+d)^{-1}\right)\\
(w\in D'\subset\mathrm{Skew}(2r+2,\BC)). 
\end{align*}
If we restrict this representation to $\tilde{G}$, we have 
\begin{align*}
\tau_{\lambda}'\!\left(\!\!\begin{pmatrix}a\!&0\!&b\!&0\\0\!&1\!&0\!&0\\c\!&0\!&d\!&0\\0\!&0\!&0\!&1\end{pmatrix}
\hspace{-8pt}{\vphantom{\begin{pmatrix}0\\0\\0\\0\end{pmatrix}}}^{-1}\right)\!
f\!\begin{pmatrix}w\!&v\\-{}^t\hspace{-1pt}v\!&0\end{pmatrix}
=\det(cw+d)^{-\lambda}f\!\begin{pmatrix}(aw+b)(cw+d)^{-1}\!&{}^t\hspace{-1pt}(cw+d)^{-1}v\\-{}^t\hspace{-1pt}v(cw+d)^{-1}\!&0\end{pmatrix}\\
(w\in \mathrm{Skew}(2r+1,\BC),\; v\in\BC^{2r+1}). 
\end{align*}
Therefore if we define the embedding map $\iota:\cO(D,\cP_k(\BC^{2r+1}))\to\cO(D')$ by 
\[ (\iota(f))\begin{pmatrix}w&v\\-{}^t\hspace{-1pt}v&0\end{pmatrix}:=f(w,v)
\qquad (w\in\mathrm{Skew}(2r+1,\BC),\; v\in\BC^{2r+1}), \]
then $\iota$ intertwines two actions $\tau_{\lambda}$ and $\tau_{\lambda}'|_{\tilde{G}}$. 
Also, since Fischer inner products on $\cP(\fp^+,\cP_k(\BC^{2r+1}))$ and $\cP(\fp^{+\prime})$ 
($\fp^+=\mathrm{Skew}(2r+1,\BC)$, $\fp^{+\prime}=\mathrm{Skew}(2r+2,\BC)$) are given by 
\begin{gather*}
\langle f,g\rangle_{F,\tau_{(k,0,\ldots,0)}^{(2r+1)\vee}}
=\frac{1}{\pi^{(r+1)(2r+1)}}\int_{\mathrm{Skew}(2r+1,\BC)}\int_{\BC^{2r+1}}f(w,v)\overline{g(w,v)}e^{-\frac{1}{2}\tr(ww^*)}e^{-|v|^2}dvdw,\\
\langle f,g\rangle_{F,\bone^{(2r+2)}}
=\frac{1}{\pi^{(r+1)(2r+1)}}\int_{\mathrm{Skew}(2r+2,\BC)}f(w)\overline{g(w)}e^{-\frac{1}{2}\tr(ww^*)}dw,
\end{gather*}
$\iota$ is an isometry with respect to the Fischer inner product. 

Next, we compute the $K$-type decomposition of $\cO(D,\cP_k(\BC^{2r+1}))_K=\cP(\fp^+)\otimes\cP_k(\BC^{2r+1})$ and 
$\cO(D')_{K'}=\cP(\fp^{+\prime})$. 
\begin{align*}
\cP(\fp^+)\otimes \cP_k(\BC^{2r+1})
&=\bigoplus_{\bm\in\BZ^r_{++}}V_{(m_1,m_1,m_2,m_2,\ldots,m_r,m_r,0)}^{(2r+1)\vee}\otimes V_{(k,0,\ldots,0)}^{(2r+1)\vee}\\
&=\bigoplus_{\bm\in\BZ^r_{++}}\bigoplus_{\substack{\bk\in(\BZ_{\ge 0})^{r+1},\;|\bk|=k\\ 0\le k_j\le m_{j-1}-m_j}}
V_{(m_1+k_1,m_1,m_2+k_2,m_2,\ldots,m_r+k_r,m_r,k_{r+1})}^{(2r+1)\vee}, \\
\cP(\fp^{+\prime})&=\bigoplus_{\bn\in\BZ^{r+1}_{++}}V_{(n_1,n_1,n_2,n_2,\ldots,n_{r+1},n_{r+1})}^{(2r+2)\vee}. 
\end{align*}
Each $K^{\prime\BC}=GL(2r+2,\BC)$-module $V_{(n_1,n_1,n_2,n_2,\ldots,n_{r+1},n_{r+1})}^{(2r+2)\vee}$ 
is decomposed under $K^\BC=GL(2r+1,\BC)$ as 
\[ \left.V_{(n_1,n_1,n_2,n_2,\ldots,n_{r+1},n_{r+1})}^{(2r+2)\vee}\right|_{K^\BC}
=\bigoplus_{\substack{\bm\in\BZ^r_{++}\\ n_j\ge m_j\ge n_{j+1}}}V_{(n_1,m_1,n_2,m_2,\ldots,n_r,m_r,n_{r+1})}^{(2r+1)\vee}, \]
which follows from the following Lemma about the branching law of $GL(s,\BC)\downarrow GL(s-1,\BC)$. 
\begin{lemma}[{\cite[$\mathsection$66, Theorem 2]{Z}}]\label{gldecomp} For $\bm\in\BZ_+^s$, 
\[ \left.V_\bm^{(s)\vee}\right|_{GL(s-1,\BC)}=\bigoplus_{\substack{\bn\in\BZ_+^{s-1}\\m_j\ge n_j\ge m_{j+1}}}V_\bn^{(s-1)\vee}. \]
\end{lemma}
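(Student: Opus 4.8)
The final statement is the classical interlacing (Gelfand--Tsetlin) branching rule, and the plan is to reduce it to the non-dual case and then prove that by a character computation. First I would observe that, since $GL(s-1,\BC)$ is embedded as a subgroup of $GL(s,\BC)$, restriction commutes with passing to the contragredient: for any $GL(s,\BC)$-module $W$ one has $\left(W|_{GL(s-1,\BC)}\right)^\vee \simeq \left(W^\vee\right)|_{GL(s-1,\BC)}$. Applying this with $W=V_\bm^{(s)}$ and using $\left(V_\bn^{(s-1)}\right)^\vee = V_\bn^{(s-1)\vee}$, the asserted decomposition of $V_\bm^{(s)\vee}$ follows termwise, with the same index set $\{m_j\ge n_j\ge m_{j+1}\}$, from the non-dual branching law
\[ V_\bm^{(s)}\big|_{GL(s-1,\BC)} = \bigoplus_{\substack{\bn\in\BZ_+^{s-1}\\ m_j\ge n_j\ge m_{j+1}}} V_\bn^{(s-1)}. \]
Thus it suffices to prove this last identity.

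Next I would compute characters on the diagonal torus. The character of $V_\bm^{(s)}$ at $\mathrm{diag}(x_1,\ldots,x_s)$ is the Schur polynomial $s_\bm(x_1,\ldots,x_s)$, and the multiplicity of $V_\bn^{(s-1)}$ in the restriction equals the coefficient of the $GL(s-1,\BC)$-character $s_\bn(x_1,\ldots,x_{s-1})$ when $s_\bm(x_1,\ldots,x_{s-1},x_s)$ is regarded as a polynomial in the last variable $x_s$ with coefficients in the symmetric functions of $x_1,\ldots,x_{s-1}$. The whole lemma therefore amounts to the Schur-polynomial identity
\[ s_\bm(x_1,\ldots,x_{s-1},x_s) = \sum_{\substack{\bn\in\BZ_+^{s-1}\\ m_j\ge n_j\ge m_{j+1}}} x_s^{|\bm|-|\bn|}\, s_\bn(x_1,\ldots,x_{s-1}), \]
in which each interlacing $\bn$ contributes a single monomial in $x_s$, hence multiplicity one.

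To establish this identity I would use the combinatorial expansion $s_\bm(x_1,\ldots,x_s) = \sum_T x^{\mathrm{wt}(T)}$ over semistandard Young tableaux $T$ of shape $\bm$ with entries in $\{1,\ldots,s\}$. In such a tableau the boxes carrying the maximal entry $s$ form a horizontal strip: column-strictness forbids two $s$'s in one column, while weak increase along rows forces them to the right ends of rows. Erasing this strip leaves a semistandard tableau of some shape $\bn$ with entries in $\{1,\ldots,s-1\}$, and the skew shape $\bm/\bn$ is a horizontal strip exactly when $\bn$ interlaces $\bm$. Grouping tableaux by the shape $\bn$ and factoring out the $x_s^{|\bm|-|\bn|}$ coming from the erased boxes yields the identity, and completes the proof after dualizing back.

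The main obstacle, if one wants a fully self-contained argument rather than quoting the horizontal-strip lemma, is the equivalence ``the $s$-labelled boxes form a horizontal strip $\iff$ $\bn$ interlaces $\bm$,'' together with the multiplicity-one conclusion; the remaining steps are mechanical specializations of symmetric-function identities. A purely representation-theoretic alternative would instead first prove that the restriction $GL(s,\BC)\downarrow GL(s-1,\BC)$ is multiplicity-free---equivalently that $(GL(s,\BC),GL(s-1,\BC))$ is a Gelfand pair---and then determine the interlacing support by matching weights, but the tableau route is the most direct.
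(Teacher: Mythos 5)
Your proof is correct, and the comparison here is a bit lopsided: the paper supplies no argument at all for this lemma, quoting it directly from Zhelobenko \cite[$\mathsection$66, Theorem 2]{Z}, whereas you give the standard self-contained proof of the Gelfand--Tsetlin branching rule. Your two reductions are both sound: restriction to a subgroup commutes with passing to the contragredient, which matches the paper's convention $V_\bn^{(s-1)\vee}=\bigl(V_\bn^{(s-1)}\bigr)^\vee$; and since the Schur polynomials $s_\bn(x_1,\ldots,x_{s-1})$ are linearly independent characters of $GL(s-1,\BC)$, the branching multiplicities are exactly the coefficients in your expansion of $s_\bm(x_1,\ldots,x_{s-1},x_s)$, with the horizontal-strip/interlacing equivalence giving multiplicity one per interlacing $\bn$. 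One small point you should patch: in this paper $\BZ_+^s$ requires only $m_1\ge\cdots\ge m_s$, so $m_s$ may be negative and $V_\bm^{(s)}$ is a rational, not polynomial, representation; the semistandard-tableau expansion of $s_\bm$ applies literally only when $\bm$ is a partition. The standard fix is a determinant twist: choose $c$ with $m_s+c\ge 0$ and note that, for $GL(s-1,\BC)$ embedded in the upper-left block, ${\det}^c_{GL(s)}$ restricts to ${\det}^c_{GL(s-1)}$, while the interlacing condition for $\bm+c\bone$ and $\bn+c\bone$ is the same as for $\bm$ and $\bn$; hence the general case follows from the partition case. With that one-line remark your argument is complete, and it has the merit of making the multiplicity-one structure (which the paper's norm-comparison method silently relies on when it locates each $K$-type inside a single summand) explicitly visible, at the cost of a page of symmetric-function combinatorics that the citation avoids.
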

Therefore it follows that 
\begin{equation}\label{polyonskew}
\iota\left(V_{(m_1+k_1,m_1,\ldots,m_r+k_r,m_r,k_{r+1})}^{(2r+1)\vee}\right)
\subset V_{(m_1+k_1,m_1+k_1,\ldots,m_r+k_r,m_r+k_r,k_{r+1},k_{r+1})}^{(2r+2)\vee}. 
\end{equation}
Therefore, for any $f\in V_{(m_1+k_1,m_1,m_2+k_2,m_2,\ldots,m_r+k_r,m_r,k_{r+1})}^{(2r+1)\vee}$, the ratio of norm is given by 
\[ \frac{\Vert\iota(f)\Vert_{\lambda,\bone^{(2r+2)}}^2}{\Vert\iota(f)\Vert_{F,\bone^{(2r+2)}}^2}
=\frac{1}{\prod_{j=1}^r(\lambda-2(j-1))_{m_j+k_j}(\lambda-2r)_{k_{r+1}}}. \]
Since $\iota$ intertwines $\tilde{G}$-action, $\Vert\cdot\Vert_{\lambda,\tau_{(k,0,\ldots,0)}^{(2r+1)\vee}}$ is 
proportional to $\Vert\iota(\cdot)\Vert_{\lambda,\bone^{(2r+2)}}$. 
Also, since $\iota$ preserves the Fischer norm, and $\Vert\cdot\Vert_{\lambda,\tau_{(k,0,\ldots,0)}^{(2r+1)\vee}}$ is 
normalized such that it coincides with the Fischer norm on the minimal $K$-type, we have 
\[ \frac{\Vert f\Vert_{\lambda,\tau_{(k,0,\ldots,0)}^{(2r+1)\vee}}^2}{\Vert f\Vert_{F,\tau_{(k,0,\ldots,0)}^{(2r+1)\vee}}^2}
=\frac{(\lambda)_k}{\prod_{j=1}^r(\lambda-2(j-1))_{m_j+k_j}(\lambda-2r)_{k_{r+1}}}, \]
and we have proved Theorem \ref{sostarodd1}. \qed

\begin{remark}
We can also prove the former part of Theorem \ref{sostareven} ($G=SO^*(4r)$), 
or Theorem \ref{suqs}, \ref{suqsnontube} ($G=SU(q,s)$) by this method, by embedding 
\begin{align*}
SO^*(4r)\hookrightarrow SO^*(4r+2),&&&\cP(\mathrm{Skew}(2r,\BC),\cP_k(\BC^{2r}))\hookrightarrow \cP(\mathrm{Skew}(2r+1,\BC)),\\
U(p)\times U(q,s)\hookrightarrow U(p+q,s),&&&V_\bk^{(p)\vee}\boxtimes\cP(M(q,s,\BC),V_\bk^{(s)})\hookrightarrow\cP(M(p+q,s,\BC)),
\end{align*}
but we cannot determine the normalizing constant $c_\lambda$ in this way. 
\end{remark}

\subsection{$SO^*(4r+2)$, $V=S^k(\BC^{2r+1})\otimes\det^{-k/2}$}
In this subsection we continue to set $G=SO^*(4r+2)$, which is realized explicitly as (\ref{sostarrealize}). 
We set $V=V_{\left(\frac{k}{2},\ldots,\frac{k}{2},-\frac{k}{2}\right)}^{(2r+1)\vee}\simeq S^k(\BC^{2r+1})\otimes\det^{-k/2}$. 
The goal of this subsection is to prove the following theorem. 
\begin{theorem}\label{sostarodd2}
When $G=SO^*(4r+2)$ and $(\tau,V)=(\tau_{(k/2,\ldots,k/2,-k/2)}^{(2r+1)\vee},V_{(k/2,\ldots,k/2,-k/2)}^{(2r+1)\vee})$, 
$\Vert\cdot\Vert_{\lambda,\tau}^2$ converges if $\Re\lambda>4r-1$, 
the normalizing constant $c_\lambda$ is given by 
\[ c_\lambda=\prod_{j=1}^{r-1}(\lambda+k-(2r+1)-2(j-1))_{2r+1}(\lambda-4r+1)_{2r}(\lambda+k-2r+1), \]
the $K$-type decomposition of $\cO(D,V)_K$ is given by 
\[ \cP(\fp^+)\otimes V_{\left(\frac{k}{2},\ldots,\frac{k}{2},-\frac{k}{2}\right)}^{(2r+1)\vee}
=\!\!\bigoplus_{\bm\in\BZ^r_{++}}\bigoplus_{\substack{\bk\in(\BZ_{\ge 0})^{r+1};|\bk|=k\\ 0\le k_j\le m_j-m_{j+1}\\ 0\le k_r\le m_r}}
\!\!\!\!V_{(m_1,m_1-k_1,m_2,m_2-k_2,\ldots,m_r,m_r-k_r,-k_{r+1})+\left(\frac{k}{2},\ldots,\frac{k}{2}\right)}^{(2r+1)\vee}, \]
and for $f\in V_{(m_1,m_1-k_1,m_2,m_2-k_2,\ldots,m_r,m_r-k_r,-k_{r+1})+\left(\frac{k}{2},\ldots,\frac{k}{2}\right)}^{(2r+1)\vee}$, 
the ratio of norms is given by 
\begin{align*}
\frac{\Vert f\Vert_{\lambda,\tau_{(k/2,\ldots,k/2,-k/2)}^{(2r+1)\vee}}^2}
{\Vert f\Vert_{F,\tau_{(k/2,\ldots,k/2,-k/2)}^{(2r+1)\vee}}^2}
=&\frac{\prod_{j=1}^r\left(\lambda-2(j-1)\right)_k}{\prod_{j=1}^r\left(\lambda-2(j-1)\right)_{m_j-k_j+k}\left(\lambda-2r+1\right)_{k-k_{r+1}}}\\
=&\frac{1}{\prod_{j=1}^r\left(\lambda+k-2(j-1)\right)_{m_j-k_j}\left(\lambda-2r+1\right)_{k-k_{r+1}}}. 
\end{align*}
\end{theorem}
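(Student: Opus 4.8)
The plan is to mirror the division of labour from the proof of Theorem~\ref{sostarodd1}: extract the $K$-type decomposition and the normalizing constant $c_\lambda$ from the general machinery, and then determine the ratio of norms on each $K$-type by an auxiliary embedding. For the decomposition I would start from Theorem~\ref{HKS}, which gives $\cP(\fp^+)=\bigoplus_{\bm\in\BZ^r_{++}}V_{(m_1,m_1,\ldots,m_r,m_r,0)}^{(2r+1)\vee}$, split off the twist $V_{(k/2,\ldots,k/2,-k/2)}^{(2r+1)\vee}=V_{(0,\ldots,0,-k)}^{(2r+1)\vee}\otimes\det^{-k/2}$ exactly as in the second half of Theorem~\ref{sostareven}, and apply a Pieri-type tensor rule for $GL(2r+1,\BC)$ followed by the $\det^{-k/2}$ shift. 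This should reproduce the stated sum over $\bk\in(\BZ_{\ge 0})^{r+1}$ with $0\le k_j\le m_j-m_{j+1}$ and $0\le k_r\le m_r$, where the last index $k_{r+1}$ records the one extra Peirce direction (the spaces $\fp^+_{0j}$) that is present only because $G$ is of non-tube type.

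For $c_\lambda$ I would apply Theorem~\ref{keythm} directly. Restriction to $K^\BC_\rT=GL(2r,\BC)$ is multiplicity free, $V|_{K^\BC_\rT}=\bigoplus_{l=0}^k V_l$ with $V_l\simeq S^l(\BC^{2r})\otimes\det^{-k/2}$, each still irreducible under $K_L=Sp(r)$, so (A1) holds; the restricted lowest weights come out as $\bk_l=(k,\ldots,k,k-l)$. Feeding the dimensions $\dim V_l$ and these weights into $c_\lambda^{-1}=\frac{1}{\dim V}\sum_{l}(\dim V_l)\Gamma_\Omega(\lambda+\bk_l-\tfrac{n}{r})/\Gamma_\Omega(\lambda+\bk_l)$ and telescoping the resulting sum of binomial quotients, just as in Theorem~\ref{sostarodd1}, should collapse it to the claimed product. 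The convergence bound is governed by the worst component $l=k$, where $k_{l,r}=0$ and the condition $\Re(\lambda)+k_{l,r}>p-1$ of Theorem~\ref{keythm} reads $\Re\lambda>4r-1$.

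The genuinely hard part is the ratio of norms, and here Theorem~\ref{keythm} only supplies the shape $R_W(\lambda)=\frac{c_\lambda}{\sum a_{ij}}\sum_{ij}a_{ij}\Gamma_\Omega(\lambda+\bk_i-\tfrac{n}{r})/\Gamma_\Omega(\lambda+\bn_{ij})$ with undetermined $a_{ij}$, precisely because assumption (A0) of Corollary~\ref{tubecor} fails: a single $K$-type $W$ meets several $K^\BC_\rT$-types. The $r$ tube directions $k_1,\ldots,k_r$ should contribute the factors $\prod_{j=1}^r(\lambda-2(j-1))_{m_j-k_j+k}$ exactly as in the tube case $SO^*(4r)$ of Theorem~\ref{sostareven}; the new ingredient is the single boundary factor $(\lambda-2r+1)_{k-k_{r+1}}$ carrying the index $k_{r+1}$. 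To pin it down I would restrict everything along the embedding $SU(1,2r)\hookrightarrow SO^*(4r+2)$, realizing the tube rank-one domain $\BC^{2r}$ as the off-diagonal last row/column inside $\mathrm{Skew}(2r+1,\BC)$. Under this restriction the invariant norm $\Vert\cdot\Vert_{\lambda,\tau}$ becomes an $SU(1,2r)$ weighted Bergman norm whose fiber is of scalar-times-vector type (up to a central-character shift absorbed into $\lambda$), and is therefore computed in closed form by Theorem~\ref{suqsnontube}. Matching the two formulas on the $K$-types that meet the $SU(1,2r)$-subdomain should fix the $k_{r+1}$-dependence, hence all the $a_{ij}$.

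I expect the main obstacle to be this last matching step. The bookkeeping is to verify that restriction to $SU(1,2r)$ sees enough $K$-types, with the correct lowest weights, to determine the rank-one factor unambiguously, and to reconcile the two normalizations (the $SO^*(4r+2)$ choice on the minimal $K$-type against the $SU(1,2r)$ one). Once $SU(1,2r)$ supplies $(\lambda-2r+1)_{k-k_{r+1}}$ and Theorem~\ref{keythm} together with Theorem~\ref{sostareven} supply the tube factors, assembling them and rewriting the $\Gamma_\Omega$-quotients via $(\lambda)_\bm=\prod_j(\lambda-2(j-1))_{m_j}$ yields the stated ratio, completing the proof.
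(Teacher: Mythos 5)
Your route is the paper's own: Theorem~\ref{keythm} gives the convergence bound $\Re\lambda>4r-1$ and, via the multiplicity-free decomposition $V|_{K_\rT^\BC}=\bigoplus_{l=0}^k V_{(k/2,\ldots,k/2,k/2-l)}^{(2r)\vee}$ with restricted lowest weights $(k,\ldots,k,k-l)$ and a telescoping sum, the constant $c_\lambda$; the Pieri-type rule gives the $K$-type decomposition; and the unknown coefficients left by Theorem~\ref{keythm} (assumption (A0) fails) are to be eliminated by combining its partial output with the embedding $SU(1,2r)\subset SO^*(4r+2)$ and Theorem~\ref{suqsnontube}. Indeed the paper first records exactly the partial result you describe (its Lemma~\ref{sostaroddlemma}): the ratio is a \emph{monic polynomial of degree $k_{r+1}$} divided by $\prod_{j=1}^r(\lambda+k-2(j-1))_{m_j-k_j}\,(\lambda-2r+1)_k$.

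The gap is in the sentence claiming that under restriction the invariant norm ``becomes an $SU(1,2r)$ weighted Bergman norm \ldots\ computed in closed form by Theorem~\ref{suqsnontube}.'' Restricted to $\tilde{G}_\rA$, $\cO(D,V)$ is the space of functions on the ball $D_\rA\subset\BC^{2r}$ valued in the \emph{reducible} fiber $\cP(\mathrm{Skew}(2r,\BC))\otimes V_{(k,\ldots,k,0;0)}^{(2r,1)\vee}$ (truncated to degree $\le N$ to stay in the finite-dimensional setting). Theorem~\ref{suqsnontube} applies to each irreducible summand $F_{\bm',\bl}$ of this fiber separately, and on $\cO(D_\rA,F_{\bm',\bl})$ it determines the $G$-invariant norm only \emph{up to a constant}: the value of $\Vert\cdot\Vert_{\lambda,\tau}$ on the minimal $K_\rA$-type of that summand. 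Those minimal $K_\rA$-types sit inside $K$-types of $G$ indexed by $\bm'=(m_2,\ldots,m_r,0)$, i.e.\ with strictly fewer nonzero entries, so ``reconciling the two normalizations'' is not one comparison but requires already knowing the answer on infinitely many smaller $K$-types --- precisely the quantity being computed. The paper closes this circle by induction on $\min\{j:m_j=0\}$: the induction hypothesis anchors the constants, the embedding then yields the ratio as a monic polynomial of degree $k_2+\cdots+k_r$ over the explicit denominator $\prod_{j=1}^r(\lambda+k-2(j-1))_{m_j-k_j}\prod_{j=2}^r(\lambda+k+m_j-k_j-(2j-3))_{k_j}(\lambda-2r+1)_{k-k_{r+1}}$, and only the comparison of this expression with the degree-$k_{r+1}$ expression of Lemma~\ref{sostaroddlemma} forces both unknown monic polynomials to be the evident Pochhammer factors, giving the stated formula. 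Without this induction (or an equivalent device), the matching step you yourself flag as the main obstacle does not close.
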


To begin with, we determine the normalizing constant $c_\lambda$. Since $V|_{K_\rT^\BC}$ is decomposed as 
\[ \left.V_{\left(\frac{k}{2},\ldots,\frac{k}{2},-\frac{k}{2}\right)}^{(2r+1)\vee}\right|_{K_\rT^\BC}
=\bigoplus_{l=0}^k V_{\left(\frac{k}{2},\ldots,\frac{k}{2},\frac{k}{2}-l\right)}^{(2r)\vee}, \]
and $V_{\left(\frac{k}{2},\ldots,\frac{k}{2},\frac{k}{2}-l\right)}^{(2r)\vee}$ has the restricted lowest weight 
$-\left.\left(\frac{k}{2}(\gamma_1+\cdots+\gamma_{r-1})+\frac{k-l}{2}\gamma_r\right)\right|_{\fa_\fl}$
and remains irreducible when restricted to $K_L=Sp(r)$, by Theorem \ref{keythm} 
$\Vert\cdot\Vert_{\lambda,\tau}^2$ converges if $\Re\lambda>4r-1$, and we have 
\begin{align*}
c_\lambda^{-1}=&\frac{1}{\dim V_{\left(\frac{k}{2},\ldots,\frac{k}{2},-\frac{k}{2}\right)}^{(2r+1)\vee}}
\sum_{l=0}^k\left(\dim V_{\left(\frac{k}{2},\ldots,\frac{k}{2},\frac{k}{2}-l\right)}^{(2r)\vee}\right)
\frac{\Gamma_\Omega\left(\lambda+(k,\ldots,k,k-l)-(2r+1)\right)}{\Gamma_\Omega(\lambda+(k,\ldots,k,k-l))}\\
=&\frac{1}{\left(\begin{smallmatrix}2r+k\\k\end{smallmatrix}\right)}
\frac{1}{\prod_{j=1}^{r-1}(\lambda+k-(2r+1)-2(j-1))_{2r+1}}
\sum_{l=0}^k\frac{\left(\begin{smallmatrix}2r+l-1\\l\end{smallmatrix}\right)}{(\lambda+k-l-(4r-1))_{2r+1}}\\
=&\frac{1}{\prod_{j=1}^{r-1}(\lambda+k-(2r+1)-2(j-1))_{2r+1}(\lambda-4r+1)_{2r}(\lambda+k-2r+1)}\\
=&\frac{(\lambda-2r+1)_k}{\prod_{j=1}^{r-1}(\lambda+k-(2r+1)-2(j-1))_{2r+1}(\lambda-4r+1)_{2r+1+k}}\\
=&\frac{\Gamma_\Omega(\lambda+(k,\ldots,k,0)-(2r+1))(\lambda-2r+1)_k}{\Gamma_\Omega(\lambda+(k,\ldots,k,k))}. 
\end{align*}
Next we compute the $K$-type decomposition of 
$\cO(D,V)_K=\cP(\fp^+)\otimes V_{\left(\frac{k}{2},\ldots,\frac{k}{2},-\frac{k}{2}\right)}^{(2r+1)\vee}$. 
\begin{align*}
\cP(\fp^+)\otimes V_{\left(\frac{k}{2},\ldots,\frac{k}{2},-\frac{k}{2}\right)}^{(2r+1)\vee}
&=\bigoplus_{\bm\in\BZ^r_{++}}V_{(m_1,m_1,m_2,m_2,\ldots,m_r,m_r,0)}^{(2r+1)\vee}
\otimes V_{\left(\frac{k}{2},\ldots,\frac{k}{2},-\frac{k}{2}\right)}^{(2r+1)\vee}\\
&=\!\!\bigoplus_{\bm\in\BZ^r_{++}}\bigoplus_{\substack{\bk\in(\BZ_{\ge 0})^{r+1},\;|\bk|=k\\ 0\le k_j\le m_j-m_{j+1}\\ 0\le k_r\le m_r}}
\!\!\!\!V_{(m_1,m_1-k_1,m_2,m_2-k_2,\ldots,m_r,m_r-k_r,-k_{r+1})+\left(\frac{k}{2},\ldots,\frac{k}{2}\right)}^{(2r+1)\vee}. 
\end{align*}
To apply Theorem \ref{keythm} for each $K$-type, we determine the image of each $K$-type under 
$\mathrm{rest}:\cP(\fp^+,V)\to \cP(\fp^+_\rT,V)$. Since we have 
\begin{align*}
&\mathrm{rest}\left(V_{(m_1,m_1,m_2,m_2,\ldots,m_r,m_r,0)}^{(2r+1)\vee}
\otimes V_{\left(\frac{k}{2},\ldots,\frac{k}{2},-\frac{k}{2}\right)}^{(2r+1)\vee}\right)\\
=&V_{(m_1,m_1,m_2,m_2,\ldots,m_r,m_r)}^{(2r)\vee}
\otimes\left. V_{\left(\frac{k}{2},\ldots,\frac{k}{2},-\frac{k}{2}\right)}^{(2r+1)\vee}\right|_{K_\rT^\BC}
=V_{(m_1,m_1,m_2,m_2,\ldots,m_r,m_r)}^{(2r)\vee}\otimes
\bigoplus_{l=0}^k V_{\left(\frac{k}{2},\ldots,\frac{k}{2},\frac{k}{2}-l\right)}^{(2r)\vee}\\
=&\bigoplus_{l=0}^k\bigoplus_{\substack{\bl\in(\BZ_{\ge 0})^{r},\;|\bl|=l\\ 0\le l_j\le m_j-m_{j+1}}}
V_{(m_1,m_1-l_1,m_2,m_2-l_2,\ldots,m_r,m_r-l_r)+\left(\frac{k}{2},\ldots,\frac{k}{2}\right)}^{(2r)\vee}, 
\end{align*}
and the abstract decomposition of $K^\BC$-modules under $K^\BC_\rT$ is given by Lemma \ref{gldecomp}, we have 
\begin{align*}
&\mathrm{rest}\left(V_{(m_1,m_1-k_1,m_2,m_2-k_2\ldots,m_r,m_r-k_r,-k_{r+1})+\left(\frac{k}{2},\ldots,\frac{k}{2}\right)}^{(2r+1)\vee}\right)\\
\subset&\bigoplus_{l=k-k_{r+1}}^k\bigoplus_{\substack{\bl\in(\BZ_{\ge 0})^r,\ |\bl|=l\\ k_j\le l_j\le m_j-m_{j+1}}}
V_{(m_1,m_1-l_1,m_2,m_2-l_2,\ldots,m_r,m_r-l_r)+\left(\frac{k}{2},\ldots,\frac{k}{2}\right)}^{(2r)\vee}. 
\end{align*}
Then, the only $K_L=Sp(r)$-spherical submodule in 
\begin{align*}
&V_{(m_1,m_1-l_1,m_2,m_2-l_2,\ldots,m_r,m_r-l_r)+\left(\frac{k}{2},\ldots,\frac{k}{2}\right)}^{(2r)\vee}
\otimes \overline{V_{\left(\frac{k}{2},\ldots,\frac{k}{2},\frac{k}{2}-l\right)}^{(2r)\vee}}\\
\simeq&V_{(m_1,m_1-l_1,m_2,m_2-l_2,\ldots,m_r,m_r-l_r)+\left(\frac{k}{2},\ldots,\frac{k}{2}\right)}^{(2r)\vee}
\otimes V_{\left(\frac{k}{2},\ldots,\frac{k}{2},\frac{k}{2}-l\right)}^{(2r)\vee}
\end{align*}
is $V_{(m_1-l_1,m_1-l_1,m_2-l_2,m_2-l_2,\ldots,m_r-l_r,m_r-l_r)+(k,\ldots,k)}^{(2r)\vee}$, 
which has the lowest weight $-((m_1-l_1+k)\gamma_1+\cdots+(m_r-l_r+k)\gamma_r)$. 
Therefore by Theorem \ref{keythm}, there exist non-negative numbers $a_{\bm,\bk,\bl}$ such that 
for $f\in V_{(m_1,m_1-k_1,\ldots,m_r,m_r-k_r,-k_{r+1})+\left(\frac{k}{2},\ldots,\frac{k}{2}\right)}$, 
the ratio of norms is given by 
\begin{align*}
\frac{\Vert f\Vert_{\lambda,\tau}^2}{\Vert f\Vert_{F,\tau}^2}
&=\frac{c_\lambda}{\sum_\bl a_{\bm,\bk,\bl}}
\sum_{l=k-k_{r+1}}^k\sum_{\substack{\bl\in(\BZ_{\ge 0})^r,\ |\bl|=l\\ k_j\le l_j\le m_{j+1}-m_j}}a_{\bm,\bk,\bl}
\frac{\Gamma_\Omega\left(\lambda+(k,\ldots,k,k-l)-(2r+1)\right)}{\Gamma_\Omega(\lambda+\bm-\bl+(k,\ldots,k))}\\
&=\frac{1}{\sum_\bl a_{\bm,\bk,\bl}}
\sum_{l=k-k_{r+1}}^k\sum_{\substack{\bl\in(\BZ_{\ge 0})^r,\ |\bl|=l\\ k_j\le l_j\le m_{j+1}-m_j}}
\frac{a_{\bm,\bk,\bl}(\lambda-4r+1)_{k-l}}{\prod_{j=1}^r(\lambda+k-2(j-1))_{m_j-l_j}(\lambda-2r+1)_k}. 
\end{align*}
It is difficult to know the exact values of $a_{\bm,\bk,\bl}$, but at least we have proved 
\begin{lemma}\label{sostaroddlemma}
For $f\in V_{(m_1,m_1-k_1,\ldots,m_r,m_r-k_r,-k_{r+1})+\left(\frac{k}{2},\ldots,\frac{k}{2}\right)}^{(2r+1)\vee}$, 
the ratio of norms is 
\[ \frac{\Vert f\Vert_{\lambda,\tau_{(k/2,\ldots,k/2,-k/2)}^{(2r+1)\vee}}^2}
{\Vert f\Vert_{F,\tau_{(k/2,\ldots,k/2,-k/2)}^{(2r+1)\vee}}^2}
=\frac{(\text{monic polynomial of degree $k_{r+1}$})}{\prod_{j=1}^r(\lambda+k-2(j-1))_{m_j-k_j}(\lambda-2r+1)_k}. \]
\end{lemma}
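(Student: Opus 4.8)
The plan is to start from the expression for the ratio of norms already obtained above from Theorem~\ref{keythm},
\[
\frac{\Vert f\Vert_{\lambda,\tau}^2}{\Vert f\Vert_{F,\tau}^2}
=\frac{1}{\sum_\bl a_{\bm,\bk,\bl}}
\sum_{l=k-k_{r+1}}^k\sum_{\substack{\bl\in(\BZ_{\ge 0})^r,\ |\bl|=l\\ k_j\le l_j}}
\frac{a_{\bm,\bk,\bl}(\lambda-4r+1)_{k-l}}{\prod_{j=1}^r(\lambda+k-2(j-1))_{m_j-l_j}(\lambda-2r+1)_k},
\]
and to clear denominators against the single common denominator $\prod_{j=1}^r(\lambda+k-2(j-1))_{m_j-k_j}(\lambda-2r+1)_k$ appearing in the statement. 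Because $l_j\ge k_j$ for every $j$, the factorization $(a)_{m_j-k_j}=(a)_{m_j-l_j}\,(a+m_j-l_j)_{l_j-k_j}$ with $a=\lambda+k-2(j-1)$ lets me rewrite each summand over this common denominator, while the factor $(\lambda-2r+1)_k$ is already common to all terms. After this the whole expression takes the form $N(\lambda)/\bigl(\prod_{j=1}^r(\lambda+k-2(j-1))_{m_j-k_j}(\lambda-2r+1)_k\bigr)$ with
\[
N(\lambda)=\frac{1}{\sum_\bl a_{\bm,\bk,\bl}}
\sum_{l,\bl} a_{\bm,\bk,\bl}\,(\lambda-4r+1)_{k-l}
\prod_{j=1}^r\bigl(\lambda+k-2(j-1)+m_j-l_j\bigr)_{l_j-k_j},
\]
so that the lemma reduces to showing that $N(\lambda)$ is monic of degree $k_{r+1}$.

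The key step is then a degree count. Each summand is a product of Pochhammer symbols, and its degree in $\lambda$ is $(k-l)+\sum_{j=1}^r(l_j-k_j)$. Since $|\bl|=l$ gives $\sum_{j=1}^r l_j=l$, and $|\bk|=k$ gives $\sum_{j=1}^r k_j=k-k_{r+1}$, I get $\sum_{j=1}^r(l_j-k_j)=l-k+k_{r+1}$, so every summand has degree exactly
\[
(k-l)+(l-k+k_{r+1})=k_{r+1},
\]
independently of $(l,\bl)$. Moreover each factor $(a(\lambda))_n$, being the product of the linear monic factors $a(\lambda),a(\lambda)+1,\dots,a(\lambda)+n-1$, is monic in $\lambda$; hence every summand $(\lambda-4r+1)_{k-l}\prod_{j}(\cdots)_{l_j-k_j}$ is a monic polynomial of degree $k_{r+1}$.

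Finally I would observe that $N(\lambda)$ is a \emph{convex combination} of these monic degree-$k_{r+1}$ polynomials: the weights $a_{\bm,\bk,\bl}/\sum_\bl a_{\bm,\bk,\bl}$ are non-negative and sum to $1$, and at least one is positive since the left-hand side is a positive finite ratio for $\Re\lambda$ large, forcing $\sum_\bl a_{\bm,\bk,\bl}>0$. As all summands share the leading coefficient $+1$ and the weights are non-negative, no cancellation can occur at the top degree, so $N(\lambda)$ is monic of degree exactly $k_{r+1}$, which is precisely the assertion. The only genuinely delicate point—and the place I would be most careful—is this degree bookkeeping: it is the constancy of the degree across all $(l,\bl)$ together with the uniform monicity that prevents any loss of degree and pins the leading coefficient to $1$; without the identity $\sum_{j=1}^r k_j=k-k_{r+1}$ the summand degrees would not align and the conclusion would fail.
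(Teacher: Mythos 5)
Your proof is correct and takes essentially the same approach as the paper: the paper derives exactly the displayed sum from Theorem \ref{keythm} and then asserts the lemma, leaving implicit precisely the bookkeeping you carry out, namely rewriting each summand over the common denominator via $(a)_{m_j-k_j}=(a)_{m_j-l_j}\,(a+m_j-l_j)_{l_j-k_j}$ and observing that every numerator is monic of degree $(k-l)+\sum_{j=1}^r(l_j-k_j)=k_{r+1}$, so the non-negative convex combination stays monic of that degree. The only cosmetic difference is that you drop the (harmless for this argument) upper bound $l_j\le m_j-m_{j+1}$ in the index set.
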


Next we consider $G_\rA:=SU(2r,1)$, which is realized as (\ref{suqsrealize}), and embed $G_\rA\hookrightarrow G$ as 
\[ \begin{pmatrix}a&b\\c&d\end{pmatrix}\longmapsto
\begin{pmatrix}a&0&0&b\\0&\bar{d}&-\bar{c}&0\\0&-\bar{b}&\bar{a}&0\\c&0&0&d\end{pmatrix}\qquad
\left(\begin{array}{c}a\in M(2r,\BC),\; b\in M(2r,1;\BC),\\ c\in M(1,2r;\BC),\; d\in \BC\end{array}\right). \]
Then the positive root system $\Delta_+(\fg_\rA^\BC,(\fh\cap\fg_\rA)^\BC)$ of $\fg_\rA$, 
induced from $\Delta_+(\fg^\BC,\fh^\BC)$, has the simple system 
\[ \{\varepsilon_j-\varepsilon_{j+1}:j=1,2,\ldots,2r-1\}\cup\{\varepsilon_{2r}+\varepsilon_{2r+1}\}. \]
Each representation of $K_\rA^\BC=S(GL(2r,\BC)\times GL(1,\BC))$ is of the form 
$(\tau_\bm^{(2r)\vee}\boxtimes\tau_{m_0}^{(1)\vee},V_\bm^{(2r)\vee}\otimes V_{m_0}^{(1)\vee})$, 
and we sometimes abbreviate this to $(\tau_{(\bm;m_0)}^{(2r,1)\vee},V_{(\bm;m_0)}^{(2r,1)\vee})$. 
Clearly $V_{(\bm+(c,\ldots,c);m_0-c)}^{(2r,1)\vee}\simeq V_{(\bm;m_0)}^{(2r,1)\vee}$ holds as $K_\rA^\BC$-modules for any $c$. 
The representation $\tau_\lambda$ of $\tilde{G}$ on $\cO(D,V)$ is given by (\ref{sostarrepn}), 
and if we restrict this representation to $\tilde{G}_\rA$, we have 
\begin{align*}
&\tau_\lambda\left(\begin{pmatrix}a&0&0&b\\0&\bar{d}&-\bar{c}&0\\0&-\bar{b}&\bar{a}&0\\c&0&0&d\end{pmatrix}^{-1}\right)
f\begin{pmatrix}w&v\\-{}^t\hspace{-1pt}v&0\end{pmatrix}\\
=&\det(a^*+vb^*)^{-\lambda/2}\det(cv+d)^{-\lambda/2}
\tau_{\left(\frac{k}{2},\ldots,\frac{k}{2},-\frac{k}{2}\right)}^{(2r+1)\vee}
\begin{pmatrix}a^*+vb^*&-w{}^t\hspace{-1pt}c\\0&{}^t\hspace{-1pt}(cv+d)\end{pmatrix}\\
&\hspace{140pt}\times
f\begin{pmatrix}(a^*+vb^*)^{-1}w{}^t\hspace{-1pt}(a^*+vb^*)^{-1}&(av+b)(cv+d)^{-1}\\-{}^t\hspace{-1pt}((av+b)(cv+d)^{-1})&0\end{pmatrix}\\
=&\det(cv+d)^{-\lambda}\tau_{\left(\frac{k}{2},\ldots,\frac{k}{2},-\frac{k}{2}\right)}^{(2r+1)\vee}
\begin{pmatrix}a^*+vb^*&-w{}^t\hspace{-1pt}c\\0&{}^t\hspace{-1pt}(cv+d)\end{pmatrix}\\
&\hspace{140pt}\times
f\begin{pmatrix}(a^*+vb^*)^{-1}w{}^t\hspace{-1pt}(a^*+vb^*)^{-1}&(av+b)(cv+d)^{-1}\\-{}^t\hspace{-1pt}((av+b)(cv+d)^{-1})&0\end{pmatrix}\\
&\hspace{275pt}(w\in\mathrm{Skew}(2r,\BC),\; v\in\BC^{2r}). 
\end{align*}
For $N\in\BN$, let $\cP_{\le N}(\mathrm{Skew}(2r,\BC))$ be the space of polynomials on $\mathrm{Skew}(2r,\BC)$ 
whose degree is smaller than or equal to $N$, and let $D_\rA\subset\BC^{2r}$ be the unit disk. 
Also, let $\mathrm{incl}:V_{(k,\ldots,k,0;0)}^{(2r,1)\vee}=V_{\left(\frac{k}{2},\ldots,\frac{k}{2},-\frac{k}{2};\frac{k}{2}\right)}^{(2r,1)\vee}
\hookrightarrow V_{\left(\frac{k}{2},\ldots,\frac{k}{2},-\frac{k}{2}\right)}^{(2r+1)\vee}$ be the $K_\rA$-equivariant inclusion. 
Then by the above computation, the map 
\begin{gather*}
\iota:\cO(D_\rA,(\cP_{\le N}(\mathrm{Skew}(2r,\BC))\boxtimes\BC)\otimes V_{(k,\ldots,k,0;0)}^{(2r,1)\vee})
\to \cO(D,V_{\left(\frac{k}{2},\ldots,\frac{k}{2},-\frac{k}{2}\right)}^{(2r+1)\vee}),\\
\iota(f)\begin{pmatrix}w&v\\-{}^t\hspace{-1pt}v&0\end{pmatrix}:=\mathrm{incl}(f(v,w))
\end{gather*}
intertwines the $G_\rA$ action, and we can also prove that $\iota$ preserves the Fischer norm. Thus we study the space 
\begin{align*}
&\cO(D_\rA,(\cP_{\le N}(\mathrm{Skew}(2r,\BC))\boxtimes\BC)\otimes V_{(k,\ldots,k,0;0)}^{(2r,1)\vee})_{K_\rA}\\
=&\cP(\BC^{2r})\otimes (\cP_{\le N}(\mathrm{Skew}(2r,\BC))\boxtimes\BC)\otimes V_{(k,\ldots,k,0;0)}^{(2r,1)\vee}\\
\simeq&\bigoplus_{m_0=0}^\infty V_{(m_0,0,\ldots,0;m_0)}^{(2r,1)\vee}\otimes 
\bigoplus_{\substack{\bm\in\BZ_{++}^r\\|\bm|\le N}}V_{(m_1,m_1,m_2,m_2,\ldots,m_r,m_r;0)}^{(2r,1)\vee}\otimes
V_{(k,\ldots,k,0;0)}^{(2r,1)\vee}. 
\end{align*}
This space is not irreducible under $G_\rA$. For $\bm\in\BZ_{++}^r$ and $\bl\in\BZ_{\ge 0}^r$ we define 
\begin{align*}
F_{\bm,\bl}:=&V_{(m_1,m_1-l_1,m_2,m_2-l_2,\ldots,m_r,m_r-l_r;0)+(k,\ldots,k;0)}^{(2r,1)\vee}\\
\subset& V_{(m_1,m_1,m_2,m_2,\ldots,m_r,m_r;0)}^{(2r,1)\vee}\otimes V_{(k,\ldots,k,0;0)}^{(2r,1)\vee}\\
\subset& (\cP_{\le N}(\mathrm{Skew}(2r,\BC))\boxtimes\BC)\otimes V_{(k,\ldots,k,0;0)}^{(2r,1)\vee}, 
\end{align*}
so that 
\begin{align*}
(\cP_{\le N}(\mathrm{Skew}(2r,\BC))\boxtimes\BC)\otimes V_{(k,\ldots,k,0;0)}^{(2r,1)\vee}
=&\bigoplus_{\substack{\bm\in\BZ_{++}^r\\|\bm|\le N}}
\bigoplus_{\substack{\bl\in\BZ_{\ge 0}^r,\; |\bl|=k\\ 0\le l_j\le m_j-m_{j+1}}} F_{\bm,\bl},\\
\cO(D_\rA,(\cP_{\le N}(\mathrm{Skew}(2r,\BC))\boxtimes\BC)\otimes V_{(k,\ldots,k,0;0)}^{(2r,1)\vee})
=&\bigoplus_{\substack{\bm\in\BZ_{++}^r\\|\bm|\le N}}
\bigoplus_{\substack{\bl\in\BZ_{\ge 0}^r,\; |\bl|=k\\ 0\le l_j\le m_j-m_{j+1}}} \cO(D_\rA,F_{\bm,\bl}). 
\end{align*}
Also, for $\bm\in\BZ_{++}^r$ and $\bk\in\BZ_{\ge 0}^{r+1}$ we set 
\begin{align*}
W_{\bm,\bk}:=&V_{(m_1-k_1,m_2,m_2-k_2,m_3,\ldots,m_{r-1}-k_{r-1},m_r,m_r-k_r,-k_{r+1};m_1)+(k,\ldots,k;0)}^{(2r,1)\vee}\\
\subset&V_{(m_1,m_2,m_2,m_3,\ldots,m_{r-1},m_r,m_r,0;m_1)}^{(2r,1)\vee}\otimes V_{(k,\ldots,k,0;0)}^{(2r,1)\vee}\\
\subset&V_{(m_1,0,\ldots,0;m_1)}^{(2r,1)\vee}\otimes V_{(m_2,m_2,m_3,m_3,\ldots,m_r,m_r,0,0;0)}^{(2r,1)\vee}
\otimes V_{(k,\ldots,k,0;0)}^{(2r,1)\vee}\\
\subset&\cP(\BC^{2r})\otimes (\cP_{\le N}(\mathrm{Skew}(2r,\BC))\boxtimes\BC)\otimes V_{(k,\ldots,k,0;0)}^{(2r,1)\vee}.
\end{align*}
Then we have the following. 
\begin{lemma}\label{propertyFW}
\begin{enumerate}
\item $\ds \iota(W_{\bm,\bk})\subset 
V_{(m_1,m_1-k_1,m_2,m_2-k_2,\ldots,m_r,m_r-k_r,-k_{r+1})+\left(\frac{k}{2},\ldots,\frac{k}{2}\right)}^{(2r+1)\vee}$. 
\item $\ds W_{\bm,\bk}\subset\bigoplus_{\substack{\bl\in(\BZ_{\ge 0})^r,\; |\bl|=k\\ l_j\le k_{j+1},\; l_r\ge k_{r+1}}}
\cO(D_\rA, F_{(m_2,\ldots,m_r,0),\bl})$. 
\item $\ds \iota(F_{\bm,\bl})\subset
\bigoplus_{\substack{\bn\in(\BZ_{\ge 0})^{r+1},\; |\bn|=k\\ n_j\le l_j,\; n_{r+1}\ge l_r-m_r}}
V_{(m_1,m_1-n_1,m_2,m_2-n_2,\ldots,m_r,m_r-n_r,-n_{r+1})+\left(\frac{k}{2},\ldots,\frac{k}{2}\right)}^{(2r+1)\vee}$. 
\end{enumerate}
\end{lemma}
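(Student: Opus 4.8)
The plan is to establish all three containments by a single mechanism, since $W_{\bm,\bk}$ and $F_{\bm,\bl}$ are irreducible $K_\rA^\BC=S(GL(2r,\BC)\times GL(1,\BC))$-modules and $\iota$ is an injective, $K_\rA$-equivariant map (being a Fischer isometry intertwining the $G_\rA$-action). Thus everything reduces to bookkeeping of $GL(2r+1,\BC)$-weights together with the branching rule $GL(2r+1,\BC)\downarrow GL(2r,\BC)$ of Lemma \ref{gldecomp} and Littlewood--Richardson decompositions. The underlying principle for parts (1) and (3) is: because $\cP(\fp^+)\otimes V$ is the multiplicity-free sum of the $K^\BC=GL(2r+1,\BC)$-types $V_{(m_1,m_1-n_1,\ldots,m_r,m_r-n_r,-n_{r+1})+(\frac k2,\ldots,\frac k2)}^{(2r+1)\vee}$ listed before Lemma \ref{sostaroddlemma}, and $\iota$ is $K_\rA^\BC$-equivariant, the image of any irreducible $K_\rA^\BC$-submodule $U$ has zero projection onto each summand whose restriction to $K_\rA^\BC$ does not contain $U$. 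Hence $\iota(U)$ lies in the sum of those $GL(2r+1,\BC)$-types that both occur in $\cP(\fp^+)\otimes V$ and contain $U$ upon restriction to $GL(2r,\BC)\times GL(1,\BC)$ (the embedding $K_\rA^\BC\hookrightarrow K^\BC$ being, up to the central twist, the standard Levi embedding, so that Lemma \ref{gldecomp} applies with the $GL(1)$-weight read off from the central character).

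For part (3) I would take $U=F_{\bm,\bl}$. By Lemma \ref{gldecomp}, a type $V_{(m_1,m_1-n_1,\ldots,-n_{r+1})+(\frac k2,\ldots)}^{(2r+1)\vee}$ restricts to contain the $GL(2r)\times GL(1)$-type of $F_{\bm,\bl}$ precisely when its weight interlaces the $GL(2r)$-highest weight of $F_{\bm,\bl}$ and the $GL(1)$-weights agree. Intersecting these interlacing inequalities with the constraint that the type actually appears in $\cP(\fp^+)\otimes V$ should yield exactly the index set $\{\bn:|\bn|=k,\ n_j\le l_j,\ n_{r+1}\ge l_r-m_r\}$. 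Part (1) is the degenerate case $U=W_{\bm,\bk}$: here the $GL(1)$-weight is fixed at $m_1$, which pins $|\bn|$, and I would verify that the interlacing then forces the unique solution $\bn=\bk$, so that the sum of part (3) collapses to the single type $V_{(m_1,m_1-k_1,\ldots,m_r,m_r-k_r,-k_{r+1})+(\frac k2,\ldots,\frac k2)}^{(2r+1)\vee}$.

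Part (2) involves no $\iota$ and is a pure $K_\rA^\BC$-decomposition. Using $\cP(\BC^{2r})=\bigoplus_{m_0}V_{(m_0,0,\ldots,0;m_0)}^{(2r,1)\vee}$ from Theorem \ref{HKS} and $(\cP_{\le N}(\mathrm{Skew}(2r,\BC))\boxtimes\BC)\otimes V_{(k,\ldots,k,0;0)}^{(2r,1)\vee}=\bigoplus_{\bm',\bl}F_{\bm',\bl}$, I would note that the middle tensor factor $V_{(m_2,m_2,\ldots,m_r,m_r,0,0;0)}^{(2r,1)\vee}$ in the definition of $W_{\bm,\bk}$ forces $\bm'=(m_2,\ldots,m_r,0)$, whence $W_{\bm,\bk}\subset\bigoplus_\bl V_{(m_1,0,\ldots,0;m_1)}^{(2r,1)\vee}\otimes F_{(m_2,\ldots,m_r,0),\bl}$, and the right-hand side sits inside $\bigoplus_\bl\cO(D_\rA,F_{(m_2,\ldots,m_r,0),\bl})_{K_\rA}$. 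It then remains to determine which $\bl$ occur, i.e.\ for which $\bl$ the irreducible $W_{\bm,\bk}$ has nonzero projection onto $\cP(\BC^{2r})\otimes F_{(m_2,\ldots,m_r,0),\bl}$; a Littlewood--Richardson analysis of the two successive tensor products gives the asserted range $l_j\le k_{j+1}$, $l_r\ge k_{r+1}$.

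The main obstacle throughout is the combinatorial bookkeeping, not any conceptual difficulty: one must track the $\vee$-duality conventions, the half-integral central shift $(\frac k2,\ldots,\frac k2)$, and two distinct families of inequalities (the $GL$-interlacing constraints and the Littlewood--Richardson ranges), and check that their intersection is exactly the stated index sets --- most delicately the collapse to a single type in (1). Beyond Lemma \ref{gldecomp}, Theorem \ref{HKS}, and the multiplicity-freeness of $\cP(\fp^+)\otimes V$, no new ingredient is required; the entire content lies in verifying that the weight inequalities assemble into the claimed ranges.
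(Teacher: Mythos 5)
Your part (2) is essentially the paper's own argument (restrict to the middle tensor factor, then apply the Pieri/Littlewood--Richardson constraint for tensoring with $V^{(2r,1)\vee}_{(m_1,0,\ldots,0;m_1)}$), and is fine. The mechanism you propose for (1) and (3), however, has a genuine gap. The Schur-lemma step itself is correct: $\iota(U)$ lies in the sum of those $K^\BC$-types of $\cP(\fp^+)\otimes V$ whose restriction to $K_\rA^\BC$ contains $U$. What fails is your claim that this sum collapses to the asserted right-hand sides: a single $K_\rA^\BC$-type occurs in the restrictions of \emph{many} $K^\BC$-constituents of $\cP(\fp^+)\otimes V$, not only the ones listed in the lemma. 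Concretely, take $r=2$, $k=2$, $\bm=(3,1)$, $\bk=(1,1,0)$, so $W_{\bm,\bk}\simeq V^{(4,1)\vee}_{(4,3,2,2;3)}$ and the target in (1) is $V^{(5)\vee}_{(4,3,2,1,1)}$ (the type labelled by $\bm'=\bm$, $\bn=\bk$). The type $V^{(5)\vee}_{(5,3,1,1,1)}$, labelled by $\bm'=(4,0)$, $\bn=(2,0,0)$, also occurs in $\cP(\fp^+)\otimes V$ (the constraints $n_1\le m_1'-m_2'$, $n_2\le m_2'$ hold), lies even in the same polynomial degree $|\bm'|=|\bm|=4$, and its restriction to $K_\rA^\BC$ contains $V^{(4,1)\vee}_{(3,2,1,1;4)}$, which equals $W_{\bm,\bk}$ under the central identification $V^{(2r,1)\vee}_{(\bp+(c,\ldots,c);q-c)}\simeq V^{(2r,1)\vee}_{(\bp;q)}$: the interlacing $5\ge3\ge3\ge2\ge1\ge1\ge1\ge1\ge1$ holds and the $GL(1)$-label $11-7=4$ matches (this weight convention is the one calibrated by the paper's own containment (\ref{propertyiota})). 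So interlacing plus the $GL(1)$-weight plus occurrence in $\cP(\fp^+)\otimes V$ does \emph{not} force the unique solution $\bn=\bk$ in (1), and the same phenomenon (competing types with $\bm'\ne\bm$) makes the index set your method produces for (3) strictly larger than the one stated.

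The missing ingredient is information about $\iota$ beyond equivariance. The paper first proves the containment (\ref{propertyiota}) --- using the concrete form of $\iota$ as the polynomial-extension map $\cP(\BC^{2r})\otimes\cP(\mathrm{Skew}(2r,\BC))\to\cP(\mathrm{Skew}(2r+1,\BC))$ tensored with $\mathrm{incl}$, together with the multiplicity-free $K$-type structure of $\cP(\mathrm{Skew}(2r+1,\BC))$ itself --- and this pins the ``$\bm$-part'' of the image: $\iota(W_{\bm,\bk})$ and $\iota(F_{\bm,\bl})$ land inside $V^{(2r+1)\vee}_{(m_1,m_1,\ldots,m_r,m_r,0)}\otimes V^{(2r+1)\vee}_{(k/2,\ldots,k/2,-k/2)}$, all of whose constituents have $\bm'=\bm$. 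Only after this pinning does the branching analysis via Lemma \ref{gldecomp} (your interlacing bookkeeping) yield (3). For (1) the paper adds one further step in place of a uniqueness claim for branching: it verifies that the image $\iota(v)$ of the highest weight vector of $W_{\bm,\bk}$ is annihilated by the extra raising operator in $\fk^\BC_{\varepsilon_{2r}-\varepsilon_{2r+1}}$ (both tensor factors of $\iota(v)$ have minimal $\varepsilon_{2r+1}$-weight in their respective modules), hence is a $\fk^\BC$-highest weight vector of the asserted weight, which locates $\iota(W_{\bm,\bk})$ in the single type. Your proposal contains no substitute for either of these steps; with them supplied, the rest of your bookkeeping would go through.
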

\begin{proof}
(1) The polynomial space $\cP(\BC^{2r})\otimes (\cP(\mathrm{Skew}(2r,\BC))\boxtimes\BC)$ is decomposed as 
\begin{align*}
\cP(\BC^{2r})\otimes (\cP(\mathrm{Skew}(2r,\BC))\boxtimes\BC)&=\bigoplus_{m_0=0}^\infty V_{(m_0,0,\ldots,0;m_0)}^{(2r,1)\vee}
\otimes \bigoplus_{\bm\in\BZ_{++}^r}V_{(m_1,m_1,m_2,m_2,\ldots,m_r,m_r;0)}^{(2r,1)\vee}\\
&=\bigoplus_{\bm\in\BZ_{++}^r}\bigoplus_{\substack{\bl\in(\BZ_{\ge 0})^r,\; |\bl|=m_0\\ 0\le l_j\le m_{j-1}-m_j}}
V_{(m_1+l_1,m_1,m_2+l_2,m_2,\ldots,m_r+l_r,m_r;m_0)}^{(2r,1)\vee}, 
\end{align*}
and similarly to (\ref{polyonskew}), we have 
\[ V_{(m_1+l_1,m_1,m_2+l_2,m_2,\ldots,m_r+l_r,m_r;m_0)}^{(2r,1)\vee}\subset
V_{(m_1+l_1,m_1+l_1,m_2+l_2,m_2+l_2,\ldots,m_r+l_r,m_r+l_r)}^{(2r+1)\vee}. \]
Therefore we have 
\begin{align}\label{propertyiota}
&\iota\left(V_{(m_1+l_1,m_1,m_2+l_2,m_2,\ldots,m_r+l_r,m_r;m_0)}^{(2r,1)\vee}\otimes V_{(k,\ldots,k,0;0)}^{(2r,1)\vee}\right)\notag \\
\subset&V_{(m_1+l_1,m_1+l_1,m_2+l_2,m_2+l_2,\ldots,m_r+l_r,m_r+l_r,0)}^{(2r+1)\vee}\otimes 
\mathrm{incl}\left(V_{(k,\ldots,k,0;0)}^{(2r,1)\vee}\right)\notag \\
\subset&V_{(m_1+l_1,m_1+l_1,m_2+l_2,m_2+l_2,\ldots,m_r+l_r,m_r+l_r,0)}^{(2r+1)\vee}\otimes 
V_{\left(\frac{k}{2},\ldots,\frac{k}{2},-\frac{k}{2}\right)}^{(2r+1)\vee}. 
\end{align}
Especially, by putting $\bl=\bzero$ we have 
\begin{align*}
W_{\bm,\bk}\subset &V_{(m_1,m_1,m_2,m_2,\ldots,m_r,m_r,0)}^{(2r+1)\vee}\otimes\mathrm{incl}\left(V_{(k,\ldots,k,0;0)}^{(2r,1)\vee}\right)\\
\subset&V_{(m_1,m_1,m_2,m_2,\ldots,m_r,m_r,0)}^{(2r+1)\vee}\otimes V_{\left(\frac{k}{2},\ldots,\frac{k}{2},-\frac{k}{2}\right)}^{(2r+1)\vee}
\end{align*}
Let $v\in W_{\bm,\bk}$ be the highest weight vector. Then 
\begin{align*}
\iota(v)=\sum_iv_{1,i}\otimes v_{2,i}
\in&V_{(m_1,m_1,m_2,m_2,\ldots,m_r,m_r,0)}^{(2r+1)\vee}\otimes\mathrm{incl}\left(V_{(k,\ldots,k,0;0)}^{(2r,1)\vee}\right)\\
\subset&V_{(m_1,m_1,m_2,m_2,\ldots,m_r,m_r,0)}^{(2r+1)\vee}\otimes V_{\left(\frac{k}{2},\ldots,\frac{k}{2},-\frac{k}{2}\right)}^{(2r+1)\vee}
\end{align*}
has the weight $-(-k_{r+1},m_r-k_r,m_r,\ldots,m_2-k_2,m_2,m_1-k_1,m_1)-\left(\frac{k}{2},\ldots,\frac{k}{2}\right)$, 
vanishes under root vectors $x\in\fk^\BC_{\varepsilon_j-\varepsilon_{j+1}}$ ($j=1,\ldots,2r-1$) 
since $v$ is the highest under $K_\rA^\BC$, and also vanishes under root vectors $x\in\fk^\BC_{\varepsilon_{2r}-\varepsilon_{2r+1}}$ 
since each $v_{1,i},v_{2,i}$ has the weight $(*,\ldots,*,-m_1)$ and $(*,\ldots,*,0)-\left(\frac{k}{2},\ldots,\frac{k}{2}\right)$ 
respectively, where $*$ are some integers. Thus $\iota(v)$ becomes a highest weight vector of 
$V_{(m_1,m_1-k_1,m_2,m_2-k_2,\ldots,m_r,m_r-k_r,-k_{r+1})+\left(\frac{k}{2},\ldots,\frac{k}{2}\right)}^{(2r+1)\vee}$. 

(2) We have 
\begin{align*}
W_{\bm,\bl}\subset&V_{(m_1,\ldots,0;m_1)}^\vee\otimes
V_{(m_2,m_2,m_3,m_3,\ldots,m_r,m_r,0,0;0)}^{(2r,1)\vee}\otimes V_{(k,\ldots,k,0;0)}^{(2r,1)\vee}\\
=&\bigoplus_{\substack{\bl\in\BZ_{\ge 0}^r,\; |\bl|=k\\ 0\le l_j\le m_{j+1}-m_{j+2}}}V_{(m_1,\ldots,0;m_1)}^\vee\otimes
V_{(m_2,m_2-l_1,m_3,m_3-l_2,\ldots,m_r,m_r-l_{r-1},0,-l_r;0)+(k,\ldots,k;0)}^{(2r,1)\vee}\\
=&\bigoplus_{\substack{\bl\in\BZ_{\ge 0}^r,\; |\bl|=k\\ 0\le l_j\le m_{j+1}-m_{j+2}}}V_{(m_1,\ldots,0;m_1)}^\vee\otimes
F_{(m_2,\ldots,m_r,0),\bl},
\end{align*}
and abstractly 
\begin{align*}
W_{\bm,\bl}\simeq&V_{(m_1-k_1,m_2,m_2-k_2,m_3,\ldots,m_{r-1}-k_{r-1},m_r,m_r-k_r,-k_{r+1};m_1)+(k,\ldots,k;0)}^{(2r,1)\vee}\\
\subset&V_{(m_1,0,\ldots,0;m_1)}^{(2r,1)\vee}\otimes
V_{(m_2,m_2-l_1,m_3,m_3-l_2,\ldots,m_r,m_r-l_{r-1},0,-l_r;0)+(k,\ldots,k;0)}^{(2r,1)\vee}
\end{align*}
holds only if $l_j\le k_{j+1}$, $l_r\ge k_{r+1}$ holds.  

(3) By (\ref{propertyiota}) with $\bl=\bzero$ we have 
\begin{align*}
\iota(F_{\bm,\bl})\subset& V_{(m_1,m_1,m_2,m_2,\ldots,m_r,m_r)}^{(2r+1)\vee}\otimes 
V_{\left(\frac{k}{2},\ldots,\frac{k}{2},-\frac{k}{2}\right)}^{(2r+1)\vee} \\
=&\bigoplus_{\substack{\bn\in(\BZ_{\ge 0})^{r+1},\; |\bn|=k\\ n_j\le m_j-m_{j+1}}}
V_{(m_1,m_1-n_1,m_2,m_2-n_2,\ldots,m_r,m_r-n_r,-n_{r+1})+\left(\frac{k}{2},\ldots,\frac{k}{2}\right)}^{(2r+1)\vee}. 
\end{align*}
Combining with the abstract branching rule under $K^\BC\supset K_\rA^\BC$ (Lemma \ref{gldecomp}), we get the desired formula. 
\end{proof}

Now we want to show that, 
on $V_{(m_1,m_1-k_1,\ldots,m_r,m_r-k_r,-k_{r+1})+\left(\frac{k}{2},\ldots,\frac{k}{2}\right)}^{(2r+1)\vee}$ the ratio is given by 
\begin{equation}\label{indhyp}
\frac{\Vert f\Vert_{\lambda,\tau_{(k/2,\ldots,k/2,-k/2)}^{(2r+1)\vee}}^2}
{\Vert f\Vert_{F,\tau_{(k/2,\ldots,k/2,-k/2)}^{(2r+1)\vee}}^2}
=\frac{1}{\prod_{j=1}^r\left(\lambda+k-2(j-1)\right)_{m_j-k_j}\left(\lambda-2r+1\right)_{k-k_{r+1}}}
\end{equation}
by induction on $\min\{j:m_j=0\}$. 

First, when $\bm=\bzero$ i.e. on $V_{(0,\ldots,0,-k)+\frac{k}{2}}^\vee$, (\ref{indhyp}) clearly holds by the normalization assumption.  
Second, we assume (\ref{indhyp}) holds when $m_j=0$, 
and prove this also holds on 
$V_{(m_1,m_1-k_1,\ldots,m_r,m_r-k_r,-k_{r+1})+\left(\frac{k}{2},\ldots,\frac{k}{2}\right)}^{(2r+1)\vee}$ 
when $m_{j+1}=0$. 

By Lemma \ref{propertyFW} (1), it suffices to compute $\Vert\iota(f)\Vert_{\lambda,\tau}^2/\Vert\iota(f)\Vert_{F,\tau}^2$ for $f\in W_{\bm,\bk}$. 
For any $\bl$, let $f_{\bl}$ be the orthogonal of $f$ onto $\cO(D_\rA,F_{\bm',\bl})$, where $\bm':=(m_2,\ldots,m_r,0)$. 
Then by Lemma \ref{propertyFW} (2), we have 
\[ f=\sum_{\substack{\bl\in(\BZ_{\ge 0})^r,\; |\bl|=k\\ l_j\le k_{j+1},\; l_r\ge k_{r+1}}}f_\bl, \]
and there exist $b_\bl\ge 0$ such that $\Vert\iota(f_\bl)\Vert_F^2=b_\bl\Vert\iota(f)\Vert_F^2$ holds. 
Next, by Theorem \ref{suqsnontube}, we have 
\begin{align*}
&\frac{\Vert\iota(f_\bl)\Vert_{\lambda,\tau}}{\Vert\iota(f_\bl)\Vert_{F,\tau}}
\times\frac{\Vert\iota(v_\bl)\Vert_{F,\tau}}{\Vert\iota(v_\bl)\Vert_{\lambda,\tau}}\\
=&\frac{\prod_{j=1}^{r-1}((\lambda-(2j-2))_{m_{j+1}+k}(\lambda-(2j-1))_{m_{j+1}-l_j+k})(\lambda-(2r-1))_{-l_r+k}}
{\begin{array}{l}\prod_{j=1}^{r-1}((\lambda-(2j-2))_{m_j-k_j+k}(\lambda-(2j-1))_{m_{j+1}+k})\\
\hspace{180pt}\times(\lambda-(2r-2))_{m_r-k_r+k}(\lambda-(2r-1))_{-k_{r+1}+k}\end{array}}\\
=&\frac{\prod_{j=1}^{r-1}(\lambda+k-2(j-1))_{m_{j+1}}\prod_{j=2}^r(\lambda+k-(2j-3))_{m_j-l_{j-1}}(\lambda-2r+1)_{k-l_r}}
{\prod_{j=1}^r(\lambda+k-2(j-1))_{m_j-k_j}\prod_{j=2}^r(\lambda+k-(2j-3))_{m_j}(\lambda-2r+1)_{k-k_{r+1}}}, 
\end{align*}
where $v_\bl$ is any non-zero element in the minimal $K_\rA$-type $F_{\fm',\fl}$. 
Next, let $v_{\bl,\bn}$ be the orthogonal projection of $\iota(v_\bl)$ onto 
$V_{(m_2,m_2-n_1,m_3,m_3-n_2,\ldots,m_r,m_r-n_{r-1},0,0,-n_r)+\left(\frac{k}{2},\ldots,\frac{k}{2}\right)}^{(2r+1)\vee}$, so that 
\[ \iota(v_\bl)=\sum_{\substack{\bn\in(\BZ_{\ge 0})^r,\; |\bn|=k\\ n_j\le l_j,\; n_r\ge l_r}} v_{\bl,\bn} \]
by Lemma \ref{propertyFW} (3). Then there exist $c_{\bl,\bn}\ge 0$ such that 
$\Vert v_{\bl,\bn}\Vert_{F,\tau}^2=c_{\bl,\bn}\Vert\iota(v_\bl)\Vert_{F,\tau}^2$ holds. 
Next, by the induction hypothesis (\ref{indhyp}), for each $\bn$ we have 
\[ \frac{\Vert v_{\bl,\bn}\Vert_{\lambda,\tau}^2}{\Vert v_{\bl,\bn}\Vert_{F,\tau}^2}
=\frac{1}{\prod_{j=1}^{r-1}\left(\lambda+k-2(j-1)\right)_{m_{j+1}-n_j}\left(\lambda-2r+1\right)_{k-n_r}}. \]
Thus for each $\bl$ we get 
\begin{align*}
\frac{\Vert\iota(v_\bl)\Vert_{\lambda,\tau}^2}{\Vert\iota(v_\bl)\Vert_{F,\tau}^2}
=&\sum_{\substack{\bn\in(\BZ_{\ge 0})^r,\; |\bn|=k\\ n_j\le l_j,\; n_r\ge l_r}}
c_{\bl,\bn}\frac{\Vert v_{\bl,\bn}\Vert_{\lambda,\tau}^2}{\Vert v_{\bl,\bn}\Vert_{F,\tau}^2}\\
=&\sum_{\substack{\bn\in(\BZ_{\ge 0})^r,\; |\bn|=k\\ n_j\le l_j,\; n_r\ge l_r}}
\frac{c_{\bl,\bn}}{\prod_{j=1}^{r-1}\left(\lambda+k-2(j-1)\right)_{m_{j+1}-n_j}\left(\lambda-2r+1\right)_{k-n_r}}\\
=&\frac{(\text{monic polynomial of degree $k-l_r$})}{\prod_{j=1}^{r-1}\left(\lambda+k-2(j-1)\right)_{m_{j+1}}\left(\lambda-2r+1\right)_{k-l_r}}, 
\end{align*}
and therefore we get 
\begin{align*}
&\frac{\Vert\iota(f)\Vert_{\lambda,\tau}^2}{\Vert\iota(f)\Vert_{F,\tau}^2}
=\sum_{\substack{\bl\in(\BZ_{\ge 0})^r,\; |\bl|=k\\ l_j\le k_{j+1},\; l_r\ge k_{r+1}}}
b_\bl\frac{\Vert f_\bl\Vert_{\lambda,\tau}^2}{\Vert f_\bl\Vert_{F,\tau}^2}\\
=&\sum_{\substack{\bl\in(\BZ_{\ge 0})^r,\; |\bl|=k\\ l_j\le k_{j+1},\; l_r\ge k_{r+1}}}b_\bl\Biggl(
\frac{(\text{monic polynomial of degree $k-l_r$})}{\prod_{j=1}^{r-1}\left(\lambda+k-2(j-1)\right)_{m_{j+1}}\left(\lambda-2r+1\right)_{k-l_r}}\\
&\hspace{15pt}\times \frac{\prod_{j=1}^{r-1}(\lambda+k-2(j-1))_{m_{j+1}}\prod_{j=2}^r(\lambda+k-2(j-1)+1)_{m_j-l_{j-1}}(\lambda-2r+1)_{k-l_r}}
{\prod_{j=1}^r(\lambda+k-2(j-1))_{m_j-k_j}\prod_{j=2}^r(\lambda+k-(2j-3))_{m_j}(\lambda-2r+1)_{k-k_{r+1}}}\Biggr)\\
=&\frac{(\text{monic polynomial of degree $k_2+\cdots+k_r$})}
{\prod_{j=1}^r(\lambda+k-2(j-1))_{m_j-k_j}\prod_{j=2}^r(\lambda+k+m_j-k_j-(2j-3))_{k_j}(\lambda-2r+1)_{k-k_{r+1}}}.
\end{align*}
On the other hand, by Lemma \ref{sostaroddlemma} we have 
\[ \frac{\Vert\iota(f)\Vert_{\lambda,\tau}^2}{\Vert\iota(f)\Vert_{F,\tau}^2}
=\frac{(\text{monic polynomial of degree $k_{r+1}$})}{\prod_{j=1}^r(\lambda+k-2(j-1))_{m_j-k_j}(\lambda-2r+1)_k}, \]
so combining these two formulas, we get 
\[ \frac{\Vert\iota(f)\Vert_{\lambda,\tau}^2}{\Vert\iota(f)\Vert_{F,\tau}^2}
=\frac{1}{\prod_{j=1}^r(\lambda+k-2(j-1))_{m_j-k_j}(\lambda-2r+1)_{k-k_{r+1}}}, \]
and the induction continues. Thus we have proved (\ref{indhyp}) for any $\bm$, and proved Theorem \ref{sostarodd2}. \qed

\subsection{Conjecture on $E_{6(-14)}$}\label{secte6}
In this subsection we set $G=E_{6(-14)}$. Then we have 
\begin{gather*}
\fk\simeq \mathfrak{so}(2)\oplus\mathfrak{so}(10),\quad \fp^\pm\simeq M(2,1;\BO_\BC),\quad
\fg_\rT\simeq \mathfrak{so}(2,8),\quad \fl\simeq \BR\oplus\mathfrak{so}(1,7),\quad \fk_\fl\simeq \mathfrak{so}(7),\\
r=2,\quad n=16,\quad d=6,\quad p=12. 
\end{gather*}

We take a Cartan subalgebra $\fh\subset\fk$. Then we can take a basis $\{t_0,t_1,\ldots,t_5\}\subset\sqrt{-1}\fh$ 
and $\{\varepsilon_0,\varepsilon_1,\ldots,\varepsilon_5\}\subset(\sqrt{-1}\fh)^\vee$, such that 
\[ \varepsilon_0(t_j)=\frac{4}{3}\delta_{0,j},\quad \varepsilon_i(t_j)=\delta_{i,j}
\quad (i=1,\ldots,5,\; j=0,1,\ldots,5), \]
and the simple system of positive roots $\Delta_+(\fg^\BC,\fh^\BC)$ is given by 
\[ \left\{\varepsilon_1-\varepsilon_2,\;\varepsilon_2-\varepsilon_3,\;\varepsilon_3-\varepsilon_4,\;
\varepsilon_4-\varepsilon_5,\;\varepsilon_4+\varepsilon_5,\;
\frac{3}{4}\varepsilon_0+\frac{1}{2}(-\varepsilon_1-\varepsilon_2-\varepsilon_3-\varepsilon_4+\varepsilon_5)\right\}, \]
where $\frac{3}{4}\varepsilon_0+\frac{1}{2}(-\varepsilon_1-\varepsilon_2-\varepsilon_3-\varepsilon_4+\varepsilon_5)$ 
is the unique non-compact simple root, and the central character of $\fk^\BC$ is given by $d\chi=\varepsilon_0$. 
The set of strongly orthogonal roots $\{\gamma_1,\gamma_2\}\subset\Delta_{\fp^+}$ is given by 
\[ \gamma_1=\frac{3}{4}\varepsilon_0+\frac{1}{2}(\varepsilon_1+\varepsilon_2+\varepsilon_3+\varepsilon_4+\varepsilon_5),\quad
\gamma_2=\frac{3}{4}\varepsilon_0+\frac{1}{2}(\varepsilon_1-\varepsilon_2-\varepsilon_3-\varepsilon_4-\varepsilon_5), \]
and $\fh_\rT:=\fh\cap\fg_\rT$, $\fa_\fl$ is given by 
\[ \sqrt{-1}\fh_\rT=\mathrm{span}\left\{ \frac{3}{4}t_0+\frac{1}{2}t_1,\; t_2,\; t_3,\; t_4,\; t_5\right\}, \;
\fa_\fl=\mathrm{span}\left\{\frac{3}{4}t_0+\frac{1}{2}t_1,\;\frac{1}{2}(t_2+t_3+t_4+t_5)\right\}. \]
We denote the restriction of $\varepsilon_j$ to $\sqrt{-1}\fh_\rT$ by the same symbol $\varepsilon_j$ ($j=2,3,4,5$), 
and define $\varepsilon_1'\in(\sqrt{-1}\fh_\rT)^\vee$ by 
\[ \varepsilon_1'\left(\frac{3}{4}t_0+\frac{1}{2}t_1\right)=1,\quad \varepsilon_1'(t_j)=0\quad (j=2,3,4,5), \]
so that $(m_0\varepsilon_0+m_1\varepsilon_1)|_{\sqrt{-1}\fh_\rT}=\left(m_0+\frac{1}{2}m_1\right)\varepsilon_1'$ holds. 
Also, we define $\varepsilon_2^\omega,\varepsilon_3^\omega,\varepsilon_4^\omega,\varepsilon_5^\omega\in(\sqrt{-1}\fh_\rT)^\vee$ 
such that they satisfy the relations 
\begin{align*}
\varepsilon_2^\omega&=\frac{1}{2}(\varepsilon_2+\varepsilon_3+\varepsilon_4+\varepsilon_5),&
\frac{1}{2}(\varepsilon_2^\omega+\varepsilon_3^\omega+\varepsilon_4^\omega+\varepsilon_5^\omega)&=\varepsilon_2,\\
\varepsilon_2^\omega+\varepsilon_3^\omega&=\varepsilon_2+\varepsilon_3,&
\frac{1}{2}(\varepsilon_2^\omega+\varepsilon_3^\omega+\varepsilon_4^\omega-\varepsilon_5^\omega)
&=\frac{1}{2}(\varepsilon_2+\varepsilon_3+\varepsilon_4-\varepsilon_5), 
\end{align*}
so that $\gamma_1|_{\sqrt{-1}\fh_\rT}=\varepsilon_1'+\varepsilon_2^\omega$, $\gamma_2|_{\sqrt{-1}\fh_\rT}=\varepsilon_1'-\varepsilon_2^\omega$ 
holds. 

For $(m_0;\bm)\in\BC\times\left(\BZ^5\cup\left(\BZ+\frac{1}{2}\right)^5\right)$ with $m_1\ge\cdots\ge m_4\ge |m_5|$, 
let $(\tau_{(m_0;\bm)}^{[2,10]},V_{(m_0;\bm)}^{[2,10]})=(\chi^{m_0}\boxtimes \tau_\bm^{[10]},\BC_{m_0}\otimes V_\bm^{[10]})$ 
be the irreducible $\fk^\BC$-module with highest weight $m_0\varepsilon_0+m_1\varepsilon_1+\cdots+m_5\varepsilon_5$. 
Also, for $(m_0;m_1;m_2,\ldots,m_5)\in\BC\times\BC\times\left(\BZ^4\cup\left(\BZ+\frac{1}{2}\right)^4\right)$ 
with $m_2\ge m_3\ge m_4\ge |m_5|$, let $(\tau_{(m_0;m_1;m_2,\ldots,m_5)}^{[2,2,8]},V_{(m_0;m_1;m_2,\ldots,m_5)}^{[2,2,8]})$, 
$(\tau_{(m_1;m_2,\ldots,m_5)}^{[2,8]},V_{(m_1;m_2,\ldots,m_5)}^{[2,8]})$ and 
$(\tau_{(m_1;m_2,\ldots,m_5)}^{[2,8]\omega},V_{(m_1;m_2,\ldots,m_5)}^{[2,8]\omega})$ 
be the irreducible $\fk_\rT^\BC$-module with highest weight $m_0\varepsilon_0+m_1\varepsilon_1+m_2\varepsilon_2+\ldots+m_5\varepsilon_5$, 
$m_1\varepsilon_1'+m_2\varepsilon_2+\ldots+m_5\varepsilon_5$, and 
$m_1\varepsilon_1'+m_2\varepsilon_2^\omega+\ldots+m_5\varepsilon_5^\omega$ respectively. 
Then as in Section \ref{explicitroots}, we can show 
\[ (\overline{\tau_{(m_1;m_2,m_3,m_4,m_5)}^{[2,8]\omega}},\overline{V_{(m_1;m_2,m_3,m_4,m_5)}^{[2,8]\omega}})
\simeq (\tau_{(m_1;m_2,m_3,m_4,-m_5)}^{[2,8]\omega},V_{(m_1;m_2,m_3,m_4,-m_5)}^{[2,8]\omega}). \]

We set $V=V_{\left(-\frac{k}{2};k,0,0,0,0\right)}^{[2,10]}$. The goal of this subsection is to prove the following proposition. 
\begin{proposition}\label{e6(-14)}
When $G=E_{6(-14)}$ and $(\tau,V)=(\chi_{-k/2}\boxtimes\tau_{(k,0,0,0,0)}^{[10]},\BC_{-k/2}\otimes V_{(k,0,0,0,0)}^{[10]})$ 
$(k\in\BZ_{\ge 0})$, $\Vert\cdot\Vert_{\lambda,\tau}^2$ converges if $\Re\lambda>11$, the normalizing constant $c_\lambda$ is given by 
\[ c_\lambda=(\lambda-7+k)_7(\lambda-8)(\lambda-11)_7(\lambda-4+k), \]
the $K$-type decomposition of $\cO(D,V)_K$ is given by 
\begin{align*}
&\cP(\fp^+)\otimes \left(\BC_{-k/2}\boxtimes V_{(k,0,0,0,0)}^{[10]}\right)\\
=&\bigoplus_{\bm\in\BZ_{++}^2}
\bigoplus_{\substack{\bk\in(\BZ_{\ge 0})^4,\; |\bk|=k\\ k_2+k_4\le m_2\\ k_3\le m_1-m_2}}
\BC_{-\frac{3}{4}(m_1+m_2)-\frac{k}{2}}\boxtimes 
V_{\left(\frac{m_1+m_2}{2}+k_1-k_4,\frac{m_1-m_2}{2}+k_2,\frac{m_1-m_2}{2},\frac{m_1-m_2}{2},-\frac{m_1-m_2}{2}+k_3\right)}^{[10]},
\end{align*}
and for $f\in \BC_{-\frac{3}{4}(m_1+m_2)-\frac{k}{2}}\boxtimes
V_{\left(\frac{m_1+m_2}{2}+k_1-k_4,\frac{m_1-m_2}{2}+k_2,\frac{m_1-m_2}{2},\frac{m_1-m_2}{2},-\frac{m_1-m_2}{2}+k_3\right)}^{[10]}$, 
the ratio of norms is of the form 
\begin{align*}
\frac{\Vert f\Vert_{\lambda,\chi_{-k/2}\boxtimes\tau_{(k,0,0,0,0)}^{[10]}}^2}
{\Vert f\Vert_{F,\chi_{-k/2}\boxtimes\tau_{(k,0,0,0,0)}^{[10]}}^2}
=&\frac{(\lambda)_k(\lambda-3)_k(\text{monic polynomial of degree $2k_1+k_2+k_3$})}
{(\lambda)_{m_1+k_1+k_2}(\lambda-3)_{m_2+k_1+k_3}(\lambda-4)_k(\lambda-7)_k}\\
=&\frac{(\text{monic polynomial of degree $2k_1+k_2+k_3$})}
{(\lambda+k)_{m_1+k_1+k_2-k}(\lambda+k-3)_{m_2+k_1+k_3-k}(\lambda-4)_k(\lambda-7)_k}. 
\end{align*}
\end{proposition}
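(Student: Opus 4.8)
The plan is to treat $E_{6(-14)}$ by the same three-step scheme that handled the other non-tube cases (Theorems \ref{sostarodd1}, \ref{sostarodd2}): determine the normalizing constant $c_\lambda$, compute the $K$-type decomposition of $\cP(\fp^+)\otimes V$, and then read off the ratio of norms on each $K$-type from Theorem \ref{keythm}. Because $E_{6(-14)}$ is of non-tube type, Corollary \ref{tubecor} is unavailable, so Theorem \ref{keythm} will produce the ratio only as a weighted sum $\frac{c_\lambda}{\sum_{ij}a_{ij}}\sum_{ij}a_{ij}\,\Gamma_\Omega(\lambda+\bk_i-8)/\Gamma_\Omega(\lambda+\bn_{ij})$ with a priori unknown non-negative integers $a_{ij}$; the aim is therefore to pin down $c_\lambda$ and the exponent vectors $\{\bk_i\},\{\bn_{ij}\}$ exactly, and to deduce from these that the ratio has the displayed shape with a \emph{monic} polynomial numerator of degree $2k_1+k_2+k_3$.

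For $c_\lambda$ I would restrict $(\tau,V)$ to $\fk_\rT^\BC\simeq\mathfrak{so}(2,\BC)\oplus\mathfrak{so}(8,\BC)$. The branching $\mathfrak{so}(10)\downarrow\mathfrak{so}(8)$ of the harmonic representation $V_{(k,0,0,0,0)}^{[10]}$ is computed by iterating the interlacing rule through $\mathfrak{so}(9)$, yielding components $V_{(b,0,0,0)}^{[8]}$ for $0\le b\le k$ with their multiplicities, each carrying a definite $\varepsilon_0$-charge. Using $\gamma_1|_{\sqrt{-1}\fh_\rT}=\varepsilon_1'+\varepsilon_2^\omega$ and $\gamma_2|_{\sqrt{-1}\fh_\rT}=\varepsilon_1'-\varepsilon_2^\omega$ I read off the restricted lowest weight $-(\tfrac{k_{i,1}}{2}\gamma_1+\tfrac{k_{i,2}}{2}\gamma_2)|_{\fa_\fl}$ of each $V_i$; since each $V_i$ stays irreducible over $\fk_\fl\simeq\mathfrak{so}(7)$, assumption (A1) holds and the $c_\lambda^{-1}$ formula of Theorem \ref{keythm} applies, with $\Gamma_\Omega(\bs)=(2\pi)^3\Gamma(s_1)\Gamma(s_2-3)$ for the rank-two cone of $\mathfrak{so}(2,8)$. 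The resulting finite sum over $b$ is a hypergeometric-type sum that I expect to telescope to the stated product $c_\lambda=(\lambda-7+k)_7(\lambda-8)(\lambda-11)_7(\lambda-4+k)$; verifying this identity is the first technical hurdle.

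For the $K$-type decomposition I would apply Hua--Kostant--Schmid (Theorem \ref{HKS}) to write $\cP(\fp^+)=\bigoplus_{\bm\in\BZ^2_{++}}\cP_\bm(\fp^+)$, identify $\cP_\bm(\fp^+)$ as the $\mathfrak{so}(10)$-type of the indicated highest weight and $\varepsilon_0$-charge, and decompose each $\cP_\bm(\fp^+)\otimes V_{(k,0,0,0,0)}^{[10]}$ by the $\mathfrak{so}(10)$ tensor rule, producing the indexing by $\bm$ and $\bk\in(\BZ_{\ge0})^4$ with $|\bk|=k$. For a fixed $K$-type $W$ I then compute $\mathrm{rest}(W)\subset\bigoplus_i\cP(\fp^+_\rT,V_i)$ and, for each admissible pair $(i,j)$, find the common lowest weight $-(n_{ij,1}\gamma_1+n_{ij,2}\gamma_2)$ of the $\fk_\fl$-spherical constituents of $W_{ij}\otimes\overline{V_i}$ using Theorem \ref{sph} together with the $\mathfrak{so}(8)\downarrow\mathfrak{so}(7)$ sphericity criterion. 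Substituting the resulting $\bk_i,\bn_{ij}$ into Theorem \ref{keythm} and clearing denominators should give a common denominator equal to $(\lambda)_{m_1+k_1+k_2}(\lambda-3)_{m_2+k_1+k_3}(\lambda-4)_k(\lambda-7)_k$, with numerator $(\lambda)_k(\lambda-3)_k$ times a polynomial whose degree is governed by the spread of the $\bn_{ij}$.

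The main obstacle is the evaluation of the coefficients $a_{ij}$. Determining the denominator and the degree $2k_1+k_2+k_3$ of the numerator requires only the exponent data $\bk_i,\bn_{ij}$ and the non-negativity of the $a_{ij}$ (so that no top-degree cancellation occurs, and the leading coefficient, after the normalization built into Theorem \ref{keythm}, is $1$); that much I expect to push through, which proves the Proposition as stated. What Theorem \ref{keythm} cannot supply is the exact monic polynomial, because --- unlike the $SO^*$ cases, where an isometric embedding into a tube-type group or from $SU(2r,1)$ forced the numerators by comparison with a known norm --- no such convenient embedding is available for $E_{6(-14)}$. Pinning down that polynomial exactly is precisely the content of Conjecture \ref{e6(-14)conj}.
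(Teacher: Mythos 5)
Your proposal follows essentially the same route as the paper's proof: restriction of $(\tau,V)$ to $\fk_\rT\simeq\mathfrak{so}(2)\oplus\mathfrak{so}(8)$ (the paper gets the charged branching from Tsukamoto's lemma, Lemma \ref{sodecomp}), evaluation of the resulting finite sum for $c_\lambda^{-1}$ in closed form (the paper solves a recurrence for an auxiliary $F(\lambda,l)$), Hua--Kostant--Schmid plus the multi-minuscule rule for the $K$-type decomposition, and finally Theorem \ref{keythm} combined with exactly your degree-counting/positivity observation — each term, put over the common denominator $(\lambda)_{m_1+k_1+k_2}(\lambda-3)_{m_2+k_1+k_3}(\lambda-4)_k(\lambda-7)_k$, contributes a monic numerator of degree exactly $2k_1+k_2+k_3$ times $a_{\bm,\bk,\bl}\ge 0$, so no top-degree cancellation and the normalized sum is monic — while the $a_{ij}$ themselves, hence the exact polynomial, remain undetermined (Conjecture \ref{e6(-14)conj}). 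The plan and its key technical points match the paper's argument.
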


Before starting the proof, we quote the following lemma about the restriction of the representation $V^{[2s]}$ of $\mathfrak{so}(2s+2)$ to 
$\mathfrak{so}(2)\oplus\mathfrak{so}(2s)$. 
\begin{lemma}[{\cite[Theorem 1.1]{T}}]\label{sodecomp}
\[ \left.V_{(m_0,m_1,\ldots,m_s)}^{[2s+2]}\right|_{\mathfrak{so}(2)\oplus\mathfrak{so}(2s)}
\simeq\bigoplus_{\substack{m_{i-1}\ge n_i\ge |m_{i+1}|\\m_{s-1}\ge |n_s|}}\bigoplus_{n_0}
c^{(m_0,m_1,\ldots,m_s)}_{(n_1,\ldots,n_s)}(n_0) V_{(n_0;n_1,\ldots,n_s)}^{[2,2s]}, \]
where $c^{(m_0,m_1,\ldots,m_s)}_{(n_1,\ldots,n_s)}(n_0)\in\BZ_{\ge 0}$ is the coefficient of $X^{n_0}$ of the polynomial 
\[ X^{a_s}\prod_{j=0}^{s-1}\frac{X^{a_j+1}-X^{-a_j-1}}{X-X^{-1}}, \]
where 
\begin{align*}
a_0&=m_0-\max\{m_1,n_1\},&\\
a_j&=\min\{m_j,n_j\}-\max\{|m_{j+1}|,|n_{j+1}|\} &(j=1,\ldots,s-1),\\
a_s&=(\sgn m_s)(\sgn n_s)\min\{|m_s|,|n_s|\}.& 
\end{align*}
\end{lemma}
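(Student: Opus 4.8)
The plan is to run the general machinery of Theorem \ref{keythm}, since $E_{6(-14)}$ is of non-tube type with $r=2$, $p=12$, $n/r=8$, and $\fk_\fl\simeq\mathfrak{so}(7)$, so that $K_L$ (with Lie algebra $\mathfrak{so}(7)$) acts on $\fp^+_\rT\simeq\BC^8=\BO_\BC$ through the irreducible $Spin(7)$ spin representation. First I would decompose $V=V_{(-k/2;k,0,0,0,0)}^{[2,10]}$ under $\fk_\rT^\BC\simeq\mathfrak{so}(2)\oplus\mathfrak{so}(8)$ by the branching rule of Lemma \ref{sodecomp} (the case $s=4$, i.e. $\mathfrak{so}(10)\downarrow\mathfrak{so}(2)\oplus\mathfrak{so}(8)$). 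The one subtlety is that the tube-type $\mathfrak{so}(8)$ is expressed in the triality-twisted coordinates $\varepsilon_1',\varepsilon_2^\omega,\ldots,\varepsilon_5^\omega$: applying Lemma \ref{sodecomp} in the $\varepsilon$-coordinates produces vector-type $\mathfrak{so}(8)$-modules $V_{(n_1,0,0,0)}$, which I then reinterpret through the triality relations defining $\varepsilon_j^\omega$. This gives a finite family $(\tau_i,V_i)$ indexed by $l=0,1,\dots,k$, whose restricted lowest weights $-\frac12(k_{i,1}\gamma_1+k_{i,2}\gamma_2)|_{\fa_\fl}$ and dimensions $\dim V_i$ are read off from the piece weights $n_0\varepsilon_1'-n_1\varepsilon_2^\omega$ via $\varepsilon_1'=\frac12(\gamma_1+\gamma_2)$, $\varepsilon_2^\omega=\frac12(\gamma_1-\gamma_2)$.

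The structural input that makes this work is assumption (A1): each $V_i|_{K_L}$ must stay irreducible. Here the $\omega$-twist turns the tube-type vector direction into the octonion/spin direction, so $K_L\simeq Spin(7)$ acts on $\fp^+_\rT$ irreducibly and each $V_i$ — a triality-image of $V_{(n_1,0,0,0)}$ — remains $\mathfrak{so}(7)$-irreducible. This is the octonionic phenomenon ($Spin(7)$ transitive on $S^7$ with isotropy $G_2$) and is exactly why Theorem \ref{keythm} applies to $E_{6(-14)}$. Granting (A1), the theorem yields convergence for $\Re\lambda+k_{i,2}>p-1=11$; since the minimal piece has $k_{i,2}=0$ this is $\Re\lambda>11$, and
\[ c_\lambda^{-1}=\frac{1}{\dim V}\sum_{l=0}^k(\dim V_i)\frac{\Gamma_\Omega(\lambda+\bk_i-8)}{\Gamma_\Omega(\lambda+\bk_i)}, \]
where $\Gamma_\Omega$ is the rank-$2$, $d=6$ Gindikin Gamma function, so that $(\bs)_\bm=(s_1)_{m_1}(s_2-3)_{m_2}$. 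Substituting the explicit $\bk_i$ and $\dim V_i$ turns the right-hand side into a single terminating hypergeometric sum over $l$, which I expect to collapse by a Chu--Vandermonde identity — as in the $SO^*(4r+2)$ computations of the previous subsections — to the closed product $c_\lambda=(\lambda-7+k)_7(\lambda-8)(\lambda-11)_7(\lambda-4+k)$.

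Next I would obtain the $K$-type decomposition of $\cO(D,V)_K=\cP(\fp^+)\otimes V$. By Theorem \ref{HKS}, $\cP(\fp^+)=\bigoplus_{\bm\in\BZ^2_{++}}\cP_\bm(\fp^+)$, and writing the lowest weight $-m_1\gamma_1-m_2\gamma_2$ in explicit $\varepsilon$-coordinates identifies $\cP_\bm(\fp^+)$ as the $\fk^\BC$-module $\BC_{-\frac34(m_1+m_2)}\boxtimes V_{\left(\frac{m_1+m_2}2,\frac{m_1-m_2}2,\frac{m_1-m_2}2,\frac{m_1-m_2}2,\pm\frac{m_1-m_2}2\right)}^{[10]}$. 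Tensoring with $V$ reduces the count to a tensor-product decomposition in $\mathfrak{so}(10)$, which I would carry out by the multi-minuscule / Littlewood--Richardson rule underlying Lemma \ref{tensorso}; this produces the stated sum over $\bk\in(\BZ_{\ge0})^4$ with $|\bk|=k$, $k_2+k_4\le m_2$, and $k_3\le m_1-m_2$. Then, for each $K$-type $W$, I would compute $\mathrm{rest}(W)\subset\bigoplus_i\cP(\fp^+_\rT,V_i)$ and its submodules $W_{ij}$, and verify assumption (A2): every $K_L$-spherical constituent of $W_{ij}\otimes\overline{V_i}$ carries a single lowest weight $-(n_{ij,1}\gamma_1+n_{ij,2}\gamma_2)$, using the conjugation rule $\overline{V_{(m_1;m_2,m_3,m_4,m_5)}^{[2,8]\omega}}\simeq V_{(m_1;m_2,m_3,m_4,-m_5)}^{[2,8]\omega}$. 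Theorem \ref{keythm} then gives
\[ \frac{\Vert f\Vert_{\lambda,\tau}^2}{\Vert f\Vert_{F,\tau}^2}=\frac{c_\lambda}{\sum_{ij}a_{ij}}\sum_{ij}a_{ij}\frac{\Gamma_\Omega(\lambda+\bk_i-8)}{\Gamma_\Omega(\lambda+\bn_{ij})}. \]
Placing this over a common denominator and inserting $c_\lambda$ produces the asserted shape: the fixed rational factor $(\lambda)_k(\lambda-3)_k\big/\big((\lambda)_{m_1+k_1+k_2}(\lambda-3)_{m_2+k_1+k_3}(\lambda-4)_k(\lambda-7)_k\big)$ times a monic polynomial, whose degree a bookkeeping of the admissible $(i,j)$ (equivalently the span of the $\bn_{ij}$) identifies as $2k_1+k_2+k_3$.

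The main obstacle is exactly the coefficients $a_{ij}$: Theorem \ref{keythm} does not determine them, and — in contrast to the $SO^*(4r+2)$ subsections, where an embedding into or from a classical tube-type group pinned the reproducing-kernel multiplicities down — for $E_{6(-14)}$ no such embedding is available. Consequently only the \emph{form} of the ratio, up to the monic polynomial of degree $2k_1+k_2+k_3$, can be proved; determining that polynomial explicitly is the content of Conjecture \ref{e6(-14)conj}. I would therefore present the argument only through the reduction above, taking care with the triality bookkeeping in Lemma \ref{sodecomp}, with (A1) and (A2), and with the degree count, and leave the polynomial undetermined.
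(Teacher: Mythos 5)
Your proposal does not prove the statement at hand. The statement is Lemma \ref{sodecomp} itself --- the branching rule for $\mathfrak{so}(2s+2)\downarrow\mathfrak{so}(2)\oplus\mathfrak{so}(2s)$ with the explicit multiplicity $c^{(m_0,\ldots,m_s)}_{(n_1,\ldots,n_s)}(n_0)$ given as a coefficient of $X^{n_0}$ in $X^{a_s}\prod_{j=0}^{s-1}\frac{X^{a_j+1}-X^{-a_j-1}}{X-X^{-1}}$ --- whereas what you have outlined is the proof of Proposition \ref{e6(-14)} (the $E_{6(-14)}$ norm computation), which merely \emph{uses} this lemma. Indeed your very first step invokes ``the branching rule of Lemma \ref{sodecomp}'' to decompose $V^{[2,10]}_{(-k/2;k,0,0,0,0)}$ under $\fk_\rT^\BC\simeq\mathfrak{so}(2)\oplus\mathfrak{so}(8)$, so as an argument for the lemma it is circular: nothing in the proposal establishes the interlacing conditions $m_{i-1}\ge n_i\ge|m_{i+1}|$, $m_{s-1}\ge|n_s|$, nor the generating polynomial for the $\mathfrak{so}(2)$-weight multiplicities. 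For what it is worth, the paper does not prove the lemma either: it is quoted verbatim from Tsukamoto \cite[Theorem 1.1]{T}, so the ``proof'' in the paper is the citation.

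If you wanted an actual proof of the lemma, the natural route is character-theoretic or via iterated classical branching: restrict through the chain $\mathfrak{so}(2s+2)\supset\mathfrak{so}(2s+1)\supset\mathfrak{so}(2s)$, where each step is multiplicity-free with interlacing conditions, and then track the $\mathfrak{so}(2)$-weight $n_0$; each factor $\frac{X^{a_j+1}-X^{-a_j-1}}{X-X^{-1}}=X^{-a_j}+X^{-a_j+2}+\cdots+X^{a_j}$ records the $a_j+1$ admissible choices of an intermediate Gelfand--Tsetlin entry between $\max$ and $\min$ of the adjacent $m$'s and $n$'s, with $X$ keeping the resulting $SO(2)$-weight, and the boundary factor $X^{a_s}$ accounts for the sign interaction of $m_s$ and $n_s$ in type $D$. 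None of this machinery appears in your proposal, which instead concerns assumptions (A1), (A2) of Theorem \ref{keythm}, the constant $c_\lambda$, and the degree bound $2k_1+k_2+k_3$ --- all of which belong to the proof of Proposition \ref{e6(-14)}, not to Lemma \ref{sodecomp}.
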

From this lemma we can easily deduce the following. 
\begin{lemma}
\[ \left.V_{(k,0,\ldots,0)}^{[2s+2]}\right|_{\mathfrak{so}(2)\oplus\mathfrak{so}(2s)}
=\bigoplus_{l_1=0}^k\;\bigoplus_{\substack{l_0\in\BZ,\; |l_0|\le k-l_1\\ k-l_0-l_1\in 2\BZ}}
V_{(l_0;l_1,0,\ldots,0)}^{[2,2s]}. \]
\end{lemma}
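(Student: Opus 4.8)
The plan is to obtain this lemma as the direct specialization of Lemma \ref{sodecomp} to the highest weight $\bm=(m_0,m_1,\ldots,m_s)=(k,0,\ldots,0)$, so that the only work is to evaluate the interlacing conditions and the multiplicity-generating polynomial in this case. First I would impose the constraints $m_{i-1}\ge n_i\ge|m_{i+1}|$ together with $m_{s-1}\ge|n_s|$. For $i=1$ these read $k\ge n_1\ge 0$, while for $i=2,\ldots,s-1$ they collapse to $0\ge n_i\ge 0$, and the last condition gives $0\ge|n_s|$; hence $n_2=\cdots=n_s=0$ are forced and only $n_1\in\{0,1,\ldots,k\}$ and $n_0\in\BZ$ survive. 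I then set $l_1:=n_1$ and $l_0:=n_0$.

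Next I would compute the exponents $a_j$ entering the generating polynomial. Since $m_1=\cdots=m_s=0$ and $n_2=\cdots=n_s=0$, one gets $a_0=m_0-\max\{m_1,n_1\}=k-l_1$ (using $l_1\ge 0$), then $a_j=\min\{m_j,n_j\}-\max\{|m_{j+1}|,|n_{j+1}|\}=0$ for $j=1,\ldots,s-1$, and $a_s=(\sgn m_s)(\sgn n_s)\min\{|m_s|,|n_s|\}=0$. Substituting into $X^{a_s}\prod_{j=0}^{s-1}(X^{a_j+1}-X^{-a_j-1})/(X-X^{-1})$, every factor with $j\ge 1$ equals $(X-X^{-1})/(X-X^{-1})=1$ and the prefactor $X^{a_s}=1$, so the whole expression reduces to the single factor $(X^{k-l_1+1}-X^{-(k-l_1)-1})/(X-X^{-1})=\sum_{t=0}^{k-l_1}X^{\,k-l_1-2t}$, where the last identity is the elementary telescoping computation.

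Finally I would read off the multiplicities: $c^{(k,0,\ldots,0)}_{(l_1,0,\ldots,0)}(l_0)$ is the coefficient of $X^{l_0}$ in this Laurent polynomial, which equals $1$ exactly when $l_0\in\{k-l_1,\,k-l_1-2,\ldots,-(k-l_1)\}$ and $0$ otherwise. This condition is precisely $|l_0|\le k-l_1$ together with $k-l_0-l_1\in 2\BZ$, each value occurring with multiplicity one, which after renaming $n_0\to l_0$, $n_1\to l_1$ is exactly the asserted decomposition. Because $k\in\BZ_{\ge 0}$ all weights in sight are integral, so no half-integral (spinorial) cases arise and the $\sgn$-dependent term $a_s$ is unambiguously $0$; thus there is essentially no obstacle beyond careful bookkeeping, the only point meriting attention being the matching of the normalization of the $\mathfrak{so}(2)$-weight $n_0$ in Lemma \ref{sodecomp} with the label $l_0$ used here, which is immediate since both record the same central character.
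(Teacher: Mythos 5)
Your proposal is correct and is exactly the argument the paper intends: the paper states this lemma as an immediate consequence of Lemma \ref{sodecomp} ("From this lemma we can easily deduce the following") and leaves the specialization to the reader, which is precisely what you carry out. Your evaluation of the interlacing conditions (forcing $n_2=\cdots=n_s=0$), of the exponents $a_0=k-l_1$, $a_1=\cdots=a_s=0$, and of the resulting Laurent polynomial $\sum_{t=0}^{k-l_1}X^{k-l_1-2t}$ is accurate and fills in the intended bookkeeping.
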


Now we start the proof. To begin with, we determine the normalizing constant $c_\lambda$. 
Since $V_{\left(-\frac{k}{2};k,0,0,0,0\right)}^{[2,10]}$ is decomposed under $\fk_\rT$ as 
\begin{align*}
&\left.V_{\left(-\frac{k}{2};k,0,0,0,0\right)}^{[2,10]}\right|_{\fk_\rT}
=\bigoplus_{l_1=0}^k\;\bigoplus_{\substack{l_0\in\BZ,\; |l_0|\le k-l_1\\ k-l_0-l_1\in 2\BZ}}
V_{\left(-\frac{k}{2};l_0;l_1,0,0,0\right)}^{[2,2,8]}
=\bigoplus_{l_1=0}^k\;\bigoplus_{\substack{l_0\in\BZ,\; |l_0|\le k-l_1\\ k-l_0-l_1\in 2\BZ}}
V_{\left(\frac{-k+l_0}{2};l_2,0,0,0\right)}^{[2,8]}\\
=&\bigoplus_{\substack{k_1,k_2\in\BZ_{\ge 0}\\k\ge k_1\ge k_2\ge 0}}
V_{\left(-\frac{k_1+k_2}{2};k_1-k_2,0,0,0\right)}^{[2,8]}
=\bigoplus_{\substack{k_1,k_2\in\BZ_{\ge 0}\\k\ge k_1\ge k_2\ge 0}}
V_{\left(-\frac{k_1+k_2}{2};\frac{k_1-k_2}{2},\frac{k_1-k_2}{2},\frac{k_1-k_2}{2},\frac{k_1-k_2}{2}\right)}^{[2,8]\omega}, 
\end{align*}
each $V_{\left(-\frac{k_1+k_2}{2};\frac{k_1-k_2}{2},\frac{k_1-k_2}{2},\frac{k_1-k_2}{2},\frac{k_1-k_2}{2}\right)}^{[2,8]\omega}$ 
remains irreducible under $\fk_\fl=\mathfrak{so}(7)$, and has the restricted lowest weight 
$-\left.\frac{1}{2}(k_1\gamma_1+k_2\gamma_2)\right|_{\fa_\fl}$, by Theorem \ref{keythm}, 
$\Vert\cdot\Vert_{\lambda,\tau}^2$ converges if $\Re\lambda>11$, and $c_\lambda$ is given by 
\begin{align*}
c_\lambda^{-1}=&\frac{1}{\dim V_{\left(-\frac{k}{2};k,0,0,0,0\right)}^{[2,10]}}\sum_{\substack{k_1,k_2\in\BZ_{\ge 0}\\k\ge k_1\ge k_2\ge 0}}
\left(\dim V_{\left(-\frac{k_1+k_2}{2};\frac{k_1-k_2}{2},\ldots,\frac{k_1-k_2}{2}\right)}^{[2,8]\omega}\right)
\frac{\Gamma_\Omega(\lambda+(k_1,k_2)-8)}{\Gamma_\Omega(\lambda+(k_1,k_2))}\\
=&\frac{1}{\left(\begin{smallmatrix}k+9\\9\end{smallmatrix}\right)-\left(\begin{smallmatrix}k+7\\9\end{smallmatrix}\right)}
\sum_{\substack{k_1,k_2\in\BZ_{\ge 0}\\k\ge k_1\ge k_2\ge 0}}
\frac{\left(\begin{smallmatrix}k_1-k_2+7\\7\end{smallmatrix}\right)-\left(\begin{smallmatrix}k_1-k_2+5\\7\end{smallmatrix}\right)}
{(\lambda+k_1-8)_8(\lambda+k_2-11)_8}. 
\end{align*}
For $l\in\BZ_{\ge 0}$, we define 
\[ F(\lambda,l):=\sum_{\substack{k_1,k_2\in\BZ_{\ge 0}\\l\ge k_1\ge k_2\ge 0}}
\frac{\left(\begin{smallmatrix}k_1-k_2+7\\7\end{smallmatrix}\right)-\left(\begin{smallmatrix}k_1-k_2+5\\7\end{smallmatrix}\right)}
{(\lambda+k_1-8)_8(\lambda+k_2-11)_8}. \]
Then it satisfies 
\begin{align*}
&F(\lambda,l+1)\\
=&\left(\sum_{l\ge k_1\ge k_2\ge 0}+\sum_{l+1\ge k_1\ge k_2\ge 1}-\sum_{l\ge k_1\ge k_2\ge 1}+\sum_{(k_1,k_2)=(l+1,0)}\right)
\frac{\left(\begin{smallmatrix}k_1-k_2+7\\7\end{smallmatrix}\right)-\left(\begin{smallmatrix}k_1-k_2+5\\7\end{smallmatrix}\right)}
{(\lambda+k_1-8)_8(\lambda+k_2-11)_8}\\
=&F(\lambda,l)+F(\lambda+1,l)-F(\lambda+1,l-1)
+\frac{\left(\begin{smallmatrix}l+8\\7\end{smallmatrix}\right)-\left(\begin{smallmatrix}l+6\\7\end{smallmatrix}\right)}
{(\lambda+l-7)_8(\lambda-11)_8}.
\end{align*}
Solving this recurrence relation, we get 
\[ F(\lambda,l)=\frac{\left(\begin{smallmatrix}l+9\\9\end{smallmatrix}\right)-\left(\begin{smallmatrix}l+7\\9\end{smallmatrix}\right)}
{(\lambda-7+l)_7(\lambda-8)(\lambda-11)_7(\lambda-4+l)}, \]
and thus we have 
\begin{align*}
c_\lambda=&(\lambda-7+k)_7(\lambda-8)(\lambda-11)_7(\lambda-4+k)
=\frac{(\lambda-8)_{k+8}(\lambda-11)_{k+8}}{(\lambda-7)_k(\lambda-4)_k}\\
=&\frac{\Gamma_\Omega(\lambda+k)}{\Gamma_\Omega(\lambda-8)(\lambda-4)_k(\lambda-7)_k}.
\end{align*}

Next we compute the $K$-type decomposition of $\cO(D,V)_K=\cP(\fp^+)\otimes V_{\left(-\frac{k}{2};k,0,0,0,0\right)}^{[2,10]}$. 
By Theorem \ref{HKS} and the ``multi-minuscule rule'' \cite[Corollary 2.16]{S}, we have 
\begin{align*}
&\cP(\fp^+)\otimes V_{\left(-\frac{k}{2};k,0,0,0,0\right)}^{[2,10]}\\
=&\bigoplus_{\bm\in\BZ_{++}^2}
V_{\left(-\frac{3}{4}(m_1+m_2);\frac{m_1+m_2}{2},\frac{m_1-m_2}{2},\frac{m_1-m_2}{2},\frac{m_1-m_2}{2},-\frac{m_1-m_2}{2}\right)}^{[2,10]}
\otimes V_{\left(-\frac{k}{2};k,0,0,0,0\right)}^{[2,10]}\\
=&\bigoplus_{\bm\in\BZ_{++}^2}
\bigoplus_{\substack{\bk\in(\BZ_{\ge 0})^4,\; |\bk|=k\\ k_2+k_4\le m_2\\ k_3\le m_1-m_2}}
V_{\left(-\frac{3}{4}(m_1+m_2)-\frac{k}{2};\frac{m_1+m_2}{2}+k_1-k_4,\frac{m_1-m_2}{2}+k_2,\frac{m_1-m_2}{2},\frac{m_1-m_2}{2},
-\frac{m_1-m_2}{2}+k_3\right)}^{[2,10]}.
\end{align*}
In order to apply Theorem \ref{keythm}, we observe the image of each $K$-type under $\mathrm{rest}:\cP(\fp^+,V)\to\cP(\fp^+_\rT,V)$. 
For each $\bm\in\BZ_{++}^2$, we have 
\begin{align*}
&\mathrm{rest}\left(V_{\left(-\frac{3}{4}(m_1+m_2);\frac{m_1+m_2}{2},\frac{m_1-m_2}{2},\frac{m_1-m_2}{2},\frac{m_1-m_2}{2},
-\frac{m_1-m_2}{2}\right)}^{[2,10]}
\otimes V_{\left(-\frac{k}{2};k,0,0,0,0\right)}^{[2,10]}\right)\\
=&V_{\left(-(m_1+m_2);\frac{m_1-m_2}{2},\frac{m_1-m_2}{2},\frac{m_1-m_2}{2},\frac{m_1-m_2}{2}\right)}^{[2,8]}
\otimes \bigoplus_{\substack{k_1',k_2'\in\BZ_{\ge 0}\\k\ge k_1'\ge k_2'\ge 0}}
V_{\left(-\frac{k_1'+k_2'}{2};k_1'-k_2',0,0,0\right)}^{[2,8]}\\
=&\bigoplus_{\substack{k_1',k_2'\in\BZ_{\ge 0}\\k\ge k_1'\ge k_2'\ge 0}}\;
\bigoplus_{\substack{l_1,l_2\in\BZ_{\ge 0}\\ l_2\le m_1-m_2\\ l_1+l_2=k_1'-k_2'}}
V_{\left(-\left(m_1+m_2+\frac{k_1'+k_2'}{2}\right);
\frac{m_1-m_2}{2}+l_1,\frac{m_1-m_2}{2},\frac{m_1-m_2}{2},\frac{m_1-m_2}{2}-l_2\right)}^{[2,8]}. 
\end{align*}
We write $k_1'+k_2'=:l_0$, so that $k_1'=\frac{1}{2}(l_0+l_1+l_2)$, $k_2'=\frac{1}{2}(l_0-l_1-l_2)$. 
By Lemma \ref{sodecomp}, 
\begin{multline*}
\mathrm{rest}\left(V_{\left(-\frac{3}{4}(m_1+m_2)-\frac{k}{2};\frac{m_1+m_2}{2}+k_1-k_4,\frac{m_1-m_2}{2}+k_2,\frac{m_1-m_2}{2},\frac{m_1-m_2}{2},
-\frac{m_1-m_2}{2}+k_3\right)}^{[2,10]}\right)\\
\cap V_{\left(-\left(m_1+m_2+\frac{l_0}{2}\right);\frac{m_1-m_2}{2}+l_1,\frac{m_1-m_2}{2},\frac{m_1-m_2}{2},\frac{m_1-m_2}{2}-l_2\right)}^{[2,8]}
\neq \{0\} 
\end{multline*}
implies 
\[ 0\le l_1\le m_2+k_1-k_4,\qquad 0\le l_2\le m_1-m_2, \]
and the coefficient of $X^{2\left(-\left(m_1+m_2+\frac{l_0}{2}\right)+\left(\frac{3}{4}(m_1+m_2)+\frac{k}{2}\right)\right)}
=X^{-\frac{m_1+m_2}{2}-l_0+k}$ of the polynomial 
\[ X^{a_4}\frac{X^{a_0+1}-X^{-a_0-1}}{X-X^{-1}}\frac{X^{a_1+1}-X^{-a_1-1}}{X-X^{-1}}\frac{X^{a_3+1}-X^{-a_3-1}}{X-X^{-1}}, \]
does not vanish, where 
\begin{align*}
a_0=&\frac{m_1+m_2}{2}+k_1-k_4-\max\left\{\frac{m_1-m_2}{2}+k_2,\frac{m_1-m_2}{2}+l_1\right\}\\
=&m_2+k_1-k_4-\max\{k_2,l_1\}, \\
a_1=&\min\left\{\frac{m_1-m_2}{2}+k_2,\frac{m_1-m_2}{2}+l_1\right\}-\frac{m_1-m_2}{2}\\
=&\min\{k_2,l_1\}, \\
a_3=&\frac{m_1-m_2}{2}-\max\left\{\left|\frac{m_1-m_2}{2}-k_3\right|, \left|\frac{m_1-m_2}{2}-l_2\right|\right\}, \\
a_4=&\sgn\!\left(-\frac{m_1-m_2}{2}+k_3\right)\sgn\!\left(\frac{m_1-m_2}{2}-l_2\right)
\min\!\left\{\left|\frac{m_1-m_2}{2}-k_3\right|, \left|\frac{m_1-m_2}{2}-l_2\right|\right\}. 
\end{align*}
This condition is satisfied only if 
\begin{align*}
-\frac{m_1+m_2}{2}-l_0+k\ge& -a_0-a_1-a_3+a_4\\
=& -\frac{m_1+m_2}{2}-k_1+k_4+|k_2-l_1|+|k_3-l_2|\\
\therefore l_0\le& k+k_1-k_4-|k_2-l_1|-|k_3-l_2|\\
=& 2k_1+k_2+k_3-|k_2-l_1|-|k_3-l_2|. 
\end{align*}
Thus we get 
\begin{align*}
&\mathrm{rest}\left(V_{\left(-\frac{3}{4}(m_1+m_2)-\frac{k}{2};\frac{m_1+m_2}{2}+k_1-k_4,\frac{m_1-m_2}{2}+k_2,\frac{m_1-m_2}{2},
\frac{m_1-m_2}{2},-\frac{m_1-m_2}{2}+k_3\right)}^{[2,10]}\right)\\
\subset&\bigoplus_{\substack{l_0,l_1,l_2\in\BZ_{\ge 0},\; l_0-l_1-l_2\in 2\BZ_{\ge 0}\\ l_1\le m_2+k_1-k_4,\; l_2\le m_1-m_2\\
l_0\le 2k_1+k_2+k_3-|k_2-l_1|-|k_3-l_2|}}
V_{\left(-\left(m_1+m_2+\frac{l_0}{2}\right);\frac{m_1-m_2}{2}+l_1,\frac{m_1-m_2}{2},\frac{m_1-m_2}{2},\frac{m_1-m_2}{2}-l_2\right)}^{[2,8]}. 
\end{align*}
For each $m_1,m_2,l_0,l_1,l_2$, we have 
\[ V_{\left(-\left(m_1+m_2+\frac{l_0}{2}\right);\frac{m_1-m_2}{2}+l_1,\frac{m_1-m_2}{2},\frac{m_1-m_2}{2},\frac{m_1-m_2}{2}-l_2\right)}^{[2,8]}\!
=\!V_{\left(-\left(m_1+m_2+\frac{l_0}{2}\right);
m_1-m_2+\frac{l_1-l_2}{2},\frac{l_1+l_2}{2},\frac{l_1+l_2}{2},\frac{l_1-l_2}{2}\right)}^{[2,8]\omega}, \]
and as in Section \ref{sectso2n}, $\fk_\fl=\mathfrak{so}(7)$-spherical irreducible submodules in 
\begin{align*}
&V_{\left(-\left(m_1+m_2+\frac{l_0}{2}\right);
m_1-m_2+\frac{l_1-l_2}{2},\frac{l_1+l_2}{2},\frac{l_1+l_2}{2},\frac{l_1-l_2}{2}\right)}^{[2,8]\omega}
\otimes \overline{V_{\left(-\frac{l_0}{2};\frac{l_1+l_2}{2},\frac{l_1+l_2}{2},\frac{l_1+l_2}{2},\frac{l_1+l_2}{2}\right)}^{[2,8]\omega}}\\
\simeq &V_{\left(-\left(m_1+m_2+\frac{l_0}{2}\right);
m_1-m_2+\frac{l_1-l_2}{2},\frac{l_1+l_2}{2},\frac{l_1+l_2}{2},\frac{l_1-l_2}{2}\right)}^{[2,8]\omega}
\otimes V_{\left(-\frac{l_0}{2};\frac{l_1+l_2}{2},\frac{l_1+l_2}{2},\frac{l_1+l_2}{2},-\frac{l_1+l_2}{2}\right)}^{[2,8]\omega}
\end{align*}
are isomorphic to $V_{(-(m_1+m_2+l_0);m_1-m_2+l_1-l_2,0,0,0)}^{[2,8]\omega}$, which has the lowest weight 
\[ -\left(m_1+\frac{l_0+l_1-l_2}{2}\right)\gamma_1-\left(m_2+\frac{l_0-l_1+l_2}{2}\right)\gamma_2. \]
Therefore for $f\in V_{\left(-\frac{3}{4}(m_1+m_2)-\frac{k}{2};\frac{m_1+m_2}{2}+k_1-k_4,\frac{m_1-m_2}{2}+k_2,\frac{m_1-m_2}{2},
\frac{m_1-m_2}{2},-\frac{m_1-m_2}{2}+k_3\right)}^{[2,10]}$, by Theorem \ref{keythm}, the ratio of norms is given by 
\begin{align*}
&\frac{\Vert f\Vert_{\lambda,\tau}}{\Vert f\Vert_{F,\tau}}
=\frac{c_\lambda}{\sum_{\bl}a_{\bm,\bk,\bl}}
\sum_{\substack{l_0,l_1,l_2\in\BZ_{\ge 0},\; l_0-l_1-l_2\in 2\BZ_{\ge 0}\\ l_1\le m_2+k_1-k_4,\; l_2\le m_1-m_2\\
l_0\le 2k_1+k_2+k_3-|k_2-l_1|-|k_3-l_2|}}
\frac{a_{\bm,\bk,\bl}\Gamma_\Omega\left(\lambda+\left(\frac{l_0+l_1+l_2}{2},\frac{l_0-l_1-l_2}{2}\right)-8\right)}
{\Gamma_\Omega\left(\lambda+\left(m_1+\frac{l_0+l_1-l_2}{2},m_2+\frac{l_0-l_1+l_2}{2}\right)\right)}\\
=&\frac{1}{\sum_{\bl}a_{\bm,\bk,\bl}}
\sum_{\substack{l_0,l_1,l_2\in\BZ_{\ge 0},\; l_0-l_1-l_2\in 2\BZ_{\ge 0}\\ l_1\le m_2+k_1-k_4,\; l_2\le m_1-m_2\\
l_0\le 2k_1+k_2+k_3-|k_2-l_1|-|k_3-l_2|}}
\frac{a_{\bm,\bk,\bl}(\lambda)_k(\lambda-3)_k(\lambda-8)_{\frac{l_0+l_1+l_2}{2}}(\lambda-11)_{\frac{l_0-l_1-l_2}{2}}}
{(\lambda)_{m_1+\frac{l_0+l_1-l_2}{2}}(\lambda-3)_{m_2+\frac{l_0-l_1+l_2}{2}}(\lambda-4)_k(\lambda-7)_k}, 
\end{align*}
using some non-negative numbers $a_{\bm,\bk,\bl}$. Now, since 
\begin{align*}
l_0+l_1-l_2\le& 2k_1+k_2+k_3-|k_2-l_1|-|k_3-l_2|+l_1-l_2\\
\le& 2k_1+2k_2-(k_2-l_1)-|k_2-l_1|+(k_3-l_2)-|k_3-l_2|\le 2(k_1+k_2),\\
l_0-l_1+l_2\le& 2k_1+k_2+k_3-|k_2-l_1|-|k_3-l_2|-l_1+l_2\\
\le& 2k_1+2k_3+(k_2-l_1)-|k_2-l_1|-(k_3-l_2)-|k_3-l_2|\le 2(k_1+k_3), 
\end{align*}
we have 
\[ \frac{\Vert f\Vert_{\lambda,\tau}}{\Vert f\Vert_{F,\tau}}
=\frac{(\lambda)_k(\lambda-3)_k(\text{monic polynomial of degree $2k_1+k_2+k_3$})}
{(\lambda)_{m_1+k_1+k_2}(\lambda-3)_{m_2+k_1+k_3}(\lambda-4)_k(\lambda-7)_k}, \]
and we have proved Proposition \ref{e6(-14)}. \qed

By $k_2+k_4\le m_2$ and $k_3\le m_1-m_2$, we have the inequality 
\[ m_1+k_1+k_2\ge m_2+k_1+k_3\ge k_2+k_3+k_4\ge k_4. \]
Thus the author conjectures the following. 
\begin{conjecture}\label{e6(-14)conj}
For $f\in \BC_{-\frac{3}{4}(m_1+m_2)-\frac{k}{2}}\boxtimes
V_{\left(\frac{m_1+m_2}{2}+k_1-k_4,\frac{m_1-m_2}{2}+k_2,\frac{m_1-m_2}{2},\frac{m_1-m_2}{2},-\frac{m_1-m_2}{2}+k_3\right)}^{[10]}$, 
the ratio of norms is given by 
\begin{align*}
\frac{\Vert f\Vert_{\lambda,\chi_{-k/2}\boxtimes\tau_{(k,0,0,0,0)}^{[10]}}^2}
{\Vert f\Vert_{F,\chi_{-k/2}\boxtimes\tau_{(k,0,0,0,0)}^{[10]}}^2}
=&\frac{(\lambda)_k(\lambda-3)_k}
{(\lambda)_{m_1+k_1+k_2}(\lambda-3)_{m_2+k_1+k_3}(\lambda-4)_{k_2+k_3+k_4}(\lambda-7)_{k_4}}\\
=&\frac{1}{(\lambda+k)_{m_1+k_1+k_2-k}(\lambda+k-3)_{m_2+k_1+k_3-k}(\lambda-4)_{k_2+k_3+k_4}(\lambda-7)_{k_4}}. 
\end{align*}
\end{conjecture}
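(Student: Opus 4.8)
The plan is to determine the unknown factor left open by Proposition~\ref{e6(-14)}. Writing the ratio produced there as
\[ \frac{\Vert f\Vert_{\lambda,\tau}^2}{\Vert f\Vert_{F,\tau}^2}
=\frac{(\lambda)_k(\lambda-3)_kP_{\bm,\bk}(\lambda)}
{(\lambda)_{m_1+k_1+k_2}(\lambda-3)_{m_2+k_1+k_3}(\lambda-4)_k(\lambda-7)_k}, \]
with $P_{\bm,\bk}$ monic of degree $2k_1+k_2+k_3$, the conjecture is equivalent to the identity
\[ P_{\bm,\bk}(\lambda)=(\lambda-4+k_2+k_3+k_4)_{k_1}(\lambda-7+k_4)_{k_1+k_2+k_3}, \]
whose right-hand side is indeed monic of degree $k_1+(k_1+k_2+k_3)=2k_1+k_2+k_3$ and is independent of $\bm$. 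Thus the problem splits into two assertions: first that $P_{\bm,\bk}$ does not depend on $\bm$, and second that its value is the displayed product. I would try to settle both at once by exhibiting a second, independent computation of the ratio, exactly in the spirit of the $SO^*(4r+2)$ cases.

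The primary route I would attempt is a supergroup embedding, in direct analogy with Theorem~\ref{sostarodd1}. One uses $E_{6(-14)}\hookrightarrow E_{7(-25)}$, whose scalar holomorphic discrete series is completely known by Faraut--Kor\'anyi \cite{FK}. Under $SO(2)\times Spin(10)\subset E_6$ (the $E_6$ factor of the maximal compact of $E_{7(-25)}$), the defining $27$-dimensional $E_6$-module $\fp^{+\prime}\simeq\BC^{27}$ decomposes as $\mathbf{27}=\mathbf{16}\oplus\mathbf{10}\oplus\mathbf{1}$, so a point of $\fp^{+\prime}$ is a triple $(w,v,t)$ with $w\in\BC^{16}\simeq\fp^+$, $v\in\BC^{10}$ and $t\in\BC$. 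I would build an intertwining map $\iota$ sending a $\cH^k(\BR^{10})$-valued function $f(w,\cdot)$ on $D$ to the scalar function on $D'$ whose fibre over $w$ is the harmonic polynomial $f(w,\cdot)$ in the variable $v$ (with the appropriate power of $t$ and the central twist dictated by $\BC_{-k/2}$). As in the classical case, $\iota$ should intertwine the $\tilde{G}$-actions and be isometric for the Fischer norms; transporting the $E_{7(-25)}$ formula then yields a second expression for the ratio, and comparing it with Proposition~\ref{e6(-14)} pins down $P_{\bm,\bk}$ and simultaneously forces its $\bm$-independence.

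The main obstacle, and the reason the statement is only conjectural, is that the fibre directions $\mathbf{10}\oplus\mathbf{1}$ generate the full polynomial algebra $\cP(\BC^{10})\otimes\cP(\BC)$ rather than a single irreducible module: under $Spin(10)$ the space $\cP_k(\BC^{10})$ breaks up as $\cH^k(\BR^{10})\oplus\cH^{k-2}(\BR^{10})\oplus\cdots$, so isolating the summand $V_{(k,0,0,0,0)}^{[10]}$ forces a harmonic projection that has no counterpart in the $GL$-branching used for $SO^*(4r+2)$. One must therefore identify precisely which $K$-type of $E_{7(-25)}$ receives $\iota$ of each $K$-type of $E_{6(-14)}$, which requires the branching laws $E_{7(-25)}\downarrow E_{6(-14)}\cdot T$, $E_6\downarrow SO(2)\times Spin(10)$, and the $\mathfrak{so}(10)$ harmonic decomposition simultaneously, together with control of the normalizing constant $c_\lambda$ through the projection.

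As a fallback I would mirror the more robust argument of Theorem~\ref{sostarodd2}: choose a rank-one Hermitian subgroup $SU(1,q)\subset E_{6(-14)}$, combine the resulting intertwining isometry with Theorem~\ref{keythm} and the partial form supplied by Proposition~\ref{e6(-14)}, and induct. Since $\rank_\BR E_{6(-14)}=2$, the induction on $\min\{j:m_j=0\}$ has only the three stages $\min\{j:m_j=0\}=1,2,3$, that is $\bm=(0,0)$, $\bm=(m_1,0)$ and $\bm=(m_1,m_2)$ with $m_2>0$; the base case $\bm=(0,0)$ is immediate from the normalization, and each inductive step should be reduced to the already-proven rank-one $SU(1,q)$ formula, just as the $SU(2r,1)$-embedding reduced the $SO^*(4r+2)$ computation. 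Here too the difficulty is that the multiplicities $a_{\bm,\bk,\bl}$ of Theorem~\ref{keythm} and the projection constants entering the induction are genuinely hard to evaluate for the exceptional branchings, so that even this inductive scheme does not obviously close; I expect the explicit evaluation of $P_{\bm,\bk}$ to be the crux in either approach.
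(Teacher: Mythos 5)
You were asked to prove Conjecture \ref{e6(-14)conj}, and the first thing to note is that the paper itself contains no proof of this statement: it is stated as a conjecture. What the paper actually establishes is Proposition \ref{e6(-14)}, which determines the ratio only up to an unevaluated monic polynomial of degree $2k_1+k_2+k_3$ (coming from the unknown coefficients $a_{\bm,\bk,\bl}$ in Theorem \ref{keythm}), and the conjectured closed form is then motivated by the inequality $m_1+k_1+k_2\ge m_2+k_1+k_3\ge k_2+k_3+k_4\ge k_4$. Your algebraic reformulation is correct: since $k=k_1+k_2+k_3+k_4$, the conjecture is equivalent to the identity
\[ P_{\bm,\bk}(\lambda)=\frac{(\lambda-4)_k(\lambda-7)_k}{(\lambda-4)_{k_2+k_3+k_4}(\lambda-7)_{k_4}}
=(\lambda-4+k_2+k_3+k_4)_{k_1}(\lambda-7+k_4)_{k_1+k_2+k_3}, \]
whose right-hand side is indeed monic of degree $2k_1+k_2+k_3$ and independent of $\bm$; establishing that $\bm$-independence is genuinely part of what must be proved, since the $a_{\bm,\bk,\bl}$ do depend on $\bm$.

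However, as submitted this is a research plan, not a proof: beyond the (correct) reformulation, no step is carried out, and the decisive steps are exactly the ones you yourself flag as open. For the $E_{6(-14)}\hookrightarrow E_{7(-25)}$ route, the step that fails relative to the model argument of Theorem \ref{sostarodd1} is the analogue of (\ref{polyonskew}): there, $\iota$ of each $K$-type lands in a \emph{single} $K'$-type of $\cP(\fp^{+\prime})$ precisely because $\cP_k(\BC^{2r+1})$ is $GL(2r+1,\BC)$-irreducible, so the known scalar ratio can be read off directly; here $\cP_k(\BC^{10})$ splits into harmonics $\cH^k\oplus\cH^{k-2}\oplus\cdots$ under $Spin(10)$, the image of a $K$-type spreads over several $E_6$-types of $\cP(\BC^{27})$, and one would again face unknown projection coefficients — i.e.\ the same obstruction one was trying to avoid, now attached to the branching $E_6\downarrow SO(2)\times Spin(10)$. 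For the rank-one fallback, the induction of Theorem \ref{sostarodd2} closed only because the auxiliary $SU(2r,1)$ norms were already known exactly (Theorem \ref{suqsnontube}) and the relevant branchings (Lemma \ref{propertyFW}) could be controlled; the exceptional analogues of both ingredients are missing, as you concede. Note also that a priori nothing forces $P_{\bm,\bk}$ to factor into Pochhammer symbols with integral shifts at all, so even the qualitative shape of the answer is unproved. The conjecture therefore remains open, and your proposal — while a faithful and well-diagnosed adaptation of the paper's own non-tube techniques — cannot be accepted as a proof.
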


\section{Analytic continuation of holomorphic discrete series}
In the previous sections, we calculated the norms of the holomorphic discrete series representations. 
Using this, we see how the highest weight modules behave as the parameter $\lambda$ goes small, 
following the arguments in \cite{FK0} and \cite{O}. 

For example, when $G=Sp(r,\BR)$ and $V=V_{\varepsilon_1+\cdots+\varepsilon_k}^\vee$ with $k=0,1,\ldots,r-1$, 
by Theorem \ref{sprr}, the norm $\Vert\cdot\Vert_{\lambda,\tau_{\varepsilon_1+\cdots+\varepsilon_k}^\vee}$ is written as 
\[ \Vert f\Vert_{\lambda,\tau_{\varepsilon_1+\cdots+\varepsilon_k}^\vee}^2
=\sum_{\bm\in\BZ^r_{++}}\sum_{\substack{\bk\in\{0,1\}^r,\,|\bk|=k\\ \bm+\bk\in\BZ^r_+}}
\frac{\prod_{j=1}^k\left(\lambda-\frac{1}{2}(j-1)\right)}{\prod_{j=1}^r\left(\lambda-\frac{1}{2}(j-1)\right)_{m_j+k_j}}
\Vert f_{\bm,\bk}\Vert_{F,\tau_{\varepsilon_1+\cdots+\varepsilon_k}^\vee}^2 \]
for $\lambda>r$, where $f_{\bm,\bk}$ is the orthogonal projection of $f$ onto $V_{2\bm+\bk}^\vee$. 
Then as in \cite[Theorem XIII.2.4]{FK}, the reproducing kernel $K_{\lambda,\tau_{\varepsilon_1+\cdots+\varepsilon_k}^\vee}$ 
is written by the converging sum 
\begin{align*}
K_{\lambda,\tau_{\varepsilon_1+\cdots+\varepsilon_k}^\vee}(z,w)
&=\sum_{\bm\in\BZ^r_{++}}\sum_{\substack{\bk\in\{0,1\}^r,\,|\bk|=k\\ \bm+\bk\in\BZ^r_+}}
\frac{\prod_{j=1}^r\left(\lambda-\frac{1}{2}(j-1)\right)_{m_j+k_j}}{\prod_{j=1}^k\left(\lambda-\frac{1}{2}(j-1)\right)}
K_{\bm,\bk}(z,w)
\end{align*}
where $K_{\bm,\bk}(z,w)$ is the reproducing kernel of $V_{2\bm+\bk}^\vee$ 
with respect to the Fischer norm $\Vert\cdot\Vert_{F,\tau_{\varepsilon_1+\cdots+\varepsilon_k}^\vee}^2$. 
This is continued analytically for smaller $\lambda$, and by \cite[Lemma XIII.2.6]{FK}, this is positive definite 
if and only if each coefficient is positive, that is, 
\[ \lambda\in\left\{\frac{k}{2},\frac{k+1}{2},\ldots,\frac{r-1}{2}\right\}\cup
\left(\frac{r-1}{2},\infty\right). \]
The positive definite function automatically becomes a reproducing kernel of some Hilbert space $\cH_\lambda(D,V)$, 
and this $\cH_\lambda(D,V)$ gives the unitary representation of $\tilde{G}$. 
Conversely, if there exists a unitary subrepresentation $\cH_\lambda(D,V)\subset\cO(D,V)$ for some $\lambda\in\BR$, 
then its reproducing kernel is 
automatically proportional to $K_{\lambda,\tau_{\varepsilon_1+\cdots+\varepsilon_k}^\vee}(z,w)$ by the arguments in Section \ref{HDS}, 
and thus the above condition on $\lambda$ is precisely the necessary and sufficient condition for unitarizability. 
Using this idea, we get the following result. 

\begin{theorem}
\begin{enumerate}
\item When $G=Sp(r,\BR)$ and $V=V_{\varepsilon_1+\cdots+\varepsilon_k}^\vee$ with $k=0,1,\ldots,r-1$, 
$(\tau_\lambda,\cO(D,V))$, originally unitarizable when $\lambda>r$, 
contains a non-zero unitary submodule $\cH_\lambda(D,V)$ if and only if 
\[ \lambda\in\left\{\frac{k}{2},\frac{k+1}{2},\ldots,\frac{r-1}{2}\right\}\cup
\left(\frac{r-1}{2},\infty\right). \]
\item When $G=SU(q,s)$ and $V=\BC\boxtimes V_\bk^{(s)}$ with $\bk\in\BZ^r_{++}$ 
$(k_l\ne 0,\; k_{l+1}=0,\; l=0,\ldots,s-1)$, 
$(\tau_\lambda,\cO(D,V))$, originally unitarizable when $\lambda>q+s-1$, 
contains a non-zero unitary submodule $\cH_\lambda(D,V)$ if and only if 
\[ \lambda\in\bigl\{l,l+1,\ldots,\min\{q+l,s\}-1\bigr\}\cup\bigl(\min\{q+l,s\}-1,\infty\bigr). \]
\item When $G=SO^*(2s)$ and $V=V_{(k,0,\ldots,0)}^\vee$ with $k=\BZ_{\ge 0}$, 
$(\tau_\lambda,\cO(D,V))$, originally unitarizable when $\lambda>2s-3$, 
contains a non-zero unitary submodule $\cH_\lambda(D,V)$ if and only if 
\[ \lambda\in\begin{cases}\left\{0,2,4,\ldots,2\left(\left\lfloor\frac{s}{2}\right\rfloor-1\right)\right\}
\cup\left(2\left(\left\lfloor\frac{s}{2}\right\rfloor-1\right),\infty\right)&(k=0),\\
\phantom{0,}\left\{2,4,\ldots,2\left(\left\lceil\frac{s}{2}\right\rceil-1\right)\right\}
\cup\left(2\left(\left\lceil\frac{s}{2}\right\rceil-1\right),\infty\right)&(k\ge 1).\end{cases} \]
\item When $G=SO^*(2s)$ and $V=V_{(k/2,\ldots,k/2,-k/2)}^\vee$ with $k=\BZ_{> 0}$, 
$(\tau_\lambda,\cO(D,V))$, originally unitarizable when $\lambda>2s-3$, 
contains a non-zero unitary submodule $\cH_\lambda(D,V)$ if and only if 
\[ \lambda\in\{s-2\}\cup(s-2,\infty). \]
\item When $G=Spin_0(2,n)$ and 
\[ V=\left\{\begin{array}{lll} \BC_k\boxtimes V_{(k,\ldots,k,\pm k)} & (k=\frac{1}{2}\BZ_{\ge 0}) & (n:\text{even}),\\
\BC_k\boxtimes V_{(k,\ldots,k,k)} & (k=0,\frac{1}{2}) & (n:\text{odd}),\end{array}\right. \]
$(\tau_\lambda,\cO(D,V))$, originally unitarizable when $\lambda>n-1$, 
contains a non-zero unitary submodule $\cH_\lambda(D,V)$ if and only if 
\[ \lambda\in\begin{cases} \left\{0,\frac{n-2}{2}\right\}\cup\left(\frac{n-2}{2},\infty\right) & (k=0),\\
\phantom{0,}\left\{\frac{n-2}{2}\right\}\cup\left(\frac{n-2}{2},\infty\right) & (k\ge \frac{1}{2}).\end{cases} \]
\end{enumerate}
\end{theorem}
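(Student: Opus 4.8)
The plan is to run, uniformly across the five cases, the analytic-continuation argument sketched in the introduction and modelled on \cite[\S XIII.2]{FK}. Each of Theorems \ref{sprr}, \ref{suqs}, \ref{suqsnontube}, \ref{sostareven}, \ref{sostarodd1}, \ref{sostarodd2}, \ref{spin2n} expresses the ratio $R_W(\lambda)=\Vert\cdot\Vert_{\lambda,\tau}^2/\Vert\cdot\Vert_{F,\tau}^2$ on each $K$-type $W$ as a quotient of products of Pochhammer symbols; consequently $R_W(\lambda)^{-1}$ is itself a finite product $\prod_\alpha(\lambda-c_\alpha)_{p_\alpha}$ with $p_\alpha\in\BZ_{\ge 0}$ and $c_\alpha$ real, namely the ``second form'' displayed in each theorem. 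First I would record the expansion of the reproducing kernel $K_{\lambda,\tau}(z,w)=\sum_W R_W(\lambda)^{-1}K_W(z,w)$, where $K_W$ is the Fischer reproducing kernel of $W$; this series converges and continues holomorphically in $\lambda$ past the discrete-series range exactly as in \cite[Theorem XIII.2.4]{FK}.

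Next comes the positivity criterion. Since $\cP(\fp^+,V)$ is multiplicity-free under $K$ (and for $SU(q,s)$ each isotypic component lies in a single polynomial space), the subspaces $W$ are mutually orthogonal for both inner products, so by \cite[Lemma XIII.2.6]{FK} the kernel $K_{\lambda,\tau}$ is positive definite if and only if every coefficient $R_W(\lambda)^{-1}$ is $\ge 0$. I would then invoke the dictionary of Section \ref{HDS} in both directions: a positive-definite $K_{\lambda,\tau}$ is the reproducing kernel of a Hilbert space $\cH_\lambda(D,V)\subset\cO(D,V)$, which is $\tilde{G}$-invariant because of the transformation law satisfied by $K_{\lambda,\tau}$, hence a nonzero unitary submodule; conversely any nonzero unitary submodule has a reproducing kernel, necessarily proportional to $K_{\lambda,\tau}$ by the uniqueness argument there, forcing positive-definiteness. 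Thus ``contains a nonzero unitary submodule'' is equivalent to ``$R_W(\lambda)^{-1}\ge 0$ for all $W$,'' and the theorem reduces to a sign analysis.

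For the sign analysis, a single factor $(\lambda-c)_p$ with $p\ge 1$ has its sign governed by how many of the $p$ consecutive arguments $\lambda-c,\ldots,\lambda-c+p-1$ are negative, and it vanishes precisely when $\lambda-c\in\{0,-1,\ldots,-(p-1)\}$. For $\lambda$ above the largest base $c_{\max}$ occurring, every base is positive and the kernel is positive definite, producing the continuous ray in each case. As $\lambda$ decreases I would locate the candidate Wallach points as those values at which the base that is about to turn negative instead equals $0$ for the relevant $K$-types: there the constraint $\bm\in\BZ^r_{++}$ together with the summation ranges (e.g.\ $\bm+\bk\in\BZ^r_+$ for $Sp(r,\BR)$, or $0\le k_j\le m_{j-1}-m_j$ for $SO^*$) forces the offending factor to be $(0)_{p}$ with $p\ge 1$, hence $0$, truncating the sum to the polynomials on a smaller orbit closure $\overline{\cO_l}$ (cf.\ \eqref{orbitpoly}); on the surviving $K$-types all remaining bases are positive, giving positive semidefiniteness and a proper unitary submodule. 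In each open gap between consecutive admissible points, by contrast, I would exhibit a single explicit $K$-type, typically with all $m_j+k_j$ equal to $0$ or $1$, whose coefficient contains exactly one strictly negative factor and no zero factor, showing the kernel is indefinite there.

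The hard part will be that the location of the clean zero, and hence which Wallach points survive, is shifted by the fiber weight $\bk$ through bases such as $\lambda+k$, $\lambda-2r$, or $\lambda-(j-1)+k_j$ appearing in the inverted ratios. This is exactly what distinguishes $k=0$ from $k\ge 1$ (the $\lfloor s/2\rfloor$ versus $\lceil s/2\rceil$ dichotomy for $SO^*(2s)$, where the point $\lambda=0$ is lost once the first base becomes $\lambda+k$ with $k\ge 1$) and what produces the $l$-dependent threshold $\min\{q+l,s\}$ for $SU(q,s)$ via the range of the $U(s)$-types $\bn$ in Theorems \ref{suqs} and \ref{suqsnontube}. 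The real work is verifying, case by case and uniformly in the summation constraints, that at each claimed point the zero-producing factor has multiplicity $\ge 1$ on \emph{every} $K$-type that would otherwise contribute a negative coefficient, so that the truncation is exact rather than merely sign-changing; the threshold rays and the ``indefinite in every gap'' statements are comparatively routine once a single bad $K$-type is pinned down.
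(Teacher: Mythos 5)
Your proposal follows exactly the paper's own route: expand $K_{\lambda,\tau}(z,w)=\sum_W R_W(\lambda)^{-1}K_W(z,w)$ in the Fischer kernels of the $K$-types, invoke \cite[Theorem XIII.2.4, Lemma XIII.2.6]{FK} for the analytic continuation and the positivity criterion, use the uniqueness of the invariant kernel from Section \ref{HDS} for the converse direction, and then determine the admissible $\lambda$ by a sign analysis of the Pochhammer products from Theorems \ref{sprr}--\ref{spin2n} — which is precisely the argument the paper gives before stating the theorem. Your account of the truncation mechanism (the dominance and summation constraints forcing a factor $(0)_{p}$ with $p\ge 1$ on every $K$-type that could otherwise contribute a negative coefficient) is the correct reading of the case-by-case verification that the paper itself leaves implicit.
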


From the explicit norm computation, we can also determine completely when the representation is reducible, 
and get some informations on the composition series, as in \cite{FK0}, \cite{O}. 
We denote the $K$-type decomposition of $\cO(D,V)_K=\cP(\fp^+,V)$ by 
\[ \cP(\fp^+,V)=\bigoplus_m W_m, \]
and for $f\in W_m$ we denote the ratio of norms by $\Vert f\Vert_{\lambda,\tau}^2/\Vert f\Vert_{F,\tau}^2=:R_m(\lambda)$, so that 
\[ \langle f,g\rangle_{\lambda,\tau}=\sum_m R_m(\lambda)\langle f_m,g_m\rangle_{F,\tau}. \]
If $\lambda$ is not a pole for all $R_m(\lambda)$, then the above sesquilinear form is well-defined, 
and non-degenerate for our cases because the numerator of each $R_m(\lambda)$ is one. 
From this we can show $(d\tau_\lambda,\cP(\fp^+,V))$ is irreducible, because 
if $\cP(\fp^+,V)$ has a proper submodule $M$, then its orthogonal complement $M^\bot$ also becomes a submodule, 
and both $M$ and $M^\bot$ contain a $\fp^+$-invariant vector i.e. contain the minimal $K$-type $V$, which is a contradiction. 
We note that in our cases the sesquilinear form is always definite on each $K$-isotypic component, 
and thus $M^\bot$ is precisely a complement vector space. 

On the other hand, if $\lambda$ is a pole for some $R_m(\lambda)$, then $(d\tau_\lambda,\cP(\fp^+,V))$ is reducible. 
In fact, for $j\in\BN$ and $\lambda\in\BR$ we define $\tilde{M}_j(\lambda)$ as the direct sum of $W_m$'s such that 
$R_m(\lambda)$ has a pole of order at most $j$ at $\lambda$. 
Then the sesquilinear form 
\begin{equation}\label{modifiedform}
\lim_{\lambda'\to\lambda}(\lambda'-\lambda)^j\langle f,g\rangle_{\lambda',\tau}
\end{equation}
is $(\fg,K)$-invariant under the representation $d\tau_\lambda$ on $\tilde{M}_j(\lambda)$, which vanishes on $\tilde{M}_{j-1}(\lambda)$. 
Thus $M_j$ is a $(\fg,K)$-submodule of $\cP(\fp^+,V)$. 
Clearly $\tilde{M}_j(\lambda)/\tilde{M}_{j-1}(\lambda)$ is infinitesimally unitary if 
the sesquilinear form (\ref{modifiedform}) is definite. This gives the following theorem. 

\begin{theorem}
\begin{enumerate}
\item When $G=Sp(r,\BR)$ and $V=V_{\varepsilon_1+\cdots+\varepsilon_k}^\vee$ with $k=0,1,\ldots,r-1$, 
for $\lambda\in\BR$ and $j=1,2,\ldots,r$, we define 
\[ M_j(\lambda):=\bigoplus_{m_j+k_j<\frac{j}{2}-\lambda+\frac{1}{2}}V_{2\bm+\bk}^\vee\subset\cP(\fp^+,V). \]
Then $(d\tau_\lambda,\cP(\fp^+,V))$ is reducible if and only if $\lambda\le\frac{r-1}{2}$ and $\lambda\in\frac{1}{2}\BZ$. 
In this case we have the sequence of submodules 
\[ \{0\}\subset M_a(\lambda)\subset M_{a+2}(\lambda)\subset\cdots\subset M_b(\lambda)\subset\cP(\fp^+,V), \]
where 
\[ a=\begin{cases} 2\lambda+1& (\frac{k}{2}\le \lambda\le \frac{r-1}{2}),\\
2\lambda+3& (0\le \lambda\le \frac{k-1}{2}),\\
1& (\lambda\le -\frac{1}{2},\; \lambda\in\BZ),\\
2& (\lambda\le -\frac{1}{2},\; \lambda\in\BZ+\frac{1}{2}), \end{cases} \qquad
b=\begin{cases} r-1& (2\lambda\equiv r \mod 2),\\
r& (2\lambda\not\equiv r \mod 2). \end{cases} \]
$M_{2\lambda+1}(\lambda)$ $(\lambda=\frac{k}{2},\frac{k+1}{2},\ldots,\frac{r-1}{2})$ and 
$\cP(\fp^+,V)/M_r(\lambda)$ $(\lambda\le \frac{r-1}{2}$, $2\lambda\not\equiv r \mod 2)$ are infinitesimally unitary. 
%
\item When $G=SU(q,s)$ and $V=\BC\boxtimes V_\bk^{(s)}$ with $\bk\in\BZ^r_{++}$, 
for $\lambda\in\BR$ and $j=1,2,\ldots,s$, we define 
\[ M_j(\lambda):=\bigoplus_{n_j<j-\lambda}c^\bn_{\bk,\bm}V_\bm^{(q)\vee}\boxtimes V_\bn^{(s)}\subset\cP(\fp^+,V). \]
Then $(d\tau_\lambda,\cP(\fp^+,V))$ is reducible if and only if $\lambda\le\min\{q+l,s\}-1$, $\lambda\in\BZ$ 
and there is no $j=q+1,\ldots,s$ such that $\lambda=j-k_j=j-k_{j-q+1}$ holds. 
In this case we have the sequence of submodules 
\[ \{0\}\subset M_a(\lambda)\subset M_{a+1}(\lambda)\subset\cdots\subset M_b(\lambda)\subset\cP(\fp^+,V), \]
where 
\[ a=\begin{cases} j+1& (j-k_j\le \lambda\le j-k_{j+1})\quad (1\le j\le \min\{q+l,s\}-1),\\
1& (\lambda\le -k_1), \end{cases} \]
and $b=s$ if $q\ge s$, 
\[ b=\begin{cases} \min\{q+l,s\}& (\min\{q+l,s\}-k_{\min\{l,s-q\}}\le \lambda\le \min\{q+l,s\}-1),\\
j& (j-k_{j-q}\le \lambda\le j-k_{j-q+1})\quad (q+1\le j\le \min\{q+l,s\}-1),\\
q& (\lambda\le q-k_1) \end{cases} \]
if $q< s$. 

If $q\ge s$ or $\bk=\bzero$, then $M_{\lambda+1}(\lambda)$ $(\lambda=l,l+1,\ldots,\min\{q,s\}-1)$ and 
$\cP(\fp^+,V)/M_{\min\{q,s\}}(\lambda)$ $(\lambda\le \min\{q,s\}-1,\; \lambda\in\BZ)$ are infinitesimally unitary. 

If $q<s$ and $\bk\ne\bzero$, then $M_{\lambda+1}(\lambda)$ $(\lambda=l,l+1,\ldots,\min\{q+l,s\}-1)$ and 
$\cP(\fp^+,V)/M_{\min\{q+l,s\}}(\lambda)$ $(\min\{q+l,s\}-k_{\min\{l,s-q\}}\le \lambda\le \min\{q+l,s\}-1,\; \lambda\in\BZ)$ 
are infinitesimally unitary. 
%
\item When $G=SO^*(4r)$ and $V=V_{(k,0,\ldots,0)}^\vee$ with $k=\BZ_{\ge 0}$, 
for $\lambda\in\BR$ and $j=1,2,\ldots,r$, we define 
\[ M_j(\lambda):=\bigoplus_{m_j+k_j<2j-\lambda-1}V_{(m_1+k_1,m_1,\ldots,m_r+k_r,m_r)}^\vee\subset\cP(\fp^+,V). \]
Then $(d\tau_\lambda,\cP(\fp^+,V))$ is reducible if and only if $\lambda\le 2r-2$ and $\lambda\in\BZ$. 
In this case we have the sequence of submodules 
\[ \{0\}\subset M_a(\lambda)\subset M_{a+1}(\lambda)\subset\cdots\subset M_r(\lambda)\subset\cP(\fp^+,V), \]
where 
\[ a=\begin{cases} \left\lceil\frac{\lambda}{2}\right\rceil+1& (3\le \lambda\le 2r-2),\\
2& (-k+1\le \lambda\le 2),\\
1& (\lambda\le -k). \end{cases} \]
$M_{\frac{\lambda}{2}+1}(\lambda)$ $(\lambda=2,4,\ldots,2r-2$ if $k\ge 1$, $\lambda=0,2,\ldots,2r-2$ if $k=0)$ and 
$\cP(\fp^+,V)/M_r(\lambda)$ $(\lambda\le 2r-2,\; \lambda\in\BZ)$ are infinitesimally unitary. 
%
\item When $G=SO^*(4r)$ and $V=V_{(k/2,\ldots,k/2,-k/2)}^\vee$ with $k=\BZ_{> 0}$, 
for $\lambda\in\BR$ and $j=1,2,\ldots,r$, we define 
\[ M_j(\lambda):=\bigoplus_{m_j-k_j+k<2j-\lambda-1}V_{(m_1,m_1-k_1,\ldots,m_r,m_r-k_r)+(k/2,\ldots,k/2)}^\vee\subset\cP(\fp^+,V). \]
Then $(d\tau_\lambda,\cP(\fp^+,V))$ is reducible if and only if $\lambda\le 2r-2$ and $\lambda\in\BZ$. 
In this case we have the sequence of submodules 
\[ \{0\}\subset M_a(\lambda)\subset M_{a+1}(\lambda)\subset\cdots\subset M_r(\lambda)\subset\cP(\fp^+,V), \]
where 
\[ a=\begin{cases} r& (2r-3-k\le \lambda\le 2r-2),\\
\left\lceil\frac{\lambda+k}{2}\right\rceil+1& (-k+1\le\lambda\le 2r-4-k),\\
1& (\lambda\le -k). \end{cases} \]
$M_r(2r-2)$ and $\cP(\fp^+,V)/M_r(\lambda)$ $(\lambda\le 2r-2,\; \lambda\in\BZ)$ are infinitesimally unitary. 
%
\item When $G=SO^*(4r+2)$ and $V=V_{(k,0,\ldots,0)}^\vee$ with $k=\BZ_{\ge 0}$, 
for $\lambda\in\BR$ and $j=1,2,\ldots,r+1$, we define 
\begin{align*}
M_j(\lambda):=&\bigoplus_{m_j+k_j<2j-\lambda-1}V_{(m_1+k_1,m_1,\ldots,m_r+k_r,m_r)}^\vee\subset\cP(\fp^+,V) &(j=1,\ldots,r),\\
M_{r+1}(\lambda):=&\bigoplus_{k_{r+1}<2r-\lambda+1}V_{(m_1+k_1,m_1,\ldots,m_r+k_r,m_r)}^\vee\subset\cP(\fp^+,V). &
\end{align*}
Then $(d\tau_\lambda,\cP(\fp^+,V))$ is reducible if and only if $\lambda\le\begin{cases}2r&(k\ge 1)\\2r-2&(k=0)\end{cases}$, $\lambda\in\BZ$ 
and $(r,\lambda)\ne (1,-k+1)$. 
In this case we have the sequence of submodules 
\[ \{0\}\subset M_a(\lambda)\subset M_{a+1}(\lambda)\subset\cdots\subset M_b(\lambda)\subset\cP(\fp^+,V), \]
where 
\[ a=\begin{cases} \left\lceil\frac{\lambda}{2}\right\rceil+1& (3\le \lambda\le 2r),\\
2& (-k+1\le \lambda\le 2),\\
1& (\lambda\le -k), \end{cases}\quad
b=\begin{cases} r+1& (2r+1-k\le \lambda\le 2r),\\
r& (\lambda\le 2r-k). \end{cases} \]
If $k=0$, then $M_{\frac{\lambda}{2}+1}(\lambda)$ $(\lambda=0,2,\ldots,2r-2)$ and 
$\cP(\fp^+,V)/M_r(\lambda)$ $(\lambda\le 2r-2,\; \lambda\in\BZ)$ are infinitesimally unitary. 

If $k\ge 1$, then $M_{\frac{\lambda}{2}+1}(\lambda)$ $(\lambda=2,4,\ldots,2r)$ and 
$\cP(\fp^+,V)/M_{r+1}(\lambda)$ $(2r+1-k\le \lambda\le 2r,\;\lambda\in\BZ)$ are infinitesimally unitary. 
%
\item When $G=SO^*(4r+2)$ and $V=V_{(k/2,\ldots,k/2,-k/2)}^\vee$ with $k=\BZ_{> 0}$, 
for $\lambda\in\BR$ and $j=1,2,\ldots,r+1$, we define 
\begin{align*}
M_j(\lambda):=&\hspace{-10pt}\bigoplus_{m_j-k_j+k<2j-\lambda-1}\hspace{-10pt}
V_{(m_1,m_1-k_1,\ldots,m_r,m_r-k_r)+(k/2,\ldots,k/2)}^\vee\subset\cP(\fp^+,V)
\;\;(j=1,\ldots,r),\\
M_{r+1}(\lambda):=&\bigoplus_{k-k_{r+1}<2r-\lambda}V_{(m_1,m_1-k_1,\ldots,m_r,m_r-k_r)+(k/2,\ldots,k/2)}^\vee\subset\cP(\fp^+,V). &
\end{align*}
Then $(d\tau_\lambda,\cP(\fp^+,V))$ is reducible if and only if $\lambda\le 2r-1$, $\lambda\in\BZ$ and $\lambda\ne 2r-k-1$. 
In this case we have the sequence of submodules 
\[ \{0\}\subset M_a(\lambda)\subset M_{a+1}(\lambda)\subset\cdots\subset M_b(\lambda)\subset\cP(\fp^+,V), \]
where 
\[ (a,b)=\begin{cases} (r+1,r+1)& (2r-k\le \lambda\le 2r-1),\\
(\left\lceil\frac{\lambda+k}{2}\right\rceil+1,r)& (-k+1\le\lambda\le 2r-2-k),\\
(1,r)& (\lambda\le -k). \end{cases} \]
$M_{r+1}(2r-1)$ and $\cP(\fp^+,V)/M_{r+1}(\lambda)$ $(2r-k\le \lambda\le 2r-1,\; \lambda\in\BZ)$ are infinitesimally unitary. 
%
\item When $G=Spin_0(2,2s)$ and 
$V=\BC_k\boxtimes V_{(k,\ldots,k,\pm k)}$ with $k=\frac{1}{2}\BZ_{\ge 0}$, 
for $\lambda\in\BR$ and $j=1,2$, we define 
\begin{align*}
M_1(\lambda)&:=\bigoplus_{m_1+k+l<1-\lambda}\BC_{m_1+m_2+k}\boxtimes V_{(m_1-m_2+l,k,\ldots,k,\pm l)}\subset\cP(\fp^+,V),\\
M_2(\lambda)&:=\bigoplus_{m_2+k-l<\frac{n}{2}-\lambda}\BC_{m_1+m_2+k}\boxtimes V_{(m_1-m_2+l,k,\ldots,k,\pm l)}\subset\cP(\fp^+,V).
\end{align*}
Then $(d\tau_\lambda,\cP(\fp^+,V))$ is reducible if and only if $\lambda\le s-1$ and $\lambda\in\BZ$. 
In this case we have the sequence of submodules 
\begin{align*}
\{0\}\subset M_2(\lambda)\subset\cP(\fp^+,V)&& &(1-2k\le \lambda\le s-1),\\
\{0\}\subset M_1(\lambda)\subset M_2(\lambda)\subset\cP(\fp^+,V)&& &(\lambda\le -2k). 
\end{align*}
$M_2(s-1)$, $M_1(0)$ (only when $k=0$), and 
$\cP(\fp^+,V)/M_2(\lambda)$ $(\lambda\le s-1,\; \lambda\in\BZ)$ are infinitesimally unitary. 
%
\item When $G=Spin_0(2,2s+1)$ and 
$V=\BC_k\boxtimes V_{(k,\ldots,k)}$ with $k=0,\frac{1}{2}$, 
for $\lambda\in\BR$ and $j=1,2$, we define 
\begin{align*}
M_1(\lambda)&:=\bigoplus_{m_1+k+l<1-\lambda}\BC_{m_1+m_2+k}\boxtimes V_{(m_1-m_2+l,k,\ldots,k,|l|)}\subset\cP(\fp^+,V),\\
M_2(\lambda)&:=\bigoplus_{m_2+k-l<\frac{n}{2}-\lambda}\BC_{m_1+m_2+k}\boxtimes V_{(m_1-m_2+l,k,\ldots,k,|l|)}\subset\cP(\fp^+,V).
\end{align*}
Then $(d\tau_\lambda,\cP(\fp^+,V))$ is reducible if and only if $\lambda\le s-\frac{1}{2}$ and $\lambda\in\BZ+\frac{1}{2}$, 
or $\lambda\le -2k$ and $\lambda\in\BZ$. 
In this case we have the sequence of submodules 
\begin{align*}
\{0\}\subset M_2(\lambda)\subset\cP(\fp^+,V)&& &(\lambda\le s-\frac{1}{2},\; \lambda\in\BZ+\frac{1}{2}),\\
\{0\}\subset M_1(\lambda)\subset\cP(\fp^+,V)&& &(\lambda\le -2k,\; \lambda\in\BZ). 
\end{align*}
$M_2(s-\frac{1}{2})$, $M_1(0)$ (only when $k=0$), and 
$\cP(\fp^+,V)/M_2(\lambda)$ $(\lambda\le s-\frac{1}{2},\; \lambda\in\BZ+\frac{1}{2})$ are infinitesimally unitary. 
\end{enumerate}
\end{theorem}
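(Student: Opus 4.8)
The plan is to derive the entire statement from the explicit norm ratios $R_m(\lambda):=\Vert f\Vert_{\lambda,\tau}^2/\Vert f\Vert_{F,\tau}^2$ for $f\in W_m$, computed in Sections \ref{secttube} and \ref{sectnontube}. The decisive structural feature, valid in every case occurring in this theorem, is that each $R_m(\lambda)$ is the \emph{reciprocal} of a product of Pochhammer symbols (the numerators are all $1$), so that $R_m$ has no zeros and only poles, located at explicit $(\tfrac12)$-integers. First I would record that, for $\Re\lambda$ large, $\langle f,g\rangle_{\lambda,\tau}=\sum_m R_m(\lambda)\langle f_m,g_m\rangle_{F,\tau}$, that each summand is a rational function of $\lambda$, and that the infinitesimal invariance $\langle d\tau_\lambda(X)f,g\rangle_{\lambda,\tau}=-\langle f,d\tau_{\bar\lambda}(X)g\rangle_{\lambda,\tau}$ ($X\in\fg$), being $K$-type by $K$-type an identity between rational functions of $\lambda$, persists by analytic continuation to every $\lambda$ at which the relevant $R_m$ are finite.

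Next I would establish the dichotomy. If $\lambda$ is a pole of no $R_m$, then $R_m(\lambda)\neq 0$ for all $m$, so $\langle\cdot,\cdot\rangle_{\lambda,\tau}$ is non-degenerate, in fact definite on each $K$-isotypic component; granting that the minimal $K$-type $V$ generates $\cP(\fp^+,V)$ under $\fp^-$ and lies in every nonzero submodule, irreducibility follows as sketched before the theorem, since a proper submodule $M$ and its complement $M^{\perp}$ would both have to contain $V$. When $\lambda$ \emph{is} a pole, I would stratify the $K$-types by the order $\operatorname{ord}_\lambda R_m$ and set $\tilde M_p(\lambda):=\bigoplus\{W_m:\operatorname{ord}_\lambda R_m\le p\}$. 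The rescaled form $\langle f,g\rangle^{(p)}:=\lim_{\lambda'\to\lambda}(\lambda'-\lambda)^p\langle f,g\rangle_{\lambda',\tau}$ is $(\fg,K)$-invariant, is finite exactly on $\tilde M_p$, vanishes on $\tilde M_{p-1}$, and is non-degenerate on $\tilde M_p/\tilde M_{p-1}$; hence each $\tilde M_p$ is a $(\fg,K)$-submodule, producing the filtration.

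The core of the work is then the case-by-case bookkeeping. For each $(G,V)$ I would read off from the pertinent theorem the denominator factors of $R_m$, one per index $j$, and observe that, because the weight labels are ordered (e.g.\ $n_1\ge\cdots\ge n_s$) and the thresholds increase with $j$, the conditions ``factor $j$ is singular at $\lambda$'' are nested in $j$; thus $\operatorname{ord}_\lambda R_m$ equals the number of singular factors, and $\tilde M_{j-1}(\lambda)$ is cut out by the single inequality killing factor $j$, which is precisely the $M_j(\lambda)$ of the statement. Determining for which $\lambda$ some factor can be singular gives the reducibility criterion (a threshold together with an integrality/parity condition), and locating the least and greatest active $j$ gives the endpoints $a,b$ of the chain. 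All the parity subtleties---$2\lambda\equiv r\bmod 2$ for $Sp(r,\BR)$, the $\min\{q+l,s\}$ breakpoints for $SU(q,s)$, the ceilings $\lceil\tfrac{\lambda}{2}\rceil$, $\lceil\tfrac{\lambda+k}{2}\rceil$ in the $SO^*$ cases---emerge here and must be tracked carefully.

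Finally, for the two distinguished subquotients claimed to be infinitesimally unitary, namely the bottom submodule $M_a(\lambda)$ at the exceptional discrete values and the top quotient $\cP(\fp^+,V)/M_b(\lambda)$ at every reducible $\lambda$, I must upgrade non-degeneracy to definiteness. On the top quotient the relevant coefficient of $\langle\cdot\rangle^{(p)}$ on each $W_m$ is, up to one $\lambda$-independent sign, a residue given by a surviving product of Pochhammer factors, and I would check that all such residues share a single sign by counting negative factors under the ordering constraints on $(\bm,\bk,\bn)$. The \textbf{main obstacle} is exactly this uniform sign control, fused with the combinatorial matching above: proving that the residues are uniformly signed across all weight-orderings and all parity regimes is the delicate part, and I expect the $SU(q,s)$ case with $q<s$ and $\bk\neq\bzero$, together with the odd $SO^*(4r+2)$ cases, to be the most intricate.
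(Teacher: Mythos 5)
Your proposal follows essentially the same route as the paper's Section 6: writing $\langle f,g\rangle_{\lambda,\tau}=\sum_m R_m(\lambda)\langle f_m,g_m\rangle_{F,\tau}$, continuing analytically in $\lambda$, proving irreducibility at non-pole values via the orthocomplement/minimal-$K$-type argument, filtering by pole order with the rescaled forms $\lim_{\lambda'\to\lambda}(\lambda'-\lambda)^j\langle\cdot,\cdot\rangle_{\lambda',\tau}$, and then reading off the chains $M_a(\lambda)\subset\cdots\subset M_b(\lambda)$ and the unitarity claims from the explicit Pochhammer denominators computed in Sections 4 and 5. The only difference is one of emphasis: you spell out the pole-order stratification, the nestedness of the vanishing conditions (which indeed needs the full weight constraints from the $K$-type decompositions, e.g.\ $m_1-m_2+l\ge k$ in the $Spin_0(2,n)$ case, not just monotonicity of the labels), and the residue-sign verification, all of which the paper leaves implicit in ``This gives the following theorem.''
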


By \cite[Lemma 4.8]{KO}, we can determine the associated variety of each subquotient module by comparing the asymptotic $K$-support 
of each subquotient module and (\ref{orbitpoly}). In fact, we have 
\begin{align*}
\cV_\fg(M_{l+1}(\lambda)/M_{l\;(\mathrm{or}\; l-1)}(\lambda))&=
\begin{cases}\overline{\cO_l} & (l=0,1,\ldots,r-1),\\ \overline{\cO_r}=\fp^+ & (l\ge r), \end{cases}\\
\cV_\fg(\cP(\fp^+,V)/M_{b\;(\mathrm{or}\; r)}(\lambda))&=\overline{\cO_r}=\fp^+, 
\end{align*}
where we set $M_0(\lambda)=M_{-1}(\lambda)=\{0\}$, $\cO_l$ are defined in (\ref{orbit}), and $r=\rank_\BR G$. 
These and (\ref{orbitdim}) give the Gelfand-Kirillov dimension of each subquotient module. 
\begin{align*}
\operatorname{DIM}(M_{l+1}(\lambda)/M_{l\;(\mathrm{or}\; l-1)}(\lambda))&=
\begin{cases}l+\frac{1}{2}l(2r-l-1)d+lb & (l=0,1,\ldots,r-1),\\ r+\frac{1}{2}r(r-1)d+rb=n & (l\ge r), \end{cases}\\
\operatorname{DIM}(\cP(\fp^+,V)/M_{b\;(\mathrm{or}\; r)}(\lambda))&=r+\frac{1}{2}r(r-1)d+rb=n. 
\end{align*}

Also, we can show that the smallest submodule $M_a(\lambda)$ is irreducible in any case, by the same argument for the irreducibility of 
$\cP(\fp^+,V)$ for $\lambda$ generic case. However, we cannot determine whether the other subquotient modules are irreducible or not, 
by the norm computation, and we need some other techniques to determine the full composition series, 
such as the techniques used in e.g. \cite{MS}, \cite{OZ3}, \cite{Sa}, or \cite{BSS}.

\section*{Acknowledgments}
The author would like to thank his supervisor T. Kobayashi, and professor B. \O rsted for a lot of helpful advice on this paper. 
He also thanks his colleagues, especially M. Kitagawa for a lot of helpful discussion.

\end{document}